\newtheorem{theorem}{Theorem}[section]
\newtheorem{lemma}[theorem]{Lemma}
\newtheorem{corollary}[theorem]{Corollary}
\newtheorem{proposition}[theorem]{Proposition}
\theoremstyle{definition}
\newtheorem{definition}[theorem]{Definition}
\newtheorem{example}[theorem]{Example}
\theoremstyle{remark}
\newtheorem{remark}[theorem]{Remark}
\numberwithin{equation}{section}
\begin{document}

\setlength\parskip{0.5em plus 0.1em minus 0.2em}

\title{New invariants of involutions from Seiberg--Witten Floer theory}

\author{David Baraglia}
\address{School of Mathematical Sciences, The University of Adelaide, Adelaide SA 5005, Australia}
\email{david.baraglia@adelaide.edu.au}

\author{Pedram Hekmati}
\address{Department of Mathematics, The University of Auckland, Auckland, 1010, New Zealand}
\email{p.hekmati@auckland.ac.nz}


\date{\today}

\begin{abstract}
We study equivariant Seiberg--Witten Floer theory of rational homology $3$-spheres in the special case where the group action is given by an involution. The case of involutions deserves special attention because we can couple the involution to the charge conjugation symmetry of Seiberg--Witten theory. This leads to new Floer-theoretic invariants which we study and apply in a variety of applications. In particular, we construct a series of delta-invariants $\delta^E_*, \delta^R_*, \delta^S_*$ which are the equivariant equivalents of the Ozsv\'ath--Szab\'o $d$-invariant. The delta-invariants come in three types: equivariant, Real and spin depending on the type of the spin$^c$-structure involved. The delta-invariants satisfy many useful properties, including a Fr{\o}yshov-type inequality for equivariant cobordisms. We compute the delta-invariants in a wide range of examples including: equivariant plumbings, branched double covers of knots and equivariant Dehn surgery. We also consider various applications including obstructions to extending involutions over bounding $4$-manifolds, non-smoothable involutions on $4$-manifolds with boundary, equivariant embeddings of $3$-manifolds in $4$-manifolds and non-orientable surfaces bounding knots.
\end{abstract}

\maketitle


\section{Introduction}

In the paper \cite{bh} we introduced and studied an equivariant version of Seiberg--Witten Floer theory for group actions on rational homology $3$-spheres. In \cite{bh2}, we applied this theory to cyclic group actions on Brieskorn spheres, obtaining obstructions for such actions to extend over a bounding $4$-manifold. In this paper we focus specifically on the case that the group action is given by an involution. Our motivation for considering involutions is that there are special features of the involutive case which makes it possible to define a new set of invariants. We demonstrate the utility of these invariants with a variety of applications.

Let $Y$ be a rational homology $3$-sphere and $\sigma\colon Y \to Y$ an orientation preserving involution. Our Floer-theoretic invariants of $(Y , \sigma)$ will come in three variations: {\em equivariant (E), Real (R)} and {\em odd spin (S)}. Corresponding to these three types, we have three different notions of compatibility between $\sigma$ and a spin$^c$-structure $\mathfrak{s}$:
\begin{itemize}
\item{$\mathfrak{s}$ is {\em equivariant} if $\sigma^*(\mathfrak{s})$ is isomorphic to $\mathfrak{s}$. In this case $\sigma$ can be lifted to a $\mathbb{C}$-linear involution on the spinor bundle corresponding to $\mathfrak{s}$.}
\item{$\mathfrak{s}$ is {\em Real} if $\sigma$ can be lifted to an antilinear involution on the spinor bundle corresponding to $\mathfrak{s}$.}
\item{$\mathfrak{s}$ is {\em odd spin} if $\mathfrak{s}$ is the spin$^c$-structure underlying a spin structure and $\sigma$ admits a lift to the spin bundle which squares to $-1$.}
\end{itemize}

The equivariant case is precisely what was considered in \cite{bh}. The Real and odd spin cases yield new invariants. For each of the three types $T \in \{ E , R , S\}$ we construct a corresponding Seiberg--Witten Floer cohomology theory $HSW_{T}^*( Y , \mathfrak{s} , \sigma)$ which is a graded module over the equivariant cohomology ring $H^*_{G^T_\mathfrak{s}}(pt)$, where $G^T_\mathfrak{s}$ is a certain group depending on the type of spin$^c$-structure. In type $E$, $G^E_{\mathfrak{s}} \cong S^1 \times \mathbb{Z}_2$, in type $R$, $G^R_{\mathfrak{s}} \cong O(2)$ and in type $S$, $G^S_{\mathfrak{s}}$ is a certain extension of $\mathbb{Z}_2$ by $Pin(2)$, see Section \ref{sec:sym} for details.

As one might expect, the Floer cohomology groups are typically difficult to compute. However we use the Floer cohomology to derive a collection of numerical invariants which are much easier to work with. We call these the {\em delta-invariants} of $(Y,\mathfrak{s} , \sigma)$ of type $E,R$, or $S$. These invariants play the role of the Ozsv\'ath--Szab\'o correction term $d(Y,\mathfrak{s})$ in the presence of an involution. For convenience we define the (ordinary) delta-invariant of $(Y,\mathfrak{s})$ to be half the $d$-invariant: $\delta(Y , \mathfrak{s}) = d(Y , \mathfrak{s})/2$.

\begin{itemize}
\item{In types $T = E$ or $R$, the delta-invariants are a sequence $\delta_j^T( Y , \mathfrak{s} , \sigma) \in \mathbb{Q}$ of rational numbers, where $j \ge 0$ is an integer.}
\item{In type $S$, the delta-invariants $\delta^S_{i,j}(Y , \mathfrak{s} , \sigma) \in \mathbb{Q}$ depend on two integers $i,j \ge 0$ subject to the condition that either $i=0$ or $j \le 1$.}
\end{itemize}

Throughout the paper our Floer cohomology groups will be defined with respect to the coefficient ring $\mathbb{F} = \mathbb{Z}/2\mathbb{Z}$. This is not strictly necessary and is chosen for convenience.

The delta-invariants $\delta^T_*(Y , \mathfrak{s} , \sigma)$ are equivariant rational homology cobordism invariants (see Remark \ref{rem:erhci} for the precise statement). They satisfy a long list of properties which we summarise below. For notational convenience we write $\delta^T_*$ to denote either $\delta^T_j$ for some $j\ge 0$ if $T=E,R$, or $\delta^S_{i,j}$ for some $i,j$ if $T=S$.

\begin{theorem}
The $\delta$-invariants satisfy the following properties:
\begin{itemize}
\item[(1)]{$\delta^T_*(Y  , \mathfrak{s} , \sigma) = \delta(Y , \mathfrak{s}) \; ({\rm mod} \; \mathbb{Z})$ for any $T$.}
\item[(2)]{$\delta^S_*(Y  , \mathfrak{s} , \sigma) = \mu(Y , \mathfrak{s}) \; ({\rm mod} \; 2\mathbb{Z})$ where $\mu(Y , \mathfrak{s})$ is the generalised Rokhlin invariant.}
\item[(3)]{For $T=E,R$, the sequence $\delta^T_j(Y , \mathfrak{s} , \sigma)$ is decreasing and is eventually constant.}
\item[(4)]{$\delta^S_{i',j'}(Y , \mathfrak{s} , \sigma ) \le \delta^S_{i,j}(Y , \mathfrak{s} , \sigma )$ whenever $i' \ge i$, $j' \ge j$. The values of $\delta^S_{0,j}(Y , \mathfrak{s} , \sigma )$, $ \delta^S_{j,0}(Y , \mathfrak{s} , \sigma )$ and $\delta^S_{j,1}(Y , \mathfrak{s} , \sigma )$ are independent of $j$ for large enough $j$.}
\item[(5)]{$\delta^T_*(Y , \mathfrak{s} , \sigma ) + \delta^T_*(-Y , \mathfrak{s} , \sigma ) \ge 0$.}
\item[(6)]{If $Y$ is an $L$-space, then $\delta^T_*(Y , \mathfrak{s} , \sigma) = \delta(Y,\mathfrak{s})$ for all $T,*$.}
\item[(7)]{If $T = E,R$, $\delta_0^T(Y , \mathfrak{s} , \sigma) \ge \delta(Y , \mathfrak{s})$.}
\item[(8)]{
\begin{itemize}
\item[(i)]{$\delta^S_{0,0}(Y , \mathfrak{s} , \sigma) \ge \alpha(Y , \mathfrak{s})$,}
\item[(ii)]{$\delta^S_{i,j}(Y , \mathfrak{s} , \sigma) \ge \beta(Y , \mathfrak{s})$ for $i+j=1$,}
\item[(iii)]{$\delta^S_{i,j}(Y,\mathfrak{s},\sigma) \ge \gamma(Y,\mathfrak{s})$ for $i+j=2$,}
\end{itemize}
where $\alpha, \beta, \gamma$ are the invariants defined in \cite{man2}.}
\item[(9)]{$\delta^T_{*_1+*_2}(Y_1 \# Y_2 , \mathfrak{s} , \sigma ) \le \delta^T_{*_1}(Y_1 , \mathfrak{s}_1 , \sigma_1 ) + \delta^T_{*_2}(Y_2 , \mathfrak{s}_2 , \sigma_2 )$.}
\item[(10)]{If $T = E,R$, then $\delta^T_j(Y,\mathfrak{s},\sigma) \ge (l(Y,\mathfrak{s}) - j)/2$ where $l(Y,\mathfrak{s})$ is the lowest degree in which $HF^+_*(Y,\mathfrak{s})$ is non-zero. Similarly $\delta^S_{i,j}(Y,\mathfrak{s},\sigma) \ge (l(Y,\mathfrak{s}) - i-j)/2$.}
\end{itemize}

\end{theorem}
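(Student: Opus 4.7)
My plan is to derive each property from the structure of $HSW^*_T(Y,\mathfrak{s},\sigma)$ as a graded module over $H^*_{G^T_\mathfrak{s}}(pt;\mathbb{F})$. For types $E$ and $R$ this ring is $\mathbb{F}[U,Q]$ with $\deg U = 2$, $\deg Q = 1$, while for type $S$ it is the more elaborate ring whose extra generator $v$ encodes the $Pin(2)$-piece of the extension $G^S_\mathfrak{s}$. The first step is to pin down the definitions: $\delta^T_j$ (resp.\ $\delta^S_{i,j}$) should be, up to a fixed normalisation, half the minimal grading of an element whose $Q^j$ (resp.\ $Q^j v^i$) multiple survives to infinity in the localised infinite-$U$-tower quotient. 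Once this is in place, every item becomes an assertion about either the module structure itself or a natural comparison map.

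A large block of the items follows from forgetful and localisation homomorphisms between the various equivariant theories. For (1), forgetting the $\mathbb{Z}_2$-factor down to $S^1$-equivariant Floer cohomology recovers $HF^+(Y,\mathfrak{s})$ and hence the ordinary $d$-invariant, pinning down the mod-$\mathbb{Z}$ class of $\delta^T_*$. Item (2) follows because the fundamental tower grading in the type $S$ theory carries a mod $2\mathbb{Z}$ refinement controlled by $\mu(Y,\mathfrak{s})$, just as in Manolescu's $Pin(2)$-theory. Item (6) is immediate: for $L$-spaces $HF^+$ is the simplest possible tower, which collapses the equivariant spectral sequence on its first page and forces $\delta^T_* = \delta$. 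Item (7) is the forgetful inequality for $T=E,R$ to ordinary $HF^+$, and item (8) comes from the forgetful map from the type $S$ theory to Manolescu's $Pin(2)$-theory, where $\alpha,\beta,\gamma$ are exactly the gradings of the $v^iQ^j$-tower generators for the appropriate values of $i+j$. Item (10) again follows from comparison with $HF^+$, noting that an equivariant tower generator cannot sit strictly below the lowest non-zero degree of its image in $HF^+$, offset by the $Q^j$-shift.

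Items (3) and (4) are purely algebraic consequences of the module structure: multiplying a tower generator by $Q$ or $v$ lowers the grading in a controlled way, yielding a monotone sequence, and finite generation of the relevant subquotient forces eventual stabilisation. Item (5) is a Poincar\'e-type duality that I would prove via the pairing $HSW^*_T(Y) \otimes HSW^{c-*}_T(-Y) \to \mathbb{F}$ induced by the product cobordism $Y \times [0,1]$ equipped with the diagonal $\sigma$-action, reading off the inequality by pairing the lowest surviving tower generators. Item (9) requires a K\"unneth-type map $HSW^*_T(Y_1,\mathfrak{s}_1,\sigma_1) \otimes HSW^*_T(Y_2,\mathfrak{s}_2,\sigma_2) \to HSW^*_T(Y_1 \# Y_2,\mathfrak{s},\sigma)$ induced by the equivariant connected-sum cobordism; the additivity of the index shift $*_1+*_2$ reflects multiplicativity of the monomial generators $Q^iU^j$ (or $v^iQ^j$) under this map. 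The main obstacle I expect is item (8), since $G^S_\mathfrak{s}$ is a non-split extension and the interaction between charge conjugation and the involution must be carefully reconciled with Manolescu's grading convention in \cite{man2} before the inequalities for $\alpha,\beta,\gamma$ can be extracted cleanly from the forgetful map; once this bookkeeping is done, the remaining items are variations on the same forgetful and algebraic themes.
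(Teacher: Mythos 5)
Your proposal matches the paper's approach: the paper establishes these items through the localisation/completion structure of $HSW^*_T$, restriction to the subgroups $S^1$ and $Pin(2)$ of $G^T_{\mathfrak{s}}$, the spectral sequence comparing $HSW^*_T$ with $HSW^*$ (for the $L$-space, mod-$\mathbb{Z}$ and lowest-degree statements), duality between $Y$ and $-Y$ for (5), and the smash-product/connected-sum decomposition for (9), with most details deferred to straightforward extensions of \cite{bh}. A couple of minor slips (the extension $1\to Pin(2)\to G^S_{\mathfrak{s}}\to\mathbb{Z}_2\to 1$ is in fact split, and the duality in (5) is best formulated at the level of equivariant spectra rather than as an $\mathbb{F}$-valued pairing) do not affect the strategy.
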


In addition to the above properties, there is a spectral sequence (Theorem \ref{thm:ss}) which relates $HSW^*_T$ to $HSW^*$. This can sometimes be used to compute the delta-invariants.

By far the most important property of the delta-invariants is their behaviour under equivariant cobordism, namely they satisfy an equivariant version of the Fr{\o}yshov inequality.

\begin{theorem}
Let $W$ be a smooth, compact, oriented $4$-manifold with boundary and with $b_1(W) = 0$. Suppose that $\sigma$ is an orientation preserving smooth involution on $W$. Let $\mathfrak{s}$ be a spin$^c$-structure on $W$ of type $T \in \{E,R,S\}$. Suppose each component of $\partial W$ is a rational homology sphere and that $\sigma$ sends each component to itself. 

Suppose that $\partial W = Y_1 \cup -Y_0$. Then:
\begin{itemize}
\item[(1)]{In type $T=E$, suppose that the $\sigma$-invariant subspace of $H^2(W ; \mathbb{R})$ is negative definite. Then
\[
\delta^E_{j+b_+(W)}(Y_0 , \mathfrak{s}|_{Y_0} , \sigma|_{Y_0}) + \frac{ c(\mathfrak{s})^2 - \sigma(W) }{8} \le  \delta^E_j(Y_1, \mathfrak{s}|_{Y_1} , \sigma|_{Y_1})
\]
for all $j \ge 0$.
}
\item[(2)]{In type $T=R$, suppose that the $\sigma$-anti-invariant subspace of $H^2(W ; \mathbb{R})$ is negative definite. Then
\[
\delta^R_{j+b_+(W)}(Y_0 , \mathfrak{s}|_{Y_0} , \sigma|_{Y_0}) + \frac{ c(\mathfrak{s})^2 - \sigma(W) }{8} \le 
\delta^R_j(Y_1, \mathfrak{s}|_{Y_1} , \sigma|_{Y_1})
\]
for all $j \ge 0$.
}
\item[(3)]{In type $T=S$, let $b_+(X)^\sigma, b_+(X)^{-\sigma}$ denote the dimensions of the $\sigma$-invariant/anti-invariant subspaces of $H^+(X)$. Then
\[
\delta^S_{i + b_+(W)^{\sigma} , j + b_+(W)^{-\sigma}}(Y_0 , \mathfrak{s}|_{Y_0} , \sigma|_{Y_0}) - \frac{\sigma(W) }{8} \le 
\delta^S_{i,j}(Y_1, \mathfrak{s}|_{Y_1} , \sigma|_{Y_1})
\]
for all $i,j \ge 0$ such that either $i + b_+(W)^{\sigma} = 0$ or $j + b_+(W)^{-\sigma} \le 1$.
}
\end{itemize}

\end{theorem}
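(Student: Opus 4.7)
The plan is to mimic the standard proof of the Ozsv\'ath--Szab\'o Fr{\o}yshov inequality for $d$-invariants, now carried out in the equivariant setting of \cite{bh}. In all three cases $T \in \{E,R,S\}$ the central input is a cobordism map
\[
F^T_W \colon HSW^*_T(Y_1,\mathfrak{s}|_{Y_1},\sigma|_{Y_1}) \longrightarrow HSW^{*+d}_T(Y_0,\mathfrak{s}|_{Y_0},\sigma|_{Y_0}),
\]
induced by the pair $(W,\sigma)$ together with its spin$^c$-structure of type $T$. I would construct $F^T_W$ via the $G^T_\mathfrak{s}$-equivariant finite-dimensional approximation of the Seiberg--Witten equations on the cylindrical-end completion of $W$; this produces a $G^T_\mathfrak{s}$-equivariant Bauer--Furuta-type map between Thom spaces of finite-dimensional representations, which descends to Borel cohomology to yield a morphism of $H^*_{G^T_\mathfrak{s}}(pt)$-modules.

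The degree shift $d$ is computed from the equivariant index of the spin$^c$ Dirac operator on $W$ together with the $\sigma$-decomposition of the harmonic self-dual $2$-forms on $W$. The non-equivariant piece of the shift is $(c(\mathfrak{s})^2-\sigma(W))/8$, after renormalising from $d$- to $\delta$-conventions. The additional shift in the $j$ (or $(i,j)$) index records the positive part of $H^2(W;\mathbb{R})$: in type $E$, the hypothesis that $H^2(W;\mathbb{R})^\sigma$ is negative definite forces all $b_+(W)$ self-dual harmonic forms to be $\sigma$-anti-invariant, and each such form obstructs the solution count and shifts the $j$-index by one, producing the total shift $j \mapsto j+b_+(W)$. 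In type $R$ the roles of invariant and anti-invariant are exchanged and the argument is symmetric. In type $S$, both invariant and anti-invariant self-dual forms occur and contribute to different generators of $H^*_{G^S_\mathfrak{s}}(pt)$, giving the bigraded shift $(i,j) \mapsto (i+b_+(W)^\sigma, j+b_+(W)^{-\sigma})$; the range restriction on $(i,j)$ then matches the range on which $\delta^S_{i,j}$ is defined.

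Once $F^T_W$ and its degree shift are established, the inequality is obtained by a standard manipulation: pick a class $\xi \in HSW^*_T(Y_1)$ of minimal degree witnessing $\delta^T_*(Y_1)$, in the sense that $\xi$ survives the appropriate tower of $H^*_{G^T_\mathfrak{s}}(pt)$-generators; apply $F^T_W$; observe that the image still survives the corresponding tower in $HSW^*_T(Y_0)$ by the module-map property; and conclude that the degree of $F^T_W(\xi)$ is at least the relevant $\delta^T$-level on $Y_0$ with the indices shifted by $b_+(W)$ (respectively $b_+(W)^\sigma$ and $b_+(W)^{-\sigma}$). Rearranging gives the stated bound. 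The main obstacle I anticipate is the equivariant index bookkeeping in type $S$: because $G^S_\mathfrak{s}$ is a nontrivial extension of $\mathbb{Z}_2$ by $Pin(2)$, one must extend Manolescu's $Pin(2)$-equivariant finite-dimensional approximation over this larger group and verify that $b_+(W)^\sigma$ and $b_+(W)^{-\sigma}$ really shift the $i$- and $j$-indices of $\delta^S_{i,j}$ independently, and that the domain constraint on $(i,j)$ emerges naturally from the range of $\delta^S$ rather than being an extra hypothesis.
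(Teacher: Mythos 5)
Your proposal follows essentially the same route as the paper, which proves Theorem \ref{thm:froy} by declaring it a straightforward adaptation of \cite[Theorem 4.1, Theorem 5.3]{bh}: one forms the $G^T_{\mathfrak{s}}$-equivariant relative Bauer--Furuta map of $(W,\sigma,\mathfrak{s})$, reads off the degree shift from the index of the Dirac operator, and tracks the Euler classes of the $\sigma$-invariant and anti-invariant summands of $H^+(W)$ (which account for the shifts by $b_+(W)$, resp.\ $b_+(W)^{\sigma}$ and $b_+(W)^{-\sigma}$, and for the definiteness hypotheses) before running the usual module-map argument on the delta-invariants. The sketch is correct and consistent with the intended argument.
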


\subsection{Calculations}\label{sec:calc}

One of the strengths of our delta-invariants is that they can be calculated in a wide variety of situations. These calculations are carried out in Section \ref{sec:con}. Here we summarise some of the main results.

{\bf Equivariant plumbing.} Let $\Gamma$ be a plumbing graph and $Y_\Gamma$ the boundary of the plumbing on $\Gamma$ (see \textsection \ref{sec:pg}). We consider two classes of involutions that can be constructed on such plumbings. The first type of involution that we consider is complex conjugation $c_\Gamma$ (\textsection \ref{sec:cc}), obtained by plumbing together the complex conjugation involutions on each disc bundle. Such involutions can be constructed for any plumbing graph. The second kind of involution that we consider will be referred to as {\em $\mathbb{Z}_2$-equivariant plumbing} (\textsection \ref{sec:z2p}) and denoted as $m_\Gamma$. Such involutions can be constructed for plumbings where $\Gamma$ satisfies the following conditions (i) the degrees of all disc bundles in the plumbing are even, (ii) $\Gamma$ is a bipartite graph (vertices can be coloured black or white in an alternating fashion) and (iii) white vertices have at most two edges joining them. If $\Gamma$ satisfies these conditions we will call it a $\mathbb{Z}_2$-equivariant plumbing graph. See Figure \ref{fig:bipartite} for an example.

\begin{figure}
\begin{center}
\begin{tikzpicture}
\draw[thick] (0.1,0) -- (1.9,0) ;
\draw[thick] (2.1,0) -- (3.9,0);
\draw[thick] (4.1,0) -- (5,0);
\draw[thick] (1,-0.9) -- (1,0.9) ;
\draw[thick] (3,0) -- (3,-0.9) ;
\draw[thick] (3,-1.1) -- (3,-2.9) ;
\draw[thick] (3,-2) -- (3.9,-2);
\draw (0,0) circle(0.1);
\draw [fill] (1,0) circle(0.1);
\draw  (2,0) circle(0.1);
\draw [fill] (3,0) circle(0.1);
\draw  (4,0) circle(0.1);
\draw [fill] (5,0) circle(0.1);
\draw (1,1) circle(0.1);
\draw (3,-1) circle(0.1);
\draw [fill] (3,-2) circle(0.1);
\draw (3,-3) circle(0.1);
\draw  (4,-2) circle(0.1);
\draw (1,-1) circle(0.1);
\end{tikzpicture}
\caption{Bipartite graph. White vertices have at most two edges joining them.}\label{fig:bipartite}
\end{center}
\end{figure}
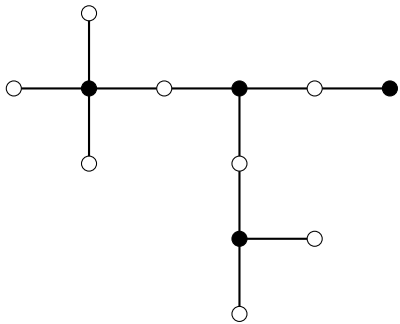

To any plumbing graph $\Gamma$ we define an invariant $j(\Gamma) \in \mathbb{Z}$. The definition is given in Section \ref{sec:pg}. Here we simply note that if $\Gamma$ has $k$ vertices, then $j(\Gamma) \le k$. 

\begin{theorem}\label{thm:plumb0}
Let $\Gamma$ be a connected plumbing graph whose degrees are all even and let $Y_\Gamma$ be the boundary of the plumbing $X_\Gamma$ according to $\Gamma$. Let $\mathfrak{s}$ denote the restriction to $Y_\Gamma$ of the unique spin$^c$-structure on $X_\Gamma$ with $c(\mathfrak{s}) = 0$.
\begin{itemize}
\item[(1)]{$\delta_j^E( Y_\Gamma , \mathfrak{s} , c_\Gamma ) = \delta^S_{0,j}( Y_\Gamma , \mathfrak{s} , c_\Gamma) = - \overline{\mu}(Y,\mathfrak{s}|_Y)$ for all $j \ge j(\Gamma)$.}
\item[(2)]{Suppose that $\Gamma$ is a $\mathbb{Z}_2$-equivariant plumbing graph. Then $\delta_j^R(Y_\Gamma , \mathfrak{s} , m_\Gamma) = \delta^S_{j,k}( Y_\Gamma , \mathfrak{s} , m_\Gamma) = -\overline{\mu}(Y , \mathfrak{s}|_Y)$ for all $j \ge j(\Gamma)$ and $k = 0,1$.}
\end{itemize}
\end{theorem}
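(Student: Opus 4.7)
My plan is a two-sided sandwich: the lower bound $\delta^T_* \ge -\overline{\mu}$ comes from applying the equivariant Fr{\o}yshov inequality to the plumbing $X_\Gamma$ itself, and the matching upper bound comes from a direct computation of the stable part of $HSW^*_T(Y_\Gamma, \mathfrak{s}, \sigma)$ from the plumbing data.

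For the lower bound, I would first identify how $c_\Gamma$ and $m_\Gamma$ act on $H^2(X_\Gamma;\mathbb{R})$. Complex conjugation is orientation-reversing on each base $\mathbb{C}P^1$, so $c_\Gamma$ acts by $-1$ on every sphere class; in particular the $c_\Gamma$-invariant subspace of $H^2(X_\Gamma;\mathbb{R})$ is zero, so the negative-definiteness hypothesis in case (1) of the Fr{\o}yshov theorem is vacuous and $b_+(X_\Gamma)^{c_\Gamma}=0$ in case (3). For $m_\Gamma$ the bipartite colouring of $\Gamma$ splits $H^2(X_\Gamma;\mathbb{R})$ into $(\pm1)$-eigenspaces, and the three combinatorial conditions on $\Gamma$ (even degrees, bipartite, valence $\le 2$ at white vertices) are precisely what is required to verify the negative-definiteness hypothesis on the relevant eigenspace. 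Since $c(\mathfrak{s})=0$, applying the Fr{\o}yshov inequality with $W=X_\Gamma$, $Y_0=\emptyset$, $Y_1=Y_\Gamma$ then gives
\[
\delta^T_*(Y_\Gamma, \mathfrak{s}, \sigma) \;\ge\; -\frac{\sigma(X_\Gamma)}{8} \;=\; -\overline{\mu}(Y_\Gamma, \mathfrak{s}|_Y),
\]
using the standard identification $\overline{\mu}=\sigma(X_\Gamma)/8$ in the even case with $c(\mathfrak{s})=0$.

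The harder direction is the upper bound $\delta^T_* \le -\overline{\mu}$ once $*$ is at least $j(\Gamma)$. For this I would compute enough of $HSW^*_T(Y_\Gamma, \mathfrak{s}, \sigma)$ to pin down its behaviour at high index, following the strategy used by Mrowka--Ozsv\'ath--Yu for plumbings in the ordinary case and by Manolescu for his $Pin(2)$-computations on Brieskorn spheres in the equivariant case. The expected outcome is that, above level $j(\Gamma)$, the module $HSW^*_T(Y_\Gamma, \mathfrak{s}, \sigma)$ stabilises as a free module over $H^*_{G^T_\mathfrak{s}}(pt)$ with a single tower based in the degree dictated by $-\overline{\mu}$. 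The integer $j(\Gamma)\le k$ is to be defined as the threshold at which this free-tower structure sets in, which is also what forces the type $S$ statement to be restricted to $i=0$ (or, for $m_\Gamma$, to $k\in\{0,1\}$), since the nontrivial tower lies along only one axis of the equivariant cohomology ring.

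The main obstacle is this explicit equivariant Floer computation. One must simultaneously track the $\sigma$-action with the $U(1)$, $O(2)$, or extended $Pin(2)$-action appearing in type $E$, $R$, $S$ respectively, and show that the extra involutive symmetry introduces no new classes below the expected degree. The bipartite and valence-two conditions on $\Gamma$ in the $m_\Gamma$ case are exactly what allow the equivariant Conley index to be assembled locally from the individual disc bundles, so much of the work will be in making this local-to-global equivariant bookkeeping rigorous.
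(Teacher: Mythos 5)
Your lower bound is the paper's first step: apply Theorem \ref{thm:froy} to $X_\Gamma$ with $Y_0=\emptyset$, $Y_1=Y_\Gamma$ (and, which you omit, to the orientation reversal $-X_\Gamma$, which gives the matching upper bound $\delta^T_*\le -\sigma(\Gamma)/8$ once $*\ge b_-(X_\Gamma)$, hence the value of $\delta^T_\infty$ with no Floer computation at all). The heart of the theorem, however, is that the stable value is already attained at the combinatorial threshold $j(\Gamma)$ of Definition \ref{def:jGamma}, and this is exactly the part you leave as an ``expected outcome'' of an unexecuted equivariant analogue of the Mrowka--Ozsv\'ath--Yu / graded-roots computation, which you yourself flag as the main obstacle. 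That computation is neither carried out nor needed. The paper's argument is again purely cobordism-theoretic: choose an admissible subgraph $\Gamma'$ realising the minimum in Definition \ref{def:jGamma}, so that $Y_{\Gamma'}$ is a disjoint union of lens spaces, hence $L$-spaces, on which every delta-invariant equals the ordinary $\delta(Y_{\Gamma'},\mathfrak{s}|_{Y_{\Gamma'}})$ and is independent of the index; applying Theorem \ref{thm:froy} to the complementary cobordism $X_{\Gamma',\Gamma}$ and to its reverse forces $\delta^E_j(Y_\Gamma,\mathfrak{s},c_\Gamma)$ to be constant for $j\ge b_-(H(\Gamma',\Gamma))=j(\Gamma)$, hence equal to $\delta^E_\infty=-\sigma(\Gamma)/8=-\overline{\mu}(Y,\mathfrak{s}|_Y)$. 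Note also that even if your proposed computation succeeded, defining $j(\Gamma)$ as ``the threshold at which the free tower sets in'' proves a different statement: the theorem asserts the bound at the combinatorially defined $j(\Gamma)$, and you would still owe an identification of your Floer-theoretic threshold with that quantity.

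There is also a factual error in your treatment of $m_\Gamma$: the bipartite colouring does not split $H^2(X_\Gamma;\mathbb{R})$ into $(\pm1)$-eigenspaces. Since $m_\Gamma$ preserves each zero section orientation-preservingly, it acts as the identity on $H^2(X_\Gamma;\mathbb{Z})$, so the anti-invariant subspace is zero and the type-$R$ hypothesis of Theorem \ref{thm:froy} is vacuous for exactly the same reason the type-$E$ hypothesis is vacuous for $c_\Gamma$. The three combinatorial conditions on $\Gamma$ are what is required for the involution $m_\Gamma$ to exist at all via equivariant plumbing, not to verify any definiteness; no definiteness assumption on $X_\Gamma$ appears in the theorem. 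Getting this action right also matters in type $S$: it is $b_+(X_\Gamma)^{-m_\Gamma}=0$ that makes the invariants $\delta^S_{j,k}$ with $k\in\{0,1\}$ accessible in part (2), mirroring the role of $b_+(X_\Gamma)^{c_\Gamma}=0$ for $\delta^S_{0,j}$ in part (1).
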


In this theorem, $\overline{\mu}(Y , \mathfrak{s}|_Y)$ is the generalised Neumann--Seibenmann invariant \cite[\textsection 4]{neu}.

A particularly interesting class of plumbed $3$-manifolds are the Brieskorn homology spheres $\Sigma(a_1, \dots , a_n)$. With the exception of $S^3$ and $\Sigma(2,3,5)$, the Brieskorn spheres have a geometry modelled on the universal cover of $SL(2,\mathbb{R})$. They have symmetry group $O(2)$ which combines the circle action of the Seifert fibration with complex conjugation (viewing $\Sigma(a_1, \dots , a_n)$ as the link of a complex singularity). By \cite[Theorem 2.1]{mesc}, any smooth, orientation preserving involution on $\Sigma(a_1, \dots , a_n)$ is conjugate to an involution in $O(2)$. There are precisely two conjugacy classes, $m$, the element of order $2$ in the circle action and $c$, complex conjugation. Concerning the delta-invariants of $(Y,m)$, $(Y,c)$, we have the following:

\begin{theorem}\label{thm:brie0}
Let $Y = \Sigma(a_1, \dots , a_n)$ be a Brieskorn sphere. Then
\begin{itemize}
\item[(1)]{$\delta^E_j(Y , c) = \delta^R_j(Y , m) =  -\overline{\mu}(Y)$ for all $j \ge 1$.}
\item[(2)]{$\delta^E_0(Y,c) = \delta^R_0(Y , m) = \delta(Y)$.}
\item[(3)]{$\delta^E_j(-Y,c) = \delta^R_j(-Y,m) = \overline{\mu}(Y)$ for all $j \ge 0$.}
\item[(4)]{$\delta^S_{j,0}(Y,m) = \delta^S_{j,1}(Y,m) = \delta^S_{0,j}(Y,c) = \delta^S_{j,1}(Y,c) = -\overline{\mu}(Y)$ for $j \ge 1$.}
\item[(5)]{$\delta^S_{j,0}(-Y,m) = \delta^S_{j,1}(-Y,m) = \overline{\mu}(Y)$ for all $j \ge 1$.}
\item[(6)]{$\delta^S_{i,j}(-Y,c) = \overline{\mu}(Y)$ for all $i,j$ with $i=0$ or $j \le 1$.}
\item[(7)]{$\delta^R_j(Y,c) \ge -\overline{\mu}(Y)$ and $\delta^R_j(-Y,c) \le \overline{\mu}(Y)$ for all $j \ge 0$.}
\end{itemize}

\end{theorem}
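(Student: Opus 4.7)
My plan is to realise each Brieskorn sphere as the boundary of a plumbing, identify the involutions $c$ and $m$ with their plumbing counterparts, extract asymptotic values of the delta-invariants from Theorem~\ref{thm:plumb0}, and then use the general structural results of Theorem~1 together with the spectral sequence of Theorem~\ref{thm:ss} to pin down the low indices and the reversed orientation cases. Every Brieskorn sphere $Y=\Sigma(a_1,\dots,a_n)$ bounds a star-shaped plumbing $X_\Gamma$ with $n$ arms emanating from a central vertex, and after successive blow-ups we may assume all weights of $\Gamma$ are even. Colouring the centre black and alternating along each arm makes $\Gamma$ bipartite with every white vertex of valency at most two, so $\Gamma$ is a $\mathbb{Z}_2$-equivariant plumbing graph. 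Under $Y=\partial X_\Gamma$, complex conjugation is identified with $c_\Gamma$, while the order-two element of the Seifert $S^1$-action acts by fibrewise negation on each disc bundle and is therefore identified with $m_\Gamma$.

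Applying Theorem~\ref{thm:plumb0} immediately yields
\[
\delta^E_j(Y,c)=\delta^S_{0,j}(Y,c)=\delta^R_j(Y,m)=\delta^S_{j,k}(Y,m)=-\overline{\mu}(Y)
\]
for $k\in\{0,1\}$ and all $j\ge j(\Gamma)$, which is the stable portion of parts~(1) and~(4). To sharpen the range to $j\ge 1$, I would combine the decreasing monotonicity of Theorem~1(3)--(4), giving the lower bound $\delta^T_j\ge -\overline{\mu}(Y)$ for $0\le j\le j(\Gamma)$, with a matching upper bound from the spectral sequence of Theorem~\ref{thm:ss}: for Brieskorn spheres the Floer cohomology $HSW^*(Y)$ has a completely controlled $U$-tower with bottom in degree $-2\overline{\mu}(Y)$ (so that $\delta(Y)=-\overline{\mu}(Y)$), and this rigidity forces the equivariant differentials to collapse the delta-sequences to their stable value already at $j=1$. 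Part~(2), the identity $\delta^E_0(Y,c)=\delta(Y)$, then follows from the general inequality $\delta^E_0\ge\delta(Y)$ of Theorem~1(7) together with the matching upper bound from the same spectral-sequence argument.

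For the reversed orientation in parts~(3), (5) and (6), the additivity-type inequality $\delta^T_*(Y,\cdot)+\delta^T_*(-Y,\cdot)\ge 0$ of Theorem~1(5) combined with the equalities already established for $Y$ yields the lower bound $\delta^T_*(-Y,\cdot)\ge\overline{\mu}(Y)$, while the matching upper bound comes from another application of Theorem~\ref{thm:ss}, now for $-Y$, using that its Floer cohomology is likewise controlled by $d(-Y)=2\overline{\mu}(Y)$. Part~(7) is the most delicate: the Real lift of complex conjugation does not in general arise from a $\mathbb{Z}_2$-equivariant plumbing, so Theorem~\ref{thm:plumb0} does not apply and only a one-sided bound is available; the inequality $\delta^R_j(Y,c)\ge -\overline{\mu}(Y)$ still follows from the spectral sequence, and the companion bound for $-Y$ then follows by combining with Theorem~1(5). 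The main obstacle throughout is bridging the gap between the plumbing stabilisation index $j(\Gamma)$ and the sharp index $j=1$ appearing in the statement, and this is where the spectral sequence of Theorem~\ref{thm:ss}, together with the rigid structure of the Brieskorn Floer cohomology, plays the essential role.
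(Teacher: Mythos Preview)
Your proposal has several genuine gaps.

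First, the claim that a star-shaped plumbing graph $\Gamma$ with all even degrees always exists for $\Sigma(a_1,\dots,a_n)$ is not correct. Corollary~\ref{cor:even} requires at least one $a_i$ to be even; when all the $a_i$ are odd, no such star-shaped even-degree graph is available in general, and the paper treats this case separately using a different (non-star-shaped) plumbing graph with extra zero-weighted vertices (Figure~\ref{fig:gr1}) together with a carefully chosen admissible subgraph. Your appeal to ``successive blow-ups'' does not produce the required even-degree graph.

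Second, and more seriously, your spectral-sequence strategy for sharpening $j\ge j(\Gamma)$ to $j\ge 1$ rests on the assertion that for Brieskorn spheres $\delta(Y)=-\overline{\mu}(Y)$. This is false: for instance $\Sigma(2,3,7)$ has $\delta(Y)=0$ but $-\overline{\mu}(Y)=1$. The paper instead uses a concrete geometric mechanism: removing the central vertex of $\Gamma$ gives an admissible subgraph $\Gamma'$ whose boundary is a union of lens spaces, and the resulting cobordism $X_{\Gamma',\Gamma}$ has $b_+(X_{\Gamma',\Gamma})=0$ and $b_-(X_{\Gamma',\Gamma})=1$ (or $b_+=1$ in the all-odd case). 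Applying Theorem~\ref{thm:froy} to $\pm X_{\Gamma',\Gamma}$ then forces the delta-invariants to be constant for $j\ge 1$ and gives the exact values directly. The same cobordism argument, not the spectral sequence, handles the reversed-orientation parts (3), (5), (6); your appeal to ``$d(-Y)=2\overline{\mu}(Y)$'' is again false in general.

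Third, your argument for (7) does not go through: the spectral sequence alone cannot produce the inequality $\delta^R_j(Y,c)\ge -\overline{\mu}(Y)$. The paper obtains it by observing that $Y$ bounds a \emph{negative definite} plumbing, that every characteristic on it is of type $R$ with respect to $c_\Gamma$, and that some spin$^c$-structure satisfies $\overline{\mu}(Y)=(\sigma(\Gamma)-c(\mathfrak{s})^2)/8$; then Theorem~\ref{thm:froy} gives the inequality directly. The spectral sequence is used in the paper only for part~(2), and there in a rather different way: one combines $\delta^T_1(Y,\sigma)=-\overline{\mu}(Y)\le\delta(Y)$ with the graded-roots description of $HF^+_{red}(Y)$ to force all differentials to vanish.
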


\begin{remark}
This theorem complements the results in \cite{bh,bh2} in which the delta-invariants $\delta^E_j(\pm Y,m)$ are studied. For instance if $p,q$ are odd and coprime, then $\delta^E_j( \Sigma(2,p,q) , m ) = -\lambda( \Sigma(2,p,q) )$ for all $j \ge 0$, where $\lambda$ is the Casson invariant (\cite[Proposition 7.2]{bh}).
\end{remark}

{\bf Branched double covers.} Let $K$ be a knot in $S^3$ and let $Y = \Sigma_2(K)$ be the double cover of $S^3$ branched over $K$. Then $Y$ is a rational homology sphere and there is a unique spin structure on $Y$ (see \textsection \ref{sec:bdc}). The corresponding spin$^c$-structure will be denoted $\mathfrak{s}_0$. Let $\sigma\colon Y \to Y$ be the covering involution of the branched double cover. By uniqueness, $\mathfrak{s}_0$ is preserved by $\sigma$ and is an odd spin involution. The delta-invariants of $(Y , \mathfrak{s}_0 , \sigma)$ define knot invariants of $K$ as follows.

\begin{definition}
Let $K$ be a knot in $S^3$. We define the delta-invariants $\delta^E_j(K), \delta^R_j(K)$ and $\delta^S_{i,j}(K)$ of $K$ to be the corresponding delta-invariants of $( \Sigma_2(K) , \mathfrak{s}_0 , \sigma )$.
\end{definition}

For a knot $K$ in $S^3$ we let $\sigma(K)$ denote the signature and $g_4(K)$ the smooth slice genus.

\begin{theorem}\label{thm:delK}
Let $K$ be a knot in $S^3$.
\begin{itemize}
\item[(1)]{$\delta^E_j(K), \delta^R_j(K), \delta^S_{k,l}(K)$ depend only on the smooth concordance class of $K$.}
\item[(2)]{$\delta^E_j(K), \delta^R_j(K), \delta^S_{k,l}(K) \in \frac{1}{4}\mathbb{Z}$.}
\item[(3)]{$\delta^E_j(K) = \delta^R_j(K) = -\sigma(K)/8 \; ({\rm mod} \; \mathbb{Z})$. $\delta^S_{k,l}(K) = -\sigma(K)/8 \; ({\rm mod} \; 2\mathbb{Z})$.}
\item[(4)]{$\delta^E_j( K ) = \delta^S_{0,j}(K) = -\sigma(K)/8$ for $j \ge g_4(K) - \sigma(K)/2$.}
\item[(5)]{If $K$ is quasi-alternating, then $\delta^E_j(K) = \delta^R_j( K) = \delta^S_{k,l}( K ) = -\sigma(K)/8$ for all $j \ge 0$ and all $(k,l)$ with $k=0$ or $l \le 1$.}
\item[(6)]{If $g_4(K) = -\sigma(K)/2$, then $\delta^R_\infty(K) \ge -\sigma(K)/8$ and $\delta^R_0( -K ) \le \sigma(K)/8$.}
\item[(7)]{If $g_4(K) = 1 - \sigma(K)/2$, then $\delta^S_{i,j}(K) \ge -\sigma(K)/8$ for all $i,j$ with $i=0$ or $j=0$ and $\delta^S_{0,1}(-K) \le \sigma(K)/8$.} 
\end{itemize}

\end{theorem}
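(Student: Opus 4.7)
The plan is to derive each of the seven parts from the general properties of the delta-invariants listed above and the equivariant Fr{\o}yshov inequalities, applied to two natural constructions on the branched double cover. A smooth concordance $C \subset S^3 \times [0,1]$ between knots $K_0, K_1$ lifts to a smooth equivariant rational homology cobordism $\Sigma_2(S^3 \times [0,1], C)$ between $(\Sigma_2(K_i), \mathfrak{s}_0, \sigma)$. A smoothly properly embedded orientable surface $F \subset D^4$ of genus $g$ with $\partial F = K$ lifts to $W = \Sigma_2(D^4, F)$, a smooth equivariant $4$-manifold with $\partial W = \Sigma_2(K)$. For this $W$ I would use Viro's signature formula $\sigma(W) = \sigma(K)$, the vanishing $H^2(W;\mathbb{R})^{\sigma} = 0$ (from the transfer identification of the invariant part with the cohomology of the contractible base), the consequent identities $b_+(W)^{-\sigma} = b_+(W) = g + \sigma(K)/2$ and $b_-(W) = g - \sigma(K)/2$, and the fact that $\mathfrak{s}_0$ extends to $W$ in each of the types $E, R, S$.

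Parts (1)--(3) and (5) use only these general properties together with classical identities. Part (1) is the concordance lift combined with the equivariant cobordism invariance recalled in Remark~\ref{rem:erhci}. Part (2) combines $d(\Sigma_2(K), \mathfrak{s}_0) \in \tfrac{1}{2}\mathbb{Z}$ and the convention $\delta = d/2$ with property (1) of the first theorem. Part (3) combines properties (1)--(2) of the first theorem with the standard facts $d(\Sigma_2(K), \mathfrak{s}_0) \equiv -\sigma(K)/4 \pmod{2\mathbb{Z}}$ and $\mu(\Sigma_2(K), \mathfrak{s}_0) \equiv -\sigma(K)/8 \pmod{2\mathbb{Z}}$. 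Part (5) is immediate: a quasi-alternating $K$ makes $\Sigma_2(K)$ an $L$-space with $d(\Sigma_2(K),\mathfrak{s}_0) = -\sigma(K)/4$, and property (6) of the first theorem collapses every delta-invariant to the common value $-\sigma(K)/8$.

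Parts (4), (6), and (7) are direct specialisations of the Fr{\o}yshov inequalities to the cobordism $\hat W = W \setminus B^4 \colon S^3 \to \Sigma_2(K)$ and its orientation reverses. In (4), choose $F$ of minimal genus $g = g_4(K)$. Since $H^2(\hat W;\mathbb{R})^{\sigma} = 0$ is vacuously negative definite, type $E$ Fr{\o}yshov and the $i=0$ slice of type $S$ yield $\delta^E_j(K), \delta^S_{0,j}(K) \geq -\sigma(K)/8$ for all $j \geq 0$. Applying the same inequalities to $-\hat W \colon \Sigma_2(K) \to S^3$, for which $b_+(-\hat W) = b_-(\hat W) = g_4(K) - \sigma(K)/2$, produces the matching upper bounds for $j \geq g_4(K) - \sigma(K)/2$. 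In (6), the hypothesis $g_4(K) = -\sigma(K)/2$ forces $b_+(W) = 0$, so the anti-invariant part of $H^2(\hat W;\mathbb{R})$ is all of $H^2(\hat W;\mathbb{R})$ and is negative definite; type $R$ Fr{\o}yshov then gives $\delta^R_j(K) \geq -\sigma(K)/8$ for every $j$, in particular $\delta^R_\infty(K) \geq -\sigma(K)/8$. For the companion bound, take a slice surface $F_-$ for $-K$ of genus $g_4(-K) = g_4(K) = -\sigma(K)/2$; the cover $W_- = \Sigma_2(D^4, F_-)$ is then positive definite, so $-W_-$ is negative definite with boundary $\Sigma_2(K)$, and type $R$ Fr{\o}yshov applied to $-\widehat{W_-} \colon \Sigma_2(-K) \to S^3$ yields $\delta^R_0(-K) \leq \sigma(K)/8$. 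For (7), the hypothesis $g_4(K) = 1 - \sigma(K)/2$ makes $b_+^{\sigma}(\hat W) = 0$ and $b_+^{-\sigma}(\hat W) = 1$, so type $S$ Fr{\o}yshov applies in the range ``$i = 0$ or $j = 0$'' and produces $\delta^S_{i,j}(K) \geq -\sigma(K)/8$ there; the upper bound $\delta^S_{0,1}(-K) \leq \sigma(K)/8$ comes by the same orientation-reversal trick applied in type $S$.

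The main technical obstacle is not any single inequality but the bookkeeping required to verify, through each orientation reversal and each passage between $K$ and $-K$, that the spin$^c$-structure extends with a $\tilde\sigma$-lift of the correct type ($E$, $R$, or $S$) and that the $\sigma$-decomposition of $H^2$ satisfies the definiteness hypothesis of the Fr{\o}yshov inequality actually being invoked. With those compatibility checks and Viro's signature formula in hand, each of (4), (6), (7) reduces to substituting the data $(b_+, b_+^\sigma, b_+^{-\sigma}, \sigma)$ into the appropriate Fr{\o}yshov inequality and using $\delta^T_*(S^3) = 0$.
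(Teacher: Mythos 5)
Your proposal is correct and follows essentially the same route as the paper, which simply defers to the branched–double–cover arguments of \cite[\textsection 6]{bh}: lift a concordance (resp.\ a minimal-genus slice surface) to $\Sigma_2(S^3\times[0,1],C)$ (resp.\ $\Sigma_2(D^4,F)$), use that the covering involution is odd and acts as $-1$ on $H^2$, and feed the data $\sigma(W)=\sigma(K)$, $b_+(W)=g+\sigma(K)/2$, $b_+(W)^{\sigma}=0$ into Theorem \ref{thm:froy} and its orientation reverse, with (2), (3), (5) coming from Proposition \ref{prop:deltaprop} and the standard identities for $d$, $\mu$ and $L$-spaces of $\Sigma_2(K)$. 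Your compatibility bookkeeping (spin extension, type $E/R/S$ of the pulled-back structure, definiteness of the relevant eigenspace) is exactly the content the paper omits, and it all checks out.
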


Additionally, the knot invariants $\delta^E_*, \delta^R_*, \delta^S_*$ are sub-additive in the sense that $\delta^E_{i+j}(K_1 \# K_2) \le \delta^E_i(K_1) + \delta^E_j(K_2)$ and similarly for $\delta^R_*, \delta^S_*$ (Proposition \ref{prop:subadd}).

Amongst other things Theorem \ref{thm:delK} says that the delta-invariants for a quasi-alternating knot $K$ are all equal to $-\sigma(K)/8$. Another class of knot where the delta-invariant can be computed are the torus knots $T_{p,q}$. This is because the branched double cover $\Sigma_2(T_{p,q})$ is a Seifert fibre space and the covering involution is the element of order $2$ in the circle action. In particular, when $p,q$ are odd, $\Sigma_2(T_{p,q})$ is the Brieskorn sphere $\Sigma(2,p,q)$ and we can apply Theorem \ref{thm:brie0}. If $p$ or $q$ is even, then $\Sigma_2(T_{p,q})$ is not a Brieskorn sphere, but it is still the boundary of a plumbing on a star-shaped graph $\Gamma$ and the covering involution coincides with $m_\Gamma$, so we can apply Theorem \ref{thm:plumb0}. A third class of knots for which the delta-invariants can be readily computed are Montesinos knots. Let $L = M( b ; (a_1,b_1) , \dots , (a_n,b_n))$ denote a Montesinos link (see \textsection \ref{sec:bdc}). Assume that exactly one of the $a_i$ is even. In this case $L$ is a knot. We have the following:

\begin{theorem}
Let $K = M(b ; (a_1, b_1) , \dots , (a_n , b_n))$ be a Montesinos knot where exactly one $a_i$ is even. Let $e = b - \sum_{i=1}^n b_i/a_i$. If $e > 0$, then $\delta^E_j(K) = -\sigma(K)/8$ for all $j \ge 0$. If $e < 0$, then $\delta^E_j(K) = -\sigma(K)/8$ for all $j \ge 1$ and $\delta^E_0(K) = \delta( \Sigma_2(K) , \mathfrak{s}_0 )$.
\end{theorem}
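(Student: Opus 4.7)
The plan is to realise $\Sigma_2(K)$ as the boundary of a star-shaped plumbing $X_\Gamma$ on which the covering involution extends, and then combine the plumbing calculation of Theorem \ref{thm:plumb0}(2), the comparison between the Floer theories of types $E$, $R$, $S$ coming from \textsection\ref{sec:sym}, and the equivariant Fr{\o}yshov inequality stated above.

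First I would realise $\Sigma_2(K)$ as the boundary of a plumbing $X_\Gamma$ on a star-shaped graph $\Gamma$: the central vertex carries a weight determined by $b$, and the arms are the Hirzebruch--Jung chains coming from the rational numbers $a_i/b_i$. The hypothesis that exactly one $a_i$ is even is used to arrange, after blow-ups if necessary, that every weight of $\Gamma$ is even and that $\Gamma$ is bipartite with white vertices of degree at most two, so that $\Gamma$ is a $\mathbb{Z}_2$-equivariant plumbing graph in the sense of \textsection\ref{sec:z2p}. As in the torus knot case already discussed in \textsection\ref{sec:calc}, one then verifies that the covering involution on $\Sigma_2(K)$ is conjugate to the plumbing involution $m_\Gamma$ (it is the order-two element of the Seifert $S^1$-action).

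Theorem \ref{thm:plumb0}(2) now yields $\delta^R_j(K) = \delta^S_{j,k}(K) = -\overline{\mu}(\Sigma_2(K),\mathfrak{s}_0)$ for $j \ge j(\Gamma)$ and $k \in \{0,1\}$, and the Neumann--Siebenmann formula applied to the plumbing presentation identifies $\overline{\mu}(\Sigma_2(K),\mathfrak{s}_0)$ with $\sigma(K)/8$. To transfer this information to $\delta^E_j(K)$, I would use the functorial comparison between the three Floer theories induced by the group inclusions of \textsection\ref{sec:sym}, together with the spectral sequence of Theorem \ref{thm:ss}, to produce an upper bound $\delta^E_j(K) \le -\sigma(K)/8$ in the stable range. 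A matching lower bound comes from the equivariant Fr{\o}yshov inequality, applied to $\pm X_\Gamma$ equipped with the characteristic spin$^c$-structure $c(\mathfrak{s})=0$; the sign of $e$ controls the definiteness of the $m_\Gamma$-invariant subspace of $H^2$ and hence selects which orientation of the plumbing the inequality can be applied to.

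Concretely, for $e>0$ the plumbing $X_\Gamma$ is negative definite and $j(\Gamma)=0$, so the two bounds force $\delta^E_j(K) = -\sigma(K)/8$ for every $j \ge 0$. For $e<0$ one instead applies the argument to $-X_\Gamma$, obtaining $j(\Gamma)=1$ and the identity for $j \ge 1$; the remaining value $\delta^E_0(K)$ is pinned down by combining property~(7) of the list of properties of the delta-invariants, which supplies $\delta^E_0(K) \ge \delta(\Sigma_2(K),\mathfrak{s}_0)$, with a second application of the Fr{\o}yshov inequality that yields the matching upper bound. The main obstacle is the combinatorial content of the first step: isolating ``exactly one $a_i$ even'' as precisely the condition turning $\Gamma$ into a $\mathbb{Z}_2$-equivariant plumbing graph, identifying the covering involution with $m_\Gamma$, and extracting $j(\Gamma)$ from the sign of $e$. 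By contrast the transfer from $\delta^R,\delta^S$ to $\delta^E$ through \textsection\ref{sec:sym} and the Fr{\o}yshov comparison are comparatively formal.
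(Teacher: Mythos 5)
The central step of your argument is incorrect. For a Montesinos knot the covering involution on $\Sigma_2(K) = Y(b;(a_1,b_1),\dots,(a_n,b_n))$ is \emph{not} the order-two element $m_\Gamma$ of the Seifert circle action; it is the fibre-inverting involution, i.e.\ the restriction to $Y_\Gamma$ of the complex conjugation involution $c_\Gamma$ of \textsection\ref{sec:cc}, and this is the identification the proof rests on (via \cite[\textsection 7.2]{sav1}). Indeed the quotient of $Y_\Gamma$ by $m_\Gamma$ is again Seifert fibred over the same base and is not $S^3$ in general; only the fibre-inverting involution has quotient $S^3$ with a Montesinos link as branch locus. (The torus-knot case you invoke, where the covering involution really is $m$, is a different presentation and does not carry over.) With the correct identification, Theorem \ref{thm:plumbdelta}(1) applies directly and gives $\delta^E_j(K) = -\sigma(\Gamma)/8 = -\sigma(K)/8$ for all $j \ge j(\Gamma)$, with $j(\Gamma)=0$ if $e>0$ and $j(\Gamma)=1$ if $e<0$; no type-$R$ or type-$S$ computation is needed. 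Relatedly, the hypothesis that exactly one $a_i$ is even is what makes $L$ a knot; what the plumbing argument needs is only that some $a_i$ be even so that all degrees can be taken even (Corollary \ref{cor:even}), and the $\mathbb{Z}_2$-equivariant plumbing graph condition is irrelevant here since $c_\Gamma$ is defined for every plumbing graph.

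The misidentification is fatal to the rest of your plan, because the ``transfer'' from $\delta^R_j$, $\delta^S_{j,k}$ to $\delta^E_j$ that you describe as comparatively formal does not exist: $HSW^*_E$ and $HSW^*_R$ are modules over different rings arising from different symmetry groups ($S^1\times\mathbb{Z}_2$ versus $O(2)$), and for a spin$^c$-structure carrying both types the two families of invariants genuinely differ (compare $\delta^E_j(Y,c)$ with $\delta^R_j(Y,c)$ in Theorem \ref{thm:briedelt}). Finally, for $e<0$ the remaining value $\delta^E_0(K)=\delta(\Sigma_2(K),\mathfrak{s}_0)$ is not pinned down by a second Fr{\o}yshov inequality: the needed upper bound $\delta^E_0 \le \delta$ comes from the degeneration of the spectral sequence of Theorem \ref{thm:ss} together with the fact that $HF^+_{red}$ of these negative definite star-shaped plumbings is concentrated in odd degrees, exactly as in the proof of Theorem \ref{thm:briedelt}(2).
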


{\bf Equviariant Dehn surgery.} Let $L$ be a link in $S^3$ and suppose that $L$ is sent to itself under some orientation preserving, smooth involution $\sigma\colon S^3 \to S^3$. Let $\mathcal{F}$ denote a framing of $L$ and let $Y = Y(L , \mathcal{F})$ be obtained from $S^3$ by performing Dehn surgery along $L$ with framing $\mathcal{F}$. Suppose that the framing is $\sigma$-invariant in the sense that for any component $K$ of $L$ which is not sent to itself by $\sigma$, the framings of $K$ and $\sigma(K)$ coincide. Then we can carry out Dehn surgery equivariantly with respect to $\sigma$ and the extension is unique up to conjugacy by diffeomorphisms isotopic to the identity (see \cite[\textsection 2]{sak} for details). We denote the induced involution on $Y$ by $\sigma$.
 
We say that $L$ is {\em $2$-periodic} if $\sigma$ sends each component of $L$ to itself and has no fixed points on $L$. We say that $L$ is {\em strongly invertible} if $\sigma$ sends each component of $L$ to itself orientation reversingly. We will say that the framing $\mathcal{F}$ is {\em even} if all the framing coefficients are even integers. In this case $Y(L , \mathcal{F})$ bounds a spin $4$-manifold $X$, the trace of the surgery on $L$. There is a unique spin$^c$-structure on $X$ which comes from a spin structure. By restriction to the boundary, this defines a spin$^c$-structure on $Y$ which we denote by $\mathfrak{s}_0$. This spin$^c$-structure has type $S$ with respect to $\sigma$. Our first main result concerns the delta-invariants of $(Y(L , \mathcal{F}), \mathfrak{s}_0 )$ in the case of even surgery.

\begin{theorem}\label{thm:surglink0}
Let $(L , \sigma)$ be a $2$-periodic or strongly invertible link. Let $Y$ be the $3$-manifold obtained by Dehn surgery on $Y$ with respect to some framing $\mathcal{F}$ and denote by $\sigma$ the induced involution on $Y$. Suppose that $\mathcal{F}$ is even and let $\mathfrak{s}_0$ denote the distinguished spin structure. Let $A$ denote the linking matrix of $(L , \mathcal{F})$. Then
\begin{itemize}
\item[(1)]{If $L$ is $2$-periodic, then $\delta^R_j(Y , \mathfrak{s}_0 , \sigma) = -\sigma(A)/8$ for $j \ge b_-(A)$.}
\item[(2)]{If $L$ is strongly invertible, then $\delta^E_j(Y , \mathfrak{s}_0 , \sigma) = -\sigma(A)/8$ for $j \ge b_-(A)$.}
\end{itemize}

\end{theorem}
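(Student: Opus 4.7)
The plan is to apply the equivariant Fr{\o}yshov inequality (Theorem 1.2) to the trace of the surgery, first with its natural orientation and then with the opposite orientation, so as to sandwich $\delta^T_j(Y,\mathfrak{s}_0,\sigma)$ between matching lower and upper bounds. Let $X$ denote the trace, a smooth compact oriented $4$-manifold with $\partial X = Y$ obtained by attaching $2$-handles to $B^4$ along $L$ with the framing $\mathcal{F}$. Since $\mathcal{F}$ is even, $X$ is spin and the distinguished spin$^c$-structure (with $c_1=0$) restricts to $\mathfrak{s}_0$ on $Y$. Because the framing is $\sigma$-invariant, the involution extends equivariantly over each $2$-handle, giving a smooth orientation-preserving involution $\tilde\sigma$ on $X$ (see \cite[\textsection 2]{sak}). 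Deleting a small $\tilde\sigma$-invariant open $4$-ball around an interior fixed point of $\tilde\sigma$ (which exists by Smith theory and local linearisation) produces an equivariant cobordism $W = X\setminus B^4$ from $S^3$ to $Y$ with $b_1(W)=0$, $\sigma(W)=\sigma(A)$ and $b_{\pm}(W)=b_{\pm}(A)$.

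Next, I would check the definiteness hypothesis by computing $\tilde\sigma^\ast$ on $H^2(W;\mathbb{R})$, using the basis consisting of Poincar\'e duals of the cores of the $2$-handles. If $L$ is strongly invertible then $\sigma$ reverses each component, so $\tilde\sigma$ restricts to an orientation-reversing reflection on every core disk and $\tilde\sigma^\ast$ acts as $-\mathrm{id}$ on $H^2(W;\mathbb{R})$; the $\tilde\sigma$-invariant subspace is then trivial, so the hypothesis of Theorem 1.2(1) holds vacuously. If $L$ is $2$-periodic then $\sigma$ acts freely and orientation-preservingly on each component, so $\tilde\sigma$ restricts to a $\pi$-rotation of each core disk and $\tilde\sigma^\ast$ acts as $+\mathrm{id}$; the $\tilde\sigma$-anti-invariant subspace is trivial, so the hypothesis of Theorem 1.2(2) holds vacuously. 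These computations remain valid for $-W$ since the $\tilde\sigma^\ast$-action on $H^2$ is orientation-independent.

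Finally, apply Theorem 1.2 in both directions. Viewing $W$ as an equivariant cobordism from $S^3$ to $Y$ and using $\delta^T_\ast(S^3)=0$ (by property (6), since $S^3$ is an $L$-space with $d(S^3)=0$) yields the lower bound
\[
\delta^T_j(Y,\mathfrak{s}_0,\sigma) \;\geq\; -\sigma(A)/8 \qquad \text{for all } j\geq 0.
\]
Viewing $-W$ as an equivariant cobordism from $Y$ to $S^3$, now with $b_+(-W)=b_-(A)$ and $\sigma(-W)=-\sigma(A)$, gives the upper bound
\[
\delta^T_{j+b_-(A)}(Y,\mathfrak{s}_0,\sigma) \;\leq\; -\sigma(A)/8 \qquad \text{for all } j\geq 0.
\]
Combining the two bounds produces the required equality $\delta^T_j(Y,\mathfrak{s}_0,\sigma) = -\sigma(A)/8$ for $j\geq b_-(A)$, with $T=R$ in the $2$-periodic case and $T=E$ in the strongly invertible case. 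The main technical obstacle is to confirm that the extended involution $\tilde\sigma$ actually induces a compatible Real (respectively equivariant) lift on the spinor bundle of the trace in a way that restricts correctly to $Y$, and to justify the local model for $\tilde\sigma$ near each $2$-handle in enough detail to underwrite the homological computation above.
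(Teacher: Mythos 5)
Your proposal is correct and is essentially the paper's own argument: the paper's proof consists of passing to the trace $X$ of the surgery, extending $\sigma$, and applying Theorem \ref{thm:froy} to $X$ and $-X$, with the homological action ($+1$ in the $2$-periodic case, $-1$ in the strongly invertible case) making the relevant definiteness hypothesis vacuous exactly as you describe. The ``technical obstacle'' you flag at the end is resolved as in \textsection\ref{sec:eds}: since $H_1(X;\mathbb{Z}_2)=0$ the spin structure on $X$ is unique, hence $\sigma$-invariant, and the involution is odd (so the spin$^c$-structure has all the required types) because its fixed-point set meets $Y$ in a non-empty $1$-dimensional set.
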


In the case of non-integral surgery coefficients, it is still possible to calculate the delta-invariants in some cases. Consider for example a strongly invertible knot $K$ and let $Y = S_{p/q}(K)$ be the Dehn surgery on $K$ with surgery coefficient $p/q$. Assume that $p$ is odd and $q$ is even. Since $p$ is odd, there exists a unique spin$^c$-structure $\mathfrak{s}$ on $Y$ which comes from a spin structure. Further, there exist even integers $a_0, a_1 , \dots, a_m$ such that $p/q = [a_0 , \dots , a_m]$ (Lemma \ref{lem:even}), where $[a_0 , a_2 , \dots , a_m]$ is the negative continued fraction
\[
[a_0,a_1,\ldots,a_m] =a_0-\frac{1}{a_1-\raisebox{-3mm}{$\ddots$
\raisebox{-2mm}{${-\frac{1}{\displaystyle{a_m}}}$}}}.
\]

\begin{theorem}\label{thm:slamd0}
Let $K$ be a strongly invertible knot and let $Y = S_{p/q}(K)$ where $p$ is odd and $q$ is even and non-zero. Let $p/q = [a_0 , a_1, \dots , a_n]$ where $a_0, \dots , a_n$ are even integers and let $A_{ij}$ be the matrix $A_{ii} = a_i$, $A_{ij} = 1$ for $|i-j| = 1$, $A_{ij} = 0$ for $|i - j|>1$. Then $\delta^E_j( S_{p/q}(K) , \mathfrak{s} , \sigma ) = -\sigma(A)/8$ for $j \ge b_-(A)$.
\end{theorem}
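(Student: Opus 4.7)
The plan is to reduce to Theorem \ref{thm:surglink0}(2) by rewriting $p/q$-surgery on $K$ as integral surgery on an equivariantly enlarged link, using the negative continued fraction expansion. Concretely, by the standard slam-dunk procedure, $S_{p/q}(K)$ is diffeomorphic to the result of integer surgery on the link $L = K \cup U_1 \cup \cdots \cup U_n$, where $U_i$ is a meridian of the previously adjoined component, $K$ is given framing $a_0$, and $U_i$ framing $a_i$ for $i \geq 1$. The resulting linking matrix is precisely the tridiagonal matrix $A$ in the statement, and since each $a_i$ is even by Lemma \ref{lem:even}, the framing $\mathcal{F}$ on $L$ is even.

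The next step is to promote this surgery description to an equivariant one with respect to the strong inversion $\sigma$ on $K$. The involution $\sigma\colon S^3 \to S^3$ is conjugate to rotation of order $2$ about an unknotted axis $\gamma$, and strong invertibility means $\gamma$ meets $K$ transversely in two points. Near one such fixed point $x \in K \cap \gamma$, I would choose $U_1$ to be a small meridian circle of $K$ lying in a $\sigma$-invariant disk transverse to $K$ at $x$; because $\sigma$ acts as a half-turn on this disk fixing $x$, the circle $U_1$ is invariant and $\sigma|_{U_1}$ is an orientation-reversing involution with two fixed points, so $U_1$ is strongly invertible (about $\gamma$). Iterating, each subsequent $U_i$ can be chosen as a small equivariant meridian of $U_{i-1}$ at one of its two fixed points on $\gamma$, producing a link $L$ on which $\sigma$ acts so that each component is strongly invertible. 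In particular $L$ is a strongly invertible link in the sense of the excerpt, and the framings are $\sigma$-invariant.

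Since equivariant Dehn surgery is unique up to isotopy (\cite{sak}), the involution induced on $Y(L,\mathcal{F}) \cong S_{p/q}(K)$ via this equivariant integral surgery agrees with the one coming from equivariant $p/q$-surgery on $K$ alone. Because $p = \det(A)$ is odd, $Y$ carries a unique spin structure, so the spin$^c$-structure $\mathfrak{s}_0$ produced from the even framing $\mathcal{F}$ coincides with $\mathfrak{s}$, and $\mathfrak{s}_0$ has type $S$. The involution is strongly invertible, and every component of $L$ is sent to itself orientation-reversingly, so Theorem \ref{thm:surglink0}(2) applies and yields
\[
\delta^E_j(S_{p/q}(K), \mathfrak{s}, \sigma) = -\sigma(A)/8 \quad \text{for } j \geq b_-(A),
\]
as desired.

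The main obstacle is the second paragraph: carefully constructing the chain $U_1, \ldots, U_n$ so that strong invertibility is preserved at every stage and the resulting equivariant surgery diagram really does yield the prescribed involution on $S_{p/q}(K)$. Once this equivariant surgery description is in place, the rest is bookkeeping with the linking matrix and a direct appeal to Theorem \ref{thm:surglink0}.
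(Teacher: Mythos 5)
Your proposal is correct and follows essentially the same route as the paper: the paper performs the slam dunk equivariantly around fixed points of the strong inversion (see Figure \ref{fig:slam}) to obtain an even integral surgery presentation of $S_{p/q}(K)$ on a strongly invertible chain link with linking matrix $A$, over whose trace $\sigma$ extends, and then applies Theorem \ref{thm:froy} --- which is exactly what your appeal to Theorem \ref{thm:surglink0}(2) amounts to. One small wording slip: $\sigma$ acts on the invariant meridian disk at the fixed point $x$ as a \emph{reflection} (fixing the arc of the axis $\gamma$ through the disk), not a half-turn; this is precisely why $\sigma|_{U_1}$ is orientation-reversing with two fixed points, making $U_1$ strongly invertible rather than $2$-periodic.
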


\subsection{Applications}

{\bf Obstructions to extending involutions.} Suppose we are given a rational homology $3$-sphere $Y$ with orientation preserving involution $\sigma$ and a spin$^c$-structure $\mathfrak{s}$ of type $E,R$ or $S$. Suppose that $X$ is a compact, oriented smooth $4$-manifold which bounds $Y$ and that $\mathfrak{s}$ extends to a spin$^c$-structure on $X$. We can use Theorem \ref{thm:froy} to obstruct the existence of an extension of $\sigma$ to an involution on $X$, under some assumptions on how $\sigma$ acts on $H^2(X ; \mathbb{Z})$. 

For simplicity we focus on the case that $Y$ is an integral homology sphere so that there is only one spin$^c$-structure on $Y$, which we omit from the notation. We consider four cases of interest: (1) $X$ is negative definite, (2) $b_+(X) = 1$, (3) homologically trivial involutions, (4) $\sigma$ acts as $-1$ on $H^2(X ; \mathbb{Z})$. These are summarised by the following four propositions:

\begin{proposition}\label{prop:def0}
Let $X$ be a compact, oriented, smooth $4$-manifold with boundary an integral homology sphere $Y$. Assume $H_1(X ; \mathbb{Z}_2) = 0$ and that $X$ is negative definite. Let $\sigma$ be an orientation preserving involution on $X$. Then
\begin{itemize}
\item[(1)]{If $c \in H^2(X ; \mathbb{Z})$ is a characteristic element and $\sigma(c) = c$, then 
\[
\frac{ c^2 + b_2(X) }{8} \le \min\{ \delta_\infty^E(Y , \sigma) , -\delta^E_{0}(-Y,\sigma)\}.
\]
}
\item[(2)]{Assume that the fixed point set of $\sigma$ contains non-isolated points. If $c \in H^2(X ; \mathbb{Z})$ is a characteristic element and $\sigma(c) = -c$, then
\[
\frac{c^2 + b_2(X) }{8} \le \min\{ \delta^R_\infty(Y , \sigma) , -\delta^R_{0}(-Y,\sigma) \}.
\]
}
\item[(3)]{If $X$ is spin and $\sigma$ is odd, then
\[
\frac{b_2(X)}{8} \le \min\{ \delta^S_{0,\infty}(Y , \sigma) , \delta^S_{\infty,1}(Y,\sigma) , -\delta^S_{0,0}(-Y,\sigma) \}.
\]
}
\end{itemize}

\end{proposition}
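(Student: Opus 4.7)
The plan is to apply the equivariant Fr{\o}yshov inequality (Theorem \ref{thm:froy}) to the cobordism $W$ obtained by removing a small open $\sigma$-invariant $4$-ball centered at a fixed point of $\sigma$. Then $W$ is a compact oriented smooth $4$-manifold with $\partial W = Y \sqcup (-S^3)$, $\sigma$ preserves each boundary component, and the inclusion $W \hookrightarrow X$ induces an isomorphism on $H^2$. Hence $b_+(W) = 0$, $\sigma(W) = -b_2(X)$, and both the $\sigma$-invariant and the $\sigma$-anti-invariant subspace of $H^2(W;\mathbb{R})$ are negative definite; $b_1(W) = 0$ follows from $H_1(X;\mathbb{Z}_2) = 0$. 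The spin$^c$-structure on $X$ determined by $c$ (or, in case (3), by the given spin structure) restricts on $Y$ to its unique spin$^c$-structure and on the inner $S^3$ to the unique spin$^c$-structure there, which carries a lift of the required type for any involution of $S^3$.

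The first half of each inequality would come from viewing $W$ as a cobordism from $S^3$ to $Y$. Because $S^3$ is an $L$-space, property (6) of the preceding theorem gives $\delta^T_*(S^3) = 0$ for every $T$ and every index. Applying Theorem \ref{thm:froy}, and using $c^2 - \sigma(W) = c^2 + b_2(X)$, produces in case (1) the inequality $(c^2+b_2(X))/8 \le \delta^E_j(Y,\sigma)$ for all $j \ge 0$, which by eventual constancy (property (3)) upgrades to the claimed bound on $\delta^E_\infty(Y,\sigma)$. Case (2) is handled identically with the type $R$ version. In case (3) one has $c = 0$ and $b_+(W)^{\pm\sigma} = 0$, so the type $S$ version delivers $b_2(X)/8 \le \delta^S_{i,j}(Y,\sigma)$ whenever $i = 0$ or $j \le 1$; letting $j \to \infty$ with $i = 0$ and then $i \to \infty$ with $j = 1$ (using property (4)) yields the required bounds on $\delta^S_{0,\infty}(Y,\sigma)$ and $\delta^S_{\infty,1}(Y,\sigma)$.

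The second half comes from regarding the same $W$ as a cobordism from $-Y$ to $-S^3$. As oriented boundary one has $\partial W = Y \sqcup (-S^3) = (-S^3) \sqcup -(-Y)$, so the reinterpretation is consistent, while the $b_+$, signature, and definiteness data are intrinsic to $W$ and unchanged. Applying Theorem \ref{thm:froy} with $j = 0$ (and $i = j = 0$ in type $S$), together with $\delta^T_*(-S^3) = 0$, yields $\delta^E_0(-Y,\sigma) \le -(c^2+b_2(X))/8$ in case (1), and analogous statements in cases (2) and (3). Taking the minimum of the two resulting families of upper bounds gives the three parts of the proposition.

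The main technical point is the existence of a fixed point of $\sigma$ at which to center the equivariant ball. In case (3) this is automatic, since odd spin involutions necessarily have a $2$-dimensional fixed set; in case (2) it is built into the non-isolated fixed point hypothesis; in case (1) it is either part of the implicit setup or follows from a Lefschetz number calculation using the $\sigma$-fixed characteristic element $c$. Once the invariant ball is in hand, the proof is a direct twofold application of the equivariant Fr{\o}yshov inequality combined with the vanishing of $\delta^T_*(S^3)$.
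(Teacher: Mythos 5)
Your overall strategy is sound in outline, but it routes everything through a step the hypotheses do not supply: the existence of a fixed point of $\sigma$ in the interior of $X$ at which to excise the invariant ball. In case (1) nothing guarantees this. An orientation preserving involution on a negative definite $X$ with $H_1(X;\mathbb{Z}_2)=0$ can a priori be free (this forces $\chi(X)=1+b_2(X)$ to be even and $\mathrm{tr}(\sigma^*|_{H^2})=-1$, which is consistent with $\sigma(c)=c$ for a characteristic $c$), and your fallback "Lefschetz number calculation using the $\sigma$-fixed characteristic element" is not an argument -- the trace of $\sigma^*$ on $H^2$ is not controlled by the existence of one invariant characteristic vector. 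In case (3) your claim that odd spin involutions "necessarily have a $2$-dimensional fixed set" is also not right: the Atiyah--Bott codimension constraint only says that \emph{if} the fixed set is non-empty then its components have codimension $2 \bmod 4$; it does not rule out a free odd involution. There is a secondary problem even when a fixed point exists but is isolated: the induced involution on the linking $S^3$ is then free (conjugate to $-1$ on $\mathbb{R}^4$ restricted to $S^3$), and the assertion that the spin$^c$-structure on this $S^3$ "carries a lift of the required type for any involution of $S^3$" is exactly the Real-versus-Quaternionic issue that the paper is careful about (this is why case (2) carries the non-isolated fixed point hypothesis in the first place), so $\delta^R_*$ and $\delta^S_*$ of that boundary sphere are not automatically defined, let alone zero.

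The paper's proof needs none of this because Theorem \ref{thm:froy} is applied to $X$ itself, viewed once as a cobordism with outgoing boundary $Y$ and empty incoming boundary, and once with incoming boundary $-Y$ and empty outgoing boundary; the remark following Theorem \ref{thm:froy} (together with Remark \ref{rem:disjoint}) explicitly allows $Y_0$ or $Y_1$ to be empty, with the corresponding delta-invariant declared to be zero. With that convention your two applications of the inequality, the identity $c^2-\sigma(X)=c^2+b_2(X)$, the vanishing of $b_+(X)^{\pm\sigma}$, and the monotonicity/stabilisation properties go through verbatim and give all three parts. So the fix is simply to delete the ball-removal step and invoke the empty-boundary form of the Fr{\o}yshov inequality directly; as written, your argument does not cover all involutions allowed by the hypotheses.
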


\begin{proposition}\label{prop:b+10}
Let $X$ be a compact, oriented, smooth spin $4$-manifold with boundary an integral homology sphere $Y$. Assume $H_1(X ; \mathbb{Z}_2) = 0$ and that $b_+(X)=1$. Let $\sigma$ be an odd involution on $X$. Let $H^+(X)$ denote a $\sigma$-invariant maximal positive definite subspace of $H^2(X ; \mathbb{R})$. Then:
\begin{itemize}
\item[(1)]{If $\sigma$ acts trivially on $H^+(X)$, then
\[
-\frac{\sigma(X)}{8} \le \min\{ \delta^S_{\infty,0}(Y,\sigma) , -\delta^S_{1,0}(-Y,\sigma) \}.
\]
}
\item[(2)]{If $\sigma$ acts non-trivially on $H^+(X)$, then
\[
-\frac{\sigma(X)}{8} \le \min\{ \delta^S_{0,\infty}(Y,\sigma) , \delta^S_{\infty,1}(Y,\sigma) , -\delta^S_{0,1}(-Y,\sigma) \}.
\]
}
\end{itemize}
\end{proposition}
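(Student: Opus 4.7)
The plan is to apply the equivariant Fr{\o}yshov inequality in its type $S$ form (Theorem \ref{thm:froy}(3)) to $W = X$, used twice in opposite cobordism orientations. Since $X$ is spin and $\sigma$ is odd, and $H_1(X;\mathbb{Z}_2)=0$, $X$ carries a canonical type $S$ spin$^c$-structure $\mathfrak{s}$ with $c(\mathfrak{s}) = 0$ whose restriction to $Y$ is the distinguished odd spin structure. The hypothesis $b_+(X) = 1$ together with the action of $\sigma$ on $H^+(X;\mathbb{R})$ yields $(b_+(X)^\sigma, b_+(X)^{-\sigma}) = (1,0)$ in case (1) and $(0,1)$ in case (2).

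To derive the lower bounds on $\delta^S_*(Y,\sigma)$ I would view $X$ as a cobordism from the empty $3$-manifold to $Y$ (equivalently, after excising an equivariant $4$-ball at a fixed point of $\sigma$, a cobordism from $S^3$ with its standard odd involution, for which the delta invariants vanish). In case (1) the Fr{\o}yshov inequality specializes to $\delta^S_{i+1,\,j}(\emptyset) - \sigma(X)/8 \le \delta^S_{i,j}(Y,\sigma)$, valid for $(i,j)$ with $j \le 1$, and sending $i \to \infty$ at $j = 0$ produces $\delta^S_{\infty,0}(Y,\sigma) \ge -\sigma(X)/8$. In case (2) the inequality becomes $\delta^S_{i,\,j+1}(\emptyset) - \sigma(X)/8 \le \delta^S_{i,j}(Y,\sigma)$, subject to $i = 0$ or $j = 0$; letting the free index tend to infinity gives $\delta^S_{0,\infty}(Y,\sigma) \ge -\sigma(X)/8$ (from $i = 0$), and taking $(i,j) = (0,1)$ gives $\delta^S_{0,1}(Y,\sigma) \ge -\sigma(X)/8$. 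The last estimate should then propagate to the claimed $\delta^S_{\infty,1}(Y,\sigma) \ge -\sigma(X)/8$ using the $H^*_{G^S_\mathfrak{s}}(pt)$-module structure on $HSW^*_S$ and the stabilization of $\delta^S_{j,1}$ in $j$.

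For the upper bound on $\delta^S_*(-Y,\sigma)$, I would reverse the cobordism direction by setting $Y_0 = -Y$, $Y_1 = \emptyset$ and applying Theorem \ref{thm:froy}(3) at $(i,j) = (0,0)$. The inequality collapses to $\delta^S_{b_+(X)^\sigma,\,b_+(X)^{-\sigma}}(-Y,\sigma) - \sigma(X)/8 \le 0$, which specializes to $\delta^S_{1,0}(-Y,\sigma) \le \sigma(X)/8$ in case (1) and $\delta^S_{0,1}(-Y,\sigma) \le \sigma(X)/8$ in case (2), exactly matching the stated bounds. The principal obstacle throughout is the careful bookkeeping of the admissible index range $i = 0$ or $j \le 1$ for type $S$ delta invariants together with the shift by $(b_+^\sigma, b_+^{-\sigma})$; the $\delta^S_{\infty,1}(Y,\sigma)$ bound of case (2) sits right at the boundary of this region, and extending the Fr{\o}yshov estimate across that boundary is the most delicate point of the argument.
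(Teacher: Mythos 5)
Your overall strategy is the same as the paper's (the paper's proof is a one-line appeal to Theorem \ref{thm:froy} applied to $X$ in both cobordism directions, using that $H_1(X;\mathbb{Z}_2)=0$ forces a unique, hence $\sigma$-preserved, spin structure), and your derivations of $\delta^S_{\infty,0}(Y,\sigma)\ge -\sigma(X)/8$ and $\delta^S_{1,0}(-Y,\sigma)\le\sigma(X)/8$ in case (1), and of $\delta^S_{0,\infty}(Y,\sigma)\ge -\sigma(X)/8$ and $\delta^S_{0,1}(-Y,\sigma)\le\sigma(X)/8$ in case (2), are correct, including the identification $(b_+^\sigma,b_+^{-\sigma})=(1,0)$ resp. $(0,1)$.

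The gap is the $\delta^S_{\infty,1}(Y,\sigma)$ bound in case (2), and your proposed repair does not close it. With $(b_+^\sigma,b_+^{-\sigma})=(0,1)$, Theorem \ref{thm:froy}(3) applied to $X$ viewed as a cobordism from $\emptyset$ to $Y$ is only asserted for pairs $(i,j)$ with $i=0$ or $j+1\le 1$, i.e.\ $i=0$ or $j=0$; the pairs $(i,1)$ with $i\ge 1$, which are exactly what $\delta^S_{\infty,1}$ requires, lie outside this range. This exclusion is not cosmetic: the localized cobordism map is multiplication by the Euler class $(v+s)$ of $H^+(X)$, and for $c=v^i(v+s)$ with $i\ge 1$ one has $c\cdot(v+s)=v^i(v+s)^2=0$ in $H^*_S$, so the argument underlying the Fr{\o}yshov inequality yields no information. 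Your suggestion to propagate $\delta^S_{0,1}(Y,\sigma)\ge -\sigma(X)/8$ to $\delta^S_{\infty,1}(Y,\sigma)$ "using the module structure and the stabilization of $\delta^S_{j,1}$ in $j$" goes the wrong way: by Proposition \ref{prop:deltaprop2}(2) the sequence $\delta^S_{j,1}(Y,\sigma)$ is non-increasing in $j$, so a lower bound at $j=0$ implies nothing about $j=\infty$, and stabilization only says the sequence is eventually constant at some value $\le\delta^S_{0,1}(Y,\sigma)$. To rescue this term one would need an independent input, e.g.\ the duality $\delta^S_{i,1}(Y,\sigma)+\delta^S_{k,0}(-Y,\sigma)\ge 0$ together with an upper bound on some $\delta^S_{k,0}(-Y,\sigma)$ — but the reversed cobordism only controls $\delta^S_{*,j}(-Y,\sigma)$ for $j\ge 1$ in case (2), so this is not available either. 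You correctly flagged this as the delicate point, but the proof as written does not establish it.
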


\begin{proposition}\label{prop:ht0}
Let $X$ be a compact, oriented, smooth spin $4$-manifold with boundary an integral homology sphere $Y$. Assume $H_1(X ; \mathbb{Z}_2) = 0$. Let $\sigma$ be a smooth odd involution on $X$ which acts homologically trivially on $X$. Then:
\[
\sigma(X) = -8 \delta^R_\infty(Y,\sigma).
\]
Furthermore, we have
\[
b_-(X) \ge j^R(Y,\sigma), \quad b_+(X) \ge j^R(-Y,\sigma).
\]
\end{proposition}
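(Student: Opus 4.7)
Take $\mathfrak{s}$ to be the spin$^c$-structure on $X$ coming from its spin structure, so that $c(\mathfrak{s}) = 0$; since $\sigma$ is odd, $\mathfrak{s}|_Y$ is of type $S$ and in particular of type $R$. Because $\sigma$ acts homologically trivially on $X$, the $\sigma$-anti-invariant subspace of $H^2(X;\mathbb{R})$ is $\{0\}$, which is trivially negative definite, and the same holds for $-X$. Hence the Real Fr{\o}yshov inequality, Theorem \ref{thm:froy}(2), applies to both $X$ and $-X$ in either orientation as a cobordism; this two-sided symmetry is what allows us to upgrade from inequality to equality.

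\textbf{The equality $\sigma(X) = -8\delta^R_\infty(Y,\sigma)$.} First apply Theorem \ref{thm:froy}(2) to $X$ viewed as a cobordism from $\emptyset$ to $Y$:
\[
-\sigma(X)/8 \le \delta^R_j(Y,\sigma) \quad \text{for all } j \ge 0,
\]
so in particular $-\sigma(X)/8 \le \delta^R_\infty(Y,\sigma)$. Next apply Theorem \ref{thm:froy}(2) to $-X$ viewed as a cobordism from $Y$ to $\emptyset$; using $b_+(-X) = b_-(X)$ and $\sigma(-X) = -\sigma(X)$ this yields $\delta^R_j(Y,\sigma) \le -\sigma(X)/8$ for all $j \ge b_-(X)$. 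Combined with the monotonicity in property (3) of the preceding theorem, these inequalities pinch to give $\sigma(X) = -8\delta^R_\infty(Y,\sigma)$, and moreover $\delta^R_j(Y,\sigma) = \delta^R_\infty(Y,\sigma)$ for $j \ge b_-(X)$, so $j^R(Y,\sigma) \le b_-(X)$.

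\textbf{The bound $b_+(X) \ge j^R(-Y,\sigma)$.} View $X$ as a cobordism from $-Y$ to $\emptyset$. Theorem \ref{thm:froy}(2) now gives $\delta^R_j(-Y,\sigma) \le \sigma(X)/8$ for all $j \ge b_+(X)$. Property (5) combined with the already-established equality $\delta^R_\infty(Y,\sigma) = -\sigma(X)/8$ yields $\delta^R_\infty(-Y,\sigma) \ge \sigma(X)/8$, and monotonicity then forces $\delta^R_j(-Y,\sigma) = \sigma(X)/8$ for $j \ge b_+(X)$, which is exactly $j^R(-Y,\sigma) \le b_+(X)$.

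\textbf{Potential obstacles.} The argument is essentially a two-sided play-off of the Fr{\o}yshov inequality combined with properties (3) and (5), so there is no deep difficulty. The only mildly technical point is permitting an empty boundary component in Theorem \ref{thm:froy}(2); this can be handled either by reading the hypothesis literally (the empty set trivially satisfies the rational-homology-sphere condition on its components) or, if a nonempty boundary on both ends is required, by excising a small $\sigma$-equivariant $4$-ball around a fixed point of $\sigma$, reducing the empty end to $S^3$ whose delta-invariants vanish. The $G$-Lefschetz formula applied to the homologically trivial involution $\sigma$ on $X$ (which has $b_1(X) = 0$ since $H_1(X;\mathbb{Z}_2) = 0$) ensures that $X^\sigma$ is nonempty, so such a fixed point is available.
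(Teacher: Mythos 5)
Your proof is correct and follows exactly the paper's route: the paper's own proof consists of observing that the unique spin structure on $X$ is of type $R$ and then applying Theorem \ref{thm:froy} to $X$ and $-X$, which is precisely the two-sided play-off you carry out in detail. The empty-boundary point you flag is already handled by the remark following Theorem \ref{thm:froy}, which sets the delta-invariant of an empty boundary component to zero.
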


\begin{proposition}\label{prop:hm0}
Let $X$ be a compact, oriented, smooth spin $4$-manifold with boundary an integral homology sphere $Y$. Assume $H_1(X ; \mathbb{Z}_2) = 0$. Let $\sigma$ be a smooth odd involution on $X$ which acts as $-1$ on $H^2(X ; \mathbb{Z})$. Then:
\[
\sigma(X) = -8 \delta^E_\infty(Y,\sigma).
\]
Furthermore, we have
\[
b_-(X) \ge j^E(Y,\sigma), \quad b_+(X) \ge j^E(-Y,\sigma).
\]
\end{proposition}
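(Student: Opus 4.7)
The plan is to apply the type $E$ equivariant Fr{\o}yshov inequality (Theorem \ref{thm:froy}(1)) to $X$ and to $-X$, each regarded as an equivariant cobordism with one boundary component empty. The key observation is that since $\sigma$ acts as $-1$ on $H^2(X ; \mathbb{Z})$, the $\sigma$-invariant subspace of $H^2(X ; \mathbb{R})$ is trivial and hence vacuously negative definite. The hypothesis $H_1(X ; \mathbb{Z}_2) = 0$ forces $b_1(X) = 0$; an odd spin involution provides in particular an equivariant lift of the underlying spin$^c$-structure $\mathfrak{s}$, which has $c(\mathfrak{s}) = 0$; and the sole boundary component $Y$ is a rational homology sphere sent to itself by $\sigma$. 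Hence the hypotheses of the type $E$ Fr{\o}yshov inequality are met.

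Regarding $X$ as an equivariant cobordism from $\emptyset$ to $Y$ (using the convention $\delta^E_*(\emptyset) = 0$, or equivalently puncturing a $\sigma$-invariant ball and using that the relevant $\delta^E$-invariants of $S^3$ vanish), the inequality yields
\[
-\frac{\sigma(X)}{8} \le \delta^E_j(Y,\sigma) \qquad \text{for all } j \ge 0.
\]
Regarding $-X$ as an equivariant cobordism from $Y$ to $\emptyset$, and noting $b_+(-X) = b_-(X)$ and $\sigma(-X) = -\sigma(X)$, the same inequality yields
\[
\delta^E_{j + b_-(X)}(Y,\sigma) \le -\frac{\sigma(X)}{8} \qquad \text{for all } j \ge 0.
\]
Taking $j \to \infty$ in both inequalities and using that $\delta^E_j$ is eventually constant produces $\delta^E_\infty(Y,\sigma) = -\sigma(X)/8$, which is the first assertion.

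Specialising the second inequality to $j=0$ gives $\delta^E_{b_-(X)}(Y,\sigma) \le \delta^E_\infty(Y,\sigma)$, and since $\delta^E_j$ is decreasing in $j$ the reverse inequality is automatic. Hence the sequence has already stabilized at index $b_-(X)$, so $b_-(X) \ge j^E(Y,\sigma)$, interpreting $j^E(Y,\sigma)$ as the smallest index at which $\delta^E_j$ attains its limiting value. The companion bound $b_+(X) \ge j^E(-Y,\sigma)$ follows by applying the same argument to $-X$, which bounds $-Y$, together with $b_+(X) = b_-(-X)$.

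The proof is not technically difficult once Fr{\o}yshov's inequality is in hand; the main subtlety is book-keeping around the empty boundary component (either a convention or a puncturing argument is required) and verifying that the odd spin lift of $\mathfrak{s}$ really does induce a bona fide type $E$ structure on $Y$, so that the type $E$ delta-invariants are the appropriate ones. Once this is settled, the proposition is obtained by a symmetric application of the Fr{\o}yshov inequality to $X$ and $-X$, leveraging the fact that the $-1$ action on $H^2$ makes the invariant subspace trivially negative definite regardless of the orientation.
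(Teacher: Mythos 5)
Your proof is correct and follows exactly the paper's (very terse) argument: the unique spin structure gives a type $E$ spin$^c$-structure since $\sigma$ fixes $c(\mathfrak{s})=0$, and Theorem \ref{thm:froy}(1) applied to $X$ and $-X$ — with the $\sigma$-invariant subspace of $H^2$ vacuously negative definite and the empty-boundary convention of Remark \ref{rem:disjoint} — yields both the signature identity and the bounds on $b_{\pm}(X)$ via $j^E$. No gaps.
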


In particular, Propositions \ref{prop:ht0} and \ref{prop:hm0} can be used to bound $b_{\pm}(X)$ for bounding $4$-manifolds over which $\sigma$ extends under the condition that $\sigma$ acts as $+1$ or $-1$ on $H^2(X ; \mathbb{Z})$.

For example, suppose $Y = \Sigma(a_1 , \dots , a_n)$ is a Brieskorn sphere where $a_i$ is even for some $i$ and that $\sigma = m$. Then $\delta^R_\infty(Y , m) = -\overline{\mu}(Y)$. Let $X$ be a spin manifold bounding $Y$ with $H_1(X ; \mathbb{Z}_2)=0$ and with $\sigma(X) \neq 8\overline{\mu}(Y)$. Then $m$ does not extend to a homologically trivial smooth involution on $X$. On the other hand, $m$ does extend to a smooth homologically trivial diffeomorphism on $X$. This is because the involution $m$ belongs to a circle action on $Y$, hence is smoothly isotopic to the identity. 

This non-extension result contrasts with the fact that $m$ does extend to a homologically trivial involution on the plumbing $X_{\Gamma}$ on a star-shaped graph $\Gamma$ whose boundary is $Y$.

{\bf Non-smoothable involutions on $4$-manifolds with boundary.} Our obstruction results can be used to give examples of orientation preserving, locally linear involutions which are non-smoothable in the sense that they are not smooth with respect to any differentiable structure on the manifold. One such construction is as follows. Let $X_0$ be a compact, oriented, spin, smooth $4$-manifold with boundary $Y$, an integral homology sphere. Assume that $H_1(X ; \mathbb{Z}_2) = 0$. Suppose $\sigma_0$ is a smooth, orientation preserving odd involution on $X_0$ and that $\sigma_0$ acts either as $+1$ or $-1$ on $H^2(X_0 ; \mathbb{Z})$. Such $4$-manifolds are easy to construct, for example $X_0$ could be an equivariant plumbing, as in \textsection \ref{sec:calc}. 

Consider two involutions $\sigma_+,\sigma_-$ on $S^2 \times S^2$, where $\sigma_+ = \phi \times \phi$, is the product to two rotations of $S^2$ by $\pi$ and $\sigma_- = r \times r$ is the product of two reflections of $S^2$. Let $X_{\pm}(m)$ be an equivariant connected sum of $(X_0 , \sigma_0)$ with $m$ copies of $(S^2 \times S^2 , \sigma_{\pm})$, where we use $+$ if $\sigma_0$ acts as $+1$ on $H^2(X_0 ; \mathbb{Z})$ and we use $-$ if $\sigma_0$ acts as $-1$ on $H^2(X_0 ; \mathbb{Z})$. Now let $W$ be any closed, simply-connected, topological $4$-manifold whose intersection form is even, negative definite and has non-zero rank. Let $X(m) = X_{\pm}(m) \# 2W$. Then we can define an involution $\sigma$ on $X(m)$ in such a way that the two copies of $W$ are exchanged. Clearly $\sigma$ is locally linear. In Section \ref{sec:nsa} we show the following:
\begin{proposition}
Suppose that $m > 3 b_2(W)/8$. Then $X(m)$ admits a smooth structure. However $\sigma$ is not smooth with respect to any smooth structure on $X(m)$.
\end{proposition}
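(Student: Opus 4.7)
My plan is to establish two claims separately: that $X(m)$ admits a smooth structure, and that no smooth structure on $X(m)$ can make the involution $\sigma$ smooth. For the smoothability I would use Freedman's classification. Since $W$ has even negative definite intersection form of rank $b_2(W) = 8n > 0$, this form is $-nE_8$, so $2W \# m(S^2\times S^2)$ has form $-2nE_8 \oplus mH$. The hypothesis $m > 3b_2(W)/8 = 3n$ yields $m - 3n \ge 1$, and $n\overline{K3} \# (m-3n)(S^2\times S^2)$ realizes the same form smoothly. Both forms being even, the Kirby--Siebenmann invariants are determined by the signature (so agree); Freedman's theorem identifies the two closed topological manifolds, and taking connect sum with $X_0$ identifies $X(m)$ with the smooth manifold $X_0 \# n\overline{K3} \# (m-3n)(S^2\times S^2)$.

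For non-smoothness of $\sigma$, I would suppose $\sigma$ is smooth for some smooth structure on $X(m)$ and derive a contradiction from the equivariant Fr{\o}yshov inequality (Theorem \ref{thm:froy}) combined with Proposition \ref{prop:ht0} or \ref{prop:hm0}. I argue the $+$ case; the $-$ case is parallel. The crucial point is that the swap involution on $H^2(2W;\mathbb{R}) = H^2(W)\oplus H^2(W)$ has $\sigma$-invariant and $\sigma$-anti-invariant subspaces equal to the diagonal and anti-diagonal, each inheriting the restricted form $2Q_W$, which is negative definite since $Q_W$ is. In the $+$ case $\sigma$ acts as $+1$ on $H^2(X_0;\mathbb{Z})$ and on each $H^2(S^2\times S^2;\mathbb{Z})$, so the $\sigma$-anti-invariant part of $H^2(X(m);\mathbb{R})$ reduces to the anti-diagonal above and is negative definite. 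The type $R$ Fr{\o}yshov inequality therefore applies, and since $c(\mathfrak{s}) = 0$ for the induced spin structure, applied to the cobordism $X(m)\colon \emptyset \to Y$ it yields
\[
\delta^R_j(Y,\sigma_0|_Y) \;\ge\; -\sigma(X(m))/8 \;=\; -\sigma(X_0)/8 + b_2(W)/4
\]
for all $j \ge 0$.

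On the other hand, $(X_0,\sigma_0)$ is a smooth odd involution acting as $+1$ on $H^2(X_0;\mathbb{Z})$, so Proposition \ref{prop:ht0} gives the equality $\delta^R_\infty(Y,\sigma_0|_Y) = -\sigma(X_0)/8$. Taking $j$ large enough that $\delta^R_j$ has stabilized (Theorem 1.2(3)) then forces $b_2(W) \le 0$, contradicting $b_2(W) > 0$. The $-$ case is the mirror image: there the $\sigma$-invariant subspace of $H^2(X(m);\mathbb{R})$ is the diagonal of $H^2(W)\oplus H^2(W)$ and is negative definite, the type $E$ Fr{\o}yshov inequality applies in place of type $R$, and Proposition \ref{prop:hm0} plays the role of \ref{prop:ht0} to yield the same conclusion. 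I expect the main technical subtlety will be bookkeeping the spin$^c$-structure type: the odd spin structure on $X_0$ has to give rise to a compatible type $R$ (resp.\ type $E$) structure on the cobordism $X(m)$ so that Theorem \ref{thm:froy}(2) (resp.\ (1)) is applicable. This compatibility is already implicit in the formulations of Propositions \ref{prop:ht0} and \ref{prop:hm0}, so once it is set up for $X_0$ it should carry through to $X(m)$ via the equivariant connect sums.
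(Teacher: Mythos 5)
Your argument is correct and follows essentially the same route as the paper: decompose $X(m) = X_0 \# Z$ with $Z = m(S^2\times S^2)\# 2W$, smooth $Z$ via Freedman and the classification of indefinite even unimodular forms, and then rule out an equivariant smooth structure by playing the type $R$ (resp.\ type $E$) Fr{\o}yshov inequality for $X(m)$ against the equality $\delta^R_\infty(Y,\sigma_0) = -\sigma(X_0)/8$ from Proposition \ref{prop:ht0} (resp.\ Proposition \ref{prop:hm0}). One minor correction: an even negative definite unimodular lattice of rank $8n$ need not be $-nE_8$ once $n\ge 2$ (e.g.\ $D_{16}^{+}$), but this is harmless because after stabilizing by $mH$ with $m>0$ the form is indefinite and hence determined by rank, signature and parity, which is all you actually use; also the $E_8$ summands of your model manifold should be negative definite, i.e.\ you want copies of $K3$ rather than $\overline{K3}$ with the paper's orientation conventions.
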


Taking equivariant connected sums with $(S^2 \times S^2 , \sigma_+)$ or $(S^2 \times S^2 , \sigma_-)$ can be thought of as two kinds of equivariant stabilisation. The above proposition shows that the involution $\sigma$ on $X(m)$ remains non-smoothable upon stabilisation by $(S^2 \times S^2 , \sigma_{\pm})$, where the sign is chosen as previously described. We do not know whether there exist locally linear involutions which remain non-smoothable under both kinds of stabilisation.

{\bf Equivariant embeddings of $3$-manifolds in $4$-manifolds.} Let $Y$ be a rational homology $3$-sphere and $\sigma$ an orientation preserving smooth involution on $Y$. Consider the problem of embedding $Y$ into a closed, oriented, smooth $4$-manifold $X$ in such a way that $\sigma$ extends to an orientation preserving involution on $X$. Theorem \ref{thm:froy} can be used to give constraints on the existence of such embeddings. We will focus on the case of embeddings into $S^4$ or connected sums of $S^2 \times S^2$.

Every orientable $3$-manifold embeds in $\#n(S^2 \times S^2)$ for some sufficiently large $n$ \cite[Theorem 2.1]{agl}. Aceto--Golla--Larson define the embedding number $\varepsilon(Y)$ to be the least such $n$ such that $Y$ embeds in $\#n(S^2 \times S^2)$. We consider three equivariant versions of $\varepsilon$.

\begin{definition}
Let $Y$ be an orientable $3$-manifold and $\sigma$ an orientation preserving smooth involution on $Y$. Define the following invariants of $(Y,\sigma)$:
\begin{itemize}
\item[(1)]{$\varepsilon(Y,\sigma)$ is the least $n$ such that $Y$ embeds in $X = \#n(S^2 \times S^2)$ and $\sigma$ extends to an orientation preserving smooth involution on $X$. If no such $n$ exists, then we set $\varepsilon(Y, \sigma) = \infty$.}
\item[(2)]{$\varepsilon_+(Y,\sigma)$ is the least $n$ such that $Y$ embeds in $X = \#n(S^2 \times S^2)$ and $\sigma$ extends to a homologically trivial, orientation preserving smooth involution on $X$. If no such $n$ exists, then we set $\varepsilon_+(Y, \sigma) = \infty$.}
\item[(1)]{$\varepsilon_-(Y,\sigma)$ is the least $n$ such that $Y$ embeds in $X = \#n(S^2 \times S^2)$ and $\sigma$ extends to an orientation preserving smooth involution on $X$ which acts as $-1$ on $H^2(X ; \mathbb{Z})$. If no such $n$ exists, then we set $\varepsilon_-(Y, \sigma) = \infty$.}
\end{itemize}
\end{definition}

In Section \ref{sec:emb} we obtain upper bounds for $\varepsilon(Y, \sigma), \varepsilon_{\pm}(Y,\sigma)$ by constructing suitable embeddings. On the other hand we also prove the following lower bounds:
\begin{proposition}
Let $Y$ be an integral homology $3$-sphere and $\sigma$ an orientation preserving smooth involution on $Y$. We have:
\begin{itemize}
\item[(1)]{If the delta-invariants $\delta^T_*$ of $(Y,\sigma)$ do not all vanish, then $\varepsilon(Y,\sigma) \ge 2$.}
\item[(2)]{$\varepsilon_+(Y,\sigma) \ge \max\{ j^R(Y , \sigma) , 2j^R(Y,\sigma) - 8 \delta^R_\infty(Y,\sigma) \}$.}
\item[(3)]{$\varepsilon_-(Y,\sigma) \ge \max\{ j^E(Y , \sigma) , 2j^E(Y,\sigma) - 8 \delta^E_\infty(Y,\sigma) \}$.}
\item[(4)]{If $\delta^S_{0,1}(Y,\sigma), \delta^S_{1,0}(Y,\sigma)$ are both non-zero, then $\varepsilon(Y,\sigma) \ge 4$.}
\item[(5)]{If the Rokhlin invariant of $Y$ is non-zero and if $\delta^S_{0,\infty}(Y,\sigma), \delta^S_{\infty,0}(Y,\sigma), \delta^S_{\infty,1}(Y,\sigma)$ do not all have the same sign, then $\varepsilon(Y,\sigma) \ge 10$.}
\end{itemize}
\end{proposition}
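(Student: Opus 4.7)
The common template for all five parts is to cut $X$ along $Y$ and apply the equivariant Fr\o yshov inequality (Theorem \ref{thm:froy}) to the two resulting pieces. Concretely, if $Y \hookrightarrow X = \#n(S^2\times S^2)$ is $\sigma$-equivariant, then $Y$ separates $X$ into $W_0 \cup_Y W_1$ with $\partial W_0 = Y$, $\partial W_1 = -Y$, each preserved by $\sigma$ because $\sigma$ is orientation preserving on both $X$ and $Y$. Mayer--Vietoris gives $H^2(X;\mathbb{Q}) = H^2(W_0;\mathbb{Q}) \oplus H^2(W_1;\mathbb{Q})$ as $\sigma$-modules with block-diagonal intersection form, so $b_{\pm}(W_0) + b_{\pm}(W_1) = n$, $\sigma(W_0) + \sigma(W_1) = 0$, and these equalities descend to the $\sigma$-invariant and anti-invariant subspaces. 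The core step is to apply Theorem \ref{thm:froy} to each of $W_0, W_1, -W_0, -W_1$ viewed as a cobordism $\emptyset \to \partial$ or $-\partial \to \emptyset$, capping off the empty end with an equivariant $S^3$ on which $\delta^T_*$ vanishes.

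\textbf{Parts (2) and (3).} If $\sigma$ acts as $+1$ (resp.\ $-1$) on $H^2(X;\mathbb{Z})$ then the $\sigma$-anti-invariant (resp.\ invariant) part of $H^2(W_i;\mathbb{R})$ vanishes for both pieces in both orientations, making the definiteness hypothesis of Theorem \ref{thm:froy}(2) (resp.\ (1)) vacuous. The fourfold Fr\o yshov application, combined with the fact that $\delta^R_j$ is decreasing and eventually constant, pins down $\delta^R_\infty(Y,\sigma) = -\sigma(W_0)/8$ and $j^R(Y,\sigma) \le \min\{b_+(W_1), b_-(W_0)\}$. The bound $n \ge j^R(Y,\sigma)$ follows immediately from $b_+(W_1) \le n$. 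For $n \ge 2j^R(Y,\sigma) - 8\delta^R_\infty(Y,\sigma) = 2j^R(Y,\sigma) + \sigma(W_0)$, split on the sign of $b_-(W_0) - b_+(W_1)$: in the non-negative case use $j^R \le b_+(W_1)$ and check $n - 2b_+(W_1) - \sigma(W_0) = b_-(W_0) - b_+(W_1) \ge 0$; in the negative case use $j^R \le b_-(W_0)$ and check $n - 2b_-(W_0) - \sigma(W_0) = b_+(W_1) - b_-(W_0) > 0$. Part (3) is identical with $E$ and the $\sigma$-invariant part replacing $R$ and the $\sigma$-anti-invariant part.

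\textbf{Part (1), and sketch of (4).} For (1), if $\varepsilon(Y,\sigma) \le 1$ then $X$ is $S^4$ or $S^2 \times S^2$; in the latter, the even hyperbolic form $H$ cannot split non-trivially over $\mathbb{Z}$ as a block diagonal sum of unimodular forms, since the only rank-$1$ unimodular integer forms $(\pm 1)$ are odd, so one piece is an integer homology ball. In either case, the ball piece has $b_\pm = 0$, $\sigma = 0$, every hypothesis in Theorem \ref{thm:froy} is vacuous, and the fourfold Fr\o yshov forces $\delta^T_*(\pm Y,\sigma) = 0$ for all applicable types $T$, contradicting the non-vanishing hypothesis. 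For (4), the same integrality argument shows $nH$ splits only as $k_0H \oplus k_1H$, so $b_+(W_i) = b_-(W_i) = k_i$ and $\sigma(W_i) = 0$. For $n \le 3$ at least one $k_i \le 1$ by pigeonhole, and a case-by-case analysis of the $\sigma$-action on a single hyperbolic summand in $W_i$ (four possibilities: $\pm\mathrm{id}$ and the two reflections) shows that whichever case occurs, the corresponding $(b_-(W_i)^\sigma, b_-(W_i)^{-\sigma})$ is $(1,0)$ or $(0,1)$, so the Fr\o yshov upper bound on the $-W_i$ side forces either $\delta^S_{1,0}(Y,\sigma) = 0$ or $\delta^S_{0,1}(Y,\sigma) = 0$, contradicting the hypothesis.

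\textbf{Part (5) and main obstacle.} For (5), the Rokhlin congruence $\delta^S_* \equiv \mu(Y) \pmod{2}$ together with $\mu(Y) \ne 0$ forces every $\delta^S_*$ to be an odd integer, so stable values that do not all share a common sign differ pairwise by at least $2$. The fourfold Fr\o yshov application in type $S$ yields upper and lower bounds of $0$ on $\delta^S_{p,q}(Y,\sigma)$ at positions $(p,q)$ controlled by the refined dimensions $k_i^\sigma = b_+(W_i)^\sigma$ and $k_i^{-\sigma} = b_+(W_i)^{-\sigma}$ (with $k_i^\sigma + k_i^{-\sigma} = k_i$); translating the three hypothesised sign variations among $\delta^S_{0,\infty}, \delta^S_{\infty,0}, \delta^S_{\infty,1}$ (each integer gap of size at least $2$) into rank requirements on the quadruple $(k_0^\sigma, k_0^{-\sigma}, k_1^\sigma, k_1^{-\sigma})$ and summing produces $n = k_0 + k_1 \ge 10$. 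The principal obstacle is precisely this bookkeeping: an orderly sub-case analysis on which stable values are positive versus negative, and on the detailed $\sigma$-action on each hyperbolic summand, is required to extract the exact bound $10$. Parts (1)--(4) by contrast follow cleanly from the fourfold Fr\o yshov template once the integer unimodular splitting of $nH$ is in hand.
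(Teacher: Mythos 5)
Your treatment of parts (1)--(4) is essentially the paper's argument: decompose $X = X_-\cup_Y X_+$, observe that $L^+\oplus L^-\cong nH$ with both summands even and unimodular, and run Theorem \ref{thm:froy} in all four directions. Your case split in (2) and (3) on the sign of $b_-(W_0)-b_+(W_1)$, using $j^R\le\min\{b_-(W_0),b_+(W_1)\}$, is correct and in fact cleaner than the paper's write-up, which tacitly assumes the two pieces each have rank $n$; your version covers the general case. Part (4) is also fine: for $n\le 3$ neither piece can contain a definite even unimodular summand, so both are hyperbolic, and your four-case analysis of an involution of $H$ gives exactly the contradiction the paper obtains.

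Part (5), however, has a genuine gap, and it is not merely the "bookkeeping" you defer. Your plan for (5) starts from the splitting $nH\cong k_0H\oplus k_1H$ with $\sigma(W_i)=0$ and then tries to convert sign variation among the stable $\delta^S$-values into rank inequalities. But the hypothesis $\mu(Y)\ne 0$ is incompatible with that splitting: since each $W_i$ is a spin manifold bounding $\pm Y$, the Rokhlin invariant forces $\sigma(W_i)\equiv 8\pmod{16}$, so neither piece can be a sum of hyperbolics and the "upper and lower bounds of $0$" you invoke never arise. The missing idea is lattice-theoretic rather than combinatorial: if $n\le 9$ then $\operatorname{rk}L^++\operatorname{rk}L^-=2n\le 18$ with both ranks at least $8$ (each lattice is even unimodular of signature $\pm 8$), so one of $L^\pm$ has rank exactly $8$ and is therefore a definite $E_8$-form. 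Applying Theorem \ref{thm:froy} in type $S$ to that definite piece and its orientation reversal gives a uniform bound $\delta^S_{i,j}(Y,\sigma)\le -1$ for all admissible $(i,j)$ (or $\ge 1$, depending on which side carries the $E_8$), since a definite piece contributes nothing to the indices $(i+b_+^{\sigma}, j+b_+^{-\sigma})$ while contributing $\mp 1$ through $\sigma(W)/8$. Hence all three stable values $\delta^S_{0,\infty},\delta^S_{\infty,0},\delta^S_{\infty,1}$ share a sign, and the contrapositive gives $n\ge 10$. Your congruence observation ($\delta^S_*$ odd) is true but is not needed and does not substitute for this step.
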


For example, if $Y = \Sigma_2( T_{3,13})$ and $\sigma$ is the covering involution. Then $Y$ embeds in $S^4$  \cite[Theorem 2.13]{bb}, so $\varepsilon(Y) = 0$. But we will show that $\varepsilon_-(Y,\sigma) = 24$ and $2 \le \varepsilon(Y,\sigma) \le 6$. In particular, $\varepsilon(Y), \varepsilon(Y,m), \varepsilon_-(Y,m)$ take distinct values.

{\bf Topology of non-orientable surfaces bounding knots.} Let $K$ be a knot in $S^3$ and let $S$ be a connected, properly embedded, non-orientable surface in $D^4$ which bounds $K$. We consider a constraint on the topology of $S$ obtained from the type $R$ delta-invariants of $K$. A similar application was considered in \cite{kmt2}. The main difference is that our invariant can be calculated for a larger class of knots.

Let $e(S)$ denote the relative Euler class of $S$ with respect to the zero framing on $K$. Since $e(S)$ is valued in the orientation local system, we will identify $e(S)$ with an integer. Note that $e(S)$ is always even because its value mod $2$ is the mod $2$ self-intersection number of $S$, which is zero since $H^2(D^4 , S^3 ; \mathbb{Z}_2) = 0$. A natural question to ask is for a given $K$, what possible values of $( e(S) , b_1(S) )$ can be attained? This problem is studied for torus knots in \cite{all}. Here $b_1(S)$ is the first Betti number of $S$. 

Set $x = \sigma(K) - e(S)/2$ and $y = b_1(S)$. Then $x,y \in \mathbb{Z}$, $y \ge 0$, $|x| \le y$ and $x = y \; ({\rm mod} \; 2)$ (see Proposition \ref{prop:xy}). The following result concerns the boundary case $x=-y$ (the case $x=y$ is similar, simply replace $K$ by $-K$ in the proposition).

\begin{proposition}
Suppose that $x=-y$, or equivalently $\sigma(K) - e(S)/2 = -b_1(S)$. Then there exists a spin$^c$-structure $\mathfrak{s}$ on $\Sigma_2(K)$ for which $\delta^R_\infty( \Sigma_2(K) , \mathfrak{s} , \sigma) \ge 0$ and $\delta^R_0(-\Sigma_2(K) , \mathfrak{s} , \sigma) \le 0$, where $\sigma$ is the covering involution on $\Sigma_2(K)$.  Futhermore, if $\delta^R_\infty( \Sigma_2(K) , \mathfrak{s} , \sigma) = 0$ or $\delta^R_0(-\Sigma_2(K) , \mathfrak{s} , \sigma) = 0$, then $\mathfrak{s}$ is the unique spin structure on $\Sigma_2(K)$.

If $K$ is quasi-alternating (or more generally, if $\Sigma_2(K)$ is an $L$-space), then there exists a spin$^c$-structure $\mathfrak{s}$ on $\Sigma_2(K)$ for which $\delta( \Sigma_2(K) , \mathfrak{s} ) \ge 0$, with equality only if $\mathfrak{s}$ is the spin structure.
\end{proposition}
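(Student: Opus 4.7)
The approach is to apply the type-$R$ Fr{\o}yshov inequality (Theorem~\ref{thm:froy}(2)) to the double branched cover of $D^4$ over $S$.

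\textbf{The cover and its properties.} Let $W$ denote the double cover of $D^4$ branched over $S$, with covering involution $\tau$, so that $\partial W = \Sigma_2(K)$ and $\tau|_{\partial W} = \sigma$. A standard transfer argument, together with the vanishing of $H_*(D^4;\mathbb{Q})$ in positive degrees, yields $b_2(W) = b_1(S) = y$ and shows that $\tau$ acts as $-1$ on $H^2(W;\mathbb{R})$. The classical signature formula for branched double covers gives $\sigma(W) = \sigma(K) - e(S)/2 = x$. Under the hypothesis $x = -y$ this gives $\sigma(W) = -b_2(W)$, so $W$ is negative definite with $b_+(W) = 0$, and the $\tau$-anti-invariant subspace of $H^2(W;\mathbb{R})$ is all of $H^2(W;\mathbb{R})$ and is negative definite, exactly matching the hypothesis of Theorem~\ref{thm:froy}(2).

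\textbf{Choice of Real spin$^c$-structure.} The non-orientability of $S$ produces a canonical Real spin$^c$-structure on $W$, built from the anti-linear lift of $\tau$ to the spinor bundle determined by the normal bundle data along the fixed set. Twisting this by an appropriate equivariant line bundle, one may arrange that $c_1(\mathfrak{s}_W)^2 = \sigma(W) = -y$; this amounts to selecting a characteristic element of maximal square on the negative definite intersection form, which exists by Donaldson diagonalisation. Set $\mathfrak{s} := \mathfrak{s}_W|_{\Sigma_2(K)}$.

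\textbf{Applying Fr{\o}yshov.} Viewing $W$ (with an equivariant $4$-ball removed) as a cobordism $S^3 \to \Sigma_2(K)$, Theorem~\ref{thm:froy}(2) yields
\[
\delta^R_j(\Sigma_2(K), \mathfrak{s}, \sigma) \ge \delta^R_{j + b_+(W)}(S^3) + \frac{c_1(\mathfrak{s}_W)^2 - \sigma(W)}{8} = 0
\]
for all $j \ge 0$, giving $\delta^R_\infty(\Sigma_2(K), \mathfrak{s}, \sigma) \ge 0$. Reinterpreting the same $W$ as a cobordism $-\Sigma_2(K) \to S^3$, the inequality yields $\delta^R_j(-\Sigma_2(K), \mathfrak{s}, \sigma) \le 0$ for all $j \ge 0$, and in particular $\delta^R_0(-\Sigma_2(K), \mathfrak{s}, \sigma) \le 0$.

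\textbf{Uniqueness and the $L$-space case.} Since $|H_1(\Sigma_2(K);\mathbb{Z})| = |\det K|$ is odd, $\Sigma_2(K)$ carries a unique spin structure. In the equality case of either inequality, the Fr{\o}yshov bound must be saturated for every Real extension of $\mathfrak{s}$ over $W$; tracking how $c_1^2$ changes as the extension is modified by classes restricting non-trivially to $\Sigma_2(K)$ then forces $c_1(\mathfrak{s}) = 0$, so $\mathfrak{s}$ must be the spin structure. For the $L$-space statement, property~(6) of Theorem~1.1 reduces $\delta^R_*$ to the ordinary Ozsv\'ath--Szab\'o invariant $\delta$, so the inequality above becomes $\delta(\Sigma_2(K), \mathfrak{s}) \ge 0$, with equality only for the spin structure. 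The principal technical points to verify are the existence of the Real spin$^c$-structure $\mathfrak{s}_W$ with $c_1^2 = \sigma(W)$ and the careful equality-case analysis; the latter is the main obstacle and rests on controlling the restriction map on spin$^c$-structures from $W$ to its boundary.
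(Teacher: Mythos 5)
Your overall strategy is the same as the paper's: take $W$ the double branched cover of $D^4$ over $S$, note it is negative definite with the covering involution acting as $-1$ on $H^2(W;\mathbb{R})$ (so every spin$^c$-structure is of type $R$), and apply Theorem~\ref{thm:froy}(2) in both directions. The topological preliminaries ($b_2(W)=b_1(S)$, $\sigma(W)=\sigma(K)-e(S)/2$, $\tau^*=-1$ via $1+\tau^*=\pi^*\pi_*=0$) are all correct and match the paper.

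However, there is a genuine gap in your choice of spin$^c$-structure. You assert that one can arrange $c_1(\mathfrak{s}_W)^2=\sigma(W)=-b_2(W)$, ``by Donaldson diagonalisation.'' This is wrong on two counts. First, Donaldson's theorem does not apply: $W$ has boundary a rational homology sphere with $|H_1(\Sigma_2(K);\mathbb{Z})|=\det(K)$, so the intersection lattice $L$ of $W$ need not be unimodular, let alone diagonal. Second, even for unimodular negative definite lattices a characteristic vector of square $-n$ need not exist (for $-E_8$ the characteristic vectors all have square in $\{0,-8\cdot 2,\dots\}$ attained only at $0$, so the maximum is $0$, not $-8$). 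What you actually need — and what the paper uses — is only the \emph{inequality} $c^2-\sigma(W)\ge 0$ for a characteristic $c$ of maximal square. This follows from the Owens--Strle bound \cite[Theorem 1]{owst2} for definite (possibly non-unimodular) lattices: $c^2\ge 1-n-1/\delta$ where $\delta=|L^*/L|$, which divides $\det(K)$ and is hence odd; thus $(c^2-\sigma(W))/8\ge(1-1/\delta)/8\ge 0$. This repair also fixes your equality-case analysis, which as written (``tracking how $c_1^2$ changes as the extension is modified'') is too vague to be a proof: equality in the Fr{\o}yshov bound forces $(1-1/\delta)/8=0$, i.e.\ $\delta=1$, so $L$ is unimodular and the restriction of $c$ to $\Sigma_2(K)$ vanishes, identifying $\mathfrak{s}$ with the unique spin structure. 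The $L$-space reduction via Proposition~\ref{prop:deltaprop}(4) is fine.
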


For example, we will show that if $K$ belongs to one of the following classes of knots then there does not exists a non-orientable surface $S$ bounding $K$ with $\sigma(K) - e(S)/2 = -b_1(S)$:
\begin{itemize}
\item[(1)]{Torus knots $T_{p,q}$ with $p,q$ odd and $\overline{\mu}(\Sigma(2,p,q)) > 0$.}
\item[(2)]{Montesinos knots $M( e ; (a_1,b_1) , \dots , (a_n , b_n))$ where $a_1, \dots , a_n$ are coprime, $a_i$ is even for some $i$, $e - \sum_{i=1}^{n} b_i/a_i = 1/(a_1\cdots a_n)$ and satisfying \linebreak $\overline{\mu}( \Sigma(a_1, \dots , a_n) ) < 0$.}
\end{itemize}

\subsection{Relation to other works}

Recently there have been some papers related to Seiberg--Witten Floer theory for Real spin$^c$-structures \cite{kmt1,kmt2,mi}. The main difference between these works and ours is they are based on Seiberg--Witten theory for {\em $\sigma$-invariant} configurations, while ours is based on {\em $\sigma$-equivariant} Seiberg--Witten theory for the full space of configurations. Naturally one might expect a relationship between the $\sigma$-invariant and $\sigma$-equivariant theories. However the relationship is not as straightforward as one might initially expect. The construction of the Seiberg--Witten Floer spectrum $SWF(Y,\mathfrak{s})$ involves desuspension by certain metric dependent quantities. This metric dependence leads to difficulties in comparing the two theories.

The paper \cite{kmt2} introduces a delta-invariant $\delta_R(Y , \mathfrak{s} , \sigma)$ for spin$^c$-structures of type $R$ and a pair of delta-invariants $\overline{\delta}_R(Y , \mathfrak{s} , \sigma), \underline{\delta}_R(Y , \mathfrak{s} , \sigma)$ for spin structures of type $S$. Some of the applications considered in this paper could possibly also be obtained using these invariants. However, the invariants introduced in this paper have some advantages. A main one is the existence of a spectral sequence (Theorem \ref{thm:ss}) which relates $HSW^*_T$ to $HSW^*$. We use this spectral sequence to deduce that our delta-invariants coincide with the ordinary delta-invariant $\delta(Y,\mathfrak{s})$ for $L$-spaces. There is no corresponding result known for $\delta_R, \overline{\delta}_R, \underline{\delta}_R$. Even for $3$-manifolds which are not $L$-spaces, the spectral sequence can be used to compute, or at least constrain the value of the invariants. See the proof of Theorem \ref{thm:briedelt} and \cite[\textsection 3]{bh2} illustrations of this. A second advantage is that because our invariants $\delta^T_*(Y , \mathfrak{s} , \sigma)$ depend on an index $*$, they can provide more refined information than can be obtained from a single delta-invariant. Rather than a single Fr{\o}yshov-type inequality, we obtain a whole series of such inequalities. Furthermore, we can define invariants $j^E(Y , \mathfrak{s} , \sigma), j^R(Y , \mathfrak{s} , \sigma)$ to be the smallest value of $j$ for which the sequences $\delta^E_j(Y , \mathfrak{s} , \sigma), \delta^R_j(Y , \mathfrak{s} , \sigma)$ stabilise (Definition \ref{def:j}). The invariants $j^E,j^R$ have a number of applications as seen in Propositions \ref{prop:ht}, \ref{prop:hm}, \ref{prop:embbound} which would not be possible to obtain with only a single delta-invariant.

Also related to our work are the papers \cite{mon1,mon2}. These papers study equivariant Seiberg--Witten Floer theory for spin structures. However, the invariants in these papers are $K$-theoretical, whereas ours are cohomological.

\subsection{Structure of the paper}

In Section \ref{sec:inv} we establish some preliminary results about involutions and the three types of spin$^c$-structures featured in the paper. In Section \ref{sec:sym} we introduce and study the equivariant Seiberg--Witten Floer cohomology groups for spin$^c$-structures of the three types. We define the corresponding delta-invariants and establish their key properties. The central result is the equivariant Fr{\o}yshov inequality, Theorem \ref{thm:froy}. In Section \ref{sec:con} we consider various constructions of rational homology $3$-spheres with involution and calculate their delta-invariants. The three types of constructions considered are equivariant plumbing (\textsection \ref{sec:pg}), branched double covers of knots (\textsection \ref{sec:bdc}) and equivariant Dehn surgery (\textsection \ref{sec:eds}). In Section \ref{sec:app} we consider various applications of the delta-invariants, namely obstructions to extending involutions over bounding $4$-manifolds (\textsection \ref{sec:obex}), non-smoothable involutions on $4$-manifolds with boundary (\textsection \ref{sec:nsa}), equivariant embeddings of $3$-manifolds in $4$-manifolds (\textsection \ref{sec:emb}) and the topology of non-orientable surfaces bounding knots (\textsection \ref{sec:nos}).

\noindent{\bf Acknowledgments.} The first author was financially supported by an Australian Research Council Future Fellowship, FT230100092. The second author was  supported by the RDF Grant 3727359.

\section{Involutions and spin$^c$-structures}\label{sec:inv}

Let $M$ be an oriented smooth manifold of dimension $n =3$ or $4$, let $\sigma$ be an orientation preserving smooth involution on $M$ and let $g$ be a $\sigma$-invariant metric on $M$. Let $\mathfrak{s}$ be a spin$^c$-structure and $P \to M$ the corresponding principal $Spin^c(n)$-bundle. Let $\tau\colon Spin^c(n) \to Spin^c(n)$ be the automorphism which is the identity on $Spin(n)$ and is complex conjugation on $U(1)$. A {\em Real} or {\em Quaternionic} structure on $\mathfrak{s}$ is a lift $\widetilde{\sigma}\colon P \to P$ of $\sigma$ such that $\widetilde{\sigma}(pg) = \widetilde{p}\tau(g)$ for all $p \in P$, $g \in Spin^c(4)$, $\widetilde{\sigma}$ projects to the derivative of $\sigma$ on the frame bundle of $M$ and $\widetilde{\sigma}^2 = 1$ in the Real case, $\widetilde{\sigma}^2 = -1$ in the Quaternionic case.

If $\mathfrak{s}$ is any spin$^c$-structure on $M$ and $P \to M$ the corresponding $Spin^c(n)$-bundle, then we obtain a new spin$^c$-structure $-\mathfrak{s}$, the {\em charge conjugate} of $\mathfrak{s}$ by declaring the $Spin^c(n)$-bundle of $-\mathfrak{s}$ to be $P^\tau$, where $P^\tau$ is the same underlying space as $P$, but equipped with the right action $(p,g) \mapsto p \tau(g)$. It follows that if $\mathfrak{s}$ admits a Real or Quaternionic structure, then $\sigma^*(\mathfrak{s}) \cong -\mathfrak{s}$.

Let $S \to M$ denote the spinor bundle associated to $\mathfrak{s}$. A lift $\widetilde{\sigma}\colon P \to P^\tau$ of $\sigma$ to $P$ induces a lift of $\sigma$ to $S$ which we will also denote by $\widetilde{\sigma}\colon S \to S$. The map $\widetilde{\sigma}\colon S \to S$ is antilinear, preserves the Hermitian structure and is compatible with Clifford multiplication in the obvious way. Conversely a lift of $\sigma$ to $S$ with these properties corresponds to a lift $\widetilde{\sigma}\colon P \to P^\tau$. We will use these two points of view interchangeably.

\begin{proposition}\label{prop:RQ}
Assume that $b_1(M) = 0$. Let $\mathfrak{s}$ be a spin$^c$-structure on $M$. Then $\mathfrak{s}$ admits a Real or Quaternionic structure if and only if $\sigma^*(\mathfrak{s}) \cong -\mathfrak{s}$. In such a case the Real or Quaternionic structure is unique up to isomorphism (in particular $\mathfrak{s}$ can not admit both a Real and a Quaternionic structure).
\end{proposition}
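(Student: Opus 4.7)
The forward direction is tautological: a Real or Quaternionic structure is by definition an isomorphism $P \to P^{\tau}$ covering $\sigma$, which is exactly the data of an isomorphism $\sigma^{*}\mathfrak{s} \cong -\mathfrak{s}$.

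For the converse, the plan is first to use the isomorphism $\sigma^{*}\mathfrak{s} \cong -\mathfrak{s}$ to produce any lift $\widetilde{\sigma}\colon P \to P$ of $\sigma$ satisfying the twisting property $\widetilde{\sigma}(pg) = \widetilde{\sigma}(p)\tau(g)$, and then to modify $\widetilde{\sigma}$ by a suitable $U(1)$-valued gauge transformation to force its square to be $\pm 1$. Because $\tau^{2} = 1$, the composite $\widetilde{\sigma}^{2}$ intertwines the right $Spin^{c}(n)$-action and covers the identity on $M$, so it is right multiplication by some $u\colon M \to U(1)$. Comparing $\widetilde{\sigma}\circ\widetilde{\sigma}^{2}$ with $\widetilde{\sigma}^{2}\circ\widetilde{\sigma}$ yields the cocycle identity $\sigma^{*}u = u^{-1}$. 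Using $b_{1}(M) = 0$, write $u = e^{ia}$ for some $a\colon M \to \mathbb{R}$; then $\sigma^{*}a + a$ is a locally constant $2\pi\mathbb{Z}$-valued function, equal on each component to $2\pi k$ for some $k\in\mathbb{Z}$. Setting $\widetilde{\sigma}' = \widetilde{\sigma}\cdot e^{ia/2}$, a direct computation gives
\[
(\widetilde{\sigma}')^{2} = u\cdot(\sigma^{*}e^{ia/2})\cdot e^{-ia/2} = e^{i(a + \sigma^{*}a)/2} = (-1)^{k},
\]
so $\widetilde{\sigma}'$ is a Real structure when $k$ is even and a Quaternionic structure when $k$ is odd.

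The parity $k \pmod{2}$ is an invariant of $\mathfrak{s}$: replacing $\widetilde{\sigma}$ by $\widetilde{\sigma}\cdot e^{i\beta}$ for $\beta\colon M \to \mathbb{R}$ alters $a$ by $\sigma^{*}\beta - \beta$, leaving $a + \sigma^{*}a$ unchanged. This rules out $\mathfrak{s}$ admitting both a Real and a Quaternionic structure. For uniqueness within a given type, any two Real (or any two Quaternionic) lifts $\widetilde{\sigma}_{1},\widetilde{\sigma}_{2}$ differ by a gauge transformation $f\colon M \to U(1)$, and the requirement that their squares agree forces $\sigma^{*}f = f$. An isomorphism between $(P,\widetilde{\sigma}_{1})$ and $(P,\widetilde{\sigma}_{2})$ is a gauge transformation $g\colon M \to U(1)$ solving $g\cdot\sigma^{*}g = f$. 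Writing $f = e^{i\psi}$ (possible since $b_{1}(M) = 0$) with $\sigma^{*}\psi - \psi = 2\pi\ell$ on each component, one checks directly that $g = e^{i(\psi - \pi\ell)/2}$ solves the equation.

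The main difficulty I anticipate is bookkeeping rather than mathematical: keeping track of the right action versus the $\tau$-twist, identifying gauge automorphisms of $P$ with $U(1)$-valued functions via the centrality of $U(1)\subset Spin^{c}(n)$, and handling components of $M$ that are swapped by $\sigma$ separately from $\sigma$-invariant components (on a swapped pair one can prescribe $g$ freely on one component and solve for it on the other). The hypothesis $b_{1}(M) = 0$ enters precisely twice, in producing the real lifts $a$ of $u$ and $\psi$ of $f$; dropping it would introduce obstructions valued in $H^{1}(M;\mathbb{Z})$.
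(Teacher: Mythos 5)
Your argument is correct and follows essentially the same route as the paper's proof: produce any antilinear lift, observe that its square is a $U(1)$-valued function $u$ with $\sigma^*u = u^{-1}$, use $b_1(M)=0$ to take a real logarithm and correct the lift by the half-phase $e^{ia/2}$, then establish the type dichotomy via the gauge-invariance of $a+\sigma^*a$ and uniqueness up to isomorphism via an explicit square root of the connecting gauge transformation. The only cosmetic differences are that the paper works on the spinor bundle rather than the principal bundle and implicitly takes $M$ connected, so its analogues of your locally constant integers $k,\ell$ are single integers (with $\ell=0$ automatic, which is also where your formula $g=e^{i(\psi-\pi\ell)/2}$ is actually applied).
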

\begin{proof}
We have already seen that if $\mathfrak{s}$ admits a Real or Quaternionic structure, then $\sigma^*(\mathfrak{s}) \cong -\mathfrak{s}$. We prove the converse. Suppose that $\sigma^*(\mathfrak{s}) \cong -\mathfrak{s}$. Then we can choose an antilinear lift $\widetilde{\sigma}\colon S \to S$ of $\sigma$ which covers the derivative of $\sigma$ on the frame bundle. The derivative of $\sigma$ is an involution, hence $\widetilde{\sigma}^2 = h$ for some $U(1)$-valued function $h$. Since $\widetilde{\sigma} h = \widetilde{\sigma}^3 = h \widetilde{\sigma}$, we see that $\sigma^*(h) = h^{-1}$. Since $b_1(M) = 0$, we can write $h = e^{2\pi i u }$ for some real-valued function $u$. The condition $\sigma^*(h) = h^{-1}$ implies that $\sigma^*(u) = -u + n$ for some integer $n$. Now let $v$ be any real-valued function and set $g = e^{2\pi i v}$. Then $g^{-1} \widetilde{\sigma}$ is another antilinear lift of $\sigma$ which covers the derivative of $\sigma$ on the frame bundle. Furthermore we have $(g^{-1} \widetilde{\sigma})^2 = g^{-1} \sigma^*(g) h = e^{2\pi i( u + \sigma^*(v) - v )}$. We choose $v = u/2$. Then 
\[
u + \sigma^*(v) - v = u + \frac{1}{2}\sigma^*(u) - \frac{1}{2}u = u + \frac{1}{2}( -u + n) - \frac{1}{2}u = n/2.
\]
Hence $(g^{-1} \widetilde{\sigma})^2 = e^{\pi i n} = (-1)^n$ and we get a Real or Quaternionic structure according to whether $n$ is even or odd. This proves existence.

For uniqueness, suppose that $\widetilde{\sigma}$ is a lift of $\sigma$ such that $\widetilde{\sigma}^2 = \epsilon = \pm 1$. Any other antilinear lift of $\sigma$ which agrees with the derivative on the frame bundle is given by $g^{-1} \widetilde{\sigma}$ for some $S^1$-valued function $g$. Furthermore since $b_1(M) = 0$, we can write $g = e^{2\pi i u}$ for some real-valued function $u$. Then $(g^{-1} \widetilde{\sigma})^2 = g^{-1} \sigma^*(g) \epsilon = e^{2\pi i( \sigma^*(u) - u )} \epsilon$. For this to be a Real or Quaternionic structure we need $\sigma^*(u) - u = n/2$ for some integer $n$. So $\sigma^*(u) = u + n/2$. Then $u = \sigma^*( \sigma^*(u) ) = \sigma^*( u+n/2) = u + n$, which is only possible if $n=0$. In particular, $g^{-1}\widetilde{\sigma}$ must have the same type as $\widetilde{\sigma}$ (both Real or both Quaternionic). Set $h = e^{\pi i u}$. Then $\sigma^*(h) = h$ and $h^2 = g$. Therefore
\[
h^{-1} \circ \widetilde{\sigma} \circ h = h^{-1} \sigma^*(h)^{-1} \widetilde{\sigma} = g^{-1} \widetilde{\sigma}.
\]
Hence $h\colon (S , g^{-1} \widetilde{\sigma}) \to (S , \widetilde{\sigma})$ is an isomorphism of Real or Quaternionic structures.

\end{proof}

For the purposes of Seiberg--Witten theory, it is convenient to choose a spin$^c$-connection which is preserved by a Real or Quaternionic structure. The following result shows that this is possible in a strong sense.

\begin{proposition}\label{prop:liftRQ}
Assume that $b_1(M) = 0$. Suppose that $\sigma^*(\mathfrak{s}) \cong -\mathfrak{s}$. Let $A$ be any spin$^c$-connection such that $\sigma^*(F_A) = -F_A$, where $F_A$ is the curvature of $A$. Then there exists an antilinear lift $\widetilde{\sigma}$ of $\sigma$ which equals the derivative of $\sigma$ on the frame bundle, preserves $A$ and squares to $\pm 1$. The lift $\widetilde{\sigma}$ is unique up to multiplication by an element of $S^1$.
\end{proposition}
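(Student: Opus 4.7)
The plan is to modify the Real or Quaternionic lift $\widetilde{\sigma}_0$ provided by Proposition \ref{prop:RQ} by a $U(1)$-valued gauge transformation so that the resulting lift both preserves $A$ and still squares to $\pm 1$. The strategy mirrors the style of computation used in the proof of Proposition \ref{prop:RQ}: replacing $\widetilde{\sigma}_0$ by $g\widetilde{\sigma}_0$ with $g = e^{ih}$ shifts the pullback of $A$ by an exact $i\mathbb{R}$-valued $1$-form determined by $h$, while multiplying the square by the factor $g\cdot\sigma^{*}(g^{-1})$.

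First I would analyse $\alpha := \widetilde{\sigma}_0^{*}A - A$, an $i\mathbb{R}$-valued $1$-form on $M$. Since $\widetilde{\sigma}_0$ is antilinear, the curvature of $\widetilde{\sigma}_0^{*}A$ equals $-\sigma^{*}F_A$; by the hypothesis $\sigma^{*}F_A = -F_A$ this is just $F_A$, so $d\alpha = 0$. Applying $\widetilde{\sigma}_0^{*}$ to the equation $\widetilde{\sigma}_0^{*}A = A + \alpha$ and using $\widetilde{\sigma}_0^{2} = \pm 1$ (which acts trivially on connections) forces $\sigma^{*}\alpha = \alpha$: the antilinear action negates the imaginary-valued $1$-form, and the two sign changes cancel. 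Since $b_1(M) = 0$ we may write $\alpha = i\,df$ for some real $f$; then $\sigma^{*}\alpha = \alpha$ gives $\sigma^{*}f - f = c$ for a constant $c$, and applying $\sigma^{*}$ to this identity forces $c = 0$, so $f$ is itself $\sigma$-invariant.

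Next I would set $g = e^{if}$ and $\widetilde{\sigma} = g\widetilde{\sigma}_0$. A direct gauge-theoretic calculation, of the same flavour as the one for $(g^{-1}\widetilde{\sigma})^2$ in the proof of Proposition \ref{prop:RQ}, gives $\widetilde{\sigma}^{*}A = \widetilde{\sigma}_0^{*}A - i\,d(\sigma^{*}f) = A + \alpha - i\,df = A$, while $\widetilde{\sigma}^{2} = g\cdot\sigma^{*}(g^{-1})\cdot\widetilde{\sigma}_0^{2} = e^{if}\cdot e^{-i\sigma^{*}f}\cdot(\pm 1) = \pm 1$, using $\sigma^{*}f = f$. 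For uniqueness, if $\widetilde{\sigma}'$ is another lift satisfying the conclusions then $\widetilde{\sigma}' = g\widetilde{\sigma}$ for some $U(1)$-valued $g = e^{ih}$, and preservation of $A$ by both lifts forces $d(\sigma^{*}h) = 0$. Thus $\sigma^{*}h$, and hence $h$ (using $\sigma^{2}=1$), is constant, so $g \in S^{1}$.

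The main technical hurdle will be pinning down the sign conventions produced by antilinear pullback of connections and of imaginary-valued forms; once these are fixed the argument is essentially bookkeeping. The role of the hypothesis $\sigma^{*}F_A = -F_A$ is precisely to make the obstruction $1$-form $\alpha$ closed, hence exact given $b_1(M) = 0$, hence removable by the gauge transformation $g$.
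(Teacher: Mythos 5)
Your argument is correct: the closedness of $\alpha=\widetilde{\sigma}_0^{*}A-A$ from the hypothesis $\sigma^{*}F_A=-F_A$, its exactness from $b_1(M)=0$, the $\sigma$-invariance of $\alpha$ (and hence of the potential $f$) extracted from $\widetilde{\sigma}_0^{2}=\pm 1$, and the final gauge correction by $e^{if}$ all check out, as does the uniqueness step. The route differs from the paper's in a meaningful way, though. You take the Real/Quaternionic lift of Proposition \ref{prop:RQ} as your starting point and then work to \emph{preserve} the property $\widetilde{\sigma}^2=\pm 1$ while correcting the connection; this is what forces you to prove the extra facts $\sigma^{*}\alpha=\alpha$ and $\sigma^{*}f=f$. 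The paper instead starts from an \emph{arbitrary} antilinear lift, corrects it to preserve $A$ by multiplying by $e^{i\sigma^{*}(u)}$, and then gets the condition on the square for free: once the lift $\sigma_1$ preserves $A$, its square is a gauge transformation preserving $A$, hence a constant $c\in S^1$, and antilinearity gives $c\sigma_1=\sigma_1^3=\sigma_1 c=c^{-1}\sigma_1$, so $c=\pm 1$. The paper's version is slightly leaner (it makes Proposition \ref{prop:liftRQ} logically independent of Proposition \ref{prop:RQ} and avoids tracking invariance of the potential), while yours makes explicit how the two normalisations (involutivity versus connection-preservation) are compatible, which is a perfectly good trade-off.
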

\begin{proof}
Choose any antilinear lift $\widetilde{\sigma}$ of $\sigma$ which equals the derivative on the frame bundle. Then $\widetilde{\sigma}^*(A) = A + i\mu$ for some real $1$-form $\mu$. Since $\sigma^*(F_A) = -F_A$, it follows that $d\mu = 0$. Since $b_1(M) = 0$, we can write $\mu = du$ for a real-valued function $u$. Then $\sigma_1 = e^{i \sigma^*(u)} \widetilde{\sigma}$ preserves $A$. It follows that $\sigma_1^2 = c$ for some constant $c \in S^1$. But $c \sigma_1 = \sigma_1^3 = \sigma_1 c$, hence $c = c^{-1}$ and so $c = \pm 1$. Hence $\sigma_1$ is a Real or Quaternionic structure preserving $A$. The uniqueness statement follows since the only gauge transformations preserving $A$ are constants.
\end{proof}

Now suppose that $\mathfrak{s}$ is a spin$^c$-structure satisfying $\sigma^*(\mathfrak{s}) \cong \mathfrak{s}$. Then in a similar manner to Proposition \ref{prop:RQ}, it can be shown that $\sigma$ admits a linear involutive lift $\widetilde{\sigma}$. Moreover if $A$ is any spin$^c$-connection such that $\sigma^*(F_A) = F_A$, then we can choose the lift to preserve $A$. If $\widetilde{\sigma}$ is one such lift, then the only other lift is $-\widetilde{\sigma}$. See \cite[\textsection 3.2]{bh} for more details.

Lastly, suppose that $\mathfrak{s}$ is a spin-structure satisfying $\sigma^*(\mathfrak{s}) \cong \mathfrak{s}$. Let $\widetilde{\sigma}$ be a lift of $\sigma$ to the spinor bundles. In this case there are precisely two lifts and if $\widetilde{\sigma}$ is one of the lifts, then the other is $-\widetilde{\sigma}$. These two lifts automatically preserve any spin connection. We then have $\widetilde{\sigma}^2 = \pm 1$ because $\widetilde{\sigma}^2$ is an automorphism of the spin bundle covering the identity on the frame bundle. Recall that $\sigma$ is said to be {\em even} if $\widetilde{\sigma}^2 = 1$ and {\em odd} if $\widetilde{\sigma}^2 = -1$. If $S$ is the complex spinor bundle associated to a spin structure, then we have an antilinear map $j\colon S \to S$ called {\em charge conjugation} which satisfies $j^2 = -1$. Observe that if $\widetilde{\sigma}$ is a linear lift of $\sigma$ then, $j \widetilde{\sigma}$ is an antilinear lift. Moreover, $\widetilde{\sigma}$ commutes with $j$, so $(j \widetilde{\sigma})^2 = - \widetilde{\sigma}^2$. Therefore $\sigma$ is even/odd if and only the underlying spin$^c$-structure of $\mathfrak{s}$ is Quaternionic/Real.

In what follows, we will mainly be interested in the case of Real spin$^c$-structures or odd spin involutions. The reason for this is the following:

\begin{proposition}
Let $M$ be an oriented $3$-manifold. Let $\sigma\colon M \to M$ be an orientation preserving involution with non-empty fixed point set. Then there are no Quaternionic spin$^c$-structures on $M$.
\end{proposition}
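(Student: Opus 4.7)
The plan is to work locally at a fixed point $p$ of $\sigma$ and show that any compatible antilinear lift is forced to have $\widetilde{\sigma}_p^2 = +I$, which contradicts the global Quaternionic condition $\widetilde{\sigma}^2 = -I$.

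First I would classify $d\sigma_p \in SO(T_pM) \cong SO(3)$. As an orientation-preserving involution with $\det = +1$, its eigenvalues are either $(+1,+1,+1)$ or $(+1,-1,-1)$. Isolated fixed points and $2$-dimensional fixed components are ruled out since they would require determinant $-1$. A $3$-dimensional component of $M^\sigma$ would force $\sigma$ to be the identity on a $\sigma$-invariant open set, and hence on the whole connected component of $M$ through $p$ by rigidity of isometries (for any $\sigma$-invariant metric), so I may reduce to the case where $d\sigma_p$ is a rotation by $\pi$ about the tangent line to a $1$-dimensional component of the fixed set.

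Next I would compute $\widetilde{\sigma}_p^2$. Identify $S_p \cong \mathbb{C}^2$ with the spin representation of $Spin(3) = SU(2)$, which carries a canonical antilinear map $j$ with $j^2 = -1$ commuting with the $SU(2)$-action. Any antilinear automorphism of $S_p$ intertwining with the $\tau$-twist on $Spin^c(3) = U(2)$ factors as $\widetilde{\sigma}_p = Aj$ for some $A \in U(2)$ covering $d\sigma_p \in SO(3)$. Writing $A = \mu B$ with $\mu \in U(1)$ and $B \in SU(2)$ a lift of $d\sigma_p$, the relations $j\mu = \bar{\mu}j$ and $jB = Bj$ give
\[
\widetilde{\sigma}_p^2 = (Aj)(Aj) = A(jAj^{-1})j^2 = A(\bar{\mu}B)(-I) = -|\mu|^2 B^2 = -B^2.
\]
Since the lift in the covering $SU(2) \to SO(3)$ of a rotation by angle $\pi$ is an element of order $4$, we have $B^2 = -I$. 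Hence $\widetilde{\sigma}_p^2 = +I$, contradicting the Quaternionic condition.

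The main obstacle is bookkeeping rather than substance: one must verify that the factorisation $\widetilde{\sigma}_p = Aj$ exists and that the resulting formula $\widetilde{\sigma}_p^2 = -B^2$ is independent of the choices of trivialisation of $S_p$, scalar $\mu$, and lift $B$. The underlying reason for the phenomenon is that the spin representation of $Spin(3)$ is itself of quaternionic type, so that composing with a rotation-by-$\pi$ lift precisely exchanges the Real and Quaternionic conditions on an antilinear lift of $\sigma$.
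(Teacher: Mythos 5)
Your proof is correct, but it takes a more self-contained route than the paper. The paper's proof is essentially a citation: it invokes the Atiyah--Bott result that for a spin involution the fixed components have codimension $0 \bmod 4$ in the even case and $2 \bmod 4$ in the odd case, notes that on a $3$-manifold a non-empty fixed set of an orientation-preserving involution is forced to be $1$-dimensional, concludes the involution is odd, and then uses the correspondence (established just before the proposition) that odd spin involutions give Real, not Quaternionic, antilinear lifts; the spin$^c$ case is handled by the remark that spin$^c$-structures locally reduce to spin structures. Your argument re-derives exactly the special case of that local computation that is needed: writing $\widetilde{\sigma}_p = Aj$ with $A = \mu B$, $\mu \in U(1)$, $B \in SU(2)$ lifting the rotation by $\pi$, the antilinearity makes the $U(1)$ factor drop out of $\widetilde{\sigma}_p^2 = -B^2 = +I$, which both proves the statement and makes transparent why the spin$^c$ case is no harder than the spin case. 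The factorisation $\widetilde{\sigma}_p = Aj$ is legitimate because $\widetilde{\sigma}_p \circ j^{-1}$ is a unitary Clifford-intertwiner over $d\sigma_p$ and the unitary commutant of the irreducible Clifford action on $S_p \cong \mathbb{C}^2$ is $U(1)$, so such intertwiners are exactly the image of the fibre of $Spin^c(3) = U(2)$ over $d\sigma_p$. What your approach buys is independence from the external reference and a cleaner explanation of the spin$^c$ reduction; what the paper's approach buys is brevity and the general codimension statement. One small point common to both arguments: the statement implicitly assumes $\sigma$ is a non-trivial involution (on each component of $M$), since for $\sigma = \mathrm{id}$ on a spin component the charge conjugation $j$ itself is a Quaternionic structure; your reduction to the $1$-dimensional fixed-component case makes this hypothesis visible rather than hiding it.
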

\begin{proof}
First recall that if $\sigma$ is an orientation preserving involution which preserves a spin structure, then every component of the fixed point set of $\sigma$ has codimension $0 \; ({\rm mod} \; 4)$ in the even case and codimension $2 \; ({\rm mod} \; 4)$ in the odd case \cite[Proposition 8.46]{ab}. If $M$ is a $3$-manifold and the fixed point set is non-empty, then it necessarily has codimension $2$, so any spin involution must be odd. The argument carries over to the spin$^c$ case, because any spin$^c$-structure can locally be reduced to a spin structure.
\end{proof}

We will treat spin structures (in dimensions $n = 3$ or $4$) as spin$^c$-structures equipped with a charge conjugation symmetry. In other words, a spin structure can be thought of as a spin$^c$-structure $\mathfrak{s}$ together with a map $j\colon P \to P^\tau$, where $P \to M$ is the principal $Spin^c(n)$-bundle corresponding to $\mathfrak{s}$ such that $j$ covers the identity on the frame bundle and $j^2 = -1$.

\begin{definition}
Let $M$ be a smooth, oriented manifold of dimension $n = 3$ or $4$ and assume that $b_1(M) = 0$. Let $\sigma$ be a smooth orientation preserving involution on $M$ and let $g$ be a $\sigma$-invariant metric. Let $\mathfrak{s}$ be a spin$^c$-structure on $M$ and $S$ the associated spinor bundle. We say that $\mathfrak{s}$ is
\begin{itemize}
\item{{\em equivariant type} if $\sigma^*(\mathfrak{s}) \cong \mathfrak{s}$. In this case $\sigma$ can be lifted to a linear involution on $S$.}
\item{{\em Real type} if $\sigma$ can be lifted to an antilinear involution on $S$. If $n=3$ and the fixed point set of $\sigma$ is non-empty, this is equivalent to saying $\sigma^*(\mathfrak{s}) \cong -\mathfrak{s}$.}
\item{{\em odd spin type} if $\mathfrak{s}$ is the spin$^c$-structure underlying a spin structure for which $\sigma$ is an odd involution. In this case $S$ admits a charge conjugation symmetry $j$ and a linear lift $\widetilde{\sigma}$ of $\sigma$ such that $j$ and $\widetilde{\sigma}$ commute and $\widetilde{\sigma}^2 = -1$.}
\end{itemize}
\end{definition}

We will use the letters $E,R,S$ to denote the above three types of spin$^c$-structure, $E$ for equivariant, $R$ for Real and $S$ for odd spin. We will sometimes use a letter $T$ to refer to any one of these three types, thus $T \in \{ E , R , S\}$.

\subsection{Symmetry groups}\label{sec:sym}

Let $M$ be a smooth, oriented manifold of dimension $n = 3$ or $4$ and assume that $b_1(M) = 0$. Let $\sigma$ be a smooth orientation preserving involution on $M$ and let $g$ be a $\sigma$-invariant metric. Let $\mathfrak{s}$ be a spin$^c$-structure on $M$ and $S$ the associated spinor bundle. Suppose that $\mathfrak{s}$ has type $T \in \{E,R,S\}$. Let $A$ be a spin$^c$-connection for $\mathfrak{s}$ and let $F_A$ denote the curvature. Assume that $\sigma^*(F_A) = F_A$ in type $E$, $\sigma^*(F_A) = -F_A$ in type $R$ and assume that $A$ is a spin connection in type $S$ (in particular, $F_A = 0$). We define a group $G^T_{\mathfrak{s}}$ whose elements are certain bundle maps of $S$ covering the identity or $\sigma$:
\begin{itemize}
\item{$G^E_{\mathfrak{s}}$ is the group generated by $S^1$ and all linear lifts of $\sigma$ to $S$ which preserve $A$.}
\item{$G^R_{\mathfrak{s}}$ is the group generated by $S^1$ and all antilinear lifts of $\sigma$ to $S$ which preserve $A$.}
\item{$G^S_{\mathfrak{s}}$ is the group generated by $S^1$, $j$ and all linear lifts of $\sigma$ to $S$ which preserve $A$.}
\end{itemize}

Note that different choices of spin$^c$-connection $A$ yield conjugate subgroups of endomorphisms of $S$.

In types $T = E, R$, $G^T_{\mathfrak{s}}$ is an extension of $\mathbb{Z}_2 = \langle \sigma \rangle$ by $S^1$:
\[
1 \to S^1 \to G^T_{\mathfrak{s}} \to \mathbb{Z}_2 \to 1
\]
In type $E$, $\sigma$ lifts to a linear involution, so $G^E_{\mathfrak{s}} \cong \mathbb{Z}_2 \times S^1$. In type $R$, $\sigma$ lifts to an antilinear involution, so $G^R_{\mathfrak{s}} \cong O(2)$.

In type $S$, $G^S_{\mathfrak{s}}$ is an extension of $\mathbb{Z}_2 = \langle \sigma \rangle$ by the group $Pin(2) = S^1 \cup j S^1$:
\[
1 \to Pin(2) \to G^T_{\mathfrak{s}} \to \mathbb{Z}_2 \to 1.
\]
Moreover, $\sigma$ admits a lift $\widetilde{\sigma}$ which squares to $-1$ and commutes with $Pin(2)$.

Throughout this paper all cohomology groups will use coefficient field $\mathbb{F} = \mathbb{Z}/2\mathbb{Z}$, unless stated otherwise. Of particular interest will be the ring $R_T^* = H^*_{G^T_{\mathfrak{s}}}(pt)$, the $G^T_{\mathfrak{s}}$-equivariant cohomology of a point (with $\mathbb{F}$-coefficients). We have:

\begin{proposition}\label{prop:R*}
We have isomorphisms:
\begin{itemize}
\item[(1)]{$R^*_E \cong \mathbb{F}[s,u]$, $deg(s) = 1$, $deg(u) = 2$.}
\item[(2)]{$R^*_R \cong \mathbb{F}[w_1,w_2]$, $deg(w_1) = 1$, $deg(w_2) = 2$.}
\item[(3)]{$R^*_S \cong \mathbb{F}[s,v,q]/( v(v+s)^2 )$, $deg(s) = 1$, $deg(v) = 1$, $deg(q) = 4$.}
\end{itemize}

In type $E$, the above isomorphism depends on a choice of splitting $\mathbb{Z}_2 \to G^S_{\mathfrak{s}}$. The two isomorphisms $R^*_E \cong \mathbb{F}[s,u]$ corresponding to the two different splittings are related by the automorphism $u \mapsto u + s^2$, $s \mapsto s$. In type $R$ and $S$, the isomorphisms do not depend on a choice of splitting.

\end{proposition}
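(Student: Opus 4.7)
The plan is to treat the three types separately. Types $E$ and $R$ reduce to standard cohomology computations for familiar Lie groups, while type $S$ requires a Serre spectral sequence together with the identification of one hidden multiplicative extension; the latter will be the main obstacle.

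For type $E$, since $G^E_{\mathfrak{s}} \cong S^1 \times \mathbb{Z}_2$, the K\"unneth formula gives $R^*_E \cong \mathbb{F}[u] \otimes \mathbb{F}[s]$. The class $s$ arises intrinsically from the quotient $G^E_{\mathfrak{s}} \to \mathbb{Z}_2$, whereas $u$ is the first Chern class of a complex character of $G^E_{\mathfrak{s}}$ extending the defining character of $S^1$; there are two such extensions, corresponding to the two lifts $\pm\widetilde{\sigma}$, and they differ by the sign representation of $\mathbb{Z}_2$ whose first Chern class mod $2$ is $s^2$, yielding $u \mapsto u + s^2$. For type $R$, the identification $G^R_{\mathfrak{s}} \cong O(2)$ reduces the claim to the classical $H^*(BO(2); \mathbb{F}) = \mathbb{F}[w_1, w_2]$, with $w_1, w_2$ the intrinsic Stiefel--Whitney classes of the defining representation, so no auxiliary choice is involved.

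For type $S$, I would apply the Serre spectral sequence of the fibration $BPin(2) \to BG^S_{\mathfrak{s}} \to B\mathbb{Z}_2$ arising from $1 \to Pin(2) \to G^S_{\mathfrak{s}} \to \mathbb{Z}_2 \to 1$. Since $\widetilde{\sigma}$ is central, the induced $\mathbb{Z}_2$-action on $H^*(BPin(2); \mathbb{F}) \cong \mathbb{F}[v_0, q_0]/(v_0^3)$ is trivial, so $E_2 = \mathbb{F}[s] \otimes \mathbb{F}[v_0, q_0]/(v_0^3)$. To see the sequence collapses I would lift both $v_0$ and $q_0$ to $H^*(BG^S_{\mathfrak{s}})$: the homomorphism $Pin(2) \to \mathbb{Z}_2$ underlying $v_0$ admits two extensions to $G^S_{\mathfrak{s}} \to \mathbb{Z}_2$, producing the classes $v$ and $v + s$, while $q_0$ lifts to $c_2(\rho) \bmod 2$ for the explicit representation $\rho \colon G^S_{\mathfrak{s}} \to U(2)$, $[p, a] \mapsto p \cdot i^a$, constructed using the presentation $G^S_{\mathfrak{s}} \cong (Pin(2) \times \mathbb{Z}_4)/\mathbb{Z}_2$.

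The main step is then to identify the hidden multiplicative extension: in $E_\infty$ one has $v_0^3 = 0$, but the true class $v^3 \in H^3(BG^S_{\mathfrak{s}})$ lies in filtration $\geq 1$ and is not determined by the spectral sequence alone. The plan is to restrict to the elementary abelian subgroup $\mathbb{Z}_4 \times \mathbb{Z}_2 = \langle \widetilde{\sigma}, j\widetilde{\sigma}\rangle \subset G^S_{\mathfrak{s}}$: under this restriction $v$ maps to the generator of $H^1(B\mathbb{Z}_2; \mathbb{F})$ while $s$ maps to the sum of both degree-one generators, and using $y_1^2 = 0$ for $y_1 \in H^1(B\mathbb{Z}_4; \mathbb{F})$ a direct calculation singles out $v^3 = s^2 v$ among the candidate degree-three monomials, which is exactly the relation $v(v+s)^2 = 0$. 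A Poincar\'e-series comparison with $\mathbb{F}[s, v, q]/(v(v+s)^2)$ confirms this is the only relation needed, yielding the stated isomorphism. Splitting-independence in type $S$ is automatic because $G^S_{\mathfrak{s}}$ is intrinsically defined and the extension $1 \to Pin(2) \to G^S_{\mathfrak{s}} \to \mathbb{Z}_2 \to 1$ admits no splitting (the lift $\widetilde{\sigma}$ squares to $-1$).
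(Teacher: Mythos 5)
Your overall architecture matches the paper's: K\"unneth for type $E$, the classical computation of $H^*(BO(2);\mathbb{F})$ for type $R$, and a Lyndon--Hochschild--Serre argument for $1 \to Pin(2) \to G^S_{\mathfrak{s}} \to \mathbb{Z}_2 \to 1$ in type $S$, with the collapse established by lifting $v_0$ (via the two extensions of $Pin(2) \to \mathbb{Z}_2$, giving $v$ and $v+s$) and $q_0$ (via an explicit rank-two complex representation, essentially the paper's $\mathbb{H}_i$). The problem is the step you yourself flag as the main one: determining the multiplicative extension $v^3 = as^3 + bs^2v + csv^2$. Restricting to $\langle \widetilde{\sigma}, j\widetilde{\sigma}\rangle \cong \mathbb{Z}_4 \times \mathbb{Z}_2$ (not elementary abelian, incidentally), where $v \mapsto t$ and $s \mapsto y_1 + t$ with $y_1^2 = 0$, one computes that $v^3, s^2v \mapsto t^3$ while $s^3, sv^2 \mapsto y_1t^2 + t^3$. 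The resulting constraints are $a+b+c = 1$ and $a+c = 0$, which admit two solutions: $(a,b,c) = (0,1,0)$, i.e.\ $v(v+s)^2 = 0$, and $(a,b,c) = (1,1,1)$, i.e.\ $(v+s)^3 = 0$. Indeed both candidate relations pull back to multiples of $y_1^2$ and hence die on this subgroup, so your single restriction cannot distinguish them; the claim that it ``singles out'' $v^3 = s^2v$ only holds if you assume a priori that $v^3$ is a single monomial, which you have no right to do. The paper closes exactly this gap with a second input: the restriction to the order-two subgroup generated by $i\widetilde{\sigma}$, on which $v \mapsto 0$ and $s \mapsto$ the generator, kills $v(v+s)^2$ but sends $(v+s)^3$ to $s^3 \neq 0$, forcing $a = 0$ (the paper then also runs a second spectral sequence for $O(2) \to G^S_{\mathfrak{s}} \to \mathbb{Z}_2$; in your setup, adding this one extra subgroup restriction to your existing one would suffice).

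A secondary issue: your justification of splitting-independence in type $S$ is wrong as stated. The extension $1 \to Pin(2) \to G^S_{\mathfrak{s}} \to \mathbb{Z}_2 \to 1$ \emph{does} split --- $j\widetilde{\sigma}$ squares to $+1$ and maps to the generator --- and the paper uses exactly this splitting. The genuine content of the independence claim is that the class $q$ (which is only determined modulo the ideal $(s,v)$ in degree $4$ by the requirement that it restrict to $q_0$) can be chosen canonically; your representation $\rho$ is built from the chosen lift $\widetilde{\sigma}$, and replacing $\widetilde{\sigma}$ by $-\widetilde{\sigma}$ swaps $\rho$ with its twist by a sign character. The paper verifies by a direct Stiefel--Whitney class computation that $w_4(\mathbb{H}_i) = w_4(\mathbb{H}_{-i})$, and some such verification is needed in your argument too; it is not automatic.
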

\begin{proof}
Case (1) follows easily from the K\"unneth theorem since $G^E_\mathfrak{s} \cong \mathbb{Z}_2 \times S^1$. For case (2), we note that since $G^R_{\mathfrak{s}} \cong O(2)$, $R^*_R$ is the ring of characteristic classes for $O(2)$ with coefficients in $\mathbb{F}$, which is well known to be a polynomial ring in the Stiefel--Whitney classes $w_1,w_2$.

Now we consider case (3). Recall \cite{man2} that $H^*_{Pin(2)}(pt) \cong \mathbb{F}[v,q]/(v^3)$ where $deg(v) = 1$, $deg(q) = 4$ and $H^*_{\mathbb{Z}_2}(pt) \cong \mathbb{F}[s]$ where $deg(s) = 1$. We have a short exact sequence 
\[
1 \to Pin(2) \to G^S_{\mathfrak{s}} \to \mathbb{Z}_2 = \langle \sigma \rangle \to 1.
\]
Choose a lift $\widetilde{\sigma}$ of $\sigma$ which squares to $-1$ and commutes with $Pin(2)$. This determines a splitting $\mathbb{Z}_2 \to G^S_{\mathfrak{s}}$ of the above sequence which maps $\sigma$ to $j \widetilde{\sigma}$. Then it follows from the Lyndon--Hochschild--Serre spectral sequence that
\[
H^*_{G^S_{\mathfrak{s}}}(pt) \cong \mathbb{F}[s,v,q]/(v^3+ \cdots)
\]
where $v^3 + \cdots$ denotes a degree $3$ polynomial in $v$ and $s$. The unknown terms in $v^3 + \cdots$ are some linear combination of $v^2s, vs^2$ and $s^3$.

The class $s$ is the pullback of the generator of $H^1_{\mathbb{Z}_2}(pt)$ to $H^*_{G^S_{\mathfrak{s}}}(pt)$ under the homomorphism $\phi_\sigma\colon {G^S_{\mathfrak{s}}} \to \mathbb{Z}_2$ which sends $\widetilde{\sigma}$ to $-1$ and $j$ to $+1$. Similarly, $v$ is the pullback under the homomorphism $\phi_j\colon G^S_{\mathfrak{s}} \to \mathbb{Z}_2$ which sends $\widetilde{\sigma}$ to $+1$ and $j$ to $-1$.

We must have a relation of the form $v^3 + Av^2 s + Bvs^2 + C s^3 = 0$ for some $A,B,C \in \mathbb{F}$. Consider the homomorphism $\psi\colon \mathbb{Z}_2 \to G^S_{\mathfrak{s}}$ which sends $-1$ to $i \widetilde{\sigma}$. Since the composition $\phi_\sigma \circ \psi\colon \mathbb{Z}_2 \to \mathbb{Z}_2$ is the identity, we see that $\psi^*(s)$ is the generator of $H^*_{\mathbb{Z}_2}(pt)$. On the other hand, $\phi_j \circ \psi\colon \mathbb{Z}_2 \to \mathbb{Z}_2$ is the trivial map and hence sends $v$ to $0$. Therefore the relation $v^3 + Av^2 s + Bvs^2 + C s^3 = 0$ when pulled back by $\psi$ gives $C s^3 = 0$, hence $C = 0$.

Next we consider the Lyndon--Hochschild--Serre spectral sequence $\{ E_r^{p,q} , d_r \}$ associated to $O(2) \to G^S_{\mathfrak{s}} \to \mathbb{Z}_2$. Here the map $\phi\colon G^S_{\mathfrak{s}} \to \mathbb{Z}_2$ is the product of $\phi_j$ and $\phi_\sigma$. Letting $\rho \in H^1_{\mathbb{Z}_2}(pt)$ denote a generator of the cohomology of this copy of $\mathbb{Z}_2$, then we have $\phi^*(\rho) = v+s$. Recall that $H^*_{O(2)}(pt) \cong \mathbb{F}[w_1,w_2]$. Since $H^1_{G^S_{\mathfrak{s}}}(pt) \cong \mathbb{F}^2$, there is no differential on $E_2^{0,1}$. Since $H^2_{G^S_{\mathfrak{s}}}(pt) \cong \mathbb{F}^3$, we must have a non-zero differential in the $(p,q) = (0,2)$ position. In fact, either $d_2(w_2) \neq 0$ or $d_2(w_2) = 0$, $d_3(w_2) \neq 0$. In the latter case we must then have $d_3(w_2) = \rho^3 = (v+s)^3$, which would imply that $v^3 + v^2s + vs^2 + s^3 = 0$ and hence $A=B=C=1$. But this contradicts our previous calculation which showed that $C=0$. Hence $d_2(w_2) \neq 0$. This means $d_2(w_2) = \rho^2 w_1$. Now since $w_1 = v \; ({\rm mod} \; (v+s))$ and $d_2(w_2) = \rho^2 w_1 = (v+s)^2 w_1$, we get $(v+s)^2 v = 0 \; ({\rm mod} \; (v+s)^3 )$. Hence either $v(v+s)^2 = 0$ or $v(v+s)^2 = (v+s)^3$. The former gives $v^3 + vs^2 = 0$, the latter gives $v^2 s + s^2 = 0$ which is not possible. So the relation satisfied by $v$ is $v(v+s)^2 = 0$ and hence
\[
H^*_{G^S_{\mathfrak{s}}}(pt) \cong \mathbb{F}[s,v,q]/( v(v+s)^2).
\]
We now show that the above isomorphism does not depend on the choice of lift $\widetilde{\sigma}$. For this it is helpful to interpret the ring $\mathbb{F}[s,v,q]/(v(v+s)^2)$ as follows. Define the ring $H^*_S$ to be the image of $H^*_{\mathbb{Z}_2 \times \mathbb{Z}_2}(pt)$ under the pullback $H^*_{\mathbb{Z}_2 \times \mathbb{Z}_2}(pt) \to H^*_{G^S_{\mathfrak{s}}}(pt)$ induced by the homomorphism $G^S_{\mathfrak{s}} \to \mathbb{Z}_2 \times \mathbb{Z}_2$. Then $H^*_S \cong \mathbb{F}[s,v]/(v(v+s)^2)$ and $H^*_{G^S_{\mathfrak{s}}}(pt) \cong H^*_S[q] \cong \mathbb{F}[s,v,q]/(v(v+s)^2)$, the isomorphism depending only on the choice of a class $q \in H^4_{G^S_{\mathfrak{s}}}(pt)$ which restricts to $q \in H^4_{Pin(2)}(pt)$ under the inclusion $Pin(2) \to G^S_{\mathfrak{s}}$. We show below that the class $q$ can be chosen in a manner that does not depend on the choice of lift. We can take $q$ to be $q = w_4( \mathbb{H}_{i})$, where $\mathbb{H}_i$ is defined as follows. Let $\mathbb{H}$ be the quaternion representation of $Pin(2)$. There are exactly two ways to extend this to a representation of $G$, namely we can take $\widetilde{\sigma}$ to act as $+i$ or as $-i$. We denote these two representations as $\mathbb{H}_i, \mathbb{H}_{-i}$. Notice however that if we change the lift $\widetilde{\sigma}$ to $-\widetilde{\sigma}$ then the roles of $\mathbb{H}_i, \mathbb{H}_{-i}$ are reversed. Now since $\mathbb{H}_i$ is a complex representation its odd Stiefel--Whitney classes are zero and its even ones are the mod $2$ reductions of the Chern classes. In particular $w_2( \mathbb{H}_i ) = c_1( \det(\mathbb{H}_i) )$. Now $e^{i\theta}$ and $j$ act trivially on $\det(\mathbb{H}_i)$, but $\widetilde{\sigma}$ acts as $\det(i,i) = i^2 = -1$. It follows that $w_2( \mathbb{H}_i ) = s^2$. Then, since $\mathbb{H}_{-i} = \mathbb{H}_i \otimes_{\mathbb{R}} \mathbb{R}_{1,-1}$, where $\mathbb{R}_{1,-1}$ denotes the representation where $j=1$, $\widetilde{\sigma} = -1$, it follows that $w_4( \mathbb{H}_{-i}) = q + s^2 w_2(\mathbb{H}_i)s^2 + s^4 = q + s^4 + s^4 = q$. Thus, changing the splitting does not change $q$.
\end{proof}

\section{Seiberg--Witten Floer theory for involutions}\label{sec:swinv}

In this section we construct equivariant versions of Seiberg--Witten Floer cohomology for a rational homology $3$-sphere equipped with a spin$^c$-structure of type $E,R$, or $S$.

Let $Y$ be a rational homology $3$-sphere (a compact, oriented, smooth $3$-manifold with $b_1(Y) = 0$) and let $\mathfrak{s}$ be a spin$^c$-structure on $Y$. Let $g$ be a metric on $Y$. Manolescu constructed an $S^1$-equivariant stable homotopy type $SWF(Y , \mathfrak{s} , g)$ \cite{man} and defined the Seiberg--Witten Floer cohomology $HSW^*(Y,\mathfrak{s})$ of $(Y,\mathfrak{s})$ by setting
\[
HSW^*(Y , \mathfrak{s}) = \widetilde{H}^{* + 2n(Y,\mathfrak{s},g)}_{S^1}( SWF(Y,\mathfrak{s},g))
\]
where
\[
n(Y , \mathfrak{s} , g) = \frac{1}{2} \eta(D_A) - \frac{1}{2}k(D_A) - \frac{1}{8}\eta_{sign}
\]
where $A$ is a flat spin$^c$-connection, $D_A$ is the associated Dirac operator, $\eta(D_A)$ is the eta invariant of $D_A$, $k(D_A)$ is the complex dimension of the kernel of $D_A$ and $\eta_{sign}$ is the eta invariant of the signature operator. The definition of $HSW^*(Y , \mathfrak{s})$ appears to depend on the choice of metric $g$ (and on some other auxiliary choices used to construct $SWF(Y,\mathfrak{s},g)$), but it is shown in \cite{man} that the cohomology groups for different choices are related by canonical isomorphisms. From the definition it is clear that the group $HSW^*(Y,\mathfrak{s})$ is a graded module over the ring $H^*_{S^1}(pt) \cong \mathbb{F}[u]$ where $deg(u) = 2$.

The Seiberg--Witten Floer cohomology groups are known to be isomorphic to other types of Floer cohomology, namely monopole Floer \cite{lima2} and Heegaard Floer homology \cite{klt1,klt2,klt3,klt4,klt5,cgh1,cgh2,cgh3,tau}. To be precise, we have isomorphisms of $\mathbb{F}[u]$-modules:
\[
HSW^*(Y , \mathfrak{s}) \cong HF_+^*(Y , \mathfrak{s})
\]
where $HF_+^*(Y , \mathfrak{s})$ denotes the plus version of Heegaard Floer cohomology with coefficients in $\mathbb{F}$. In the case that $\mathfrak{s}$ is a spin structure, the stable homotopy type $SWF(Y , \mathfrak{s} , g)$ can be constructed $Pin(2)$-equivariantly and following Manolescu \cite{man2} we can define a $Pin(2)$-equivariant version of Seiberg--Witten Floer cohomology
\[
HSW^*_{spin}(Y , \mathfrak{s}) = \widetilde{H}^{* + 2n(Y , \mathfrak{s} , g)}_{Pin(2)}( SWF(Y , \mathfrak{s} , g))
\]
(the notation $HSW^*_{spin}(Y , \mathfrak{s})$ is non-standard, but is chosen to avoid confusion with equivariant Seiberg--Witten Floer cohomology as defined below).

In \cite{bh}, we constructed an equivariant version of Seiberg--Witten Floer cohomology in the presences of a finite group $G$ acting orientation preservingly on $Y$ which preserves the isomorphism class of $\mathfrak{s}$. In more detail, suppose we fix a $G$-invariant flat spin$^c$-connection $A$. Then we define $G_{\mathfrak{s}}$ to be the group of all possible linear lifts of $G$ to the spinor bundle of $\mathfrak{s}$ which preserves $A$. This group is a central extension of $G$ by $S^1$. One can carry out Manolescu's construction of the stable homotopy type $SWF(Y , \mathfrak{s} , g)$ equivariantly with respect to $G_{\mathfrak{s}}$. Then we defined the $G$-equivariant Seiberg--Witten Floer cohomology of $(Y,\mathfrak{s})$ by
\[
HSW_G^*(Y , \mathfrak{s}) = \widetilde{H}^{* + 2n(Y,\mathfrak{s},g)}_{G_{\mathfrak{s}}}( SWF(Y,\mathfrak{s},g)).
\]

Suppose now that $G = \mathbb{Z}_2 = \langle \sigma \rangle$ is a group of order $2$ generated by an involution on $Y$. Suppose that $\mathfrak{s}$ has type $T \in \{E,R,S\}$. In Section \ref{sec:sym} we constructed a group $G^T_{\mathfrak{s}}$ which acts on the spinor bundle of $\mathfrak{s}$. By a straightforward extension of the construction in \cite{bh}, we can construct $SWF(Y , \mathfrak{s} , g)$ equivariantly with respect to the group $G^T_{\mathfrak{s}}$. Then we can use this to define an appropriate notion of Seiberg--Witten Floer cohomology of types $E,R,S$.

\begin{definition}
Let $Y$ be a rational homology $3$-sphere, $\sigma$ a smooth, orientation preserving diffeomorphism and $\mathfrak{s}$ a spin$^c$-structure of type $T \in \{E,R,S\}$. We define the {\em type $T$ Seiberg--Witten Floer cohomology} of $(Y , \mathfrak{s} , \sigma)$ by
\[
HSW^*_{T}(Y , \mathfrak{s} , \sigma) = \widetilde{H}^{* + 2n(Y , \mathfrak{s} , g)}_{G^T_{\mathfrak{s}}}(SWF(Y , \mathfrak{s} , g)).
\]
\end{definition}

By this definition $HSW^*_{T}(Y , \mathfrak{s} , \sigma)$ is an invariant of the triple $(Y , \mathfrak{s} , \sigma)$. Moreover it is a graded module over the ring $R^*_T$.

\begin{remark}
The case $T = E$ is simply the equivariant Seiberg--Witten Floer cohomology of $(Y , \mathfrak{s})$ as defined in \cite{bh} in the special case that $G \cong \mathbb{Z}_2$. The cases $T=R$ and $T=S$ are new.
\end{remark}

In all three types $T \in \{E,R,S\}$, $G^T_{\mathfrak{s}}$ contains a canonically defined $S^1$ subgroup. Let $Q^T_{\mathfrak{s}}$ be the quotient so that we have a short exact sequence
\[
1 \to S^1 \to G^T_{\mathfrak{s}} \to Q^T_{\mathfrak{s}} \to 1.
\]
In types $T = E,R$, we have $Q^T_{\mathfrak{s}} \cong \mathbb{Z}_2$ and in type $T=S$ we have $Q^S_{\mathfrak{s}} \cong \mathbb{Z}_2 \times \mathbb{Z}_2$. Note that $S^1$ acts trivially on $HSW^*(Y , \mathfrak{s})$ and so the natural action of $G^T_{\mathfrak{s}}$ on $HSW^*(Y , \mathfrak{s})$ factors though $Q^T_{\mathfrak{s}}$ and makes $HSW^*(Y , \mathfrak{s})$ into a $Q^T_{\mathfrak{s}}$-module.

\begin{theorem}\label{thm:ss}
Let $Y$ be a rational homology $3$-sphere, $\sigma$ a smooth, orientation preserving diffeomorphism and $\mathfrak{s}$ a spin$^c$-structure of type $T \in \{E,R,S\}$. There is a spectral sequence $E_r^{p,q}$ abutting to $HSW^*_T(Y,\mathfrak{s},\sigma)$ whose second page is
\[
E_2^{p,q} = H^p( BQ^T_{\mathfrak{s}} ; HSW^q(Y , \mathfrak{s}) ).
\]
\end{theorem}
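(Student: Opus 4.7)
Both cohomology theories in the statement are defined as shifted reduced equivariant cohomology of a single pointed $G^T_{\mathfrak{s}}$-equivariant stable homotopy type $X := SWF(Y,\mathfrak{s},g)$, with the \emph{same} grading shift $2n := 2n(Y,\mathfrak{s},g)$:
\[
HSW^*(Y,\mathfrak{s}) = \widetilde{H}^{*+2n}_{S^1}(X), \qquad HSW^*_T(Y,\mathfrak{s},\sigma) = \widetilde{H}^{*+2n}_{G^T_{\mathfrak{s}}}(X).
\]
Consequently, it suffices to construct a spectral sequence
\[
E_2^{p,q} = H^p(BQ^T_{\mathfrak{s}}; \widetilde{H}^q_{S^1}(X)) \Rightarrow \widetilde{H}^{p+q}_{G^T_{\mathfrak{s}}}(X)
\]
in the equivariant cohomology of $X$; regrading by $-2n$ in the vertical direction then yields the spectral sequence of the theorem.

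The plan is to produce this as the Serre spectral sequence of a fibration coming from the group extension
\[
1 \longrightarrow S^1 \longrightarrow G^T_{\mathfrak{s}} \longrightarrow Q^T_{\mathfrak{s}} \longrightarrow 1.
\]
Since $EG^T_{\mathfrak{s}}$ is contractible and $S^1 \subset G^T_{\mathfrak{s}}$ is normal, $EG^T_{\mathfrak{s}}$ serves as a model for $ES^1$, and the quotient $EG^T_{\mathfrak{s}}/S^1$ carries a free residual $Q^T_{\mathfrak{s}}$-action whose further quotient is $BG^T_{\mathfrak{s}}$, sitting over $BQ^T_{\mathfrak{s}}$. Applying the Borel construction on the pointed $G^T_{\mathfrak{s}}$-space $X$ produces a fibre sequence
\[
X_{hS^1} \longrightarrow X_{hG^T_{\mathfrak{s}}} \longrightarrow BQ^T_{\mathfrak{s}}
\]
whose associated Serre spectral sequence has $E_2$-page $H^p(BQ^T_{\mathfrak{s}}; \widetilde{H}^q(X_{hS^1}))$ converging to $\widetilde{H}^{p+q}(X_{hG^T_{\mathfrak{s}}})$, with coefficients in the local system determined by the natural $Q^T_{\mathfrak{s}}$-action on the fibre cohomology. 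By construction this action is the residual action of $G^T_{\mathfrak{s}}/S^1 = Q^T_{\mathfrak{s}}$ on $\widetilde{H}^q_{S^1}(X) = HSW^{q-2n}(Y,\mathfrak{s})$, which is precisely the $Q^T_{\mathfrak{s}}$-module structure referred to in the paragraph preceding the theorem.

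The main obstacle is that $SWF(Y,\mathfrak{s},g)$ is not a space but a formal desuspension, by a $G^T_{\mathfrak{s}}$-representation sphere, of a finite-dimensional $G^T_{\mathfrak{s}}$-equivariant Conley index, so the Borel construction and Serre spectral sequence must be interpreted at the spectrum level. This is routine: smashing with representation spheres commutes with the homotopy-quotient constructions involved, so the desuspensions used to define $SWF$ shift the spectral sequence in a controlled way without altering its existence or its $E_2$-page. Granted this verification, the regrading described above produces the spectral sequence of the theorem.
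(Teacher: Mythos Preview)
Your proposal is correct and is essentially the argument the paper has in mind: the paper's proof consists of the single sentence ``This is a straightforward extension of \cite[Theorem 3.2]{bh}'', and that result is precisely the Serre spectral sequence of the fibration $X_{hS^1} \to X_{hG_{\mathfrak{s}}} \to BQ_{\mathfrak{s}}$ arising from the group extension, applied to the Conley index and adjusted for the formal desuspension.
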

\begin{proof}
This is a straightforward extension of \cite[Theorem 3.2]{bh}.
\end{proof}

We say that $Y$ is an {\em $L$-space} with respect to spin$^c$-structure $\mathfrak{s}$ and coefficient group $\mathbb{F}$ if $HSW^*(Y , \mathfrak{s})$ (taken with $\mathbb{F}$-coefficients) is a free $\mathbb{F}[u]$-module of rank $1$, where $\mathbb{F}[u] = H^*_{S^1}(pt)$ is the $S^1$-equivariant cohomology of a point.

\begin{corollary}
Suppose $Y$ is an $L$-space with respect to $\mathfrak{s}$. Then $HSW^*(Y , \mathfrak{s})$ is a free $R^*_T$-module of rank $1$ with generator in degree $d(Y,\mathfrak{s})$, the $d$-invariant of $Y$ (with respect to $\mathfrak{s}$ and $\mathbb{F}$).
\end{corollary}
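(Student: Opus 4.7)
The natural strategy is to feed the $L$-space hypothesis into the spectral sequence of Theorem \ref{thm:ss} and compare it with the Serre spectral sequence computing $R^*_T$. Since $Y$ is an $L$-space with respect to $\mathfrak{s}$, $HSW^*(Y,\mathfrak{s}) \cong \mathbb{F}[u]$ as a graded $\mathbb{F}[u]$-module, with generator in degree $d(Y,\mathfrak{s})$. First I would observe that the $Q^T_{\mathfrak{s}}$-action on $HSW^*(Y,\mathfrak{s})$ is automatically trivial: each graded component is at most one-dimensional over $\mathbb{F} = \mathbb{F}_2$, and the only $\mathbb{F}_2$-linear automorphism of such a space is the identity. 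Hence the $E_2$ page of Theorem \ref{thm:ss} collapses to a tensor product,
\[
E_2^{p,q} = H^p(BQ^T_{\mathfrak{s}};\mathbb{F}) \otimes HSW^q(Y,\mathfrak{s}).
\]

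The key step is to compare this with the Serre spectral sequence of the fibration $BS^1 \to BG^T_{\mathfrak{s}} \to BQ^T_{\mathfrak{s}}$, which has $E_2$-page $H^p(BQ^T_{\mathfrak{s}};\mathbb{F}) \otimes H^q(BS^1;\mathbb{F})$ and abuts to $R^*_T = H^*_{G^T_{\mathfrak{s}}}(pt)$. Because the spectral sequence of Theorem \ref{thm:ss} is a module over the latter (both arising as Borel spectral sequences: the one for $(Y,\mathfrak{s},\sigma)$ is computed from the Borel construction on $SWF(Y,\mathfrak{s},g)$, and the module structure comes from the natural $R^*_T$-action on the target), and because $HSW^*(Y,\mathfrak{s})$ is free of rank one over $H^*_{S^1}(pt) = \mathbb{F}[u]$ shifted by $d(Y,\mathfrak{s})$, I would argue that the two spectral sequences can be identified page by page after the degree shift. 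In particular, every differential of our spectral sequence is determined by the corresponding differential of the Serre spectral sequence (recall from Proposition \ref{prop:R*} that these differentials are nontrivial in type $S$, where $d_3(u) = v(v+s)^2$). This identification forces the $E_\infty$ page to be (the associated graded of) $R^*_T$, shifted by $d(Y,\mathfrak{s})$.

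Finally I would pass from the associated graded to the unfiltered module. Pick any element $x \in HSW^{d(Y,\mathfrak{s})}_T(Y,\mathfrak{s},\sigma)$ lifting the generator on the $E_\infty$-page. Then the $R^*_T$-linear map $R^*_T \to HSW^*_T(Y,\mathfrak{s},\sigma)$ sending $r \mapsto r\cdot x$ induces, on associated graded, the isomorphism identified above. Since in each total degree the filtration is finite, this yields an isomorphism $R^*_T[d(Y,\mathfrak{s})] \xrightarrow{\sim} HSW^*_T(Y,\mathfrak{s},\sigma)$ of graded $R^*_T$-modules, as required.

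The main obstacle is the rigorous identification in the middle paragraph: one must verify that the spectral sequence of Theorem \ref{thm:ss} is genuinely multiplicative and respects the module structure over the corresponding spectral sequence converging to $R^*_T$. This should be a formal consequence of the construction of $SWF(Y,\mathfrak{s},g)$ as a $G^T_{\mathfrak{s}}$-equivariant object in \cite{bh}, combined with the standard naturality of Borel-type spectral sequences, but it is the step that requires the most care.
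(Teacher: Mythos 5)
Your proposal is correct and follows essentially the same route as the paper: run the $L$-space hypothesis through the spectral sequence of Theorem \ref{thm:ss}, use the module structure over the Serre spectral sequence of $BS^1 \to BG^T_{\mathfrak{s}} \to BQ^T_{\mathfrak{s}}$ (together with the fact that the generator lies in the bottom nonzero row, hence is a permanent cycle) to identify the two page by page up to the shift by $d(Y,\mathfrak{s})$, and resolve the extension problem by finiteness of the filtration in each degree. If anything, your account is more accurate than the paper's one-line sketch, which speaks of degeneration at $E_2$ and defers to \cite[Theorem 3.5]{bh}: in type $S$ the sequence cannot degenerate at $E_2$ (the Poincar\'e series of $\mathbb{F}[s,v]\otimes\mathbb{F}[u]$ and of $R^*_S$ differ) and instead carries the transgression $d_3(u\theta)=v(v+s)^2\theta$ inherited from the ring spectral sequence, exactly as you note.
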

\begin{proof}
In the case $T = E$, this follows from \cite[Theorem 3.5]{bh}. The idea of the proof is that if $Y$ is an $L$-space, then the spectral sequence of Theorem \ref{thm:ss} can be shown to degenerate as $E_2$. The cases $T = R, S$ are very similar so we omit the proof.
\end{proof}

For $T \in \{E,R,S\}$, let $H^*_T$ be the graded ring which is the image of the map $H^*_{Q^T_{\mathfrak{s}}}(pt) \to H^*_{G^T_{\mathfrak{s}}}(pt) = R^*_T$ induced by the quotient map $G^T_\mathfrak{s} \to Q^T_\mathfrak{s}$. Then from Proposition \ref{prop:R*} it follows that:
\begin{itemize}
\item[(1)]{$H^*_E \cong \mathbb{F}[s]$ and $R^*_E \cong H^*_E[u]$.}
\item[(2)]{$H^*_R \cong \mathbb{F}[w_1]$ and $R^*_E \cong H^*_R[w_2]$.}
\item[(3)]{$H^*_S \cong \mathbb{F}[s,v]/(v(v+s)^2)$ and $R^*_S \cong H^*_S[q]$.}
\end{itemize}
Let $H^{* \ge j}_T = \bigoplus_{k \ge j} H^k_T$. Let $I^*_T = H^{* \ge 1}_T$. If $M^*$ is a graded $H^*_T$-module (in particular, $M^*$ could be a graded $R^*_T$-module, which by restriction can be regarded as a $H^*_T$-module), then we let $\widehat{M}^*$ denote the $I^*_T$-adic completion of $M^*$.

\begin{lemma}\label{lem:loc}
Let $S$ be the multiplicative subset of $R^*_E = \mathbb{F}[s,u]$ generated by $u$ and $u+s^2$. Let $M^*$ be a graded $R^*_E$-module. Then we have an isomorphism $\widehat{ S^{-1} M^* } \cong \widehat{ u^{-1} M^* }$, where $u^{-1}M^*$ denotes the localisation of $M^*$ with respect to the multiplicative subset generated by $u$. In particular, if $M^* = R^*_E$, then $\widehat{ S^{-1} R^*_E } \cong (\mathbb{F}[u,u^{-1}])[[s]]$.
\end{lemma}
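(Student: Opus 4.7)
My plan is to reduce to working modulo $s^n$ for each $n \ge 1$ and then take an inverse limit. The natural map $u^{-1}M^* \to S^{-1}M^*$ is obtained by further localising at $u+s^2$, and I will show that this map becomes an isomorphism after $s$-adic completion. Once that is established, the explicit identification $\widehat{S^{-1}R^*_E} \cong (\mathbb{F}[u,u^{-1}])[[s]]$ in the case $M^* = R^*_E$ will be immediate.

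The first step is the standard fact that for any multiplicative subset $T \subset R^*_E$, localisation commutes with the quotient by $(s^n)$, giving a natural isomorphism
\[
(T^{-1}M^*)/s^n(T^{-1}M^*) \;\cong\; T^{-1}(M^*/s^n M^*).
\]
Applied with $T = \{u^k\}$ and with $T = S$, the problem reduces to showing that the canonical map $u^{-1}(M^*/s^n M^*) \to S^{-1}(M^*/s^n M^*)$ is an isomorphism for each $n$, naturally in $n$. Equivalently, I must show that $u+s^2$ already acts as a unit on $u^{-1}(M^*/s^n M^*)$, so that no further localisation is needed.

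For this, write $u+s^2 = u(1 + u^{-1}s^2)$ in $u^{-1}(M^*/s^n M^*)$. Since $(u^{-1}s^2)^k = u^{-k}s^{2k}$ vanishes as soon as $2k \ge n$ (because $s^n = 0$ in the quotient), the element $u^{-1}s^2$ is nilpotent, and the finite sum $\sum_{k=0}^{\lceil n/2\rceil - 1} (u^{-1}s^2)^k$ is a well-defined inverse to $1 + u^{-1}s^2$ (working over $\mathbb{F} = \mathbb{Z}/2$, so there are no sign issues). Hence $u+s^2$ is a unit, further localisation by $u+s^2$ changes nothing, and the natural map above is an isomorphism. Passing to the inverse limit over $n$ yields the desired isomorphism $\widehat{S^{-1}M^*} \cong \widehat{u^{-1}M^*}$.

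For the final clause, specialise to $M^* = R^*_E$: here $u^{-1}R^*_E = \mathbb{F}[s, u, u^{-1}]$, and its $s$-adic completion, viewed as formal power series in $s$ with coefficients in the degree-zero-in-$s$ ring $\mathbb{F}[u, u^{-1}]$, is precisely $(\mathbb{F}[u, u^{-1}])[[s]]$. I do not expect any serious obstacle in this argument; the one point requiring a moment of care is the commutation of localisation with the $(s^n)$-quotient, which is standard commutative algebra.
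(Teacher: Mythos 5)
Your proof is correct and uses essentially the same idea as the paper: the inverse of $u+s^2$ in the completed $u$-localisation is the geometric series $u^{-1}+u^{-2}s^2+u^{-3}s^4+\cdots$, which the paper writes directly in $\widehat{u^{-1}M^*}$ while you realise it via the finite truncations modulo $s^n$ and an inverse limit. The extra care about localisation commuting with the quotient by $(s^n)$ is fine but does not change the substance of the argument.
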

\begin{proof}
Since $S$ is generated by $u$ and $u+s^2$, it suffices to show that $u+s^2$ is invertible in $\widehat{ u^{-1} M^*}$. In fact, we have $1 = (u+s^2)( u^{-1} + u^{-2}s^2 + u^{-3}s^4 + \cdots )$.
\end{proof}

\begin{proposition}\label{prop:loc}
Let $Y$ be a rational homology $3$-sphere, $\sigma$ a smooth, orientation preserving diffeomorphism and $\mathfrak{s}$ a spin$^c$-structure of type $T \in \{E,R,S\}$. Then we have an isomorphism:
\begin{itemize}
\item[(1)]{$\widehat{ u^{-1} HSW}^*_E(Y , \mathfrak{s} , \sigma) \cong (\mathbb{F}[u,u^{-1}])[[s]]\theta$ for some homogeneous element $\theta \neq 0 \; ({\rm mod} \; I^*_E)$.}
\item[(2)]{$\widehat{ w_2^{-1} HSW}^*_R(Y , \mathfrak{s} , \sigma) \cong (\mathbb{F}[w_2,w_2^{-1}])[[w_1]]\theta$ for some homogeneous element $\theta \neq 0 \; ({\rm mod} \; I^*_R)$.}
\item[(3)]{$\widehat{ q^{-1} HSW}^*_S(Y , \mathfrak{s} , \sigma) \cong (\mathbb{F}[q,q^{-1}])[[s,v]]/(v(v+s)^2)\theta$ for some homogeneous element $\theta \neq 0 \; ({\rm mod} \; I^*_S)$.}
\end{itemize}
\end{proposition}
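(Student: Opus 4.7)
The plan is to apply the localization theorem in equivariant cohomology to the $G^T_\mathfrak{s}$-equivariant Seiberg--Witten Floer spectrum. The guiding principle is that since $Y$ is a rational homology $3$-sphere, the Seiberg--Witten configuration space has a unique reducible solution (up to gauge), and in Manolescu's construction this reducible gives a distinguished $S^1$-fixed point (in types $E$ and $R$) or $Pin(2)$-fixed point (in type $S$) in $SWF(Y, \mathfrak{s}, g)$.

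First I would identify the relevant fixed point set. In types $E$ and $R$, taking the central $S^1 \subset G^T_\mathfrak{s}$, one checks that $SWF(Y,\mathfrak{s},g)^{S^1}$, after the Manolescu desuspension by $2n(Y, \mathfrak{s}, g)$, reduces to a single point with residual action of $Q^T_\mathfrak{s}$. In type $S$, I would instead use the subgroup $Pin(2) \subset G^S_\mathfrak{s}$ and check that the $Pin(2)$-fixed locus is again a single point carrying a residual $\mathbb{Z}_2$-action.

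Next I would invoke the Atiyah--Bott--Segal localization theorem. For type $E$, inverting the equivariant Euler class $u$ of the normal representation to the fixed point set kills the contribution of the free $S^1$-orbits, and by Lemma~\ref{lem:loc} this is equivalent to inverting the intrinsic set $S$ generated by $u$ and $u+s^2$ (so the answer does not depend on the splitting used in Proposition~\ref{prop:R*}). Completion at the augmentation ideal $I^*_E$ then absorbs the finite-group direction $s$ and produces $(\mathbb{F}[u, u^{-1}])[[s]]$. The rank-$1$ generator $\theta$ is the image under restriction-to-the-fixed-point of a generating Thom class, and its nontriviality modulo $I^*_T$ reflects the fact that the reducible is genuinely fixed by the whole group $G^T_\mathfrak{s}$. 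For type $R$ the same argument goes through with $w_2$, the Euler class of the standard $2$-dimensional representation of $O(2) = G^R_\mathfrak{s}$, playing the role of $u$. For type $S$ the normal representation at the $Pin(2)$-fixed locus is a copy of $\mathbb{H}$, whose equivariant Euler class is the class $q$ identified in the proof of Proposition~\ref{prop:R*}; inverting $q$ and completing at $I^*_S = (s,v)$ therefore yields $(\mathbb{F}[q, q^{-1}])[[s, v]]/(v(v+s)^2)$.

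The main obstacle will be the careful identification of the $S^1$- (respectively $Pin(2)$-) fixed set inside the Conley index $SWF(Y, \mathfrak{s}, g)$, together with correct bookkeeping of the desuspension by $2n(Y, \mathfrak{s}, g)$ so that the generator $\theta$ lands in a well-defined absolute degree independent of auxiliary choices. A secondary subtlety in type $E$ is the splitting-dependence of the identification $R^*_E \cong \mathbb{F}[s,u]$, which is precisely why Lemma~\ref{lem:loc} is needed: it shows that after completion the natural splitting-independent localization collapses to inversion of $u$ alone. Once these points are settled, freeness of rank one follows because the localized fixed-point contribution is a single $Q^T_\mathfrak{s}$-orbit consisting of a point, whose equivariant cohomology is a free module of rank one over $H^*_{Q^T_\mathfrak{s}}(pt)$, and tensoring with the appropriate Laurent ring in the inverted Euler class yields the asserted form.
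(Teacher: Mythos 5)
For types $E$ and $R$ your argument is essentially the paper's: one applies the localisation theorem (tom Dieck \cite[III (3.8)]{die}) to the inclusion $C^{S^1}\subset C$ of the $S^1$-fixed set of the Conley index, uses that $C^{S^1}$ is a $G^T_{\mathfrak{s}}$-representation sphere so that the Thom isomorphism gives a free rank-one module, and then invokes Lemma \ref{lem:loc} to replace the multiplicative set $S=\langle u,\,u+s^2\rangle$ by $u$ alone after $I^*_E$-adic completion. One small correction of emphasis: the reason $u$ alone does not suffice before completion is not only the splitting-dependence but the presence of orbits in $C\setminus C^{S^1}$ with stabiliser $\langle(-1,\sigma)\rangle$, on which $u$ restricts to a nonzero class while $u+s^2$ restricts to zero; this is exactly why the hypotheses of the localisation theorem force the larger set $S$.

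Your type $S$ argument, however, has a genuine gap. You propose to localise to the $Pin(2)$-fixed locus and invert $q$, viewing $q$ as the equivariant Euler class of the normal representation. But the normal directions to $C^{Pin(2)}$ inside $C$ are not all copies of $\mathbb{H}$: the $S^1$-fixed part $C^{S^1}$ is the one-point compactification of a sum of copies of the sign representation $\widetilde{\mathbb{R}}$ of $j$, whose equivariant Euler class is $v$ (or $v+s$), and these classes are not invertible --- indeed $v(v+s)^2=0$ in $R^*_S$. Equivalently, the localisation theorem cannot be applied to $C^{Pin(2)}\subset C$ with the multiplicative set $\{q^n\}$ because points of $C^{S^1}\setminus C^{Pin(2)}$ have stabiliser containing $S^1$, and $q$ restricts to $u^2\neq 0$ in $H^*_{S^1}(pt)$. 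Concretely, if $C^{S^1}=(\widetilde{\mathbb{R}}^a)^+$ with $a\ge 3$, then after inverting $q$ the restriction map to the $Pin(2)$-fixed point sends the Thom class to $v^a=0$, so $q^{-1}\widetilde{H}^*(C^{S^1})\to q^{-1}\widetilde{H}^*(C^{Pin(2)})$ is not an isomorphism. The correct route, as in Manolescu's $Pin(2)$-equivariant computation and as intended by the paper's ``the other cases are similar,'' is to localise only as far as the $S^1$-fixed set $C^{S^1}$, which is itself a $G^S_{\mathfrak{s}}$-representation sphere, and obtain the rank-one free module there directly from the Thom isomorphism; the generator $\theta$ is the Thom class of that representation sphere, not the restriction to a $Pin(2)$-fixed point.
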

\begin{proof}
We consider case (1), the other cases being similar. Here we have $G^E_{\mathfrak{s}} \cong \mathbb{Z}_2 \times S^1$. Recall \cite[\textsection 3]{bh} that $SWF(Y , \mathfrak{s} , g)$ has the form $SWF(Y , \mathfrak{s} , g) = \Sigma^{-V} C$ where $V$ is some finite dimensional representation of $G^E_{\mathfrak{s}}$ and $C$ is the Conley index of a finite-dimensional approximation of the gradient flow of the Chern--Simons--Dirac functional. 

Let $S$ be the multiplicative subset of $H^*_{G^E_{\mathfrak{s}}}(pt) \cong \mathbb{F}[s,u]$ generated by $u$ and $u+s^2$. We will apply the localisation theorem \cite[III (3.8)]{die} to the inclusion $\iota\colon C^{S^1} \to C$, where $C^{S^1}$ is the fixed point set of the $S^1$ subgroup. In order to apply the localisation theorem we need to check that every orbit type in $C \setminus C^{S^1}$ is of the form $G^E_{\mathfrak{s}}/H$ where $H \in \mathfrak{F}(S) = \{ H \; | \; S \cap Ker( H^*_{G^E_{\mathfrak{s}}}(pt) \to H^*_H(pt)) \neq \emptyset \}$. Since $S^1$ acts freely on $C \setminus C^{S^1}$ \cite[\textsection 2,3]{bh}, we see that the only possible stabilisers are $H = \{1\} , \langle \sigma \rangle, \langle (-1 , \sigma)\rangle$. In the case $H = \langle \sigma \rangle$, $u \in Ker ( H^*_{G^E_{\mathfrak{s}}}(pt) \to H^*_H(pt)) )$. In the case $H = \langle (-1 , \sigma) \rangle$, $u+s^2 \in Ker ( H^*_{G^E_{\mathfrak{s}}}(pt) \to H^*_H(pt)) )$. So the localisation theorem applies and says that $\iota^*\colon S^{-1} \widetilde{H}^*_{G^E_{\mathfrak{s}}}( C^{S^1}) \to S^{-1} \widetilde{H}^*_{G^E_{\mathfrak{s}}}(C)$ is an isomorphism. It can be shown (see \cite[\textsection 2,3]{bh}) that $C^{S^1}$ is the one-point compactification of a finite-dimensional representation of $G^E_{\mathfrak{s}}$, hence $\widetilde{H}^*_{G^E_{\mathfrak{s}}}(C^{S^1}) \cong \mathbb{F}[s,u] \theta$ for some $\theta$, by the Thom isomorphism. So the localisation theorem gives an isomorphism $S^{-1} \widetilde{H}^*_{G^E_{\mathfrak{s}}}(pt) \cong S^{-1} \mathbb{F}[s,u]\theta$. Applying Lemma \ref{lem:loc} gives
\begin{align*}
\widehat{ u^{-1} \widetilde{H}^*_{G^E_{\mathfrak{s}}}( C ) } &\cong \widehat{ S^{-1} \widetilde{H}^*_{G^E_{\mathfrak{s}}}(C)} \\
& \cong \widehat{ S^{-1} \mathbb{F}[s,u] \theta } \\
& \cong (\mathbb{F}[u,u^{-1}])[[s]]\theta.
\end{align*}

The result follows since $HSW^*_E(Y , \mathfrak{s} , \sigma)$ and $\widetilde{H}^*_{G^E_{\mathfrak{s}}}(C)$ are equal, up to a grading shift.
\end{proof}

\begin{remark}
In \cite{bh} the localisation theorem \cite[III (3.8)]{die} was incorrectly applied to the multiplicative subset generated by $u$ alone. However this error does not affect any of the results of \cite{bh} as we now explain. We saw in the proof of Proposition \ref{prop:loc}, in the case $G = \mathbb{Z}_2$, one needs to use the multiplicative subset generated by $u$ and $u + s^2$. More generally, for any finite group $G$, one should localise with respect to the multiplicative set $S$ generated by elements of the form $u + a$, where $a$ ranges over $H^2_G(pt)$. However, the analogue of Lemma \ref{lem:loc} shows that localising with respect to $S$ and localising with respect to $u$ alone become isomorphic upon completion in the $I^*$-adic topology. Now we observe that the definition of the $\delta$-invariants \cite[Definition 3.7]{bh} in that paper only uses truncation to finite order in the $I^*$-adic topology. Hence they are unaffected by taking a completion.
\end{remark}

The element $\theta$ in Proposition \ref{prop:loc} is not uniquely determined, however it is unique modulo $I^*_T$ and powers of $u$ (or $w_2$ or $q$). That is, if $\theta, \theta'$ are two elements for which the isomorphism in Proposition \ref{prop:loc} holds, then $\theta' = \xi^k \theta \; ({\rm mod} \; I^*_T)$ for some $k \in \mathbb{Z}$, where $\xi = u, w_2$, or $q$ depending on $T$.

\begin{definition}\label{def:delta}
Let $Y$ be a rational homology $3$-sphere, $\sigma$ a smooth, orientation preserving diffeomorphism and $\mathfrak{s}$ a spin$^c$-structure of type $T \in \{E,R,S\}$. Set $\xi = u$, $w_2$, or $q$ according to whether $T$ is $E,R$, or $S$. Let 
\[
\varphi\colon \widehat{ \xi^{-1} HSW^*_T(Y , \mathfrak{s} , \sigma)} \to \widehat{H}^*_T [ \xi , \xi^{-1}]\theta
\]
be an isomorphism as in Proposition \ref{prop:loc}. Let $c \in H^*_T$ be a non-zero homogeneous element of degree $|c|$. Define
\[
d^T_c(Y ,\mathfrak{s} , \sigma) = \min\{ i \; | \; \exists x \in HSW^i_T(Y , \mathfrak{s} , \sigma) \;  \xi^n \varphi(x) = c \xi^k \theta \; ({\rm mod} \; (I^*_T)^{|c|+1})  \} - |c|.
\]
It is easily verified that this does not depend on the choice of $\varphi$ or $\theta$.

We set $d^T_0(Y , \mathfrak{s} , \sigma) = 0$. Finally, if $c \in H^*_T$ is any non-zero element, write $c = c_0 + c_1 + \cdots + c_k$, where $c_i$ has degree $i$. Then we set $d^T_c(Y , \mathfrak{s} , \sigma) = \max_i \{ d^T_{c_i}(Y , \mathfrak{s} , \sigma) \}$.

For convenience we also define $\delta^T_{c}(Y , \mathfrak{s} , \sigma)$ by setting $\delta^T_c(Y , \mathfrak{s} , \sigma) = d^T_c(Y , \mathfrak{s} , \sigma)/2$.

\end{definition}

The following properties of the delta-invariants follows by a straightforward extension of \cite[\textsection 3.5-3.7]{bh}.

\begin{proposition}\label{prop:deltaprop}
The $\delta$-invariants satisfy the following properties:
\begin{itemize}
\item[(1)]{$\delta^T_{c_1 + c_2}(Y , \mathfrak{s} , \sigma) \le \max\{ \delta^T_{c_1}(Y , \mathfrak{s} , \sigma) + \delta^T_{c_2}(Y , \mathfrak{s} , \sigma) \}$.}
\item[(2)]{$\delta^T_{c_1 c_2}(Y , \mathfrak{s} , \sigma) \le \min\{ \delta^T_{c_1}(Y , \mathfrak{s} , \sigma) , \delta^T_{c_2}(Y , \mathfrak{s} , \sigma) \}$.}
\item[(3)]{$\delta^T_{1}(Y , \mathfrak{s} , \sigma) \ge \delta(Y , \mathfrak{s})$, where $1 \in H^*_T$ is the identity element and $\delta(Y , \mathfrak{s})$ is the ordinary $\delta$-invariant of $(Y , \mathfrak{s})$.}
\item[(4)]{If $Y$ is an $L$-space with respect to $\mathfrak{s}$ and $\mathbb{F}$, then $\delta^T_c(Y , \mathfrak{s} , \sigma) = \delta(Y , \mathfrak{s})$ for all non-zero $c$.}
\item[(5)]{For all $T$ and $c$, $\delta^T_c(Y  , \mathfrak{s} , \sigma) = \delta(Y , \mathfrak{s}) \; ({\rm mod} \; \mathbb{Z})$. Furthermore $\delta^S_c(Y  , \mathfrak{s} , \sigma) = \mu(Y , \mathfrak{s}) \; ({\rm mod} \; 2\mathbb{Z})$ where $\mu(Y , \mathfrak{s})$ is the generalised Rokhlin invariant, as defined in \cite{man2}.}
\end{itemize}

\end{proposition}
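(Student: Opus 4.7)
The plan is to verify each of the five properties uniformly in $T \in \{E,R,S\}$, following the template of \cite[\textsection 3.5-3.7]{bh}, where the type-$E$ case was handled. The key observation is that Definition \ref{def:delta} is formally identical across the three types, depending only on the abstract structure of $HSW^*_T(Y,\mathfrak{s},\sigma)$ as a graded $R^*_T$-module together with the localisation isomorphism of Proposition \ref{prop:loc}. Hence once the ring $R^*_T$ and the distinguished multiplicative element $\xi \in R^*_T$ have been identified via Proposition \ref{prop:R*}, the arguments of \cite{bh} transfer to types $R$ and $S$ essentially verbatim.

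Properties (1) and (2) are formal consequences of the definition of $d^T_c$. For (2), let $c_1, c_2 \in H^*_T$ be homogeneous and non-zero, and let $x \in HSW^{|c_1| + d^T_{c_1}}_T(Y,\mathfrak{s},\sigma)$ be a witness for $d^T_{c_1}$, so that $\xi^n \varphi(x) \equiv c_1 \xi^k \theta \pmod{(I^*_T)^{|c_1|+1}}$ for some $n, k$. Since $\varphi$ is $R^*_T$-linear, the element $c_2 x$ has degree $|c_1 c_2| + d^T_{c_1}$ and satisfies $\xi^n \varphi(c_2 x) \equiv c_1 c_2 \xi^k \theta \pmod{(I^*_T)^{|c_1 c_2|+1}}$, yielding $d^T_{c_1 c_2} \le d^T_{c_1}$; symmetry gives (2). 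For (1), one combines a witness for $c_1$ with a witness for $c_2$ additively, multiplying by powers of $\xi$ to align degrees where necessary, and checks that the sum realises the claimed bound for $c_1 + c_2$. The non-homogeneous case in both (1) and (2) reduces to the homogeneous one via the convention $d^T_c = \max_i d^T_{c_i}$ for $c = \sum c_i$.

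For (3) and (4), the essential tool is the forgetful ring homomorphism induced by the inclusion $S^1 \hookrightarrow G^T_{\mathfrak{s}}$, which yields an $R^*_T$-linear map $HSW^*_T(Y,\mathfrak{s},\sigma) \to HSW^*(Y,\mathfrak{s})$ compatible with the respective localisation maps of Proposition \ref{prop:loc} and sending a chosen representative $\theta$ to a generator of $\widehat{\xi^{-1} HSW^*}(Y,\mathfrak{s})$ up to a unit in $\mathbb{F}[\xi,\xi^{-1}]$. Property (3) follows immediately: any witness for $\delta^T_1(Y,\mathfrak{s},\sigma)$ descends to a class whose image survives in $\widehat{\xi^{-1} HSW^*}(Y,\mathfrak{s})$ in the same degree, forcing $\delta^T_1(Y,\mathfrak{s},\sigma) \ge \delta(Y,\mathfrak{s})$. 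For (4), the $L$-space hypothesis together with the preceding corollary says that $HSW^*_T(Y,\mathfrak{s},\sigma)$ is a free $R^*_T$-module of rank one with generator $\eta$ in degree $d(Y,\mathfrak{s})$; under $\varphi$ this $\eta$ maps to a unit multiple of $\theta$, so for any non-zero homogeneous $c$ the element $c \cdot \eta$ exhibits $d^T_c = d(Y,\mathfrak{s})$ exactly.

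Finally, property (5) is a congruence statement. The mod $\mathbb{Z}$ relation $\delta^T_c(Y,\mathfrak{s},\sigma) \equiv \delta(Y,\mathfrak{s}) \pmod{\mathbb{Z}}$ follows from the fact that all degrees appearing in $HSW^*_T(Y,\mathfrak{s},\sigma)$ lie in a single coset modulo $2\mathbb{Z}$ determined by the grading shift $2n(Y,\mathfrak{s},g)$, so $d^T_c \equiv d(Y,\mathfrak{s}) \pmod{2\mathbb{Z}}$. The refinement $\delta^S_c \equiv \mu(Y,\mathfrak{s}) \pmod{2\mathbb{Z}}$ in type $S$ uses in addition that, as in \cite{man2}, $HSW^*_{spin}(Y,\mathfrak{s})$ has its grading concentrated in a coset of $4\mathbb{Z}$ determined by the generalised Rokhlin invariant, together with the degree-preserving forgetful map $HSW^*_S(Y,\mathfrak{s},\sigma) \to HSW^*_{spin}(Y,\mathfrak{s})$ induced by $Pin(2) \hookrightarrow G^S_{\mathfrak{s}}$. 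I anticipate no serious obstacle beyond careful bookkeeping; the only mild technical care is needed in type $S$, where the relation $v(v+s)^2 = 0$ in $R^*_S$ must be respected when manipulating witnesses, but this affects none of the formal steps above.
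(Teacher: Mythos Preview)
Your approach matches the paper's, which simply says the proposition ``follows by a straightforward extension of \cite[\textsection 3.5--3.7]{bh}'' and gives no further detail. Your sketches for (1)--(4) are adequate expansions of that reference.

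There is, however, a genuine gap in your argument for (5). You assert that ``all degrees appearing in $HSW^*_T(Y,\mathfrak{s},\sigma)$ lie in a single coset modulo $2\mathbb{Z}$ determined by the grading shift $2n(Y,\mathfrak{s},g)$''. This is false: the equivariant cohomology $\widetilde{H}^*_{G^T_{\mathfrak{s}}}(SWF)$ is $\mathbb{Z}$-graded (for instance $R^*_T$ already has classes in every non-negative degree), so after the rational shift the degrees fill out an entire coset of $\mathbb{Z}$ in $\mathbb{Q}$, not of $2\mathbb{Z}$. Your stated reasoning therefore only yields $\delta^T_c \equiv \delta(Y,\mathfrak{s}) \pmod{\tfrac{1}{2}\mathbb{Z}}$, which is too weak.

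The correct mechanism is the degree constraint built into Definition~\ref{def:delta}: the condition $\xi^n \varphi(x) = c\,\xi^k \theta \pmod{(I^*_T)^{|c|+1}}$ forces $d^T_c = i - |c| = (k-n)\deg(\xi) + \deg(\theta)$, so $d^T_c \equiv \deg(\theta) \pmod{\deg(\xi)}$. Since $\deg(\xi) = 2$ for $T = E,R$ and $\deg(\xi) = 4$ for $T = S$, and since under the forgetful map to $S^1$- (resp.\ $Pin(2)$-) equivariant cohomology the class $\theta$ restricts to the tower generator, one obtains $\deg(\theta) \equiv d(Y,\mathfrak{s}) \pmod{2}$ in all types and $\deg(\theta) \equiv 2\mu(Y,\mathfrak{s}) \pmod{4}$ in type $S$ via \cite{man2}. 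This is what gives the claimed congruences. Once you replace your coset argument with this $\deg(\xi)$ argument, the proof of (5) goes through.
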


\begin{remark}\label{rem:disjoint}
We can extend the definition of (ordinary and equivariant) Seiberg--Witten Floer cohomology and (ordinary and equivariant) delta-invariants to the case that $Y$ is a (possibly empty) disjoint union of rational homology spheres, provided that $\sigma$ preserves the connected components of $Y$. To do this, suppose that $Y = \cup_{i=1}^{n} Y_i$ where the $Y_i$ are the connected components of $Y$. Then we simply define $SWF(Y , \mathfrak{s} , g)$ to be the smash product $SWF(Y , \mathfrak{s} , g) = \wedge_{i=1}^{n} SWF(Y , \mathfrak{s}|_{Y_i} , g|_{Y_i})$ (in the case $Y$ is empty, $SWF(Y , \mathfrak{s} , g) = S^0$). Then we can define $HSW^*(Y , \mathfrak{s}) = \widetilde{H}^{*+2n(Y , \mathfrak{s} , g)}_{S^1}( SWF(Y , \mathfrak{s} , g))$ and when $\mathfrak{s}$ has type $T$, $HSW^*_T(Y , \mathfrak{s})$ is defined similarly. Then we can define the (ordinary and equivariant) delta-invariants of $Y$ in exactly the same way as in the connected case. It follows easily from the Eilenberg--Moore spectral sequence that $\delta(Y , \mathfrak{s}) = \sum_{i=1}^{n} \delta( Y_i , \mathfrak{s}|_{Y_i})$.
\end{remark}

\begin{definition}
Let $Y$ be a rational homology $3$-sphere, $\sigma$ a smooth, orientation preserving diffeomorphism and $\mathfrak{s}$ a spin$^c$-structure of type $T \in \{E,R,S\}$. We define the following invariants:
\begin{itemize}
\item[(1)]{If $T=E$, we set $\delta^E_j(Y  , \mathfrak{s} , \sigma) = \delta^E_{s^j}(Y  , \mathfrak{s} , \sigma)$ for $j \ge 0$.}
\item[(2)]{If $T=R$, we set $\delta^R_j(Y  , \mathfrak{s} , \sigma) = \delta^R_{w_1^j}(Y  , \mathfrak{s} , \sigma)$ for $j \ge 0$.}
\item[(3)]{If $T=S$, we set $\delta^S_{i,j}(Y  , \mathfrak{s} , \sigma) = \delta^S_{ v^i (v+s)^j }(Y  , \mathfrak{s} , \sigma)$ for $i,j \ge 0$.}
\end{itemize}

\end{definition}

Note that in type $S$, since $v(v+s)^2 = 0$, we have $\delta^S_{i,j}(Y  , \mathfrak{s} , \sigma) = -\infty$ unless $i = 0$ or $j \le 1$. So the interesting invariants in this case are of the form $\delta^S_{0,j}(Y  , \mathfrak{s} , \sigma)$, $\delta^S_{j,0}(Y  , \mathfrak{s} , \sigma)$ or $\delta^S_{j,1}(Y  , \mathfrak{s} , \sigma)$.

\begin{remark}
Suppose that $Y$ is an integral homology $3$-sphere with an orientation preserving involution $\sigma$. Then $Y$ has a unique spin$^c$-structure $\mathfrak{s}$ which has all three types with respect to $\sigma$. In this case we will sometimes omit $\mathfrak{s}$ from the notation.
\end{remark}

\begin{proposition}\label{prop:deltaprop2}
The delta-invariants have the following properties:
\begin{itemize}
\item[(1)]{For $T = E$ or $R$, the sequence $\delta^T_j(Y , \mathfrak{s} , \sigma)$ is decreasing and there is an $N \ge 0$ such that $\delta^T_j(Y , \mathfrak{s} , \sigma)$ is independent of $j$ for $j \ge N$.}
\item[(2)]{For $T = S$, $\delta^S_{i,j}(Y , \mathfrak{s} , \sigma)$ is decreasing in the sense that $\delta^S_{i',j'}(Y , \mathfrak{s} , \sigma ) \le \delta^S_{i,j}(Y , \mathfrak{s} , \sigma )$ whenever $i' \ge i$, $j' \ge j$. There is an $N \ge 0$ such that $\delta^S_{0,j}(Y , \mathfrak{s} , \sigma )$, $\delta^S_{j,0}(Y , \mathfrak{s} , \sigma )$ and $\delta^S_{j,1}(Y , \mathfrak{s} , \sigma )$ are independent of $j$ for $j \ge N$.}
\item[(3)]{For $T = E$ or $R$, $\delta^T_i(Y , \mathfrak{s} , \sigma ) + \delta^T_j(-Y , \mathfrak{s} , \sigma ) \ge 0$ for all $i,j$. Similarly for $T = S$, $\delta^S_{i,j}(Y , \mathfrak{s} , \sigma ) + \delta^S_{k,l}(-Y , \mathfrak{s} , \sigma ) \ge 0$ for all $i,j,k,l$ with $i=k=0$ or $j+l \le 1$.}
\end{itemize}

\end{proposition}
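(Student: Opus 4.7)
The plan is to reduce parts (1) and (2) to Proposition~\ref{prop:deltaprop}(2) combined with finite generation of $HSW^*_T$ as an $R^*_T$-module, and to deduce (3) from equivariant Spanier--Whitehead duality between $SWF(Y,\mathfrak{s},g)$ and $SWF(-Y,\mathfrak{s},g)$.

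First I would address the monotonicity in (1) and (2). In the ring $H^*_T$ one has the factorisations $s^{j+1} = s \cdot s^j$ (type $E$), $w_1^{j+1} = w_1 \cdot w_1^j$ (type $R$), and $v^{i'}(v+s)^{j'} = v^{i'-i}(v+s)^{j'-j} \cdot v^i(v+s)^j$ (type $S$, whenever $i' \ge i$ and $j' \ge j$). Applying Proposition~\ref{prop:deltaprop}(2) to these factorisations immediately gives $\delta^E_{j+1} \le \delta^E_j$, $\delta^R_{j+1} \le \delta^R_j$, and $\delta^S_{i',j'} \le \delta^S_{i,j}$.

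Next, for eventual stability, I would use Manolescu's construction $SWF(Y,\mathfrak{s},g) = \Sigma^{-V}C$ with $C$ a finite $G^T_\mathfrak{s}$-CW complex, which forces $HSW^*_T(Y,\mathfrak{s},\sigma)$ to be finitely generated over $R^*_T$. Its image $M$ in $\widehat{\xi^{-1} HSW^*_T}$, identified via Proposition~\ref{prop:loc} with a submodule of $\widehat{H}^*_T[\xi,\xi^{-1}]\theta$, is then generated by finitely many homogeneous elements $x_1,\ldots,x_N$ of degrees $D_\alpha$ whose leading $I^*_T$-adic terms are $c_\alpha \xi^{k_\alpha}\theta$ for $c_\alpha \in H^*_T$ homogeneous of some degree $n_\alpha$. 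For any test class $c$ from $\{s^j, w_1^j, v^j, (v+s)^j, v^j(v+s)\}$ with $|c|$ exceeding $\max_\alpha n_\alpha$, a realisation of $c\xi^?\theta$ modulo $(I^*_T)^{|c|+1}$ comes from multiplying an admissible generator $x_\alpha$ by a monomial $h \in H^*_T$ with $hc_\alpha = c$ to leading order, giving minimal degree $D_\alpha + (|c| - n_\alpha)$. Minimising over admissible $\alpha$ yields $\delta^T_c = \min_\alpha(D_\alpha - n_\alpha)/2$, constant in $|c|$ within this range, which is exactly the claimed stabilisation for each of $\delta^E_j$, $\delta^R_j$, $\delta^S_{0,j}$, $\delta^S_{j,0}$ and $\delta^S_{j,1}$.

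For (3) I would invoke $G^T_\mathfrak{s}$-equivariant Spanier--Whitehead duality: the Conley index underlying $SWF(-Y,\mathfrak{s},g)$ is equivariantly dual to that of $SWF(Y,\mathfrak{s},g)$, producing a pairing
\[
\epsilon\colon SWF(Y,\mathfrak{s},g) \wedge SWF(-Y,\mathfrak{s},g) \longrightarrow S^V
\]
for a virtual $G^T_\mathfrak{s}$-representation $V$ whose dimension matches the combined metric shifts. Taking $G^T_\mathfrak{s}$-equivariant cohomology and applying K\"unneth gives a degree-zero pairing
\[
\langle -, - \rangle\colon HSW^*_T(Y,\mathfrak{s},\sigma) \otimes HSW^*_T(-Y,\mathfrak{s},\sigma) \longrightarrow R^*_T
\]
which becomes perfect after $\xi$-localisation and completion, and under which $\theta \otimes \theta'$ pairs to $\xi^m$ for some $m$. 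Choosing $x \in HSW^{d_1}_T(Y)$ and $y \in HSW^{d_2}_T(-Y)$ realising $\delta^T_c(Y)$ and $\delta^T_{c'}(-Y)$ with $cc' \ne 0$ in $H^*_T$ (automatic in types $E, R$; in type $S$ precisely the hypothesis $i=k=0$ or $j+l \le 1$), the leading $I^*_T$-adic term of $\langle x, y\rangle$ is $cc'\,\xi^{a+b+m}\neq 0$ in $R^*_T$. Since elements of $R^*_T$ carry only nonnegative powers of $\xi$, matching the total degree $d_1+d_2$ of $\langle x,y\rangle$ against that of its leading term forces $d_1+d_2 \ge |c|+|c'|$, and hence $\delta^T_c(Y,\mathfrak{s},\sigma) + \delta^T_{c'}(-Y,\mathfrak{s},\sigma) \ge 0$.

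The principal technical difficulty will be constructing the pairing $\epsilon$ compatibly with $G^T_\mathfrak{s}$ and verifying that $\theta \cdot \theta'$ is a unit after localisation. This requires carefully matching the metric-dependent desuspensions in $SWF(\pm Y)$ against the representation $V$, and is the only step that genuinely goes beyond the algebraic manipulations of \cite[\S 3.5--3.7]{bh} used to establish Proposition~\ref{prop:deltaprop}.
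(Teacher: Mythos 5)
Your argument is correct and is essentially the route the paper intends: its proof of this proposition is a one-line citation of \cite{bh}, where monotonicity is deduced from the multiplicativity property $\delta^T_{c_1c_2}\le\min\{\delta^T_{c_1},\delta^T_{c_2}\}$, eventual stabilisation from finite generation of $HSW^*_T$ over $R^*_T$ together with a leading-term analysis in the completed localisation, and item (3) from $G^T_{\mathfrak{s}}$-equivariant Spanier--Whitehead duality between $SWF(Y,\mathfrak{s},g)$ and $SWF(-Y,\mathfrak{s},g)$, exactly as you propose. The one place your sketch is slightly too quick is the type-$S$ stabilisation: in $\mathbb{F}[s,v]/(v(v+s)^2)$ the classes $v^j$ and $(v+s)^j$ reduce to different normal forms ($s^{j-1}v$ versus $s^{j-2}v^2$, etc.) according to the parity of $j$, so the set of ``admissible'' generators is not literally constant in $|c|$; this is repaired by combining your leading-term bound with the monotonicity already established, which forces the two parity subsequences to agree for large $j$.
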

\begin{proof}
Once again, these properties follow from a straightforward extension of the proofs in \cite{bh}.
\end{proof}

In light of Proposition \ref{prop:deltaprop2}, we can make the following definitions:
\begin{definition}\label{def:j}
Let $Y$ be a rational homology $3$-sphere, $\sigma$ a smooth, orientation preserving diffeomorphism and $\mathfrak{s}$ a spin$^c$-structure of type $T \in \{E,R,S\}$. We define the following invariants:
\begin{itemize}
\item[(1)]{If $T = E$ or $R$, we define $\delta^T_\infty(Y , \mathfrak{s} , \sigma) = \lim_{j \to \infty} \delta^T_j(Y , \mathfrak{s} , \sigma)$ and we let $j^T(Y , \mathfrak{s} , \sigma)$ denote the smallest $j \ge 0$ such that $\delta^T_j(Y , \mathfrak{s} , \sigma) = \delta^T_\infty(Y , \mathfrak{s} , \sigma)$.}
\item[(2)]{If $T=S$, we define $\delta^S_{0,\infty}(Y , \mathfrak{s} , \sigma) = \lim_{j\to \infty} \delta^S_{0,j}(Y , \mathfrak{s} , \sigma)$. We define $\delta^S_{\infty,0}(Y,\mathfrak{s},\sigma)$ and $\delta^S_{\infty,1}(Y,\mathfrak{s},\sigma)$ similarly.}
\end{itemize}

\end{definition}

Let $Y_1,Y_2$ be rational homology $3$-spheres equipped with orientation preserving, smooth involutions $\sigma_1, \sigma_2$. Assume that the fixed point sets of $\sigma_1, \sigma_2$ are non-empty. Since $\sigma_1, \sigma_2$ act orientation preservingly, their fixed point sets are $1$-dimensional. This implies that $\sigma_1, \sigma_2$ have the same local form about any fixed point. Let $y_1 \in Y_1$, $y_2 \in Y_2$ be fixed points. Then we can remove $\sigma_i$-invariant balls $B_1, B_2$ around $y_1, y_2$ and identify their boundaries orientation reversingly to form the $\mathbb{Z}_2$-equivariant connected sum $Y = Y_1 \# Y_2$ with involution $\sigma = \sigma_1 \# \sigma_2$. The construction of $\sigma$ depends on the choice of points $y_1, y_2$, but we do not indicate this in the notation.

Let $\mathfrak{s}_1, \mathfrak{s}_2$ be $\sigma_i$-invariant spin$^c$-structures on $Y_1,Y_2$ and set $\mathfrak{s} = \mathfrak{s}_1 \# \mathfrak{s}_2$. If $\mathfrak{s}_1, \mathfrak{s}_2$ both have type $T \in \{E,R,S\}$, then so does $\mathfrak{s}$. The proof of \cite[Proposition 3.1]{bar} can easily be adapted and gives the following.

\begin{proposition}\label{prop:csum}
If $T = E$ or $R$, then
\[
\delta^T_{i+j}(Y , \mathfrak{s} , \sigma ) \le \delta^T_{i}(Y_1 , \mathfrak{s}_1 , \sigma_1 ) + \delta^T_{j}(Y_2 , \mathfrak{s}_2 , \sigma_2 )
\]
for all $i,j \ge 0$. If $T = S$, then
\[
\delta^S_{i+k,j+l}(Y , \mathfrak{s} , \sigma ) \le \delta^S_{i,j}(Y_1 , \mathfrak{s}_1 , \sigma_1 ) + \delta^S_{k,l}(Y_2 , \mathfrak{s}_2 , \sigma_2 )
\]
for all $i,j,k,l \ge 0$ such that $i=k=0$ or $j+l \le 1$.
\end{proposition}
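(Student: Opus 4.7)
The approach is to adapt the non-equivariant connected sum argument of \cite[Proposition 3.1]{bar} to the $G^T_{\mathfrak{s}}$-equivariant setting. The central input is an equivariant smash product decomposition of the Seiberg--Witten Floer spectrum. Since $y_1,y_2$ are fixed points of $\sigma_1,\sigma_2$ and the local model near each one-dimensional fixed set is standard, one can choose $\sigma$-invariant metrics, finite-dimensional approximations and gradient flow data compatibly on both sides of the equivariant connect sum region, producing a $G^T_{\mathfrak{s}}$-equivariant homotopy equivalence
\[
SWF(Y_1 \# Y_2 , \mathfrak{s} , g) \simeq SWF(Y_1, \mathfrak{s}_1 , g_1) \wedge SWF(Y_2 , \mathfrak{s}_2 , g_2),
\]
possibly after suspension by representations of $G^T_{\mathfrak{s}}$ to absorb the additive correction $n(Y,\mathfrak{s},g)$ under disjoint union.

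Taking $G^T_{\mathfrak{s}}$-equivariant cohomology and accounting for the grading shift by $n$, this yields an $R^*_T$-bilinear pairing
\[
\mu \colon HSW^*_T(Y_1, \mathfrak{s}_1 , \sigma_1) \otimes_{R^*_T} HSW^*_T(Y_2 , \mathfrak{s}_2 , \sigma_2) \to HSW^*_T(Y_1 \# Y_2 , \mathfrak{s} , \sigma).
\]
This pairing is compatible with the localisations of Proposition \ref{prop:loc}: after inverting $\xi \in \{u, w_2, q\}$ and completing $I^*_T$-adically, $\mu$ sends the tensor product $\theta_1 \otimes \theta_2$ of the localisation generators to a generator $\theta$ for $Y_1 \# Y_2$, up to a unit. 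This is because the $S^1$-fixed sets of the Conley indices behave multiplicatively under the smash decomposition, and both $\theta_i$ arise via Thom isomorphisms from these fixed sets.

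Now let $c_1 , c_2 \in H^*_T$ be homogeneous, and choose $x_i \in HSW^{d_i}_T(Y_i , \mathfrak{s}_i , \sigma_i)$ witnessing $d^T_{c_i}(Y_i , \mathfrak{s}_i , \sigma_i) = d_i - |c_i|$, i.e.\ $\xi^{n_i}\varphi_i(x_i) \equiv c_i \xi^{k_i}\theta_i$ modulo $(I^*_T)^{|c_i|+1}$. By $R^*_T$-linearity of $\mu$ and the identification $\mu(\theta_1 \otimes \theta_2) \equiv \theta$ modulo $I^*_T$, the class $\mu(x_1 \otimes x_2) \in HSW^{d_1+d_2}_T(Y_1 \# Y_2 , \mathfrak{s} , \sigma)$ satisfies
\[
\xi^{n_1+n_2}\varphi\bigl(\mu(x_1 \otimes x_2)\bigr) \equiv c_1 c_2 \xi^{k_1+k_2}\theta \quad ({\rm mod}\; (I^*_T)^{|c_1 c_2|+1}).
\]
Hence $d^T_{c_1 c_2}(Y_1 \# Y_2, \mathfrak{s}, \sigma) \le (d_1+d_2) - |c_1 c_2| = d^T_{c_1}(Y_1,\mathfrak{s}_1,\sigma_1) + d^T_{c_2}(Y_2,\mathfrak{s}_2,\sigma_2)$, and halving gives the $\delta$-version. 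Specialising $(c_1,c_2) = (s^i, s^j)$ for $T=E$, $(w_1^i, w_1^j)$ for $T=R$, and $(v^i(v+s)^j, v^k(v+s)^l)$ for $T=S$ gives precisely the stated inequalities; in type $S$ the hypothesis $i=k=0$ or $j+l \le 1$ ensures that the product $v^{i+k}(v+s)^{j+l}$ is non-zero in $H^*_S = \mathbb{F}[s,v]/(v(v+s)^2)$, so that the right-hand side is a meaningful invariant.

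The main technical obstacle is to establish the equivariant smash product decomposition in types $R$ and $S$, where $G^T_{\mathfrak{s}}$ contains antilinear lifts of $\sigma$ and, in type $S$, the charge conjugation $j$. One must arrange the cut-and-paste near $y_1 , y_2$ compatibly with the Real or Quaternionic structure on each spinor bundle (this is where Proposition \ref{prop:liftRQ} is used to pin down the lifts up to $S^1$), and verify that the Conley index construction admits $G^T_{\mathfrak{s}}$-invariant isolating neighbourhoods whose smash product is the Conley index of $Y_1 \# Y_2$ up to the expected equivariant suspension. Both are routine adaptations of the arguments in \cite{bar,bh}, but each type requires checking that the local lift of $\sigma$ on the connect-sum neck matches with the lifts on the two pieces, a verification that is purely local near the fixed points.
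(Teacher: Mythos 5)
Your proposal is correct and follows essentially the same route as the paper, which simply states that the proof of \cite[Proposition 3.1]{bar} adapts to the equivariant setting: the equivariant smash product decomposition of $SWF$ under connected sum at fixed points, the induced $R^*_T$-bilinear pairing, multiplicativity of the localisation generators $\theta_1\otimes\theta_2\mapsto\theta$, and the resulting inequality $d^T_{c_1c_2}\le d^T_{c_1}+d^T_{c_2}$ specialised to $c_i = s^i$, $w_1^i$, or $v^i(v+s)^j$. Your write-up supplies exactly the details the paper leaves to the reader, including the correct bookkeeping of the $(I^*_T)^{|c|+1}$ truncations and the non-vanishing condition in type $S$.
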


Let $l(Y,\mathfrak{s}) \in \mathbb{Q}$ denote the lowest degree in which $HSW^*(Y , \mathfrak{s})$ is non-zero (or equivalently, the lowest degree in which $HF^+_*(Y,\mathfrak{s})$ is non-zero). From \cite[Corollary 6.3]{os3} we see that $l(Y_1 \# Y_2) \ge l(Y_1) + l(Y_2)$.

\begin{proposition}\label{prop:jbound}
Let $Y$ be a rational homology $3$-sphere, $\sigma$ a smooth, orientation preserving diffeomorphism and $\mathfrak{s}$ a spin$^c$-structure of type $T \in \{E,R\}$. Then $\delta^T_j(Y,\mathfrak{s},\sigma) \ge (l(Y,\mathfrak{s}) - j)/2$ and $j^T(Y,\mathfrak{s},\sigma) \ge l(Y,\mathfrak{s}) - 2\delta^T_\infty(Y,\mathfrak{s},\sigma)$. Similarly $\delta^S_{i,j}(Y,\mathfrak{s},\sigma) \ge (l(Y,\mathfrak{s}) - i-j)/2$.
\end{proposition}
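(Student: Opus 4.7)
The plan is to reduce the bound on the delta-invariants to a vanishing statement on the equivariant Floer cohomology in low degrees, which follows directly from the spectral sequence of Theorem \ref{thm:ss}.

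First I would establish the key vanishing: $HSW^n_T(Y,\mathfrak{s},\sigma) = 0$ for all $n < l(Y,\mathfrak{s})$. This is immediate from Theorem \ref{thm:ss}: the $E_2$ page satisfies $E_2^{p,q} = H^p(BQ^T_\mathfrak{s}; HSW^q(Y,\mathfrak{s}))$, which vanishes whenever $p < 0$ or $q < l(Y,\mathfrak{s})$. Since the differentials on any page preserve the bigrading up to a shift in $(p,q)$ that keeps $p+q$ non-decreasing, every non-zero entry on the $E_\infty$ page lies in total degree $\ge l(Y,\mathfrak{s})$, and the abutment $HSW^*_T(Y,\mathfrak{s},\sigma)$ vanishes in total degrees below $l(Y,\mathfrak{s})$.

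Next I would feed this vanishing into Definition \ref{def:delta}. To bound $d^T_c(Y,\mathfrak{s},\sigma)$ from below, note that any $x$ satisfying the defining condition $\xi^n\varphi(x) = c\xi^k\theta \pmod{(I^*_T)^{|c|+1}}$ must be non-zero. Indeed, under the identification of Proposition \ref{prop:loc}, the element $c\xi^k\theta$ has $I^*_T$-filtration exactly $|c|$ (it is homogeneous of $I$-degree $|c|$ and $\theta$ is a free generator away from $I^*_T$), hence is non-trivial modulo $(I^*_T)^{|c|+1}$, which forces $\varphi(x)\ne 0$ and therefore $x\ne 0$. Combined with the vanishing from the previous step, this gives $i = \deg(x) \ge l(Y,\mathfrak{s})$, so
\[
d^T_c(Y,\mathfrak{s},\sigma) \ge l(Y,\mathfrak{s}) - |c|.
\]
Halving and specialising $c = s^j$ (type $E$), $c = w_1^j$ (type $R$), or $c = v^i(v+s)^j$ (type $S$) yields the three stated inequalities, since $|s^j| = |w_1^j| = j$ and $|v^i(v+s)^j| = i+j$.

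For the bound on $j^T$, I would simply combine the first part with the definition of $j^T$: since $\delta^T_\infty(Y,\mathfrak{s},\sigma) = \delta^T_{j^T}(Y,\mathfrak{s},\sigma)$ and the first part of the proposition gives $\delta^T_{j^T}(Y,\mathfrak{s},\sigma) \ge (l(Y,\mathfrak{s}) - j^T)/2$, rearranging produces $j^T(Y,\mathfrak{s},\sigma) \ge l(Y,\mathfrak{s}) - 2\delta^T_\infty(Y,\mathfrak{s},\sigma)$. The only genuinely substantive ingredient is the spectral sequence; the hardest step to articulate carefully is the non-vanishing of $c\xi^k\theta$ modulo $(I^*_T)^{|c|+1}$ in the completion, which I expect to be a short computation using the freeness statement in Proposition \ref{prop:loc}.
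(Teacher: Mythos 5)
Your proposal is correct and follows essentially the same route as the paper: use the spectral sequence of Theorem \ref{thm:ss} to show $HSW^*_T(Y,\mathfrak{s},\sigma)$ vanishes in degrees below $l(Y,\mathfrak{s})$, then observe that Definition \ref{def:delta} produces a nonzero class in degree $2\delta^T_c+|c|$, forcing $2\delta^T_c+|c|\ge l(Y,\mathfrak{s})$, with the $j^T$ bound obtained by specialising to $j=j^T$. The only difference is that you spell out why the witnessing class $x$ is nonzero (non-triviality of $c\xi^k\theta$ modulo $(I^*_T)^{|c|+1}$), a point the paper leaves implicit.
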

\begin{proof}
Set $\delta = \delta^T_j(Y , \mathfrak{s} , \sigma)$. By the definition of $\delta^T_j(Y , \mathfrak{s} , \sigma)$, there exists some $x \in HSW_T^{2 \delta + j}(Y , \mathfrak{s} , \sigma)$ such that $\xi^n \varphi(x) = c^j \xi^k \theta \; ({\rm mod} \; (I^*_T)^{j+1})$ where $\xi, \varphi, \theta$ are as in Definition \ref{def:delta} and $c = s$ if $T=E$, $c = w_1$ if $T=R$. But from the spectral sequence of Theorem \ref{thm:ss} we see that the lowest degree in which $HSW^*_T(Y , \mathfrak{s} , \sigma)$ is non-zero is at least $l(Y,\mathfrak{s})$. Hence $2\delta + j \ge l(Y , \mathfrak{s})$, or $\delta^T_j(Y,\mathfrak{s},\sigma) \ge (l(Y,\mathfrak{s}) - j)/2$. In the case $j = j^T(Y , \mathfrak{s} , \sigma)$ we have $\delta = \delta^T_j(Y , \mathfrak{s} , \sigma) = \delta^T_\infty(Y , \mathfrak{s} , \sigma)$ and thus $j^T(Y,\mathfrak{s},\sigma) \ge l(Y,\mathfrak{s}) - 2\delta^T_\infty(Y,\mathfrak{s},\sigma)$. The proof that $\delta^S_{i,j}(Y,\mathfrak{s},\sigma) \ge (l(Y,\mathfrak{s}) - i-j)/2$ follows by essentially the same argument.
\end{proof}

\subsection{Behaviour under cobordisms}\label{sec:cobord}

The most important property of the delta-invariants is their behaviour under cobordisms. Suppose that $W$ is smooth, compact, oriented $4$-manifold with boundary a disjoint union of rational homology spheres and with $b_1(W) = 0$. Suppose that $\sigma$ is an orientation preserving smooth involution on $W$. Let $\mathfrak{s}$ be a spin$^c$-structure on $W$ of type $T$. Since the boundary of $W$ is a union of rational homology $3$-spheres, we have $H^2(W , \partial W ; \mathbb{R}) \cong H^2(W ; \mathbb{R})$ and hence we obtain a non-degenerate intersection form on $H^2(W ; \mathbb{R})$. Let $H^+(W)$ denote a $\sigma$-invariant maximal positive definite subspace of $H^2(W ; \mathbb{R})$. Since the space of $\sigma$-invariant, maximal positive definite subspaces of $H^2(W ; \mathbb{R})$ is connected, it follows that the isomorphism class of $H^+(W)$ as a $\mathbb{Z}_2$-module does not depend on the choice of subspace. 

For $(W , \mathfrak{s})$ as above, we define $\delta(W , \mathfrak{s}) \in \mathbb{Q}$ by
\[
\delta(W , \mathfrak{s}) = \frac{c_1(\mathfrak{s})^2 - \sigma(W)}{8}.
\]

The following result is a straightforward adaptation of \cite[Theorem 4.1, Theorem 5.3]{bh}:

\begin{theorem}\label{thm:froy}
Let $W$ be a smooth, compact, oriented $4$-manifold with boundary and with $b_1(W) = 0$. Suppose that $\sigma$ is an orientation preserving smooth involution on $W$. Let $\mathfrak{s}$ be a spin$^c$-structure on $W$ of type $T \in \{E,R,S\}$. Suppose each component of $\partial W$ is a rational homology sphere and that $\sigma$ sends each component to itself. 

Suppose that $\partial W = Y_1 \cup -Y_0$. Then:
\begin{itemize}
\item[(1)]{In type $T=E$, suppose that the $\sigma$-invariant subspace of $H^2(W ; \mathbb{R})$ is negative definite. Then
\[
\delta^E_{j+b_+(W)}(Y_0 , \mathfrak{s}|_{Y_0} , \sigma|_{Y_0}) + \delta(W , \mathfrak{s}) \le 
\delta^E_j(Y_1, \mathfrak{s}|_{Y_1} , \sigma|_{Y_1})
\]
for all $j \ge 0$.
}
\item[(2)]{In type $T=R$, suppose that the $\sigma$-anti-invariant subspace of $H^2(W ; \mathbb{R})$ is negative definite. Then
\[
\delta^R_{j+b_+(W)}(Y_0 , \mathfrak{s}|_{Y_0} , \sigma|_{Y_0}) + \delta(W , \mathfrak{s}) \le 
\delta^R_j(Y_1, \mathfrak{s}|_{Y_1} , \sigma|_{Y_1})
\]
for all $j \ge 0$.
}
\item[(3)]{In type $T=S$, let $b_+(X)^\sigma, b_+(X)^{-\sigma}$ denote the dimensions of the $\sigma$-invariant/anti-invariant subspaces of $H^+(X)$. Then
\[
\delta^S_{i + b_+(W)^{\sigma} , j + b_+(W)^{-\sigma}}(Y_0 , \mathfrak{s}|_{Y_0} , \sigma|_{Y_0}) + \delta(W , \mathfrak{s}) \le 
\delta^S_{i,j}(Y_1, \mathfrak{s}|_{Y_1} , \sigma|_{Y_1})
\]
for all $i,j \ge 0$ such that either $i + b_+(W)^{\sigma} = 0$ or $j + b_+(W)^{-\sigma} \le 1$.
}
\end{itemize}

\end{theorem}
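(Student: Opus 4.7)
The plan is to adapt the equivariant relative Bauer--Furuta argument of \cite[Theorem 4.1]{bh} to the three types $T \in \{E,R,S\}$, with the essential new content being a careful identification of the Euler class by which the relative Bauer--Furuta map acts after localisation. First I would choose a $\sigma$-invariant Riemannian metric on $W$ together with an extension of the given spin$^c$-connections on $\partial W$ to a spin$^c$-connection on $W$ that is compatible with the type-$T$ symmetry (linear lift in types $E,S$, antilinear lift in type $R$; the existence of such connections reduces to Proposition \ref{prop:liftRQ} and its type-$E$/$S$ analogues). A finite-dimensional approximation of the Seiberg--Witten map on $W$ with $\sigma$-invariant cylindrical ends then produces a $G^T_\mathfrak{s}$-equivariant stable map
\[
BF(W,\mathfrak{s}) \colon SWF(Y_0,\mathfrak{s}|_{Y_0},g) \wedge (\mathbb{R}^{b_+(W)})^+ \to SWF(Y_1,\mathfrak{s}|_{Y_1},g),
\]
where $\mathbb{R}^{b_+(W)}$ carries the natural $G^T_\mathfrak{s}$-action coming from the $\sigma$-action on a maximal positive-definite subspace $H^+(W) \subset H^2(W;\mathbb{R})$, and with an overall grading shift which, after accounting for the index of the Dirac operator and the $\eta$-invariant contributions in the definition of $n(Y,\mathfrak{s},g)$, contributes exactly $2\delta(W,\mathfrak{s}) = (c_1(\mathfrak{s})^2-\sigma(W))/4$.

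Passing to equivariant cohomology yields a degree-preserving $R^*_T$-module map
\[
BF^*\colon HSW^*_T(Y_1,\mathfrak{s}|_{Y_1},\sigma) \to HSW^{*+2\delta(W,\mathfrak{s})}_T(Y_0,\mathfrak{s}|_{Y_0},\sigma),
\]
and the next step is to analyse what $BF^*$ does after inverting $u$, $w_2$, or $q$ (as appropriate) and completing in the $I^*_T$-adic topology, using Proposition \ref{prop:loc}. The key point is the Thom isomorphism: the contribution of the $(\mathbb{R}^{b_+(W)})^+$ factor makes $BF^*$ act, up to a unit and a power of the invertible generator, as multiplication by the $G^T_\mathfrak{s}$-equivariant Euler class $e(H^+(W))$ of the representation $H^+(W)$. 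In type $E$ the negative-definiteness of the $\sigma$-invariant subspace forces $\sigma$ to act as $-1$ on $H^+(W)$, so $e(H^+(W)) = s^{b_+(W)}$ in $R^*_E = \mathbb{F}[s,u]$; in type $R$ the negative-definiteness of the anti-invariant subspace makes $H^+(W)$ the underlying real bundle of a rank-$b_+(W)$ real $O(2)$-representation with $w_1$ non-trivial on each summand, giving $e(H^+(W)) = w_1^{b_+(W)}$ (modulo terms of higher degree in $w_2$, which are swallowed by completion); in type $S$ one splits $H^+(W) = H^+(W)^\sigma \oplus H^+(W)^{-\sigma}$ and identifies the Euler class as $v^{b_+(W)^\sigma}(v+s)^{b_+(W)^{-\sigma}}$ in $R^*_S/(v(v+s)^2)$, where the matching of $\sigma$-invariant/anti-invariant with the generators $v+s$ versus $v$ is dictated by the construction of $R^*_S$ in Proposition \ref{prop:R*} via the homomorphism $\phi_\sigma$.

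With the Euler class identified, the final step is a routine chase through Definition \ref{def:delta}. Given $x \in HSW^{2\delta^T_j(Y_1)+j}_T(Y_1)$ realising $\delta^T_j(Y_1,\mathfrak{s}|_{Y_1},\sigma)$ via an element whose image in the localised ring is $c^j \xi^k \theta \pmod{(I^*_T)^{j+1}}$, the class $BF^*(x)$ lies in degree $2\delta^T_j(Y_1)+2\delta(W,\mathfrak{s})+j$ and its localised image is $c^j \cdot e(H^+(W)) \cdot \xi^{k'} \theta'$, which by the Euler class computation is $c^{j+b_+(W)} \xi^{k'}\theta'$ in types $E,R$ (respectively $v^{i+b_+(W)^\sigma}(v+s)^{j+b_+(W)^{-\sigma}} \xi^{k'} \theta'$ in type $S$). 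By the definition of $\delta^T_{j+b_+(W)}(Y_0)$ this forces
\[
2\delta^T_{j+b_+(W)}(Y_0,\mathfrak{s}|_{Y_0},\sigma)+j+b_+(W) \le 2\delta^T_j(Y_1,\mathfrak{s}|_{Y_1},\sigma)+2\delta(W,\mathfrak{s})+j+b_+(W),
\]
which is exactly the required inequality in types $E,R$; the type $S$ case is the same bookkeeping with the bigraded index.

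The main obstacle will be Step 2, the identification of the equivariant Euler class: one must carefully track the $G^T_\mathfrak{s}$-action on $H^+(W)$ through the finite-dimensional approximation, verifying that the Thom class of the added summand of representations really coincides with the algebraically specified Euler class in the localised ring, and in type $S$ that the quadratic relation $v(v+s)^2 = 0$ does not obstruct the use of $v^{b_+^\sigma}(v+s)^{b_+^{-\sigma}}$ (which is precisely why the hypothesis $i + b_+(W)^\sigma = 0$ or $j + b_+(W)^{-\sigma} \le 1$ appears). Everything else is a direct extension of the corresponding steps in \cite[\S 4]{bh}.
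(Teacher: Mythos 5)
Your overall strategy --- an equivariant relative Bauer--Furuta map for $(W,\sigma,\mathfrak{s})$, localisation as in Proposition \ref{prop:loc}, identification of the localised map as multiplication by the equivariant Euler class of $H^+(W)$, and a chase through Definition \ref{def:delta} --- is exactly the intended argument; the paper gives no details and records the theorem as a straightforward adaptation of \cite[Theorems 4.1, 5.3]{bh}. However, your final bookkeeping does not produce the stated inequality: the degree shift of the cobordism map carries the wrong sign. On cohomology the map $BF^*\colon HSW^*_T(Y_1,\ldots)\to HSW^{*+\Delta}_T(Y_0,\ldots)$ has $\Delta = b_+(W) - 2\delta(W,\mathfrak{s})$, not $+2\delta(W,\mathfrak{s})$ (compare the Heegaard Floer grading shift $(c_1(\mathfrak{s})^2-2\chi(W)-3\sigma(W))/4$, or test against $W$ the negative definite $E_8$-manifold bounding $\Sigma(2,3,5)$, where the shift must be $-2$ so that the generator in degree $d=2$ lands in $H^{\ge 0}_{S^1}(pt)$). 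As written, your displayed inequality simplifies to $\delta^T_{j+b_+(W)}(Y_0)-\delta(W,\mathfrak{s})\le\delta^T_j(Y_1)$, which is not Theorem \ref{thm:froy} and is a genuinely different statement since $\delta(W,\mathfrak{s})$ can take either sign; it is also internally inconsistent with your assertion two lines earlier that $BF^*(x)$ lies in degree $2\delta^T_j(Y_1)+2\delta(W,\mathfrak{s})+j$ (the right-hand side of your display carries an unexplained extra $b_+(W)$). With the correct $\Delta$ the chase gives $2\delta^T_{j+b_+(W)}(Y_0)+j+b_+(W)\le 2\delta^T_j(Y_1)+j+b_+(W)-2\delta(W,\mathfrak{s})$, which is the theorem.

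Second, the Euler class identification is asserted rather than derived at the one point where it is delicate: $G^T_{\mathfrak{s}}$ does not act on $H^+(W)$ simply via $\sigma^*$. The circle acts trivially and a linear lift acts by $\sigma^*$, but charge conjugation and any antilinear lift act on $\Omega^1(W;i\mathbb{R})$, hence on $H^+(W)$, by $-\mathrm{id}$ and $-\sigma^*$ respectively. In type $R$ the hypothesis forces $\sigma^*|_{H^+(W)}=+1$, so the reflections in $G^R_{\mathfrak{s}}\cong O(2)$ act by $-1$ and $H^+(W)$ is $b_+(W)$ copies of the determinant representation, whence $e(H^+(W))=w_1^{b_+(W)}$; your phrasing ("the negative-definiteness of the anti-invariant subspace makes \dots $w_1$ non-trivial on each summand") reads as though $\sigma^*=-1$ there, which is the opposite situation. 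Likewise in type $S$ it is because $j$ acts by $-1$ and $\widetilde{\sigma}$ by $\sigma^*$ that the $\sigma$-invariant part contributes $v$ and the anti-invariant part contributes $v+s$; your formula is correct but your parenthetical pairs the generators the other way around. These signs are precisely what determine which definiteness hypothesis goes with which type, so they need to be made explicit rather than inferred from the shape of the answer.
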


\begin{remark}
In Theorem \ref{thm:froy}, we can allow $Y_0,Y_1$ to be a (possibly empty) union of rational homology spheres. In this case the delta-invariants of $Y_i$ should be defined as per Remark \ref{rem:disjoint}. In particular, if $Y_0$ or $Y_1$ is empty, then the corresponding delta-invariant is zero.
\end{remark}

\begin{remark}\label{rem:erhci}
Theorem \ref{thm:froy} implies in particular that $\delta^E_j, \delta^R_j, \delta^S_{i,j}$ are equivariant rational homology cobordism invariants. To be precise, this means the following. Let $Y_0, Y_1$ be rational homology $3$-spheres. For $i=0,1$ let $\sigma_i$ be a smooth orientation preserving involution on $Y_i$ and let $\mathfrak{s}_i$ be a spin$^c$-structure on $Y_i$ of type $T$ (same type for $Y_0$ and $Y_1$). Let $W$ be a smooth, compact, oriented cobordism from $Y_0$ to $Y_1$ and suppose that the inclusions $Y_i \to W$ for $i=0,1$ induce isomorphisms in rational cohomology. Suppose that there is an orientation preserving smooth involution $\sigma$ on $W$ which restricts to $\sigma_i$ on $Y_i$ and a spin$^c$-structure on $W$ of type $T$ which restricts to $\mathfrak{s}_i$ on $Y_i$. Then $\delta^T_j(Y_0 , \mathfrak{s}_0 , \sigma_0) = \delta^T_j(Y_1 , \mathfrak{s}_1 , \sigma_1)$ for all $j$ if $T = E,R$ or $\delta^S_{i,j}(Y_0 , \mathfrak{s}_0 , \sigma_0) = \delta^S_{i,j}(Y_1 , \mathfrak{s}_1 , \sigma_1)$ for all $i,j$ if $T = S$.
\end{remark}

\section{Constructions}\label{sec:con}

\subsection{Plumbing graphs}\label{sec:pg}

Let $\Gamma$ be a plumbing graph. By definition this means that $\Gamma$ is a finite undirected graph with no cycles and at most one edge between any two vertices. $\Gamma$ can be disconnected in which case we will interpret the corresponding plumbed $4$-manifold to be a disjoint union. Let $x_1, \dots , x_n$ be the vertices of $\Gamma$. Associate to each vertex $x_i$ an integer weight $d_i$, which we will refer to as the {\em degree} of the vertex.

Let us recall the construction of the plumbing according to $\Gamma$ \cite[\textsection 2]{or}, \cite[\textsection 1.1.9]{sav}. Let $D^2$ denote the closed unit disc in $\mathbb{C}$. Associate to each vertex $x_i$ the $D^2$-bundle $X_i$ over $S^2$ with Euler class $d_i$. If $x_i$ has $k_i$ edges connected to it, choose $k_i$ disjoint closed discs $D_{ij}$ in the base of $X_i$, one for each $j$ such that $x_j$ is joined to $x_i$ by an edge. Since $D_{ij}$ is a disc, $X_i|_{D_{ij}}$ can be identified with the trivial disc bundle $X_i|_{D_{ij}} \cong D^2 \times D^2$. Now glue $X_i$ and $X_j$ together by identifying $X_i|_{D_{ij}}$ and $X_j|_{D_{ji}}$, exchanging the base and fibre coordinates. This construction produces a manifold with corners. Smoothing the corners yields a compact, oriented smooth $4$-manifold $X_\Gamma$ with boundary. We call $X_\Gamma$ a {\em plumbing (according to $\Gamma$)}. The boundary $Y_\Gamma = \partial X_\Gamma$ will be called the {\em boundary of the plumbing (according to $\Gamma$)}. The connected components of $X_\Gamma$ and $Y_\Gamma$ are in bijection with the components of $\Gamma$ and each component of $X_\Gamma$ is simply-connected.

The cohomology group $H^2(X_\Gamma , Y_\Gamma ; \mathbb{Z})$ has a basis $\{ e_i \}$ given by the Poincar\'e duals of the zero sections of the disc bundles $X_i \to S^2$. Let $A(\Gamma) = A(\Gamma)_{ij}$ be the matrix where $A(\Gamma)_{ii} = d_i$, $A(\Gamma)_{ij} = 1$ if $i,j$ are distinct and $x_i,x_j$ are joined by an edge and $A(\Gamma)_{ij} = 0$ otherwise. The intersection form on $H^2(X_\Gamma , Y_\Gamma ; \mathbb{Z})$ is given by $\langle e_i , e_j \rangle = A(\Gamma)_{ij}$. If $\Gamma$ is connected, then $Y_\Gamma$ is a rational homology sphere if and only if $\det(A(\Gamma)) \neq 0$. If this holds then $| H_1(Y_\Gamma ; \mathbb{Z}) | = |\det(A(\Gamma))|$. More generally, each component of $Y_\Gamma$ is a rational homology sphere if and only if $\det(A(\Gamma)) \neq 0$. All plumbing graphs considered in this paper will be assumed to satisfy this condition. We define $\sigma(\Gamma)$, $b_+(\Gamma), b_-(\Gamma)$ to equal the corresponding invariants of $X_\Gamma$.

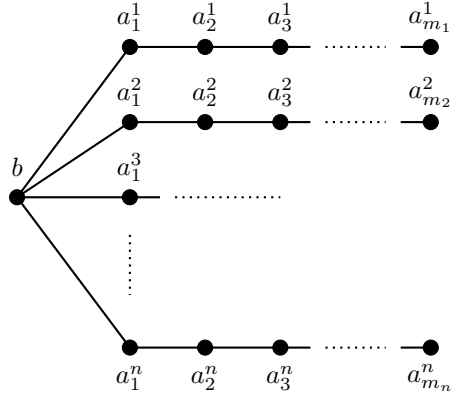
\begin{figure}

\begin{tikzpicture}
\draw[thick] (-0.5,0) -- (1,2) ;
\draw[thick] (1,2) -- (3.4,2) ;
\draw[thick] (4.6,2) -- (5,2) ;
\draw [dotted, thick] (3.6,2) -- (4.4,2);
\draw [fill] (-0.5,0) circle(0.1);
\draw [fill] (1,2) circle(0.1);
\draw [fill] (2,2) circle(0.1);
\draw [fill] (3,2) circle(0.1);
\draw [fill] (5,2) circle(0.1);
\draw[thick] (-0.5,0) -- (1,1) ;
\draw[thick] (1,1) -- (3.4,1) ;
\draw[thick] (4.6,1) -- (5,1) ;
\draw [dotted, thick] (3.6,1) -- (4.4,1);
\draw [fill] (1,1) circle(0.1);
\draw [fill] (2,1) circle(0.1);
\draw [fill] (3,1) circle(0.1);
\draw [fill] (5,1) circle(0.1);
\draw[thick] (-0.5,0) -- (1,-2) ;
\draw[thick] (1,-2) -- (3.4,-2) ;
\draw[thick] (4.6,-2) -- (5,-2) ;
\draw [dotted, thick] (3.6,-2) -- (4.4,-2);
\draw [fill] (1,-2) circle(0.1);
\draw [fill] (2,-2) circle(0.1);
\draw [fill] (3,-2) circle(0.1);
\draw [fill] (5,-2) circle(0.1);
\draw [thick] (-0.5,0) -- (1,0.0) ;
\draw [fill] (1,0) circle(0.1);
\draw [dotted, thick] (1,-0.5) -- (1,-1.3);
\draw[thick] (1,0) -- (1.4,0) ;
\draw [dotted, thick] (1.6,0) -- (3,0);
\node at (-0.5,0.4) {$b$};
\node at (1,2.4) {$a^1_1$};
\node at (2,2.4) {$a^1_2$};
\node at (3,2.4) {$a^1_3$};
\node at (5,2.4) {$a^1_{m_1}$};
\node at (1,1.4) {$a^2_1$};
\node at (2,1.4) {$a^2_2$};
\node at (3,1.4) {$a^2_3$};
\node at (5,1.4) {$a^2_{m_2}$};
\node at (1,-2.4) {$a^n_1$};
\node at (2,-2.4) {$a^n_2$};
\node at (3,-2.4) {$a^n_3$};
\node at (5,-2.4) {$a^n_{m_n}$};
\node at (1,0.4) {$a^3_1$};
\end{tikzpicture}
\caption{Star-shaped plumbing graph.}\label{fig:star}
\end{figure}

A particularly interesting class of plumbed $3$-manifold are those given by plumbing on a star-shaped graph, see Figure \ref{fig:star}. All such plumbings are Seifert fibre spaces. Given integers $b , \alpha_1, \dots , \alpha_n , \beta_1 , \dots , \beta_n$ where $\alpha_i , \beta_i$ are coprime and $\beta_i \neq 0$ for all $i$, define the Seifert manifold $Y(b ; (\alpha_1 , \beta_1) , \dots , (\alpha_n , \beta_n))$ to be surgery on the framed link given in Figure \ref{fig:seifert}. This is a rational homology sphere if and only if $b - \sum_{i=1}^{n} \beta_i/\alpha_i \neq 0$. The Seifert structure on $Y = Y(b ; (\alpha_1 , \beta_1) , \dots , (\alpha_n , \beta_n))$ completely determines the values of $b , \alpha_1, \dots , \alpha_n , \beta_1 , \dots , \beta_n$ up to a sequence of transformations of the form $b \mapsto b+k$, $\beta_i \mapsto \beta_i + k\alpha_i$ for some $i$.

For integers $a_0, \dots , a_m$, let $[a_1 , a_2 , \dots , a_m]$ be the negative continued fraction
\[
[a_1,a_2,\ldots,a_m] =a_1-\frac{1}{a_2-\raisebox{-3mm}{$\ddots$
\raisebox{-2mm}{${-\frac{1}{\displaystyle{a_m}}}$}}}.
\]
The Euclidean algorithm implies that any rational number can be written as a negative continued fraction. Now write $\alpha_i/\beta_i = [a^i_1 , a^i_2 , \dots , a^i_{m_i}]$ for some integers $\{a^i_j\}$. Then we claim that the boundary $Y_\Gamma$ of the plumbing in Figure \ref{fig:star} is the Seifert manifold $Y(b ; (\alpha_1 , \beta_1) , \dots , (\alpha_n , \beta_n))$. To see this, recall that the boundary of a plumbing graph $\Gamma$ can also be constructed as surgery on a link whose components are all unknots \cite[\textsection 1.1.9]{sav}. Applying this to the pluming graph of Figure \ref{fig:star} and repeatedly performing the reverse slam dunk operation (\cite[\textsection 5.3]{gs}), we obtain the surgery diagram in Figure \ref{fig:seifert}.

\begin{figure}[h]

\begin{tikzpicture}
  \draw[thick] ($(0, 0) + (4:3cm and 1.5cm)$(P) arc
  (4:176:3cm and 1.5cm);
  \draw[thick] ($(0, 0) + (184:3cm and 1.5cm)$(P) arc
  (184:229:3cm and 1.5cm);
  \draw[thick] ($(0, 0) + (233:3cm and 1.5cm)$(P) arc
  (233:358:3cm and 1.5cm);
  \draw[thick] ($(-3, -1.4) + (27:0.6cm and 1.4cm)$(P) arc
  (27:375:0.6cm and 1.4cm);
  \draw[thick] ($(-1.4, -2) + (28:0.6cm and 1.4cm)$(P) arc
  (28:378:0.6cm and 1.4cm);
  \draw[thick] ($(3, -1.4) + (-195:0.6cm and 1.4cm)$(P) arc
  (-195:153:0.6cm and 1.4cm);
\node at (0,2) {$b$};
\node at (-4,-1.5) {$\dfrac{\alpha_1}{\beta_1}$};
\node at (-2.2,-3.5) {$\dfrac{\alpha_2}{\beta_2}$};
\node at (0.3,-2.4) {$\bullet$};
\node at (0.6,-2.4) {$\bullet$};
\node at (0.9,-2.4) {$\bullet$};
\node at (1.2,-2.4) {$\bullet$};
\node at (4,-1.5) {$\dfrac{\alpha_n}{\beta_n}$};
\end{tikzpicture}
\caption{Surgery diagram for $Y(b ; (\alpha_1 , \beta_1 ) , \dots , (\alpha_n , \beta_n) )$.}\label{fig:seifert}
\end{figure}
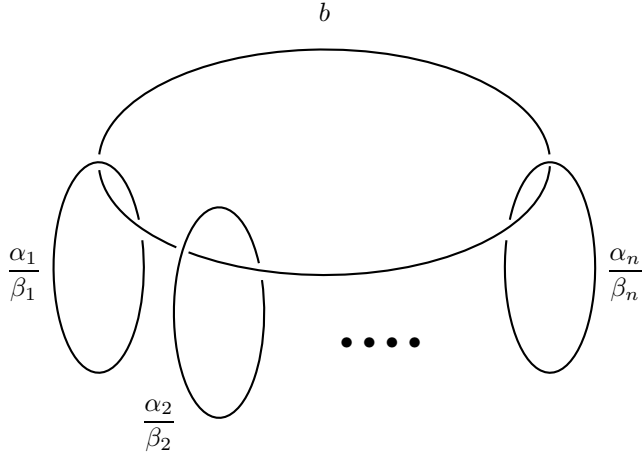

We will be particularly interested in the case of plumbing graphs $\Gamma$ where every vertex has even degree, for this is precisely the condition for the plumbing $X_\Gamma$ to be spin.

\begin{lemma}\label{lem:even}
Let $r = p/q \in \mathbb{Q}$ be a rational number where $p,q$ are coprime and $p$ or $q$ is even. Then there exists even integers $a_1, \dots , a_n$ such that $r = [a_1, \dots , a_n]$.
\end{lemma}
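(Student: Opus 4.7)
The plan is to prove this by induction on $|q|$. The base case $|q|=1$ is immediate: $\gcd(p,q)=1$ with $q$ odd forces $p$ to be even (by the standing hypothesis that $p$ or $q$ is even), so $p/q = [p]$ is already a one-term expansion with an even entry.

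For the inductive step, assume $|q| \ge 2$. Then $p/q$ is not an integer, so the open interval $(p/q - 1,\, p/q + 1)$ contains exactly two consecutive integers; one of them is even, and I would take $a$ to be this even integer. Setting $r = aq - p$ gives $|r| = |a - p/q|\cdot |q| < |q|$ and $r \ne 0$. From the identity
\[
\frac{p}{q} = a - \frac{r}{q} = a - \frac{1}{q/r},
\]
we are reduced to producing an all-even expansion of $q/r$, which after concatenation with $a$ will give the required expansion of $p/q$.

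To close the induction I would verify that $q/r$ satisfies the same hypotheses with denominator of strictly smaller absolute value. Coprimality is immediate: $\gcd(q,r) = \gcd(q,\, aq - p) = \gcd(q,p) = 1$. For the parity condition, since $a$ is even we have $aq$ even, so $r = aq - p$ has the opposite parity to $p$; hence whichever parity pattern the pair $(p,q)$ has (even/odd or odd/even), the new pair $(q,r)$ still has exactly one even and one odd entry. The inductive hypothesis then furnishes $q/r = [a_2, \dots, a_n]$ with all $a_i$ even, yielding $p/q = [a, a_2, \dots, a_n]$.

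I do not foresee any serious obstacle here: the single point that needs attention is the existence of an even integer at distance less than $1$ from $p/q$, which is guaranteed precisely because $|q| \ge 2$ and $\gcd(p,q)=1$ force $p/q$ to be a non-integer (and hence a fortiori not an odd integer, which is the only case where the open interval $(p/q-1, p/q+1)$ could fail to contain an even integer strictly interior to it).
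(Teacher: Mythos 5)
Your argument is essentially the paper's proof: choose the even integer $a$ nearest to $p/q$ (which exists and is at distance less than $1$ because $p/q$ cannot be an odd integer), pass to $q/(aq-p)$, and observe that the denominator strictly decreases in absolute value; you merely organise the descent as an induction on $|q|$ and, usefully, make explicit the preservation of coprimality and of the parity hypothesis, which the paper leaves implicit. One small slip: since $aq$ is even, $r = aq - p$ has the \emph{same} parity as $p$, not the opposite parity; the conclusion you draw from it --- that the new pair $(q,r)$ again has exactly one even entry --- is nonetheless correct (the parities are simply those of $(q,p)$, swapped), so the proof goes through after this one-word correction.
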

\begin{proof}
Let $a$ be the closest even integer to $r$. Note that $a$ is unique, for if there are exactly two even integers closest to $r$ then $r$ is an odd integer, which is impossible since then $r = r/1 = p/q$, so $p = r$ and $q=1$ are both odd. If $r$ is an even integer, then $r = a$ and we are done. Otherwise $r = a - 1/r'$, where $r' = 1/(a-r)$. Write $p = \alpha q + b$ where $0 < b < q$. Then $a$ is either $\alpha$ or $\alpha + 1$. If $a = \alpha$, then $r' = -q/b$. If $a = \alpha + 1$, then $r' = q/(q-b)$. In either case the denominator of $r'$ is strictly less than $q$. Hence iterating this process will eventually terminate. 
\end{proof}

\begin{corollary}\label{cor:even}
Every Seifert manifold $Y = Y(b ; (a_1 , b_1) , \dots , (a_n , b_n))$ with at least one $a_i$ even is the boundary of a star-shaped plumbing graph with every vertex having even degree.
\end{corollary}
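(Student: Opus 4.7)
\textbf{Proof plan for Corollary \ref{cor:even}.}

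The plan is to combine two ingredients: the freedom in the Seifert invariants given by the transformations $b \mapsto b+k$, $\beta_i \mapsto \beta_i + k\alpha_i$, and the continued fraction statement of Lemma \ref{lem:even}. Recall that the star-shaped plumbing of Figure \ref{fig:star} whose central vertex has degree $b$ and whose $i$-th arm has degrees $a^i_1, \dots, a^i_{m_i}$ with $\alpha_i/\beta_i = [a^i_1, \dots, a^i_{m_i}]$ bounds the Seifert manifold $Y(b;(\alpha_1,\beta_1),\dots,(\alpha_n,\beta_n))$. Thus it suffices to produce a Seifert presentation of $Y$ in which (i) $b$ is even, and (ii) for each $i$ at least one of $\alpha_i,\beta_i$ is even, since then each $\alpha_i/\beta_i$ admits an even continued fraction expansion by Lemma \ref{lem:even}.

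First I would fix an index $j$ with $\alpha_j = a_j$ even (which exists by hypothesis), and modify the invariants via the allowed transformations as follows. Composing transformations across different indices, any tuple $(k_1,\dots,k_n) \in \mathbb{Z}^n$ gives a new presentation $(b',\beta'_1,\dots,\beta'_n)$ with $\beta'_i = \beta_i + k_i \alpha_i$ and $b' = b + \sum_i k_i$, defining the same Seifert manifold. For each $i \neq j$ with $\alpha_i$ odd, $\gcd(\alpha_i,\beta_i)=1$ allows $\beta_i$ to have either parity; I pick $k_i \in \{0,1\}$ with $k_i \equiv \beta_i \pmod 2$, which forces $\beta'_i$ to be even. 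For each $i \neq j$ with $\alpha_i$ even, condition (ii) already holds, so I take $k_i = 0$. Finally, because $\alpha_j$ is even, changing $k_j$ by $1$ does not affect the parity of $\beta'_j$ (it stays odd), and so I am free to choose $k_j \in \{0,1\}$ to make $b' = b + \sum_i k_i$ even. This achieves (i) and (ii) simultaneously.

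Next I would invoke Lemma \ref{lem:even} for each $i$: since $\gcd(\alpha_i,\beta'_i)=1$ (the gcd is preserved by the transformation) and at least one of $\alpha_i,\beta'_i$ is even, we can write $\alpha_i/\beta'_i = [a^i_1,\dots,a^i_{m_i}]$ with each $a^i_k$ even. Assembling the star-shaped plumbing graph with central weight $b'$ and arms determined by these continued fractions produces a plumbing $\Gamma$ with every vertex of even degree whose boundary is $Y$.

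The only delicate point is the parity juggling in the first step, where one must ensure that the constraint on $b'$ can be met together with all the constraints on the $\beta'_i$. The hypothesis that some $\alpha_j$ is even is precisely what provides the extra degree of freedom: it decouples the parity of $b'$ from the parities of the $\beta'_i$, since toggling $k_j$ changes $b'$ without changing the parity of $\beta'_j$. Without this hypothesis one cannot in general achieve both (i) and (ii), which is consistent with the fact that Brieskorn spheres $\Sigma(a_1,\dots,a_n)$ with all $a_i$ odd (e.g.\ $\Sigma(2,3,5)$ fails this, but $\Sigma(3,5,7)$ satisfies it) need not bound spin star-shaped plumbings with even weights.
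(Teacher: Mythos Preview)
Your proof is correct and follows essentially the same approach as the paper: both use the allowed transformations $b \mapsto b+1$, $\beta_i \mapsto \beta_i + \alpha_i$ to arrange that each pair $(\alpha_i,\beta_i)$ has at least one even entry and that $b$ is even, with the even $\alpha_j$ providing the extra parity degree of freedom for $b$, and then invoke Lemma \ref{lem:even} on each arm. The only difference is bookkeeping---the paper first fixes all the $\beta_i$ and then adjusts $b$ via the even $\alpha_j$, whereas you reserve the index $j$ from the start---but the argument is the same.
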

\begin{proof}
If $a_i$ and $b_i$ are both odd, replace $b_i$ by $b_i + a_i$ and $b$ by $b+1$. Hence we can assume for each $i$ that at least one of $a_i,b_i$ is even. After making these replacements, if $b$ is odd, choose an $i$ such that $a_i$ is even and replace $b_i$ by $b_i + a_i$ and $b$ by $b+1$. By Lemma \ref{lem:even} we may write $a_i/b_i = [a^i_1 , \dots , a^i_{m_i}]$ where $a^i_j$ is even for all $i,j$. Then $Y$ is the boundary of the plumbing graph given in Figure \ref{fig:star}, where all the degrees are even.
\end{proof}
 
By an {\em equivariant plumbing} we mean that we perform plumbing on a graph $\Gamma$ in such a way that each individual disc bundle has a group action and these actions fit together when we perform the plumbing. In what follows, we will consider a few different types of equivariant plumbing.

\subsubsection{Complex conjugation}\label{sec:cc}

Every plumbing $X_\Gamma$ can be made into a $\mathbb{Z}_2$-equivariant plumbing as follows. Identify $S^2$ with the Riemann sphere $\mathbb{CP}^1$ and identify the disc bundle $X_i \to S^2$ of Euler class $d_i$ with the unit disc bundle in the total space of the complex line bundle $\mathcal{O}(d_i) \to \mathbb{CP}^1$ (with respect to a choice of Hermitian metric). The action on $\mathbb{CP}^1$ by complex conjugation lifts to an antiholomorphic involution $c\colon \mathcal{O}(d_i) \to \mathcal{O}(d_i)$. By restriction this defines an orientation preserving involution $c\colon X_i \to X_i$. We assume that each disc $D_{ij} \subset \mathbb{CP}^1$ is chosen to be conjugation invariant. Then we can choose the trivialisation $X_i|_{D_{ij}} \cong D^2 \times D^2$ so that $c$ acts as complex conjugation on both factors. Since this commutes with the map $D^2 \times D^2 \to D^2 \times D^2$ which swaps base and fibre coordinates, we obtain an orientation preserving involution $c_\Gamma\colon X_\Gamma \to X_\Gamma$ which is given by $c$ on each disc bundle. We call $c_\Gamma$ a {\em complex conjugation} involution of the plumbing $X_\Gamma$. Note that $c_\Gamma$ is not uniquely determined because it depends on the relative locations of the discs $D_{ij}$ whose centres must lie on the circle $S^1 \subset \mathbb{CP}^1$ fixed by $c$. Thus the construction of $c_\Gamma$ requires that for each vertex $x_i$ we choose a cyclic ordering of the edges passing through $x_i$. This ordering specifies the order in which the discs $D_{ij}$ should be placed.

Since $c_\Gamma$ acts as complex conjugation on each disc bundle $X_i \to \mathbb{CP}^1$, it sends the zero section of $X_i$ to itself orientation reversingly. Hence $c_\Gamma$ acts as $-1$ on $H^2(X_\Gamma ; \mathbb{Z})$.

Since (each component of) $X_\Gamma$ is simply-connected, a spin$^c$-structure $\mathfrak{s}$ on $X_\Gamma$ is uniquely determined by its characteristic element $c(\mathfrak{s}) \in H^2(X_\Gamma ; \mathbb{Z})$. It follows that every spin$^c$-structure on $X_\Gamma$ is of type $R$ with respect to $c_\Gamma$. Furthermore, if all the degrees $d_i$ are even, then $X_\Gamma$ is spin and the unique spin$^c$-structure on $X_\Gamma$ with $c(\mathfrak{s}) = 0$ is of types $E$ and $S$ with respect to $c_\Gamma$.

\subsubsection{$S^1$-equivariant plumbing}\label{sec:S1plumb}

Let $D_d \to S^2$ denote the unit disc bundle over $S^2$ with Euler class $d$. Consider a circle action on $D_n$ which covers a circle action on $S^2$ and acts linearly on the fibres. The simplest case is to take a trivial action on $S^2$ in which case the action is given by scalar multiplication on the fibres with some arbitrary weight $w \in \mathbb{Z}$. If the action on $S^2$ is non-trivial, it must be conjugate to a rotation with some weight $m$. More precisely, let $S^1$ act on $S^2$ by rotation about some axis. The action has two fixed points $p_+,p_- \in S^2$ and the action on the tangent spaces at $p_+, p_-$ has weights $+1$, $-1$. We will regard this standard action as having weight $m=1$. Then a weight $m$ action is given by precomposing with the map $S^1 \to S^1$, $z \mapsto z^m$ of degree $m$. The resulting action of $S^1$ on $S^2$ has weight $m$ on $T_{p_+}S^2$ and weight $-m$ on $T_{p_-}S^2$. Note that the actions of weight $m$ and $-m$ are conjugate to each other through a map that interchanges $p_+$ and $p_-$. However for the purpose of equivariant plumbing we wish to distinguish the roles of the two points $p_+, p_-$, so it is convenient to distinguish between positive and negative values of $m$.

Now consider a lift of the weight $m$ circle action on $S^2$ to the total space of $D_d$. We will regard $S^2$ as $\mathbb{P}^1$ and take the weight $m$ circle action to be $\rho [u,v] = [u , \rho^m v]$. Then $p_+ = [1,0]$, $p_- = [0,1]$. Consider the projection map $\mathbb{C}^2 \setminus \{0\} \to \mathbb{P}^1$. The total space of $\mathbb{C}^2 \setminus \{0\}$ can be thought of as the total space of $\mathcal{O}(-1)$ minus the zero section (since $\mathcal{O}(-1)$ is the tautological line bundle). There is a lift of the $S^1$-action to $\mathcal{O}(-1)$ which is defined on the complement of the zero section by $\rho(u,v) = ( u , z^{m}v )$ and which clearly extends over the zero section. Taking tensor powers of this action we obtain an $S^1$-action on $\mathcal{O}(d)$ for any $n$. Let $w^+,w^-$ denote the weights of this action over $p_+,p_-$. It follows from this construction that $w^+ = 0, w^- = -dm$, hence $w^+ - w^- = dm$. Combining such an action with scalar multiplication of weight $k$ allows us to produce a new $S^1$-action with weights $(w^+ + k , w^- + k)$ for any $k$. Hence any solution to $w^+ - w^- = dm$ defines a circle action on $\mathcal{O}(d)$ (and by restriction on $D_d$) with weights $(w^+ , w^-)$.

Now suppose we attempt to undertake plumbing equivariantly. Consider a vertex $N_0$ with degree $d_0$ and with base weight $m$. If $m=0$ then the circle action is scalar multiplication on the fibres by some weight $w$. Since each point in the base is fixed, we can have as many edges leaving the vertex as we wish. It is clear from the requirement that the plumbing is equivariant that all adjacent vertices will have a non-trivial circle action on the base. Let $N_1$ be one such vertex adjacent to $N_0$. The circle action for $N_1$ will then have exactly two fixed points which are zeros in the fibres over $p_+, p_-$. To do the plumbing equivariantly we need to attach vertices using fixed points, thus the attachment must take place at either $p_+$ or $p_-$. As a convention we assume the attachment takes place at the point $p_-$ of $N_1$. The base and fibre weights of $N_0$ are $(0,w)$ (at any point). If the vertex $N_1$ has base weight $m_1$ and fibre weights $w^+_1, w^-_1$ over $p_+, p_-$, then the base and fibre weights over $p_-$ are $( -m_1 , w^-_1)$. The plumbing swaps base and fibre, hence $(0,w) = (w^-_1 , -m_1)$. That is, $m_1 = -w$, $w^-_1 = 0$. Then since $N_1$ has degree $d_1$, we have $w^+_1 - w^-_1 = d_1 m_1 = -d_1 w$. Hence
\[
\left[ \begin{matrix} m_1 \\ w_1^+ \end{matrix} \right] = \left[ \begin{matrix} 0 & -1 \\ 1 & -d_1 \end{matrix} \right] \left[ \begin{matrix} m_0 \\ w_0^+ \end{matrix} \right]
\]
where $m_0 = 0$, $w_0^+ = w$.

Similarly, if we have two vertices $N_1,N_2$ with circle actions with possibly non-zero base weights $m_1,m_2$, with degrees $d_1,d_2$ and with fibre weights $w^{\pm}_1 , w^{\pm}_2$ and we attach $p_+$ of $N_1$ to $p_-$ of $N_2$, then because the plumbing swaps base and fibre we get $(w_1^+ , m_1 ) = (-m_2 , w_2^-)$. Together with $w_2^+ - w_2^- = d_2 m_2$, this gives
\[
\left[ \begin{matrix} m_2 \\ w_2^+ \end{matrix} \right] = \left[ \begin{matrix} 0 & -1 \\ 1 & -d_2 \end{matrix} \right] \left[ \begin{matrix} m_1 \\ w_1^+ \end{matrix} \right].
\]

Now if we start with a vertex $N_0$ with base weight $m_0 = 0$, fibre weight $w_0$ (which we can assume is equal to $\pm 1$ since we want the generic orbit to be free) then for each vertex $N_1$ that we attach, the circle action must be non-trivial in the base of $N_1$. This means there are only two points $p_+, p_-$ at which we can attach vertices to $N_1$. Then since we use $p_-$ to attach $N_1$ to $N_0$, this leaves only one remaining point $p_+$ on which to do further attachments. Each time we attach an additional vertex, the exact same situation occurs, unless the circle action in the base is trivial. There are only two fixed points $p_+, p_-$ and we use $p_-$ to attach the new vertex to the previous one leaving only $p_+$ for further attachment.

From this we see that $S^1$-equivariant plumbing can be achieved for any star-shaped graph. Moreover, the weights of the circle action are completely determined, up to an overall sign. There are other graphs which can be $S^1$-equivariantly plumbed, but there is a non-trivial condition on the degrees of the vertices for this to happen (see \cite[\textsection 2]{or} for further details).

Consider a sequence of vertices $N_0 , N_1 , \dots , N_n$, where $m_0 = 0$, $w^+_0 = 1$ and $N_{i}$ is attached to $N_{i-1}$ by joining $p_+$ of $N_{i-1}$ to $p_-$ of $N_{i}$. To simplify notation write $w_i$ for $w_i^+$. Then
\[
\left[ \begin{matrix} m_i \\ w_i \end{matrix} \right] = \left[ \begin{matrix} 0 & -1 \\ 1 & -d_i \end{matrix} \right] \left[ \begin{matrix} m_{i-1} \\ w_{i-1} \end{matrix} \right].
\]
From which one finds (assuming the $m_i$ are all non-zero) that
\[
\frac{w_n}{m_n} = [d_n , d_{n-1} , \dots , d_2 , d_1 ].
\]
If $m_1, \dots , m_k$ are non-zero and $m_{k+1} = 0$, then since $m_{k+1} = -w_k$, we see that $[ d_k , d_{k-1} , \dots , d_2 , d_1 ] = 0$. When this happens we can have more than one vertex with a trivial circle action on the base.

\subsubsection{$\mathbb{Z}_2$-equivariant plumbing}\label{sec:z2p}

Consider $\mathbb{Z}_2$-equivariant plumbings. We will construct these along the same lines as $S^1$-equivariant plumbings but with the group $\mathbb{Z}_2$ in place of $S^1$. The main difference is that for $\mathbb{Z}_2$, the weights are now to be considered as integers mod $2$. More precisely, the trivial representation is assigned weight $0$ and the sign representation is assigned weight $1$. Our discussion of equivariant plumbing in Section \ref{sec:S1plumb} carries over almost verbatim except that the weights $m, w^+, w^-$ are now valued in $\mathbb{Z}_2$ instead of $\mathbb{Z}$. Suppose we attach vertex $N_1$ to $N_2$ such that if $m_1 \neq 0$, then the attachment takes place at $p_+$ on $N_1$ and if $m_2 \neq 0$ then the attachment takes place at $p_-$ on $N_2$. Then as before we have a relation
\[
\left[ \begin{matrix} m_2 \\ w_2 \end{matrix} \right] = \left[ \begin{matrix} 0 & 1 \\ 1 & d_2 \end{matrix} \right] \left[ \begin{matrix} m_{1} \\ w_{1} \end{matrix} \right]
\]
(we have omitted minus signs since we are working over $\mathbb{Z}_2$). But now $m_i,w_i \in \mathbb{Z}_2$ and they can not both be zero (otherwise the involution is trivial), so there are only three possibilities $(m_i,w_i) \in \{ (0,1) , (1,0) , (1,1) \}$. The case $(0,1)$ corresponds to a trivial action on the base.

We make a simplifying assumption that all the degrees $d_i$ are even. Then one of two things can happen:
\begin{itemize}
\item[(1)]{Every vertex has $m=w=1$. In particular the plumbing graph (if connected) is linear and so the boundary is a lens space.}
\item[(2)]{Every vertex has either $(m,w) = (0,1)$ or $(1,0)$ in an alternating fashion, that is, each edge connects a vertex of type $(0,1)$ to a vertex of type $(1,0)$. Equivalently, we have a bipartite graph, as in Figure \ref{fig:bipartite}. The vertices of type $(0,1)$ (coloured black) can have arbitrarily many edges but the vertices of type $(1,0)$ (coloured white) can have at most two edges.}
\end{itemize}

Case (1) is not particularly interesting since the boundary must be a lens space. On the other hand Case (2) is quite interesting since there is a wide variety of plumbing graphs of this form. 

Thus if $\Gamma$ is a plumbing graph where all the vertices have even degree and weights $m_i, w_i \in \mathbb{Z}_2$ can be assigned according to Case (1) or Case (2), then we can carry out the plumbing $\mathbb{Z}_2$-equivariantly. We will refer to $\Gamma$ as a {\em $\mathbb{Z}_2$-equivariant plumbing graph} and we refer to $X_\Gamma$ as a $\mathbb{Z}_2$-equivariant plumbing. The resulting involution on $X_\Gamma$ will be denoted as $m_\Gamma$. Since $m_\Gamma$ preserves the zero section of each disc bundle of $X_\Gamma$ in an orientation-preserving manner, we see that $m_\Gamma$ acts trivially on $H^2(X ; \mathbb{Z})$. Thus, every spin$^c$-structure on $X_\Gamma$ is of type $E$ with respect to $m_\Gamma$. Furthermore, since we assumed that each $d_i$ is even, $X_\Gamma$ is spin and the unique spin$^c$-structure $\mathfrak{s}$ with $c(\mathfrak{s}) = 0$ has types $R$ and $S$ with respect to $m_\Gamma$ (the involution $m_\Gamma$ is always odd since it is constructed from gluing together odd involutions on disc bundles).

We now describe a construction that gives rise to cobordisms between boundaries of plumbings (cf. \cite[\textsection 3]{sav0}). Let $\Gamma$ be a plumbing graph and let $\Gamma'$ be a subgraph of $\Gamma$ with the property that if $e$ is an edge of $\Gamma$ that does not belong to $\Gamma'$, then at most one vertex of $e$ belongs to $\Gamma'$. We will also assume that $\det(A(\Gamma')) \neq 0$. A subgraph satisfying this property will be said to be {\em admissible}. Recall that $X_\Gamma$ is constructed by plumbing together unit disc bundles $\{ X_i \}$. For each vertex $x_i$ of $\Gamma$ which belongs to $\Gamma'$, let $X'_i$ be the closed disc bundle over $S^2$ of radius $1/2$ and Euler class $d_i$. Plumbing together the $X'_i$ gives the plumbed $4$-manifold $X_{\Gamma'}$. Identifying $X'_i$ with a subspace of $X_i$, we obtain an inclusion $X_{\Gamma'} \to X_\Gamma$. Let $X_{\Gamma',\Gamma}$ denote the closure in $X_{\Gamma}$ of the complement of $X_{\Gamma'}$. Then $X_{\Gamma',\Gamma}$ is a compact, simply-connected $4$-manifold with boundary $Y_\Gamma \cup -Y_{\Gamma'}$. So we can regard $X_{\Gamma',\Gamma}$ as a cobordism from $X_{\Gamma'}$ to $X_{\Gamma}$.

Since $\det(A(\Gamma'))$ is assumed to be non-zero, the boundary of $X_{\Gamma'}$ is a union of rational homology spheres. Mayer--Vietoris gives $H^2(X_{\Gamma} ; \mathbb{Q}) \cong H^2( X_{\Gamma'} ; \mathbb{Q}) \oplus H^2(X_{\Gamma',\Gamma} ; \mathbb{Q})$ where the direct sum is orthogonal with respect to the intersection form. Hence we can identify $H^2( X_{\Gamma' , \Gamma} ; \mathbb{Q})$ with the orthogonal complement of $H^2(X_{\Gamma'} ; \mathbb{Q})$ in $H^2(X_\Gamma ; \mathbb{Q})$.

Observe that the cobordism $X_{\Gamma' , \Gamma'}$ is equivariant with respect to the complex conjugation involution defined in Section \ref{sec:cc}. Similarly, if $X_{\Gamma}$ is an $S^1$- or $\mathbb{Z}_2$-equivariant plumbing, then the same is true of $X_{\Gamma'}$ by restriction and $X_{\Gamma' , \Gamma}$ has an $S^1$- or $\mathbb{Z}_2$-action. 

Let $a_1, \dots , a_n$ be coprime positive integers. Define the Brieskorn homology sphere $\Sigma(a_1, \dots , a_n)$ to be the unique Seifert manifold $Y(b ; (a_1,b_1) , \dots , (a_n,b_n))$ for which $b - \sum_{i=1}^{n} b_i/a_i = -1/a_1\cdots a_n$. The Brieskorn spheres other than $S^3$ and $\Sigma(2,3,5)$ have an $\widetilde{SL(2,\mathbb{R})}$ geometry with symmety group $O(2)$ which combines the circle action of the Seifert fibration with complex conjugation (viewing $\Sigma(a_1, \dots , a_n)$ as the link of a complex singularity). Let $m \in O(2)$ correspond to the unique element of order $2$ within the circle subgroup and let $c \in O(2)$ denote complex conjugation. By \cite[Theorem 2.1]{mesc} (or \cite{blp,dl} for the spherical cases $S^3$ , $\Sigma(2,3,5)$), any smooth, orientation preserving action of a finite group on $Y = \Sigma(a_1, \dots , a_n)$ is conjugate to a subgroup of this $O(2)$-action (or to a subgroup of $SO(4)$ in the case of $S^3$). In $O(2)$ there are precisely two conjugacy classes of involution, represented by $m$ and $c$. Thus any smooth, orientation preserving involution on a Brieskorn sphere other than $S^3$ is conjugate to $m$ or $c$ (in the case of $S^3$, there is again two conjugacy classes of involutions in $SO(4)$, but this time $m$ and $c$ are both conjugate to $diag(1,1,-1,-1)$ since they are both odd involutions).

Suppose that $\Gamma$ is a star-shaped plumbing graph such that $Y_\Gamma = \Sigma(a_1, \dots , a_n)$. Then we obtain two involutions $m_\Gamma, c_\Gamma$. Since $\Gamma$ is star-shaped, we obtain an $S^1$-action on the plumbing. Together with complex conjugation, this defines an action of $O(2)$ on the plumbing which then restricts to an action of $O(2)$ on $Y_\Gamma$. This action preserves a Seifert structure on $Y_\Gamma$. If $Y_\Gamma$ is not $S^3$, then the Seifert structure is unique. It follows that $m_\Gamma$ is conjugate to $m$ and $c_\Gamma$ is conjugate to $c$. In the case of $S^3$, $m_\Gamma$ and $c_\Gamma$ are odd involutions, so they must be conjugate to $m$ and $c$.

\begin{theorem}\label{thm:briedelt}
Let $Y = \Sigma(a_1, \dots , a_n)$ where $a_1, \dots , a_n$ are coprime positive integers. Then
\begin{itemize}
\item[(1)]{$\delta^E_j(Y , c) = \delta^R_j(Y , m) =  -\overline{\mu}(Y)$ for all $j \ge 1$, where $\overline{\mu}$ is the Neumann--Siebenmann invariant.}
\item[(2)]{$\delta^E_0(Y,c) = \delta^R_0(Y , m) = \delta(Y)$.}
\item[(3)]{$\delta^E_j(-Y,c) = \delta^R_j(-Y,m) = \overline{\mu}(Y)$ for all $j \ge 0$.}
\item[(4)]{$\delta^S_{j,0}(Y,m) = \delta^S_{j,1}(Y,m) = \delta^S_{0,j}(Y,c) = \delta^S_{j,1}(Y,c) = -\overline{\mu}(Y)$ for $j \ge 1$.}
\item[(5)]{$\delta^S_{j,0}(-Y,m) = \delta^S_{j,1}(-Y,m) = \overline{\mu}(Y)$ for all $j \ge 1$.}
\item[(6)]{$\delta^S_{i,j}(-Y,c) = \overline{\mu}(Y)$ for all $i,j$ with $i=0$ or $j \le 1$.}
\item[(7)]{$\delta^R_j(Y,c) \ge -\overline{\mu}(Y)$ and $\delta^R_j(-Y,c) \le \overline{\mu}(Y)$ for all $j \ge 0$.}
\end{itemize}

\end{theorem}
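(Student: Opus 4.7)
The plan is to realize the involutions $m$ and $c$ on $Y=\Sigma(a_1,\dots,a_n)$ as plumbing involutions, then bootstrap Theorem \ref{thm:plumb0} and the equivariant Fr{\o}yshov inequality (Theorem \ref{thm:froy}). By the rigidity result of Meeks--Scott recalled just above the theorem, $m$ and $c$ are conjugate to $m_\Gamma$ and $c_\Gamma$ on the boundary of a star-shaped plumbing $X_\Gamma$ with $Y_\Gamma=Y$. When at least one $a_i$ is even Corollary \ref{cor:even} provides such a $\Gamma$ with all vertex degrees even, so that $X_\Gamma$ is spin, the spin$^c$-structure with $c(\mathfrak s)=0$ has the correct type with respect to both $m_\Gamma$ and $c_\Gamma$, and $\Gamma$ is moreover a $\mathbb Z_2$-equivariant plumbing graph in the sense of \textsection\ref{sec:z2p}. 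The case when all $a_i$ are odd is handled similarly after a small rewrite of the Seifert invariants to produce a spin plumbing, as in the proof of Corollary \ref{cor:even}.

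The stable values in (1) and (4) now follow directly from Theorem \ref{thm:plumb0}: it gives $\delta^E_j(Y,c)=\delta^S_{0,j}(Y,c)=-\overline{\mu}(Y)$ and $\delta^R_j(Y,m)=\delta^S_{j,k}(Y,m)=-\overline{\mu}(Y)$ for $k\in\{0,1\}$, whenever $j\ge j(\Gamma)$. The remaining piece $\delta^S_{j,1}(Y,c)$ follows from the same plumbing computation applied to the alternative linear lift $\widetilde\sigma\, j$ of $c_\Gamma$, which swaps the roles of the characters $v$ and $v+s$ in $R^*_S$ compared with $m_\Gamma$. To push the stability threshold all the way down to $j=1$ I invoke the spectral sequence of Theorem \ref{thm:ss}: the underlying $HF^+(Y)$ for a Brieskorn sphere has an explicit graded-root description with a single infinite tower based at $-\overline{\mu}(Y)$, and combining the form of the $E_2$-page $H^*(BQ^T_\mathfrak s)\otimes HF^+(Y)$ with the monotonicity in Proposition \ref{prop:deltaprop2}(1) pins $\delta^T_j$ to $\delta^T_\infty$ already at $j=1$. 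For part (2), Proposition \ref{prop:deltaprop}(3) gives $\delta^E_0(Y,c),\delta^R_0(Y,m)\ge \delta(Y)$; the opposite inequality is read off the lowest surviving class on the $E_\infty$-page at filtration zero, which is detected by the ordinary $HF^+(Y)$-tower.

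Parts (3), (5), (6), and one direction of (7) come from applying Theorem \ref{thm:froy} to $X_\Gamma$: once as a cobordism from $\emptyset$ to $Y$ (yielding $\delta^T_j(Y)\ge -\overline{\mu}(Y)$) and once from $-Y$ to $\emptyset$ (yielding $\delta^T_j(-Y)\le\overline{\mu}(Y)$). Star-shaped plumbings of Brieskorn spheres are negative definite, so $b_+(X_\Gamma)=0$, and the relevant $\sigma$-invariant or anti-invariant subspace of $H^2(X_\Gamma;\mathbb R)$ automatically equals all of $H^2$ in each case of interest: type $E$ for $m_\Gamma$, which acts trivially on cohomology, and type $R$ for $c_\Gamma$, which acts as $-1$. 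Identifying $(c(\mathfrak s)^2-\sigma(X_\Gamma))/8$ with $-\overline{\mu}(Y)$ via the Neumann--Siebenmann formula, the Fr{\o}yshov bounds become those listed. Combined with the general inequality $\delta^T_*(Y)+\delta^T_*(-Y)\ge 0$ of Proposition \ref{prop:deltaprop2}(3) and the values already established on $Y$, this upgrades (3), (5), (6) to equalities, while (7) is just the raw Fr{\o}yshov bound.

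The principal obstacle is the stability-range compression used in the second paragraph. Theorem \ref{thm:plumb0} only delivers the stable value for $j\ge j(\Gamma)$, which a priori is comparable to the number of vertices of $\Gamma$, whereas parts (1) and (4) assert stability already at $j=1$. This step is specific to the particularly clean structure of Heegaard Floer cohomology for Brieskorn spheres and requires careful bookkeeping on the spectral sequence of Theorem \ref{thm:ss} and its multiplicative behaviour; the closely related sharp equality $\delta^E_0(Y,c)=\delta(Y)$ in (2), upgrading the general inequality $\ge\delta(Y)$ from Proposition \ref{prop:deltaprop}, is the second delicate point and is expected to fall out of the same spectral-sequence analysis.
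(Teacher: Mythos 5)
Your overall skeleton (realise $m$ and $c$ as $m_\Gamma$, $c_\Gamma$ on an equivariant plumbing and feed everything into Theorem \ref{thm:froy}) matches the paper, but the two steps you yourself flag as delicate are exactly where the proof lives, and the mechanism you propose for them does not work. The paper does not compress the stability threshold to $j=1$ by a spectral-sequence computation. It deletes the central vertex of the star-shaped graph to obtain an admissible subgraph $\Gamma'$ which is a disjoint union of linear graphs, so that $Y_{\Gamma'}$ is a union of lens spaces (hence $L$-spaces, whose equivariant delta-invariants all equal the ordinary $\delta$), and $X_{\Gamma',\Gamma}$ is an equivariant cobordism from $Y_{\Gamma'}$ to $Y$ with $b_+=0$, $b_-=1$. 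Applying Theorem \ref{thm:froy} to $X_{\Gamma',\Gamma}$ and $-X_{\Gamma',\Gamma}$ sandwiches $\delta^T_{j}(Y,\sigma)$ between quantities independent of $j$ for $j\ge 1$, which is what forces $\delta^T_j=\delta^T_\infty=-\overline{\mu}(Y)$ there. Your alternative --- reading the answer off the $E_2$-page $H^*(BQ^T_{\mathfrak s};HF^+(Y))$ together with monotonicity --- cannot succeed as stated: the infinite tower of $HF^+(Y)$ for a Brieskorn sphere starts in degree $d(Y)=2\delta(Y)$, not $-2\overline{\mu}(Y)$ (the whole content of parts (1) versus (2) is that these differ in general), and knowledge of the $E_2$-page does not determine the $R^*_T$-module structure of the abutment without controlling the differentials. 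The paper's proof of (2) in fact runs in the opposite logical direction: it first establishes $\delta^T_1=-\overline{\mu}(Y)\le\delta(Y)$ by the cobordism argument, and uses this to conclude that all differentials in the spectral sequence vanish; the equality $\delta^T_0=\delta(Y)$ then needs the additional structural input that $HF^+_{red}(Y)$ is concentrated in odd degrees at most $d(Y)-1$ (from the one-bad-vertex theorem and the graded-roots algorithm).

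The second genuine gap is the case where all $a_i$ are odd. Corollary \ref{cor:even} explicitly requires some $a_i$ to be even, and when all $a_i$ are odd there is no ``small rewrite'' producing a star-shaped plumbing with all even degrees. The paper instead normalises the Seifert invariants so that all $b_i$ are odd with $b_1<0$ and $b_i>0$ for $i>1$, builds the graph of Figure \ref{fig:gr1}, and uses a spin cobordism $X_{\Gamma',\Gamma}$ with $b_+=1$ from a union of lens spaces $L(b_i,a_i)$, together with a Wu-class computation and the identity $\delta(L(p,q),\mathfrak{s})=-\overline{\mu}(L(p,q))$ from \cite{sti}, to recover $-\overline{\mu}(Y)$. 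Separately, in parts (3), (5), (6) you assert that the star-shaped plumbing is negative definite; that is true of the canonical plumbing used for part (7), but the even-degree plumbing your spin arguments require is generally indefinite (e.g.\ the even plumbing bounding $\Sigma(2,3,7)$ is positive definite), so the direct application of Theorem \ref{thm:froy} to $X_\Gamma$ you describe does not yield the claimed bounds for all $j\ge 0$; here too the paper routes the argument through the lens-space cobordism.
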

\begin{proof}
We will first prove (1) and (3)-(6). Consider the case that $a_i$ is even for some $i$. Corollary \ref{cor:even} implies that $Y = Y_\Gamma$ is the boundary of the plumbing $X_\Gamma$ on the graph $\Gamma$ given in Figure \ref{fig:star} where all the degrees are even. Applying Theorem \ref{thm:froy} to $X_\Gamma$ and $-X_\Gamma$, we deduce that 
\[
\delta^E_\infty(Y,c) = \delta^R_\infty(Y,m) = \delta^S_{0,\infty}(Y,c) = \delta^S_{\infty,0}(Y,m) = \delta^S_{\infty,1}(Y,m) = -\sigma(\Gamma)/8 = -\overline{\mu}(Y)
\]
and
\[
\delta^E_\infty(-Y,c) = \delta^R_\infty(-Y,m) = \delta^S_{0,\infty}(-Y,c) = \delta^S_{\infty,0}(-Y,m) = \delta^S_{\infty,1}(-Y,m) = \overline{\mu}(Y).
\]

Let $\Gamma'$ be the admissible subgraph obtained by removing the central vertex of $\Gamma$. Then $\Gamma'$ is a disjoint union of $n$ linear graphs and $Y_{\Gamma'}$ is a union of lens spaces. Thus $X_{\Gamma' , \Gamma}$ is a cobordism from a union of lens spaces to $Y$ and $b_+(X_{\Gamma' , \Gamma}) = 0$, $b_-(X_{\Gamma' , \Gamma}) = 1$. Applying Theorem \ref{thm:froy} to $X_{\Gamma' , \Gamma}$ and $-X_{\Gamma' , \Gamma}$ gives (1) and (3)-(6). To see this, consider for example the invariants $\delta^E_j(Y,c)$. Since $b_+(X_{\Gamma' , \Gamma}) = 0$, $b_-(X_{\Gamma' , \Gamma}) = 1$, Theorem \ref{thm:froy} implies that $\delta^E_j(Y,c)$ is independent of $j$ for all $j \ge 1$. Hence $\delta^E_j(Y,c) = \delta^E_\infty(Y,c) = -\overline{\mu}(Y)$ for $j \ge 1$. The other cases of (1) and (3)-(6) follow similarly.

Now suppose all the $a_i$ are odd. Since $\Sigma(a_1, \dots , a_n)$ $= \Sigma(a_1 , a_2 , \dots , a_n , 1)$ we can assume $n$ is odd by inserting a $1$ if necessary. Choose integers $b_1, \dots , b_n$ such that $\sum_{i=1}^{n} b_i/a_i = 1/(a_1 \cdots a_n)$ so that $Y = Y(0 ; (a_1 , b_1) , \dots , (a_n , b_n))$. Since all the $a_i$ are odd, this implies that $\sum_{i=1}^{n} b_i = 1 \; ({\rm mod} \; 2)$ and hence an odd number of $b_i$ are odd. If $b_{i_1}, b_{i_2}$ are both even, then we can replace them by $b_{i_1}+a_{i_1}, b_{i_2}-a_{i_2}$ to make them odd. Since $n$ is odd, by making such substitutions it is possible to choose the $b_i$ so that they are all odd. Furthermore for each $i>1$, we can replace $b_i$ by $b_i + 2k a_i$ and $b_1$ by $b_1 - 2ka_1$ for sufficiently large $k$ and hence we can assume $b_i > 0$ for $i>1$. Since $\sum_{i=1}^{n} b_i/a_i = 1/(a_1 \cdots a_n)$, it follows that $b_1 < 0$.

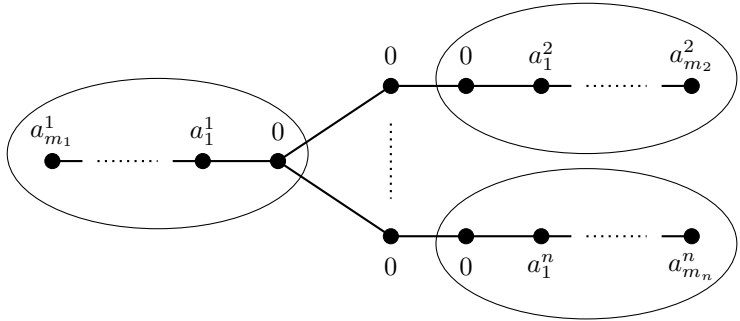
\begin{figure}[h]
\begin{tikzpicture}

\draw[-latex] (-2.1, 0.1) ellipse [x radius = 2cm, y radius = 1cm, start angle = 30, end angle = 30];
\draw[-latex] (3.6, 1.1) ellipse [x radius = 2cm, y radius = 1cm, start angle = 30, end angle = 30];
\draw[-latex] (3.6, -1.1) ellipse [x radius = 2cm, y radius = 1cm, start angle = 30, end angle = 30];  
\draw[thick] (-3.5,0) -- (-3.1,0) ;
\draw[thick] (-1.9,0) -- (-1.5,0) ;
\draw[thick] (-1.5,0) -- (-0.5,0) ;
\draw [dotted, thick] (-2.9,0) -- (-2.1,0);
\draw [fill] (-3.5,0) circle(0.1);
\draw [fill] (-1.5,0) circle(0.1);
\draw [fill] (-0.5,0) circle(0.1);
\draw[thick] (-0.5,0) -- (1,1) ;
\draw[thick] (1,1) -- (3.4,1) ;
\draw[thick] (4.6,1) -- (5,1) ;
\draw [dotted, thick] (3.6,1) -- (4.4,1);
\draw [fill] (1,1) circle(0.1);
\draw [fill] (2,1) circle(0.1);
\draw [fill] (3,1) circle(0.1);
\draw [fill] (5,1) circle(0.1);
\draw[thick] (-0.5,0) -- (1,-1) ;
\draw[thick] (1,-1) -- (3.4,-1) ;
\draw[thick] (4.6,-1) -- (5,-1) ;
\draw [dotted, thick] (3.6,-1) -- (4.4,-1);
\draw [fill] (1,-1) circle(0.1);
\draw [fill] (2,-1) circle(0.1);
\draw [fill] (3,-1) circle(0.1);
\draw [fill] (5,-1) circle(0.1);
\draw [dotted, thick] (1,0.5) -- (1,-0.5);
\node at (-3.5,0.4) {$a^1_{m_1}$};
\node at (-1.5,0.4) {$a_1^1$};
\node at (-0.5,0.4) {$0$};
\node at (1,1.4) {$0$};
\node at (2,1.4) {$0$};
\node at (3,1.4) {$a^2_1$};
\node at (5,1.4) {$a^2_{m_2}$};
\node at (1,-1.4) {$0$};
\node at (2,-1.4) {$0$};
\node at (3,-1.4) {$a^n_1$};
\node at (5,-1.4) {$a^n_{m_n}$};

\end{tikzpicture}
\caption{Plumbing graph in case where all $a_i$ are odd}\label{fig:gr1}
\end{figure}

Choose integers $a_{ij}$ with $a_i/b_i = [a_{i1} , \dots , a_{i m_i}]$. Then $Y = Y_\Gamma$ is the boundary of the plumbing $X_\Gamma$ on the graph shown in Figure \ref{fig:gr1}. Let $\Gamma'_i$ denote the subgraph of $\Gamma$ indicated by the $i$-th circle in Figure \ref{fig:gr1}. Let $\Gamma' = \cup_{i=1}^{n} \Gamma'_i$. By \cite[Proposition 7]{sav0}, we have that $X_{\Gamma' , \Gamma}$ is spin and has signature $\sum_{i=1}^{n} {\rm sign}(a_i/b_i) - 1$. Since $b_i > 0$ for $i>1$ and $b_1<0$, we get $\sigma(X_{\Gamma' , \Gamma}) = n-3$ and $b_+(X_{\Gamma' , \Gamma}) = 1$. Let $w \in H^2(X_\Gamma ; \mathbb{Z})$ denote the Wu class (the unique characteristic which when written in terms of the basis $\{e_j \}$ of $H^2(X_\Gamma ; \mathbb{Z})$ has the form $w = \sum_{i=1}^{n} u_i e_i$ where the $u_i$ are $0$ or $1$). The boundary of $X_{\Gamma'_i}$ is the lens space $L(b_i,a_i)$. Therefore $H^1( \partial X_{\Gamma'_i} ; \mathbb{Z}_2) = 0$ and we get an orthogonal decomposition 
\[
H^2(X_\Gamma ; \mathbb{Z}_2) =  \bigoplus_{i=1}^{n} H^2(X_{\Gamma'_i} ; \mathbb{Z}_2) \oplus H^2(X_{\Gamma' , \Gamma} ; \mathbb{Z}_2).
\]
Let $w_i$ denote the Wu class of $X_{\Gamma'_i}$ (which is unique since $\det( \Gamma'_i ) = \pm b_i$ is odd). Then since $X_{\Gamma' , \Gamma}$ is spin, it follows that $w = w_1 + \cdots + w_n \; ({\rm mod} \; 2)$. But the coefficients of $w$ and $w_i$ are all $0$ or $1$, hence $w = w_1 + \cdots + w_n$. Let $\mathfrak{s}$ denote the unique spin structure on $X_{\Gamma',\Gamma}$. By uniqueness $\mathfrak{s}$ is necessarily of type $S$ with respect to both $m$ and $c$. Now (1) and (3)-(6) will follow by applying Theorem \ref{thm:froy} to $(X_{\Gamma' , \Gamma} , \mathfrak{s})$. We will show this for (1). The cases (3)-(6) will follow by similar reasoning. By Theorem \ref{thm:froy}, we find 
\[
\delta^E_j(Y , c) = \delta^R_j(Y , m) = -\frac{\sigma(X_{\Gamma' , \Gamma})}{8} + \sum_{i=1}^{n} \delta( L(b_i , a_i ) , \mathfrak{s}_i )
\]
for $j \ge 1$, where $\mathfrak{s}_i$ denotes the unique spin structure on $L(b_i , a_i)$ (recall that $b_i$ is odd). But for lens spaces $L(p,q)$ with $p$ odd, one has $\delta( L(p,q) , \mathfrak{s}) = -\overline{\mu}(L(p,q))$ where $\mathfrak{s}$ denotes the spin structure \cite{sti}. Hence
\begin{align*}
\delta^E_j(Y , c) = \delta^R_j(Y , m) &= -\frac{\sigma(X_{\Gamma' , \Gamma})}{8} + \sum_{i=1}^{n} \delta( L(b_i , a_i ) , \mathfrak{s}_i ) \\
&=  -\frac{\sigma(X_{\Gamma' , \Gamma})}{8} - \sum_{i=1}^{n} \overline{\mu}(L(b_i,a_i)) \\
&=  -\frac{\sigma(X_{\Gamma' , \Gamma})}{8} - \frac{1}{8} \sum_{i=1}^{n} \left( \sigma(X_{\Gamma'_i}) - w_i^2 \right) \\
&= -\frac{1}{8} \left( \sigma(X_\Gamma) - w^2 \right) \\
&= -\overline{\mu}(Y)
\end{align*}
where in the second to last line we used that $w = w_1 + \cdots + w_n$.

Next we prove (7). For this recall that $Y = Y_\Gamma$ is the boundary of a negative definite plumbing $X_\Gamma$ (\cite[Example 1.17]{sav}). Any characteristic in $H^2(X_\Gamma ; \mathbb{Z})$ is necessarily anti-preserved by $c_\Gamma$ and so corresponds to a spin$^c$-structure of type $R$. In particular, there is a spin$^c$-structure $\mathfrak{s}$ of type $R$ for which $\overline{\mu}(Y) = (\sigma(\Gamma) - c(\mathfrak{s})^2)/8$ \cite[\textsection 7.2.3]{sav}. Applying Theorem \ref{thm:froy} gives $\delta^R_j(Y , c) \ge -\overline{\mu}(Y)$ for all $j \ge 0$. Thinking of $-Y$ as an ingoing boundary of $X_\Gamma$ Theorem \ref{thm:froy} also gives $\delta^R_j(-Y,c) \le \overline{\mu}(Y)$ for all $j \ge 0$.

It remains to prove (2). First note that $Y_\Gamma$ is the boundary of a negative definite plumbing whose plumbing graph has only one bad vertex in the terminology of \cite{os2}. Then it follows from \cite[Corollary 1.4]{os2} that $HF^+(-Y)$ is concentrated in even degrees. Consequently $d(Y)$ is even and $HF^+_{red}(Y)$ is concentrated in odd degrees. Furthermore, $HF^+(-Y)$ may be computed from the graded roots algorithm \cite{nem}. The algorithm implies that $HF^+_{red}(-Y)$ is concentrated in degrees at least $-d(Y)$. Dually it follows that $HF^+_{red}(Y)$ is concentrated in degrees at most $d(Y) - 1$. 

We have $\delta(Y) \ge -\overline{\mu}(Y)$. This follows by applying the Fr{\o}yshov inequality to any negative definite plumbing which bounds $Y$.

Now consider the spectral sequence $E_2^{p,q}$ for $HSW_E^*(Y,c)$ or $HSW_R^*(Y,m)$ given in Theorem \ref{thm:ss}. If any of the differentials in the spectral sequence are non-zero, then we must have $\delta^T_j(Y , \sigma) > \delta(Y)$ for all $j \ge 0$, where $(T,\sigma) = (E,c)$ or $(Y,m)$. But $\delta^T_1(Y,\sigma) = -\overline{\mu}(Y) \le \delta(Y)$. Hence the differentials must all be zero and it follows (since $HF^+_{red}(Y)$ is concentrated in odd degrees) that $\delta^T_0(Y , \sigma) = \delta(Y)$.
\end{proof}

For any plumbing graph $\Gamma$, let $|\Gamma|$ denote the underlying vertex set. Let $H(\Gamma)$ denote $\mathbb{Q}^{|\Gamma|}$ equipped with the bilinear form $\langle e_i , e_j \rangle = A(\Gamma)_{ij}$, where $e_1, \dots , e_{|\Gamma|}$ is the standard basis. If $\Gamma'$ is an admissible subgraph of $\Gamma$, let $H(\Gamma',\Gamma)$ denote the orthogonal complement of $H(\Gamma')$ in $H(\Gamma)$. 

\begin{definition}\label{def:jGamma}
Let $\Gamma$ be a plumbing graph. Define an invariant $j(\Gamma) \in \mathbb{Z}$ of $\Gamma$ by 
\[
j(\Gamma) = \min_{\Gamma'} \{ b_-( H(\Gamma',\Gamma)) \}
\]
where the minimum is taken over all admissible subgraphs $\Gamma' \subseteq \Gamma$ for which $\Gamma'$ is a disjoint union of linear graphs.
\end{definition}

Note that from the definition we clearly have $j(\Gamma) \le |\Gamma|$.

\begin{theorem}\label{thm:plumbdelta}
Let $\Gamma$ be a connected plumbing graph whose degrees are all even and let $Y_\Gamma$ be the boundary of the plumbing according to $\Gamma$. Let $\mathfrak{s}$ denote the restriction to $Y_\Gamma$ of the unique spin$^c$-structure on $X_\Gamma$ with $c(\mathfrak{s}) = 0$.
\begin{itemize}
\item[(1)]{$\delta_j^E( Y_\Gamma , \mathfrak{s} , c_\Gamma ) = \delta^S_{0,j}( Y_\Gamma , \mathfrak{s} , c_\Gamma) =  -\sigma(\Gamma)/8$ for all $j \ge j(\Gamma)$.}
\item[(2)]{Suppose that $\Gamma$ is a $\mathbb{Z}_2$-equivariant plumbing graph. Then $\delta_j^R(Y_\Gamma , \mathfrak{s} , m_\Gamma) = \delta^S_{j,k}( Y_\Gamma , \mathfrak{s} , m_\Gamma) = -\sigma(\Gamma)/8$ for all $j \ge j(\Gamma)$ and $k = 0,1$.}
\end{itemize}
\end{theorem}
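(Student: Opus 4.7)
The plan is to sandwich $\delta^T_*(Y_\Gamma,\mathfrak{s},\sigma)$ between matching upper and lower bounds, both coming from the equivariant Fr{\o}yshov inequality (Theorem \ref{thm:froy}), using $X_\Gamma$ itself in one direction and the cobordism $-X_{\Gamma',\Gamma}$ (for a suitable admissible subgraph $\Gamma'$) in the other. The first key point is that $c(\mathfrak{s}) = 0$ on $X_\Gamma$ and hence on every piece of the plumbing, so the $\delta(W,\mathfrak{s}) = (c(\mathfrak{s})^2 - \sigma(W))/8$ term reduces to $-\sigma(W)/8$ throughout. The second is to identify the action of each involution on $H^2$: the involution $c_\Gamma$ acts as $-1$ on $H^2(X_\Gamma;\mathbb{R})$ (as noted in \textsection \ref{sec:cc}) while $m_\Gamma$ acts trivially (as noted in \textsection \ref{sec:z2p}), and the same holds on any sub-piece. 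Hence the definiteness hypotheses of Theorem \ref{thm:froy} are satisfied vacuously in every case we need: for $c_\Gamma$ the invariant subspace is $0$ (giving type $E$), and for $m_\Gamma$ the anti-invariant subspace is $0$ (giving type $R$); the type $S$ cases require the appropriate $b_+^\sigma$ or $b_+^{-\sigma}$ to vanish, which is again automatic.

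\textbf{Lower bound.} View $X_\Gamma$ as a cobordism with empty incoming boundary and outgoing $Y_\Gamma$. Applying Theorem \ref{thm:froy} with $\delta^T_*(\emptyset) = 0$ yields
\[
-\sigma(\Gamma)/8 \le \delta^T_*(Y_\Gamma,\mathfrak{s},\sigma)
\]
in all the cases of interest, noting that in type $S$ under $c_\Gamma$ only $\delta^S_{0,j}$ is accessible (since $b_+(X_\Gamma)^\sigma = 0$), while under $m_\Gamma$ only $\delta^S_{j,0}$ and $\delta^S_{j,1}$ are accessible (since $b_+(X_\Gamma)^{-\sigma} = 0$ and we need $j+0 \le 1$).

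\textbf{Upper bound.} Fix an admissible subgraph $\Gamma'\subseteq \Gamma$ that is a disjoint union of linear graphs; then $Y_{\Gamma'}$ is a disjoint union of lens spaces, in particular an $L$-space, so by property (6) of the first theorem (and Remark \ref{rem:disjoint}) $\delta^T_*(Y_{\Gamma'}) = \delta(Y_{\Gamma'})$. Since all degrees in $\Gamma$ (hence in $\Gamma'$) are even, $X_{\Gamma'}$ is spin and its Wu class vanishes, whence $\delta(Y_{\Gamma'},\mathfrak{s}|_{Y_{\Gamma'}}) = -\sigma(X_{\Gamma'})/8$ by the lens space formula $\delta = -\overline{\mu}$ used in the proof of Theorem \ref{thm:briedelt}. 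Now apply Theorem \ref{thm:froy} to $W = -X_{\Gamma',\Gamma}$, viewed as a cobordism from $Y_\Gamma$ (incoming) to $Y_{\Gamma'}$ (outgoing); here $b_+(-X_{\Gamma',\Gamma}) = b_-(X_{\Gamma',\Gamma}) = b_-(H(\Gamma',\Gamma))$ and $\sigma(-X_{\Gamma',\Gamma}) = -\sigma(X_{\Gamma',\Gamma})$. The inequality becomes
\[
\delta^T_{*+b_-(H(\Gamma',\Gamma))}(Y_\Gamma,\mathfrak{s},\sigma) + \sigma(X_{\Gamma',\Gamma})/8 \le \delta(Y_{\Gamma'},\mathfrak{s}|_{Y_{\Gamma'}}) = -\sigma(X_{\Gamma'})/8,
\]
and by Novikov additivity $\sigma(X_{\Gamma'}) + \sigma(X_{\Gamma',\Gamma}) = \sigma(\Gamma)$, so the right side is $-\sigma(\Gamma)/8 - \sigma(X_{\Gamma',\Gamma})/8$, giving
\[
\delta^T_{*+b_-(H(\Gamma',\Gamma))}(Y_\Gamma,\mathfrak{s},\sigma) \le -\sigma(\Gamma)/8.
\]

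\textbf{Combining and optimizing.} The two bounds coincide, and combined with the monotonicity in Proposition \ref{prop:deltaprop2} we conclude that the $\delta^T_*$-invariants stabilise at $-\sigma(\Gamma)/8$ once the index exceeds $b_-(H(\Gamma',\Gamma))$. Minimising over admissible $\Gamma'$ that are disjoint unions of linear graphs gives the threshold $j(\Gamma)$ of Definition \ref{def:jGamma}, which yields precisely the ranges asserted: $j \ge j(\Gamma)$ for $\delta^E_j$ and $\delta^S_{0,j}$ under $c_\Gamma$ in part (1), and $j \ge j(\Gamma)$, $k\in\{0,1\}$ for $\delta^R_j$ and $\delta^S_{j,k}$ under $m_\Gamma$ in part (2). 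The main technical nuisance is the bookkeeping in the type $S$ case: one must verify that the admissibility conditions on the indices $(i + b_+^\sigma, j + b_+^{-\sigma})$ in Theorem \ref{thm:froy}(3) are met both for $W = X_\Gamma$ and for $W = -X_{\Gamma',\Gamma}$, which forces the restriction to $i = 0$ (for $c_\Gamma$) or $j\in\{0,1\}$ (for $m_\Gamma$) appearing in the statement.
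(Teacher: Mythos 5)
Your argument uses exactly the two cobordisms the paper uses ($\pm X_\Gamma$ and $\pm X_{\Gamma',\Gamma}$, with the definiteness hypotheses of Theorem \ref{thm:froy} holding vacuously because $c_\Gamma$ acts as $-1$ and $m_\Gamma$ as $+1$ on $H^2$), and your index bookkeeping in the type $S$ cases is correct. The one place you diverge is in how the upper and lower bounds are made to match. The paper never evaluates $\delta(Y_{\Gamma'},\mathfrak{s}|_{Y_{\Gamma'}})$: it first applies Theorem \ref{thm:froy} to $X_\Gamma$ in both orientations to get $\delta^T_\infty(Y_\Gamma,\mathfrak{s},\sigma) = -\sigma(\Gamma)/8$, then applies it to $X_{\Gamma',\Gamma}$ in both orientations to conclude that $\delta^T_j$ is \emph{constant} (equal to $\delta(Y_{\Gamma'},\mathfrak{s}|_{Y_{\Gamma'}}) - \sigma(H(\Gamma',\Gamma))/8$, whatever that number is) for $j \ge j(\Gamma)$, hence equal to $\delta^T_\infty$. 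You instead close the loop by asserting $\delta(Y_{\Gamma'},\mathfrak{s}|_{Y_{\Gamma'}}) = -\sigma(X_{\Gamma'})/8$ and invoking Novikov additivity.

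That identity is true, but your justification of it has a gap: the formula $\delta = -\overline{\mu}$ quoted in the proof of Theorem \ref{thm:briedelt} is stated (and cited from \cite{sti}) only for lens spaces $L(p,q)$ with $p$ odd, whereas the linear components of $\Gamma'$ with all weights even can have even determinant (any odd-length chain, e.g.\ a single vertex), so the lens spaces occurring can have even first homology and two spin structures. You can repair this by (a) checking that the result of \cite{sti} covers all spin structures on all lens spaces and that the spin structure restricted from the even plumbing $X_{\Gamma'_i}$ is the one whose Wu class vanishes, so $\overline{\mu} = \sigma(X_{\Gamma'_i})/8$; or (b) running your own sandwich on each piece, applying Theorem \ref{thm:froy} to $\pm X_{\Gamma'_i}$ with the conjugation involution and using that $Y_{\Gamma'_i}$ is an $L$-space so $\delta^E_* = \delta$; or (c) simply reordering the argument as the paper does, which sidesteps the computation entirely. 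A minor further remark: the appeal to monotonicity at the end is unnecessary, since your lower bound from $X_\Gamma$ already holds for every index and combines directly with the upper bound for all $j \ge b_-(H(\Gamma',\Gamma))$.
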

\begin{proof}
We will prove (1). The proof for (2) is almost identical. Applying Theorem \ref{thm:froy} to $X_\Gamma$ gives $\delta^E_j(Y_\Gamma , \mathfrak{s} , c_\Gamma) = -\sigma(\Gamma)/8$ for all large enough $j$ (in fact, for $j \ge |\Gamma|)$. Hence $\delta^E_\infty(Y_\Gamma , \mathfrak{s} , c_\Gamma) = -\sigma(\Gamma)/8$.

Now choose an admissible subgraph $\Gamma' \subseteq \Gamma$ such that $\Gamma'$ is a disjoint union of linear graphs and $b_-( H(\Gamma',\Gamma)) = j(\Gamma)$. Applying Theorem \ref{thm:froy} to $X_{\Gamma' , \Gamma}$ and noting that $Y_{\Gamma'}$ is a union of $L$-spaces, we see that $\delta_j^E(Y_\Gamma , \mathfrak{s} , c_\Gamma) = -\sigma(H(\Gamma',\Gamma))/8 + \delta(Y_{\Gamma'} , \mathfrak{s}|_{Y_{\Gamma'}})$ for all $j \ge j(\Gamma)$. In particular, this shows that the value of $\delta^E_j(Y_\Gamma , \mathfrak{s}  , c_\Gamma)$ is independent of $j$ for $j \ge j(\Gamma)$. Therefore $\delta^E_j(Y_\Gamma , \mathfrak{s} , c_\Gamma ) = \delta^E_\infty(Y_\Gamma , \mathfrak{s} , c_\Gamma) = -\sigma(\Gamma)/8$ for all $j \ge j(\Gamma)$. A similar argument also shows that $\delta^S_{0,j}( Y_\Gamma , \mathfrak{s} , c_\Gamma) = -\sigma(\Gamma)$ for $j \ge j(\Gamma)$.
\end{proof}

\begin{remark}
If $\det(\Gamma)$ is odd, so that $Y_\Gamma$ is a $\mathbb{Z}_2$-homology $3$-sphere, then $\sigma(\Gamma)/8 = \overline{\mu}(Y)$ is the Neumann--Siebenmann invariant of $Y$. More generally, for any $\sigma(\Gamma)/8 = \overline{\mu}(Y , \mathfrak{s}|_Y)$ where $\mathfrak{s}$ is the unique spin$^c$-struture on $X_\Gamma$ with $c(\mathfrak{s}) = 0$ and $\overline{\mu}(Y , \mathfrak{s}|_Y)$ is the generalised Neumann--Seibenmann invariant, as in \cite[\textsection 4]{neu}.
\end{remark}

\begin{corollary}\label{cor:plumbl}
Let $Y$ be an integral homology $3$-sphere which is the boundary of the plumbing on a graph $\Gamma$ with all vertices having even degree. If $Y$ is an $L$-space, then $\delta(Y) = -\overline{\mu}(Y)$.
\end{corollary}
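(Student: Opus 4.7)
The plan is to combine Theorem \ref{thm:plumbdelta} with the $L$-space property of the delta-invariants (Proposition \ref{prop:deltaprop}(4)). The key observation is that since $Y = Y_\Gamma$ is the boundary of a plumbing on $\Gamma$ with all degrees even, the plumbing $X_\Gamma$ is spin and carries the complex conjugation involution $c_\Gamma$ constructed in \textsection\ref{sec:cc}. This restricts to an orientation preserving involution on $Y$, and the unique spin$^c$-structure $\mathfrak{s}$ on $X_\Gamma$ with $c(\mathfrak{s}) = 0$ is of type $E$ with respect to $c_\Gamma$ (as noted at the end of \textsection\ref{sec:cc}). Since $Y$ is an integral homology $3$-sphere, $\mathfrak{s}|_Y$ is the unique spin$^c$-structure on $Y$.

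By Theorem \ref{thm:plumbdelta}(1), we have
\[
\delta^E_j(Y, c_\Gamma) = -\sigma(\Gamma)/8 = -\overline{\mu}(Y)
\]
for all $j \ge j(\Gamma)$, where the equality with $-\overline{\mu}(Y)$ is the remark following Theorem \ref{thm:plumbdelta}.

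On the other hand, the hypothesis that $Y$ is an $L$-space combined with Proposition \ref{prop:deltaprop}(4) yields
\[
\delta^E_c(Y, c_\Gamma) = \delta(Y)
\]
for every non-zero homogeneous $c \in H^*_E = \mathbb{F}[s]$. Applying this to $c = s^j$ for any $j \ge j(\Gamma)$ and comparing with the computation above gives $\delta(Y) = -\overline{\mu}(Y)$, as required.

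The argument is essentially a one-line deduction; there is no real obstacle once the two inputs are in place. The only thing to verify is that the choice of spin$^c$-structure and involution is consistent: namely that the $\mathfrak{s}$ appearing in Theorem \ref{thm:plumbdelta} coincides with the unique spin$^c$-structure on the integral homology sphere $Y$ so that $\delta^E_j(Y, c_\Gamma) = \delta^E_j(Y, \mathfrak{s}, c_\Gamma)$ in the shorthand of the final remark preceding Proposition \ref{prop:deltaprop2}. This is immediate since $b_1(X_\Gamma) = 0$ and $H_1(Y; \mathbb{Z}) = 0$.
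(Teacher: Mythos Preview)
Your proof is correct and follows essentially the same approach as the paper: both combine Theorem \ref{thm:plumbdelta} (giving $\delta^E_\infty(Y,c_\Gamma) = -\overline{\mu}(Y)$) with the $L$-space property from Proposition \ref{prop:deltaprop}(4) (giving $\delta^E_\infty(Y,c_\Gamma) = \delta(Y)$) to conclude. The paper's version is slightly terser, but the argument is the same.
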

\begin{proof}
Since $Y = Y_\Gamma$, Theorem \ref{thm:plumbdelta} gives $\delta^E_\infty(Y , c_\Gamma) = -\overline{\mu}(Y)$. On the other hand, $Y$ is an $L$-space so $\delta^E_\infty(Y , c_\Gamma) = \delta(Y)$.
\end{proof}

\begin{remark}
A result similar to Corollary \ref{cor:plumbl} was proven in \cite{sti}, but with different assumptions on $Y$. Namely \cite{sti} requires that $Y$ is the boundary of a {\em negative definite} plumbing, but does not require the degrees of the plumbing graph to be even.
\end{remark}

\subsection{Branched double covers}\label{sec:bdc}

Let $K$ be a knot in $S^3$ and let $Y = \Sigma_2(K)$ be the double cover of $S^3$ branched over $K$. Then $Y$ is a rational homology sphere, in fact $| H_1(Y ; \mathbb{Z}) | = \det(K)$ \cite[Corollary 9.2]{lic}. Since $\det(K)$ is odd, there is a unique spin structure on $Y$. The corresponding spin$^c$-structure will be denoted $\mathfrak{s}_0$. Let $\sigma\colon Y \to Y$ be the covering involution of the branched double cover. By uniqueness, $\mathfrak{s}_0$ is preserved by $\sigma$ and is an odd spin involution. The delta-invariants of $(Y , \mathfrak{s}_0 , \sigma)$ define knot invariants of $K$ as follows.

\begin{definition}
Let $K$ be a knot in $S^3$. We define the delta-invariants $\delta^E_j(K), \delta^R_j(K)$ and $\delta^S_{i,j}(K)$ of $K$ to be the corresponding delta-invariants of $( \Sigma_2(K) , \mathfrak{s}_0 , \sigma )$.

In a similar fashion we also define invariants $\delta^E_\infty(K), j^E(K)$ etc. to be equal to the corresponding invariants of $(\Sigma_2(K) , \mathfrak{s}_0 , \sigma)$.
\end{definition}

Note that the invariant $\delta_j(K)$ defined in \cite{bh} is equal to $4 \delta^E_j(K)$.

For a knot $K$ in $S^3$, we let $\sigma(K)$ denote the signature and $g_4(K)$ the smooth slice genus.

\begin{theorem}\label{thm:deltaK}
Let $K$ be a knot in $S^3$.
\begin{itemize}
\item[(1)]{$\delta^E_j(K), \delta^R_j(K), \delta^S_{k,l}(K)$ depend only on the smooth concordance class of $K$.}
\item[(2)]{$\delta^E_j(K), \delta^R_j(K), \delta^S_{k,l}(K) \in \frac{1}{4}\mathbb{Z}$.}
\item[(3)]{$\delta^E_j(K) = \delta^R_j(K) = -\sigma(K)/8 \; ({\rm mod} \; \mathbb{Z})$. $\delta^S_{k,l}(K) = -\sigma(K)/8 \; ({\rm mod} \; 2\mathbb{Z})$.}
\item[(4)]{$\delta^E_j( K ) = \delta^S_{0,j}(K) = -\sigma(K)/8$ for $j \ge g_4(K) - \sigma(K)/2$.}
\item[(5)]{If $K$ is quasi-alternating, then $\delta^E_j(K) = \delta^R_j( K) = \delta^S_{k,l}( K ) = -\sigma(K)/8$ for all $j \ge 0$ and all $(k,l)$ with $k=0$ or $l \le 1$.}
\item[(6)]{If $g_4(K) = -\sigma(K)/2$, then $\delta^R_\infty(K) \ge -\sigma(K)/8$ and $\delta^R_0( -K ) \le \sigma(K)/8$.}
\item[(7)]{If $g_4(K) = 1 - \sigma(K)/2$, then $\delta^S_{i,j}(K) \ge -\sigma(K)/8$ for all $i,j$ with $i=0$ or $j=0$ and $\delta^S_{0,1}(-K) \le \sigma(K)/8$.} 
\end{itemize}

\end{theorem}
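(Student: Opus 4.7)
The plan is to apply the equivariant Fr\o yshov inequality (Theorem \ref{thm:froy}) to the double branched cover $W = \Sigma_2(D^4,F)$ of a genus-$g_4(K)$ properly embedded orientable surface $F \subset D^4$ bounding $K$, together with the analogous cover $W'$ built from a surface bounding the mirror $-K$; the remaining parts will fall out of the formal properties of the delta-invariants. For (1), the branched double cover of a smooth concordance $C \subset S^3 \times [0,1]$ from $K_0$ to $K_1$ is, by a standard transfer argument, a rational homology cobordism from $\Sigma_2(K_0)$ to $\Sigma_2(K_1)$, and the covering involution extends both $\sigma_0,\sigma_1$ and the unique spin structures, so concordance invariance follows from Remark \ref{rem:erhci}. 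For (5), if $K$ is quasi-alternating then $\Sigma_2(K)$ is an $\mathbb{F}$-coefficient $L$-space, so property (4) of Proposition \ref{prop:deltaprop} collapses every $\delta^T_*(K)$ to the ordinary $\delta(\Sigma_2(K),\mathfrak{s}_0) = -\sigma(K)/8$ (Ozsv\'ath--Szab\'o).

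For the main case (4), write $g = g_4(K)$. The manifold $W$ is simply connected with $b_2(W) = 2g$ and $\sigma(W) = \sigma(K)$ (standard signature identity for branched double covers along an orientable surface with Seifert framing); its unique spin structure restricts to $\mathfrak{s}_0$, and the covering involution $\sigma$ is odd spin. A transfer argument gives $H^2(W;\mathbb{R})^\sigma = 0$, so $b_+(W)^\sigma = 0$ and $b_+(W)^{-\sigma} = b_+(W) = g + \sigma(K)/2$. Viewing $W$ as a cobordism $\emptyset \to \Sigma_2(K)$ and applying parts (1) and (3) of Theorem \ref{thm:froy} with $\delta(W,\mathfrak{s}) = -\sigma(K)/8$ yields the lower bounds
\[
\delta^E_j(K),\ \delta^S_{0,j}(K) \ \ge\ -\sigma(K)/8 \qquad (j \ge 0).
\]
For the matching upper bounds I use $W' = \Sigma_2(D^4, F')$, where $F' \subset D^4$ is a genus-$g$ surface bounding $-K$; then $\sigma(W') = -\sigma(K)$, $b_+(W') = g - \sigma(K)/2$, and since $\partial W' = -\Sigma_2(K)$ the manifold $W'$ is naturally a cobordism $\Sigma_2(K) \to \emptyset$. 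Theorem \ref{thm:froy} then gives $\delta^E_{j + g - \sigma(K)/2}(K) \le -\sigma(K)/8$ and $\delta^S_{0,\, j + g - \sigma(K)/2}(K) \le -\sigma(K)/8$ for $j \ge 0$, and reindexing produces (4). Parts (3) and (2) now follow formally: combining (4) with property (5) of Proposition \ref{prop:deltaprop} identifies $\delta(\Sigma_2(K),\mathfrak{s}_0)$ and $\mu(\Sigma_2(K),\mathfrak{s}_0)$ with $-\sigma(K)/8$ modulo $\mathbb{Z}$ and $2\mathbb{Z}$ respectively, which then propagates to every $\delta^T_*$ (including $T=R$, where (4) is silent); and $\sigma(K) \in 2\mathbb{Z}$ forces $-\sigma(K)/8 \in \tfrac14\mathbb{Z}$, giving (2).

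Parts (6) and (7) are variations on (4) under stronger hypotheses on $g$. When $g = -\sigma(K)/2$, $b_+(W) = 0$, so $W$ is negative definite and its $\sigma$-anti-invariant subspace --- which is all of $H^2(W;\mathbb{R})$ --- is negative definite; Theorem \ref{thm:froy}(2) then applies to $W$ in both cobordism directions ($\emptyset \to \Sigma_2(K)$ and $\Sigma_2(-K) \to \emptyset$), yielding $\delta^R_\infty(K) \ge -\sigma(K)/8$ and $\delta^R_0(-K) \le \sigma(K)/8$. When $g = 1 - \sigma(K)/2$, $b_+(W) = 1$ with $b_+(W)^\sigma = 0$ and $b_+(W)^{-\sigma} = 1$; Theorem \ref{thm:froy}(3) applied to the two cobordism directions of $W$ yields the stated bounds on $\delta^S_{i,j}(K)$ (for $i = 0$ or $j = 0$) and on $\delta^S_{0,1}(-K)$. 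The main obstacle is the sign bookkeeping: verifying the identity $\sigma(\Sigma_2(D^4, F)) = \sigma(K)$ under the chosen orientation conventions, confirming that the covering involution is odd spin with trivial invariant part in $H^2$, and computing $b_+(W)^{\pm\sigma}$ correctly so that the three families of Fr\o yshov inequalities apply simultaneously with the expected numerical values.
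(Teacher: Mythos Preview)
Your proposal is correct and follows essentially the same approach that the paper (via its reference to \cite[\S 6]{bh}) has in mind: the branched double cover $W = \Sigma_2(D^4, F)$ of a minimal-genus orientable surface for $K$, together with Theorem~\ref{thm:froy} applied in both cobordism directions, is precisely the engine behind parts (4), (6), (7), and the remaining parts reduce to the formal properties of the delta-invariants exactly as you indicate. One cosmetic remark: your use of a separate surface $F'$ for $-K$ to obtain the upper bound in (4) is equivalent to simply reversing orientation on $W$ (since $-W$ has $\partial(-W) = -\Sigma_2(K)$ and $b_+(-W) = b_-(W) = g - \sigma(K)/2$), so you need only the single manifold $W$.
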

\begin{proof}
This is a straightforward extension of the results in \cite[\textsection 6]{bh} and so we omit the details of the proof.
\end{proof}

\begin{proposition}\label{prop:subadd}
For any knots $K_1,K_2$, we have $\delta^T_{i+j}(K_1 \# K_2) \le \delta^T_i(K_1) + \delta^T_j(K_2)$ for $T = E$ or $R$ and $\delta^S_{i+k,j+l}(K_1 \# K_2) \le \delta^S_{i,j}(K_1) + \delta^S_{k,l}(K_2)$.
\end{proposition}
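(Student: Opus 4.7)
The plan is to reduce the proposition directly to Proposition \ref{prop:csum} by identifying the branched double cover of a connected sum, together with its covering involution, with the equivariant connected sum of the branched double covers.

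The first step is the following standard topological fact: for any knots $K_1, K_2 \subset S^3$ there is an orientation-preserving diffeomorphism
\[
\Sigma_2(K_1 \# K_2) \cong \Sigma_2(K_1) \# \Sigma_2(K_2)
\]
which intertwines the covering involutions $\sigma$ on the left with $\sigma_1 \# \sigma_2$ on the right. Concretely, the connected sum of knots is formed by excising small balls centred at chosen points $p_i \in K_i$ and re-gluing the resulting arc-tangles along their common boundary $2$-sphere; lifting this construction to the branched double covers realises $\Sigma_2(K_1 \# K_2)$ as the equivariant connected sum of $\Sigma_2(K_1)$ and $\Sigma_2(K_2)$ taken at the (unique) preimage of $p_i$ in each factor, which is a fixed point of $\sigma_i$ on the branch locus. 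Thus the hypotheses in the setup preceding Proposition \ref{prop:csum}, namely that the involutions have non-empty fixed point sets and that the equivariant connected sum is performed at fixed points, are satisfied.

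The second step is to identify the spin$^c$-structures. Since each $\det(K_i)$ is odd, every $\Sigma_2(K_i)$ carries a unique spin structure, and the same is true of $\Sigma_2(K_1 \# K_2)$ since $\det(K_1 \# K_2) = \det(K_1)\det(K_2)$ is also odd. By uniqueness, the distinguished spin$^c$-structure $\mathfrak{s}_0$ on $\Sigma_2(K_1 \# K_2)$ coincides with the connected sum $\mathfrak{s}_0 \# \mathfrak{s}_0$ of the distinguished spin$^c$-structures on the factors, and this identification is compatible with the odd spin lifts (and hence also with the canonical equivariant and Real structures induced by the spin structure).

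With these identifications in hand, the inequalities are immediate from Proposition \ref{prop:csum} applied to $(Y_i, \mathfrak{s}_0, \sigma_i) = (\Sigma_2(K_i), \mathfrak{s}_0, \sigma_i)$. The only mildly delicate point is the compatibility of the spin lift on $\Sigma_2(K_1 \# K_2)$ with the connected sum of the two spin lifts, but this is forced by the uniqueness of spin (and hence of odd spin) structures in our setting. No further work beyond invoking Proposition \ref{prop:csum} is required.
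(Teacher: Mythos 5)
Your proof is correct and takes essentially the same route as the paper, which simply observes that $\Sigma_2(K_1 \# K_2)$ is equivariantly diffeomorphic to the equivariant connected sum $\Sigma_2(K_1) \# \Sigma_2(K_2)$ and invokes Proposition \ref{prop:csum}. Your additional care with the identification of the (unique) spin structures and the fixed points used to form the connected sum is a sound elaboration of the same argument.
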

\begin{proof}
This follows easily from the fact that $\Sigma_2(K_1 \# K_2)$ is diffeomorphic to the equivariant connected sum $\Sigma_2(K_1) \# \Sigma_2(K_2)$.
\end{proof}

Let $L = M( b ; (a_1,b_1) , \dots , (a_n,b_n))$ denote a Montesinos link. We use the same convention for Montesinos knots as \cite[\textsection 3.2]{owst}. $L$ is constructed by joining together $n$ rational tangles with slopes $a_1/b_1, \dots , a_n/b_n$ together with $b$ half-twists. If exactly one of the $a_i$ is even, then $L$ is a knot. The branched double cover $Y = \Sigma_2(L)$ is the Seifert fibre space $Y( b ; (a_1 , b_1) , \dots , (a_n , b_n))$ (this was shown in \cite{mont}, but see also \cite[Proposition 3.3]{owst} where the orientation is worked out carefully. Note that our definition of $Y(b ; (a_1 , b_1) , \dots , (a_n , b_n))$ corresponds to $Y(-b ; (a_1 , b_1) , \dots , (a_n , b_n))$ in \cite{owst}).

\begin{theorem}\label{thm:mont}
Let $K = M(b ; (a_1, b_1) , \dots , (a_n , b_n))$ be a Montesinos knot where exactly one $a_i$ is even. Let $e = b - \sum_{i=1}^n b_i/a_i$. If $e > 0$, then $\delta^E_j(K) = -\sigma(K)/8$ for all $j \ge 0$. If $e < 0$, then $\delta^E_j(K) = -\sigma(K)/8$ for all $j \ge 1$ and $\delta^E_0(K) = \delta( \Sigma_2(K) , \mathfrak{s}_0 )$ and for $j \ge 0$.
\end{theorem}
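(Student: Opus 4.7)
The plan is to follow the strategy used in Theorem \ref{thm:briedelt}, exploiting the description of $\Sigma_2(K)$ as a Seifert fibre space together with the equivariant Fr{\o}yshov inequality. The branched double cover $Y=\Sigma_2(K)$ equals the Seifert fibre space $Y(b;(a_1,b_1),\dots,(a_n,b_n))$, and the covering involution $\sigma$ is conjugate to the order-$2$ element $m$ of the $S^1$-action on $Y$ (its fixed set is the branch locus, which is precisely the exceptional fibre of even multiplicity). Since exactly one $a_i$ is even, Corollary \ref{cor:even} realises $Y=Y_\Gamma$ as the boundary of a star-shaped plumbing $\Gamma$ with all vertex weights even. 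The covering involution extends over $X_\Gamma$ as the $\mathbb{Z}_2$-equivariant plumbing involution $m_\Gamma$ of Section \ref{sec:z2p}, which acts trivially on $H^2(X_\Gamma;\mathbb{R})$. The signature of $X_\Gamma$ is identified with $\sigma(K)$ using the standard description of $X_\Gamma$ as the double branched cover of $D^4$ along a checkerboard surface for a Montesinos diagram of $K$, and the sign of $e$ controls definiteness: $X_\Gamma$ is negative definite precisely when $e<0$.

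For the case $e<0$ and $j\ge 1$, let $\Gamma'$ denote the admissible subgraph obtained by removing the central vertex of $\Gamma$, so $Y_{\Gamma'}$ is a disjoint union of lens spaces (hence $L$-spaces, whose equivariant delta-invariants coincide with the ordinary ones by Proposition \ref{prop:deltaprop}(4)) and $X_{\Gamma',\Gamma}$ has $b_2=1$ with generator of self-intersection $\det(A(\Gamma))/\det(A(\Gamma'))$, which is negative. Apply Theorem \ref{thm:froy}(1) to $X_{\Gamma',\Gamma}$: since the $m_\Gamma$-invariant subspace of $H^2(X_{\Gamma',\Gamma};\mathbb{R})$ is the whole one-dimensional negative-definite $H^2$, the hypothesis is met and we obtain a lower bound of the form $\sum_i\delta(L_i,\mathfrak{s}|_{L_i})+\delta(X_{\Gamma',\Gamma},\mathfrak{s})\le \delta^E_j(K)$, which matches $-\sigma(K)/8$ via the same algebraic identity $\sigma(X_\Gamma)-w^2=\sigma(X_{\Gamma'})-\sum w_i^2+\sigma(X_{\Gamma',\Gamma})$ and Stipsicz's formula for lens space $d$-invariants used in the proof of Theorem \ref{thm:briedelt}. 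The matching upper bound is the technical crux: direct application of Theorem \ref{thm:froy}(1) to the orientation-reversed cobordism $-X_{\Gamma',\Gamma}$ fails in type $E$ because the $m_\Gamma$-invariant subspace becomes the whole positive-definite $H^2$. To circumvent this, I would apply Theorem \ref{thm:froy}(2) and \ref{thm:froy}(3) to $-X_{\Gamma',\Gamma}$ (both hypotheses are automatic since the $m_\Gamma$-anti-invariant subspace vanishes) to obtain upper bounds on $\delta^R_j$ and on $\delta^S_{i,j}$, and then transfer these bounds back to type $E$ using the forgetful maps $HSW^*_S\to HSW^*_E$ and $HSW^*_S\to HSW^*_R$ induced by the inclusions $G^E_\mathfrak{s},G^R_\mathfrak{s}\hookrightarrow G^S_\mathfrak{s}$ of Section \ref{sec:sym}.

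For the $j=0$ boundary in the case $e<0$, the inequality $\delta^E_0(K)\ge \delta(\Sigma_2(K),\mathfrak{s}_0)$ is property (7) of the main theorem, and the reverse inequality follows from the spectral sequence of Theorem \ref{thm:ss}: its $E_2$-page is $H^*(B\mathbb{Z}_2;HSW^*(Y,\mathfrak{s}_0))$, and the lowest-degree generator of $HSW^*(Y,\mathfrak{s}_0)$ survives to $E_\infty$ at filtration zero, producing a class in $HSW^{2\delta(Y,\mathfrak{s}_0)}_E$ that pins $\delta^E_0(K)\le\delta(Y,\mathfrak{s}_0)$. For the case $e>0$, apply the already-established $e<0$ case to the mirror knot $\overline K=M(-b;(a_1,-b_1),\dots,(a_n,-b_n))$ (which satisfies $\Sigma_2(\overline K)=-\Sigma_2(K)$, $e(\overline K)=-e(K)<0$ and $\sigma(\overline K)=-\sigma(K)$), and combine the resulting values for $\overline K$ with Proposition \ref{prop:deltaprop2}(3) to force $\delta^E_j(K)=-\sigma(K)/8$ for all $j\ge 0$.

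The hard part is the upper bound $\delta^E_j(K)\le -\sigma(K)/8$ for $j\ge 1$ in the $e<0$ case. For $c$-type involutions the analogous bound in the Brieskorn proof comes for free from the orientation-reversed cobordism (since the $c_\Gamma$-invariant subspace is trivially negative definite in both orientations), but the $m_\Gamma$-invariant subspace fills all of $H^2$ and fails the Fr{\o}yshov hypothesis in type $E$ whenever the intersection form is positive definite. The translation from types $R$ and $S$ to type $E$ via the restriction ring maps $R^*_S\to R^*_E$ (under which $s\mapsto s$, $v\mapsto 0$, $q\mapsto s^2+u^2$ as computed in the style of Proposition \ref{prop:R*}) is the key mechanism for transferring the bound, and verifying that the transferred bound saturates at the expected value is where the main effort lies.
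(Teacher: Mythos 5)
There is a genuine error at the very first step: you identify the covering involution of $\Sigma_2(K)\to S^3$ with the order-$2$ element $m$ of the Seifert circle action, whereas for a Montesinos knot it is the fibre-\emph{inverting} involution, i.e.\ the restriction to $Y_\Gamma=\partial X_\Gamma$ of the complex conjugation involution $c_\Gamma$ of \textsection \ref{sec:cc} (this is the content of Montesinos' construction; the paper cites \cite[\textsection 7.2]{sav1}). Its fixed set is the preimage of $K$, a curve transverse to the fibres, not the even-multiplicity exceptional fibre; and by Theorem \ref{thm:briedelt} the $m$- and $c$-involutions have genuinely different delta-invariants, so the two cannot be interchanged. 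You may be conflating this with the torus-knot case $T_{p,q}$ ($p,q$ odd), where the covering involution really is $m$. The misidentification matters structurally: $c_\Gamma$ acts as $-1$ on $H^2(X_\Gamma;\mathbb{Z})$, so the $\sigma$-\emph{invariant} subspace of $H^2$ is zero and the hypothesis of Theorem \ref{thm:froy}(1) is vacuous in \emph{both} orientations. Consequently the entire difficulty you single out as "the technical crux" — the failure of the type-$E$ Fr{\o}yshov hypothesis on the positive-definite side for a homologically trivial involution — does not arise, and the theorem follows directly from Theorem \ref{thm:plumbdelta}(1) together with the computation $j(\Gamma)=0$ for $e>0$ and $j(\Gamma)=1$ for $e<0$.

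Even granting your setup, the proposed repair does not work as stated. The "forgetful maps" $HSW^*_S\to HSW^*_E$, $HSW^*_S\to HSW^*_R$ and the ring map $R^*_S\to R^*_E$ with $q\mapsto s^2+u^2$ are not constructed anywhere in the paper, and it is not established that they transfer upper bounds between the differently indexed delta-invariants $\delta^S_{i,j}$, $\delta^R_j$, $\delta^E_j$ (note $G^E_{\mathfrak s}$ and $G^R_{\mathfrak s}$ are not subgroups of $G^S_{\mathfrak s}$ in a way that intertwines the relevant classes $s$, $w_1$, $v$, $v+s$ as you would need). Two smaller points: the claim that $X_\Gamma$ is negative definite precisely when $e<0$ is false in general (only the one-dimensional complement $H(\Gamma',\Gamma)$ of the lens-space subgraph has sign controlled by $e$); and the mirror argument via Proposition \ref{prop:deltaprop2}(3) only yields the lower bound $\delta^E_j(K)\ge-\sigma(K)/8$ in the $e>0$ case, not the required equality. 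Likewise the assertion that the bottom tower class automatically survives the spectral sequence to give $\delta^E_0\le\delta(Y,\mathfrak{s}_0)$ needs the vanishing-of-differentials argument (via the parity of $HF^+_{red}$) used in the proof of Theorem \ref{thm:briedelt}(2); it is not automatic.
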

\begin{proof}
Recall that $\Sigma_2(K) = Y(b ; (a_1 , b_1) , \dots , (a_n , b_n))$ is the boundary of a plumbing $X_\Gamma$, where $\Gamma$ is a star-shaped graph and all the degrees are even. We claim that $\sigma$ is the restriction to $\partial X_\Gamma$ of the complex conjugation involution $c_\Gamma$ constructed in Section \ref{sec:cc}. This follows from the argument given in \cite[\textsection 7.2]{sav1}. Now the result follows from Theorem \ref{thm:plumbdelta} and the fact that $Y(b ; (a_1, b_1) , \dots , (a_n , b_n))$ is given by plumbing on a star-shaped graph $\Gamma$ which has $j(\Gamma) = 0$ if $e > 0$ and $j(\Gamma) = 1$ if $e < 0$. Note that $\sigma(K)/8 = \overline{\mu}(Y)$ \cite[\textsection 7.2.3]{sav}. 
\end{proof}

\subsection{Equivariant Dehn surgery}\label{sec:eds}

Let $L$ be a link in $S^3$ and suppose that $L$ is sent to itself under some orientation preserving, smooth involution $\sigma\colon S^3 \to S^3$. The resolution of the Smith conjecture \cite{bm} implies that the fixed point set $C$ of $\sigma$ is an unknot or is empty. Let $K$ be a component of $L$. If $\sigma(K) \neq K$, then $\sigma$ exchanges the two components $K$ and $\sigma(K)$. If $\sigma(K) = K$, then either $\sigma$ acts freely on $K$, in which case we say $K$ is {\em $2$-periodic} or $\sigma$ has exactly two fixed points on $K$, in which case we say $K$ is {\em strongly invertible}. We say that $L$ is {\em $2$-periodic} if every component of $L$ is $2$-periodic (with respect to $\sigma$) and we say that $L$ is {\em strongly invertible} if every component of $L$ is strongly invertible (with respect to $\sigma$).

Let $\mathcal{F}$ denote a framing of $L$ and let $Y$ be obtained from $S^3$ by performing Dehn surgery along $L$ with framing $\mathcal{F}$. Suppose that the framing is $\sigma$-invariant in the sense that for any component $K$ of $L$ which is not sent to itself by $\sigma$, the framings of $K$ and $\sigma(K)$ coincide. Then we can carry out Dehn surgery equivariantly with respect to $\sigma$ and the extension is unique up to conjugacy by diffeomorphisms isotopic to the identity (see \cite[\textsection 2]{sak} for details). We denote the induced involution on $Y$ by $\sigma$.

Note that $2$-periodic links can further be subdivided into two types according to whether or not $\sigma$ acts freely on $S^3$. However we will use the term $2$-periodic to refer to either of these possibilities. 

If the framing coefficients (i.e. the slopes) are all integers then there is a corresponding $4$-manifold $X$, the trace of the surgery on $L$, which is constructed by adding $2$-handles to the $4$-ball along the components of $L$, with the framing specifying the handle attachments, then smoothing out corners. The result is a compact, oriented, simply-connected smooth $4$-manifold $X$ with boundary $Y$. Futhermore, the involution $\sigma$ is easily seen to extend to a smooth, orientation preserving involution on $X$.

$H^2(X ; \mathbb{Z})$ has a natural basis corresponding to the $2$-handles. Hence for each component $K$ of $L$ there is a corresponding basis element $e_K \in H^2(X ; \mathbb{Z})$ (one also needs to choose an orientation on $L$ so that each $e_K$ is defined. Without choosing an orientation, $e_K$ is only defined up to sign). The intersection form on $H^2(X ; \mathbb{Z})$ is given by $\langle e_{K_1} , e_{K_2} \rangle = lk( K_1 , K_2)$, the linking number of $K_1, K_2$. In the case $K_1 = K_2$, $lk(K_1 , K_2)$ is the self-linking number, which coincides with the framing coefficient. Note that $H^2(X ; \mathbb{Z})$ is spin if and only if all the framing coefficients are even. The action of $\sigma$ on $H^2(X ; \mathbb{Z})$ is easy to describe: if $K$ is a component of $L$ then $\sigma( e_K ) = \pm e_{\sigma(K)}$, where the sign is $+$ or $-$ depending on whether $\sigma\colon K \to \sigma(K)$ is orientation preserving or reversing. In particular, if $L$ is $2$-periodic, then $\sigma$ acts as the identity on $H^2(X ; \mathbb{Z})$ and if $L$ is strongly invertible, then $\sigma$ acts as $-1$ on $H^2(X ; \mathbb{Z})$.

If $\mathcal{F}$ is integral and $L = K_1 \cup \cdots \cup K_n$ is the decomposition of $L$ into its components, then we define the linking matrix $A = [A_{ij}]$ of $(L , \mathcal{F})$ by $A_{ij} = lk(F_i , F_j)$. Then $Y$ is a rational homology sphere if and only if $\det(A) \neq 0$. Moreover $| H_1( Y ; \mathbb{Z}) | = |\det(A)|$. The linking matrix defines a symmetric bilinear form, which as explained above gives the intersection form on $H^2(X ; \mathbb{Z})$. Let $\sigma(A)$ denote the signature of this intersection form, which is also the signature of $X$. Similarly, we define $b_{\pm}(A)$ to be $b_{\pm}(X)$.

We will say that the framing $\mathcal{F}$ is {\em even} if all the framing coefficients are even integers. In this case the trace $X$ is spin and there is a unique spin structure $\mathfrak{s}_0$ on $X$ for which $c(\mathfrak{s}_0) = 0$. By restriction, $\mathfrak{s}_0$ defines a distinguished spin structure on $Y$. We have that $\sigma$ is an odd spin involution. This follows because the fixed point set of $\sigma$ on $Y$ is non-empty (recall that the original involution on $S^3$ had fixed point set an unknot in $S^3$).

\begin{theorem}\label{thm:surglink}
Let $(L , \sigma)$ be a $2$-periodic or strongly invertible link. Let $Y$ be the $3$-manifold obtained by Dehn surgery on $Y$ with respect to some framing $\mathcal{F}$ and denote by $\sigma$ the induced involution on $Y$. Suppose that $\mathcal{F}$ is even and let $\mathfrak{s}_0$ denote the distinguished spin structure. Let $A$ denote the linking matrix of $(L , \mathcal{F})$. Then
\begin{itemize}
\item[(1)]{If $L$ is $2$-periodic, then $\delta^R_j(Y , \mathfrak{s}_0 , \sigma) = -\sigma(A)/8$ for $j \ge b_-(A)$.}
\item[(2)]{If $L$ is strongly invertible, then $\delta^E_j(Y , \mathfrak{s}_0 , \sigma) = -\sigma(A)/8$ for $j \ge b_-(A)$.}
\end{itemize}

\end{theorem}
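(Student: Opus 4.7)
The plan is to apply the equivariant Fr{\o}yshov inequality (Theorem \ref{thm:froy}) twice, once to the trace of the surgery and once to its orientation reversal. Let $X$ denote the trace of the surgery on $L$ with framing $\mathcal{F}$. Since $\mathcal{F}$ is even and $X$ is simply-connected, $X$ is a compact, oriented, smooth spin $4$-manifold with $\partial X = Y$, and the involution $\sigma$ on $S^3$ extends to an orientation-preserving smooth involution on $X$, which we continue to denote by $\sigma$. Because $H^1(X;\mathbb{Z}_2)=0$, the spin structure $\mathfrak{s}_0$ on $X$ with $c(\mathfrak{s}_0)=0$ is unique, and is therefore preserved by $\sigma$; viewing $\mathfrak{s}_0$ as a spin$^c$-structure, this makes it of type $E$, while composing a $\mathbb{C}$-linear lift with the charge conjugation symmetry $j$ gives an antilinear lift, exhibiting $\mathfrak{s}_0$ also as type $R$.

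The first step is to identify the $\sigma$-action on $H^2(X;\mathbb{Z})$ in the basis $\{e_K\}$ indexed by components of $L$, using the formula $\sigma(e_K) = \pm e_{\sigma(K)}$ with sign determined by whether $\sigma\colon K \to \sigma(K)$ preserves or reverses orientation. A fixed-point-free involution of a circle is rotation by $\pi$, hence orientation-preserving, so in the $2$-periodic case $\sigma$ acts as $+\mathrm{id}$ on $H^2(X;\mathbb{Z})$; in the strongly invertible case the components are fixed orientation-reversingly, so $\sigma$ acts as $-\mathrm{id}$. Consequently, in case (1) the $\sigma$-anti-invariant subspace of $H^2(X;\mathbb{R})$ is zero, and in case (2) the $\sigma$-invariant subspace is zero. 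In either case the relevant subspace is trivially negative definite, so the hypothesis of Theorem \ref{thm:froy}(2) or (1) is satisfied.

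Next, view $X$ as a cobordism from $\emptyset$ to $Y$. With $\delta(X,\mathfrak{s}_0) = (c(\mathfrak{s}_0)^2 - \sigma(X))/8 = -\sigma(A)/8$ and $\delta^T_*(\emptyset) = 0$, Theorem \ref{thm:froy} yields
\[
-\sigma(A)/8 \;\le\; \delta^T_j(Y,\mathfrak{s}_0,\sigma)
\]
for all $j \ge 0$, where $T=R$ in case (1) and $T=E$ in case (2). Then view $-X$ as a cobordism from $-Y$ to $\emptyset$. The $\sigma$-eigenspace decomposition of $H^2(-X;\mathbb{R})$ is unchanged from that of $H^2(X;\mathbb{R})$ (only the pairing changes sign), so the definiteness hypothesis still holds trivially. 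Since $b_+(-X) = b_-(X) = b_-(A)$ and $\delta(-X,\mathfrak{s}_0) = \sigma(A)/8$, applying Theorem \ref{thm:froy} to $-X$ gives
\[
\delta^T_{j+b_-(A)}(Y,\mathfrak{s}_0,\sigma) \;\le\; -\sigma(A)/8
\]
for all $j \ge 0$. Combining the two bounds with the monotonicity of $\delta^T_j$ in $j$ from Proposition \ref{prop:deltaprop2} forces $\delta^T_j(Y,\mathfrak{s}_0,\sigma) = -\sigma(A)/8$ for all $j \ge b_-(A)$, which is precisely the claim in each case.

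There is no significant obstacle in this proof; the only real content beyond invoking Theorem \ref{thm:froy} is the identification of the $\sigma$-action on $H^2(X;\mathbb{Z})$ and the observation that under our two symmetry hypotheses the relevant eigenspace is zero-dimensional, making the definiteness conditions vacuous. The remaining work is a routine two-sided application of the Fr{\o}yshov inequality together with the fact that $\mathfrak{s}_0$ being a spin structure forces $c(\mathfrak{s}_0)=0$.
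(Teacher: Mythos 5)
Your proof is correct and follows the same route as the paper, which simply applies Theorem \ref{thm:froy} to the trace $X$ of the surgery (with both boundary orientations) after noting that $\sigma$ acts as $+1$ on $H^2(X;\mathbb{Z})$ in the $2$-periodic case and as $-1$ in the strongly invertible case, so the relevant eigenspace condition is vacuous. The only nitpick is the phrase ``view $-X$ as a cobordism from $-Y$ to $\emptyset$'': to obtain the displayed inequality $\delta^T_{j+b_-(A)}(Y,\mathfrak{s}_0,\sigma)\le-\sigma(A)/8$ you should regard $-X$ as a cobordism with \emph{incoming} boundary $Y$ (i.e.\ $\partial(-X)=\emptyset\cup -Y$), but the inequality you wrote is the correct one.
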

\begin{proof}
Let $X$ be the trace of $(L , \mathcal{F})$. Then $\sigma$ extends over $X$. Since $\mathcal{F}$ is even, $X$ is spin and $\mathfrak{s}_0$ extends to a unique spin structure $\mathfrak{s}$ on $X$ with $c(\mathfrak{s}) = 0$. Applying Theorem \ref{thm:froy} to $X$ gives the results.
\end{proof}

In the case that $L = K$ is a knot, we can improve upon Theorem \ref{thm:surglink}. In this case the framing $\mathcal{F}$ is a single rational number $p/q$ and $Y = S_{p/q}(K)$ is the Dehn surgery along $K$ with slope $p/q$. Note that it is sufficient to assume $p/q > 0$ since $S_{-p/q}(K) = -S_{p/q}(-K)$.

\begin{proposition}
Let $K$ be a $2$-periodic or strongly invertible knot. For $p > 0$, let $Y = S_{2p}(K)$ with the induced involution $\sigma$ and distinguished spin structure $\mathfrak{s}_0$. If $K$ is $2$-periodic, then:
\begin{itemize}
\item[(1)]{$\delta^E_0(Y , \mathfrak{s}_0 , \sigma) \le -1/8$, $\delta^E_\infty(-Y , \mathfrak{s}_0 , \sigma) \ge 1/8$.}
\item[(2)]{$\delta^R_j(Y , \mathfrak{s}_0 , \sigma) = -1/8$, $\delta^R_j(-Y , \mathfrak{s}_0 , \sigma) = 1/8$ for all $j \ge 0$.}
\item[(3)]{$\delta^S_{j,0}(Y , \mathfrak{s}_0 , \sigma) \ge -1/8$ for all $j \ge 0$ and $\delta^S_{1,0}(-Y , \mathfrak{s}_0 , \sigma) \le 1/8$.}
\end{itemize}

If $K$ is strongly invertible, then:
\begin{itemize}
\item[(1)]{$\delta^E_j(Y , \mathfrak{s}_0 , \sigma) = -1/8$, $\delta^E_j(-Y , \mathfrak{s}_0 , \sigma) = 1/8$ for all $j \ge 0$.}
\item[(2)]{$\delta^R_0(Y , \mathfrak{s}_0 , \sigma) \le -1/8$, $\delta^R_\infty(-Y , \mathfrak{s}_0 , \sigma) \ge 1/8$.}
\item[(3)]{$\delta^S_{i,j}(Y , \mathfrak{s}_0 , \sigma) \ge -1/8$ for all $i,j$ with $i=0$ or $j=0$ and $\delta^S_{0,1}(-Y , \mathfrak{s}_0 , \sigma) \le 1/8$.}
\end{itemize}

\end{proposition}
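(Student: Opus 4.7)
The plan is to apply Theorem \ref{thm:froy} (the equivariant Fr{\o}yshov inequality) to the trace $X$ of $2p$-surgery on $K$, viewing $X$ and $-X$ as equivariant cobordisms in all four possible configurations relative to $\emptyset$, $Y$, and $-Y$. Recall that $X = D^4 \cup h^2$ is a simply connected, spin $4$-manifold with $\partial X = Y$, intersection form $(2p)$, hence $b_+(X) = 1$, $b_-(X) = 0$, $\sigma(X) = 1$, and $\delta(X,\mathfrak{s}) = -1/8$ for the spin structure $\mathfrak{s}$ restricting to $\mathfrak{s}_0$. The involution $\sigma$ extends over $X$ acting on $H^2(X;\mathbb{Z}) \cong \mathbb{Z}$ trivially if $K$ is $2$-periodic and as $-1$ if $K$ is strongly invertible, as explained in the discussion preceding Theorem \ref{thm:surglink}.

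In the $2$-periodic case, the $\sigma$-invariant subspace of $H^2(X;\mathbb{R})$ equals $H^2(X;\mathbb{R})$ (positive definite), while the $\sigma$-anti-invariant subspace is trivial (vacuously negative definite); for type $S$, $b_+(X)^\sigma = 1$ and $b_+(X)^{-\sigma}=0$. Thus Fr{\o}yshov applies in types $R$ and $S$ to $X$, while for $-X$ (negative definite) the invariant subspace becomes negative definite, so all three types $E,R,S$ apply. I would extract bounds from four cobordisms: $X\colon \emptyset \to Y$ yields $\delta^R_j(Y)\ge -1/8$ and $\delta^S_{i,j}(Y)\ge -1/8$ for $j\le 1$; $X\colon -Y \to \emptyset$ yields $\delta^R_{k}(-Y)\le 1/8$ for $k\ge 1$ and $\delta^S_{i+1,j}(-Y)\le 1/8$; $-X\colon \emptyset \to -Y$ yields $\delta^R_j(-Y)\ge 1/8$ and $\delta^E_j(-Y)\ge 1/8$; and $-X\colon Y \to \emptyset$ yields $\delta^R_j(Y)\le -1/8$ and $\delta^E_j(Y)\le -1/8$. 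Combining these matching two-sided bounds gives the equalities for $\delta^R_j(Y)$ and the stated inequalities for $\delta^E_0(Y)$, $\delta^E_\infty(-Y)$, $\delta^S_{j,0}(Y)$, and $\delta^S_{1,0}(-Y)$. The strongly invertible case is the mirror argument, exchanging the roles of the $\sigma$-invariant and $\sigma$-anti-invariant subspaces: the $\sigma$-invariant subspace of $H^2(X;\mathbb{R})$ is now trivial (vacuously negative definite), so type $E$ applies to $X$, while type $R$ requires passing to $-X$; for type $S$ the relevant shifts become $b_+(X)^\sigma = 0$, $b_+(X)^{-\sigma} = 1$.

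The main obstacle is extracting the sharpest possible conclusions from the index shifts in Theorem \ref{thm:froy}. Since the inequality shifts indices by $b_+$, applying it to $X$ (with $b_+ = 1$) only gives bounds starting at $j \ge 1$ rather than $j \ge 0$. For those index ranges, one must combine the one-sided Fr{\o}yshov bounds with the monotonicity in Proposition \ref{prop:deltaprop2}(1),(2) (the sequences $\delta^T_j$ are decreasing in $j$) and the conjugation pairing $\delta^T_i(Y) + \delta^T_j(-Y) \ge 0$ of Proposition \ref{prop:deltaprop2}(3), together with the type $S$ constraints $i=0$ or $j\le 1$, to force the remaining cases. I would also take care to verify that the spin structure on $Y$ is of all three types relevant to the various Fr{\o}yshov inequalities, which is automatic since $c(\mathfrak{s}) = 0$ on $X$ and $\sigma$ has nonempty fixed set on $Y$ (the image of the axis of $\sigma$ on $S^3$).
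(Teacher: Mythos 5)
Your approach is exactly the paper's: the published proof is a single line ("apply Theorem \ref{thm:froy} to the trace of the $2p$-surgery"), and your four-configuration bookkeeping with $X$ and $-X$, the identification of which of the types $E$, $R$, $S$ applies to which orientation in each of the $2$-periodic and strongly invertible cases, and the computation $\delta(X,\mathfrak{s}) = -1/8$ are all correct. Every inequality in the proposition except one family does drop out of those four applications exactly as you describe.

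The gap is in your final paragraph. The direct applications give, in the $2$-periodic case, $\delta^R_j(-Y,\mathfrak{s}_0,\sigma) \ge 1/8$ for all $j \ge 0$ (from $-X\colon \emptyset \to -Y$, which has $b_+=0$) but $\delta^R_j(-Y,\mathfrak{s}_0,\sigma) \le 1/8$ only for $j \ge 1$ (the only cobordism with $-Y$ incoming is $X$ itself, and $b_+(X)=1$ shifts the index). So the stated equality $\delta^R_0(-Y,\mathfrak{s}_0,\sigma) = 1/8$ still needs the upper bound at $j=0$, and the tools you invoke to "force the remaining cases" cannot supply it: monotonicity gives $\delta^R_0(-Y) \ge \delta^R_1(-Y) = 1/8$, and the pairing of Proposition \ref{prop:deltaprop2}(3) gives $\delta^R_0(-Y) \ge -\delta^R_j(Y) = 1/8$ — both are lower bounds, in the same direction as what you already have. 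The identical issue occurs for $\delta^E_0(-Y,\mathfrak{s}_0,\sigma) = 1/8$ in the strongly invertible case. An upper bound on $\delta^T_0$ of $-Y$ requires a cobordism with $-Y$ incoming and $b_+ = 0$, i.e.\ a negative definite filling of $Y$ of the appropriate type, which the trace does not provide (compare the proof of Theorem \ref{thm:briedelt}, where the $j=0$ statements on $-Y$ are obtained from the auxiliary cobordism $X_{\Gamma',\Gamma}$ with $b_+=0$, and where the $j=0$ equality on $Y$ itself needs a separate spectral-sequence argument). To be fair, the paper's one-line proof does not address this point either, so the $j=0$ case of the $-Y$ equalities appears to be unjustified as stated; but as written, your proposed mechanism for closing it does not work, and you should either restrict those equalities to $j \ge 1$ or supply a genuinely new input for $j=0$.
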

\begin{proof}
Let $X$ be the trace of the $2p$-surgery on $K$ and let $\mathfrak{s}$ be the unique spin$^c$-structure on $X$ with $c(\mathfrak{s}) = 0$. Applying Theorem \ref{thm:froy} to $(X , \mathfrak{s})$ gives the result.
\end{proof}

We can also consider the delta-invariants for spin$^c$-structures which are not spin. In this case we can consider $Y = S_p(K)$ where $p>0$ need not be even. The spin$^c$-structures on $Y$ form a torsor over $\mathbb{Z}_p$. The torsor can be identified with $\mathbb{Z}_p$ as follows. Let $X$ be the traces of the Dehn surgery on $K$. For any $c \in \mathbb{Z}$ such that $c = p \; ({\rm mod} \; 2)$, there is a unique spin$^c$-structure $\mathfrak{s}$ on $X$ such that $c(\mathfrak{s}) = c$ (where we identify $H^2(X ; \mathbb{Z})$ with $\mathbb{Z}$). By restriction, $\mathfrak{s}$ determines a spin$^c$-structure $\mathfrak{s}|_{Y}$ on $Y$. To $\mathfrak{s}|_{Y}$, we associate the unique $i \in \mathbb{Z}_p$ such that $c = p + 2i \; ({\rm mod} \; 2p)$. Since $\sigma$ sends $c$ to $c$ in the $2$-periodic case and to $-c$ in the strongly invertible case, we see that $\sigma$ acts trivially on the spin$^c$-structures on $Y$ in the $2$-periodic case and acts by $i \mapsto -i$ in the strongly invertible case. Denote the spin$^c$-structure on $Y$ corresponding to $i \in \mathbb{Z}_p$ by $\mathfrak{s}_i$. Then each $\mathfrak{s}_i$ is type $E$ in the $2$-periodic case and is type $R$ in the stronly invertible case.

\begin{proposition}
Let $K$ be a $2$-periodic or strongly invertible knot and $p$ a positive integer. Let $Y = S_p(K)$ and let $\sigma$ be the induced involution on $Y$. Let $i$ be an integer with $|i| \le p/2$.
\begin{itemize}
\item[(1)]{If $K$ is $2$-periodic, then
\[
\delta^E_0( Y , \mathfrak{s}_i , \sigma) \le \frac{ (p-2|i|)^2}{8p} - \frac{1}{8}, \quad \delta^E_\infty( -Y , \mathfrak{s}_i , \sigma) \ge -\frac{(p-2|i|)^2}{8p} + \frac{1}{8}.
\]
}
\item[(2)]{If $K$ is strongly invertible, then
\[
\delta^R_0( Y , \mathfrak{s}_i , \sigma) \le \frac{ (p-2|i|)^2}{8p} - \frac{1}{8}, \quad \delta^R_\infty( -Y , \mathfrak{s}_i , \sigma) \ge -\frac{(p-2|i|)^2}{8p} + \frac{1}{8}.
\]
}
\end{itemize}

\end{proposition}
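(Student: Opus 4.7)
The plan is to apply the equivariant Fr\o{}yshov inequality (Theorem \ref{thm:froy}) to the trace $X$ of the $p$-surgery on $K$. This trace is a compact, oriented, simply-connected $4$-manifold with $\partial X = Y$, $H^2(X;\mathbb{Z}) \cong \mathbb{Z}\langle e\rangle$ of rank one, $b_+(X) = 1$, $b_-(X) = 0$, and $\sigma(X) = +1$. Since the framing of $K$ is $\sigma$-invariant, the involution on $S^3$ extends to a smooth, orientation preserving involution $\sigma_X$ on $X$ by attaching the $2$-handle equivariantly. In the $2$-periodic case $\sigma_X$ preserves the orientation of $K$ and hence acts trivially on $H^2(X;\mathbb{Z})$, while in the strongly invertible case it acts as $-1$. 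Consequently every spin$^c$-structure on $X$ is of type $E$ in the $2$-periodic case and of type $R$ in the strongly invertible case.

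Given $i$ with $|i| \le p/2$, I choose the integer $c$ of smallest absolute value with $c \equiv p + 2i \pmod{2p}$, so that $|c| = p - 2|i|$, and let $\mathfrak{s}$ be the unique spin$^c$-structure on $X$ with $c_1(\mathfrak{s}) = c e$. By construction $\mathfrak{s}|_Y = \mathfrak{s}_i$. The rational self-pairing of $c_1(\mathfrak{s})$ entering $\delta(X, \mathfrak{s})$ comes from lifting $c_1(\mathfrak{s})$ to $H^2(X,\partial X;\mathbb{Q}) \cong H^2(X;\mathbb{Q})$; since the integral map $H^2(X,\partial X;\mathbb{Z}) \to H^2(X;\mathbb{Z})$ has cokernel $H^2(\partial X;\mathbb{Z}) \cong \mathbb{Z}_p$ and hence image of index $p$, one obtains $c_1(\mathfrak{s})^2 = c^2/p = (p-2|i|)^2/p$ on $X$, and $-(p-2|i|)^2/p$ on $-X$.

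Both inequalities will follow by applying Theorem \ref{thm:froy} to $W = -X$, interpreted as a cobordism in two different ways. For the upper bound on $\delta^T_0(Y, \mathfrak{s}_i, \sigma)$ (with $T = E$ or $R$ according to the case), I view $-X$ as a cobordism from $Y_0 = Y$ to $Y_1 = \emptyset$. The intersection form on $H^2(-X;\mathbb{R})$ is the rank one negative definite form $(-p)$, so the required $\sigma$-invariant subspace (for $T=E$) or $\sigma$-anti-invariant subspace (for $T=R$) is automatically negative definite; $b_+(-X) = 0$ and $\delta(-X, \mathfrak{s}) = (-c^2/p + 1)/8$. The inequality
\[
\delta^T_0(Y, \mathfrak{s}_i, \sigma) + \delta(-X,\mathfrak{s}) \le \delta^T_0(\emptyset) = 0
\]
then rearranges to the desired upper bound $(p-2|i|)^2/(8p) - 1/8$.

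For the lower bound on $\delta^T_\infty(-Y, \mathfrak{s}_i, \sigma)$, I instead view $-X$ as a cobordism from $Y_0 = \emptyset$ to $Y_1 = -Y$; the definiteness hypotheses are unchanged, and Theorem \ref{thm:froy} now reads $\delta(-X,\mathfrak{s}) \le \delta^T_j(-Y, \mathfrak{s}_i, \sigma)$ for every $j \ge 0$, so letting $j \to \infty$ yields the claimed bound. The only delicate point is the Poincar\'e--Lefschetz bookkeeping for $c_1(\mathfrak{s})^2$; once this is in place, everything else is a direct substitution into Theorem \ref{thm:froy}.
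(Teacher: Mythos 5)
Your proof is correct and follows the same route as the paper: apply the equivariant Fr{\o}yshov inequality (Theorem \ref{thm:froy}) to the (reversed) trace of the $p$-surgery with the spin$^c$-structure whose characteristic $c \equiv p+2i \pmod{2p}$ has $|c| = p-2|i|$, so that $c_1(\mathfrak{s})^2 = (p-2|i|)^2/p$. The paper's proof is just a terser version of the same argument; your bookkeeping of the action on $H^2$, the definiteness hypotheses, and the rational self-intersection is all accurate.
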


\begin{proof}
Let $X$ be the trace of the $p$-surgery on $K$. For any $i$ with $|i| \le p/2$ we choose the unique spin$^c$-structure $\mathfrak{s}$ on $X$ with $c = c(\mathfrak{s})$ given by $c = 2i-p$ if $i \ge 0$, or $c = 2i+p$ if $i < 0$. Then
\[
\delta(X , \mathfrak{s}) = \frac{ (p-2|i|)^2}{8p} - \frac{1}{8}.
\]
The result now follows by applying Theorem \ref{thm:froy}.
\end{proof}

Suppose that $L$ is a strongly invertible link, and suppose that $\mathcal{F}$ is a framing where the coefficients are not all integral. Let $Y(L,\mathcal{F})$ be the $3$-manifold obtained by Dehn surgery on $(L , \mathcal{F})$. In this case we do not immediately get a $4$-manifold bounding $Y(L,\mathcal{F})$. However, we can obtain such a manifold by performing the slam dunk operation to $(L,\mathcal{F})$ \cite[\textsection 5.3]{gs}. Suppose that $(L' , \mathcal{F}')$ is obtained from $(L,\mathcal{F})$ by performing a slam dunk on a component $K$ of $L$. Hence $Y(L , \mathcal{F})$ and $Y(L' , \mathcal{F}')$ are diffeomorphic. In fact, by performing the slam dunk around a fixed point of the strong inversion as in Figure \ref{fig:slam}, one can ensure that $Y(L , \mathcal{F})$ and $Y(L' , \mathcal{F}')$ are {\em equivariantly} diffeomorphic. By repeatedly performing equivariant slam dunks, we can replace $(L , \mathcal{F})$ by a pair $(L' , \mathcal{F}')$ where $\mathcal{F}'$ is integral. Hence we can obtain a $4$-manifold $X$ bounding $Y(L , \mathcal{F})$ and such that $\sigma$ extends over $X$.

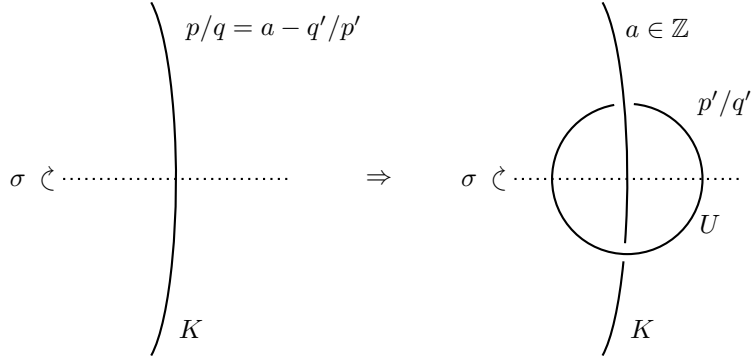
\begin{figure}
\begin{center}
\begin{tikzpicture}
\draw[thick] ($(0, 0) + (-70:0.5cm and 2.5 cm)$(P) arc (-70:70:0.5cm and 2.5cm);
\draw [dotted, thick] (-1,0) -- (2,0);
\node at (1.8,2) {$p/q = a - q'/p'$};
\node at (0.7,-2) {$K$};
\node at (6.7,-2) {$K$};
\node at (7.6,-0.6) {$U$};
\node at (-1.6,-0.02) {$\sigma$}; \node at (-1.2,0) {\rotatebox{90}{$\curvearrowright$}};
\node at (3.2,0) {$\Rightarrow$};
\node at (6-1.6,-0.02) {$\sigma$}; \node at (6-1.2,0) {\rotatebox{90}{$\curvearrowright$}};
\draw[thick] ($(6, 0) + (-70:0.5cm and 2.5 cm)$(P) arc (-70:-26:0.5cm and 2.5cm);
\draw[thick] ($(6, 0) + (-20:0.5cm and 2.5 cm)$(P) arc (-20:70:0.5cm and 2.5cm);
\draw [dotted, thick] (6-1,0) -- (6+2,0);
\node at (6+0.3+0.6,2) {$a \in \mathbb{Z}$};
\node at (7.8,1) {$p'/q'$};
\draw[thick] (7.5,0) arc (0:-260:1) ;
\draw[thick] (7.5,0) arc (0:85:1) ;
\end{tikzpicture}
\caption{Slam dunk move performed equivariantly with respect to a strong inversion.}\label{fig:slam}
\end{center}
\end{figure}

For example, let $K$ be any strongly invertible knot and let $Y = S_{1/2p}(K)$ be the Dehn surgery along $K$ with slope $1/2p$, where $p$ is any non-zero integer. Then $Y = Y(L , \mathcal{F})$ where $L = K \cup U$, where $U$ is an unknot with $lk(K,U) = 1$ and $\mathcal{F}$ is the framing which is $0$ on $K$ and is $-2n$ on $U$. The trace $X = X(L , \mathcal{F})$ of $(L , \mathcal{F})$ is spin, the intersection form on $H^2(X ; \mathbb{Z})$ is isomorphic to the hyperbolic lattice and $\sigma$ extends to $X$, acting as $-1$ on $H^2(X ; \mathbb{Z})$. The unique spin structure on $Y$ extends to a spin structure $\mathfrak{s}$ on $X$ with $c(\mathfrak{s}) = 0$. Applying Theorem \ref{thm:froy}, we find

\begin{proposition}\label{prop:12p}
Let $K$ be a strongly invertible knot and $p$ a non-zero integer. Then $\delta^E_j( \pm S_{1/2p}(K) , \mathfrak{s} , \sigma) = 0$ for all $j \ge 1$.
\end{proposition}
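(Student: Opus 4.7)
The plan is to apply the equivariant Fr{\o}yshov inequality (Theorem \ref{thm:froy}) in type $E$ to the cobordism $(X,\mathfrak{s})$ constructed in the paragraph preceding the proposition, together with its orientation reversal. Recall that $X = X(L,\mathcal{F})$ bounds $Y = S_{1/2p}(K)$, is spin, its intersection form on $H^2(X;\mathbb{Z})$ is hyperbolic, and $\sigma$ extends to $X$ acting as $-1$ on $H^2(X;\mathbb{Z})$, with $c(\mathfrak{s}) = 0$.

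First I would check that Theorem \ref{thm:froy}(1) applies. Since $\sigma$ acts as $-1$ on $H^2(X;\mathbb{R})$, the $\sigma$-invariant subspace is trivial, hence vacuously negative definite. The linking matrix $A = \bigl(\begin{smallmatrix} 0 & 1 \\ 1 & -2p \end{smallmatrix}\bigr)$ is the hyperbolic form, so $\sigma(X) = 0$ and $b_+(X) = b_-(X) = 1$. Combined with $c(\mathfrak{s})^2 = 0$, this makes the correction term $\delta(X,\mathfrak{s}) = (c(\mathfrak{s})^2 - \sigma(X))/8$ equal to zero.

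Next I would view $X$ as a cobordism from $\emptyset$ to $Y$, i.e.\ take $Y_0 = \emptyset$ and $Y_1 = Y$. By Remark \ref{rem:disjoint} we have $\delta^E_*(\emptyset) = 0$, and Theorem \ref{thm:froy}(1) with $b_+(X) = 1$ gives
\[
0 = \delta^E_{j+1}(\emptyset) + \delta(X,\mathfrak{s}) \le \delta^E_j(Y,\mathfrak{s},\sigma) \qquad \text{for all } j \ge 0,
\]
so in particular $\delta^E_j(Y,\mathfrak{s},\sigma) \ge 0$ for $j \ge 1$. Reversing the roles of incoming and outgoing boundary (equivalently, applying the theorem to $-X$ viewed as a cobordism from $Y$ to $\emptyset$) yields
\[
\delta^E_{j+1}(Y,\mathfrak{s},\sigma) + \delta(-X,\mathfrak{s}) \le \delta^E_j(\emptyset) = 0,
\]
and since $\delta(-X,\mathfrak{s}) = 0$ and $b_+(-X) = b_-(X) = 1$, this gives $\delta^E_{j+1}(Y,\mathfrak{s},\sigma) \le 0$ for all $j \ge 0$. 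Combining both inequalities yields $\delta^E_j(Y,\mathfrak{s},\sigma) = 0$ for $j \ge 1$. The case of $-Y$ is handled identically by running the same argument on the spin $4$-manifold $-X$, which bounds $-Y$ and has all the same structural properties (spin, hyperbolic form, $\sigma$ acting as $-1$).

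There is no real obstacle here; the only thing to be careful about is the bookkeeping of orientations and the observation that $b_+$ of the reversed cobordism equals $b_-$ of the original, so that the shift by $b_+$ in Theorem \ref{thm:froy}(1) produces the expected bound in both directions and forces the delta invariants to vanish simultaneously from above and below.
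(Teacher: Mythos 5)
Your argument is correct and is exactly the paper's proof: the paper sets up the two-component surgery description via the equivariant slam dunk, notes that the trace $X$ is spin with hyperbolic intersection form and $\sigma$ acting as $-1$, and then simply invokes Theorem \ref{thm:froy}; you have filled in that invocation (both directions, with the correct shift by $b_+=1$ and the vanishing correction term) accurately. The only cosmetic remark is that the lower bound $\delta^E_j(Y,\mathfrak{s},\sigma)\ge 0$ actually holds for all $j\ge 0$, while the upper bound is what restricts the conclusion to $j\ge 1$.
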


More generally, let $K$ be a strongly invertible knot and consider $Y = S_{p/q}(K)$ where $p$ is odd and $q$ is even and non-zero. Then by performing the slam dunk move, we can write $Y = Y(L , \mathcal{F})$, where $L = K \cup U_1 \cup U_2 \cup \cdots \cup U_n$, $U_1, \dots , U_n$ are unknots, $lk(K,U_1) = lk(U_1 , U_2 ) = \cdots = lk(U_{n-1} , U_n) = 1$, $lk(K , U_j) = 0$ for $j>1$, $lk(U_i , U_j) = 0$ for $|i-j| > 1$ and $lk(K,K) = a_0$, $lk(U_i , U_i) = a_i$ are such that all $a_i$ are even and $p/q = [a_0 , a_1, \cdots , a_n]$. Let $A$ denote the linking matrix. Note that since $p$ is odd, $Y$ has a unique spin$^c$-structure $\mathfrak{s}$ that comes from a spin structure. Then applying \ref{thm:froy} to the trace of $(L , \mathcal{F})$, we find

\begin{theorem}\label{thm:slamd}
Let $K$ be a strongly invertible knot and let $Y = S_{p/q}(K)$ where $p$ is odd and $q$ is even and non-zero. Let $p/q = [a_0 , a_1, \dots , a_n]$ where $a_0, \dots , a_n$ are even integers and let $A_{ij}$ be the matrix $A_{ii} = a_i$, $A_{ij} = 1$ for $|i-j| = 1$, $A_{ij} = 0$ for $|i - j|$. Then $\delta^E_j( S_{p/q}(K) , \mathfrak{s} , \sigma ) = -\sigma(A)/8$ for $j \ge b_-(A)$.
\end{theorem}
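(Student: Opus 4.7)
The plan is to convert the non-integral surgery description into an integer surgery description on an enlarged link by iterated equivariant slam dunks, build a suitable spin $4$-manifold bounding $Y$ to which $\sigma$ extends, and then apply the equivariant Fr{\o}yshov inequality (Theorem \ref{thm:froy}) in type $E$ twice: once to $X$ for the lower bound and once to $-X$ for the upper bound.

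First I would set up the surgery description. Using Lemma \ref{lem:even}, write $p/q = [a_0, a_1, \dots, a_n]$ with all $a_i$ even. Starting from $(K, p/q)$, perform an equivariant slam dunk of the kind shown in Figure \ref{fig:slam} around a fixed point of the strong inversion on $K$ to introduce an unknotted component $U_1$ with integer framing $a_0$ on $K$ and framing $[a_1, \dots, a_n]$ on $U_1$. Iterating this procedure (at each step performing the slam dunk around a fixed point of $\sigma$ on the newly-introduced unknot) yields, after $n$ steps, the integer surgery presentation $Y = Y(L, \mathcal{F})$ with $L = K \cup U_1 \cup \cdots \cup U_n$ and linking matrix equal to the tridiagonal matrix $A$ in the statement. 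Because each slam dunk is performed equivariantly, the involution on $Y$ agrees with the involution induced from $\sigma$ on $S^3$ extended over surgery on $(L, \mathcal{F})$.

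Next I would analyse the trace $X$ of the surgery on $(L, \mathcal{F})$. Since the framings are even, $X$ is spin; let $\mathfrak{s}$ be the unique spin$^c$-structure on $X$ with $c(\mathfrak{s}) = 0$, which restricts to the distinguished spin$^c$-structure on $Y$. The involution $\sigma$ extends smoothly over $X$. The core component $K$ contributes a generator $e_K \in H^2(X; \mathbb{Z})$ on which $\sigma$ acts as $-1$ because $K$ is strongly invertible; each meridional unknot $U_i$ is strongly invertible about the $\sigma$-axis (the axis meets $U_i$ in two points, as in Figure \ref{fig:slam}), so $\sigma$ acts as $-1$ on each $e_{U_i}$ as well. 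Therefore $\sigma$ acts as $-1$ on all of $H^2(X; \mathbb{Z})$. The $\sigma$-invariant subspace of $H^2(X; \mathbb{R})$ is then $\{0\}$, which is vacuously negative definite, so the hypothesis of Theorem \ref{thm:froy}(1) is satisfied. Note also that $\mathfrak{s}$ is of type $E$ since $c(\mathfrak{s}) = 0$ is preserved.

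Finally I would apply Theorem \ref{thm:froy}(1) twice. Viewing $X$ as a cobordism from $\emptyset$ to $Y$, with $b_+(X) = b_+(A)$, $\sigma(X) = \sigma(A)$, and $\delta(X, \mathfrak{s}) = -\sigma(A)/8$, the inequality yields
\[
-\frac{\sigma(A)}{8} \le \delta^E_j(Y, \mathfrak{s}, \sigma) \quad \text{for all } j \ge 0.
\]
For the reverse inequality, regard $-X$ as a cobordism from $Y$ to $\emptyset$; the $\sigma$-action on $H^2(-X;\mathbb{R})$ is still $-1$, so the hypothesis still holds, and $b_+(-X) = b_-(A)$, $\delta(-X, \mathfrak{s}) = \sigma(A)/8$, giving
\[
\delta^E_{j + b_-(A)}(Y, \mathfrak{s}, \sigma) + \frac{\sigma(A)}{8} \le 0 \quad \text{for all } j \ge 0.
\]
Combining the two inequalities for $j \ge b_-(A)$ yields $\delta^E_j(Y, \mathfrak{s}, \sigma) = -\sigma(A)/8$, as claimed. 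The only non-routine point is verifying that the sequence of slam dunks can genuinely be carried out equivariantly with the strong inversion preserved at every stage, so that $\sigma$ extends over $X$ and acts as $-1$ on the added basis elements; this is the content of Figure \ref{fig:slam} together with the preceding remarks on equivariant Dehn surgery.
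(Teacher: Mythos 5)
Your proposal is correct and follows essentially the same route as the paper: equivariant slam dunks around fixed points of the strong inversion convert $p/q$-surgery into even integral surgery on the link $K\cup U_1\cup\cdots\cup U_n$ with linking matrix $A$, the involution extends over the spin trace $X$ acting as $-1$ on $H^2(X;\mathbb{Z})$ since every component is strongly invertible, and Theorem \ref{thm:froy}(1) applied to $X$ and $-X$ gives the two inequalities that pin down $\delta^E_j = -\sigma(A)/8$ for $j \ge b_-(A)$. The paper states this only as "applying Theorem \ref{thm:froy} to the trace," so your write-up supplies exactly the intended details.
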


Note that in the case that $K$ is the unknot $Y = S_{p/q}(K) = -L(p,q)$ is a lens space. Since lens spaces are $L$-spaces, we get $-\sigma(A)/8 = \delta^E_j( S_{p/q}(K) , \mathfrak{s} , \sigma) = -\delta( L(p,q) , \mathfrak{s} )$. Hence $\sigma(A) = 8 \delta( L(p,q) , \mathfrak{s})$ and so feeding this in to Theorem \ref{thm:slamd}, we get $\delta^E_j( S_{p/q}(K) , \mathfrak{s} , \sigma) = -\delta( L(p,q) , \mathfrak{s})$ for any strongly invertible knot $K$ and where $j \ge b_-(A)$. Note also that $b_-(A) = (n+1 - \sigma(A))/2$, where $n+1$ is the number terms in the negative continued fraction $p/q = [a_0 , \dots , a_n]$, $a_i \in 2\mathbb{Z}$.

Recall that the knot Floer complex of a knot $K$ can be used to define a sequence of knot invariants $V_i(K)$, $i \in \mathbb{Z}$ \cite{niwu}. The sequence $V_i(K)$ is decreasing and is eventually zero. Hom--Wu define $\nu^+(K)$ to be the smallest $i$ such that $V_i(K) = 0$ \cite{howu}. It is shown in \cite{howu} that $\nu^+(K) \ge 0$ and equals zero if and only if $V_0(K) = 0$.

\begin{theorem}
Let $K$ be a strongly invertible knot and let $Y = S_{p/q}(K)$ where $p$ is odd, $q$ is even and $q > p > 0$. Let $\mathfrak{s}$ denote the unique spin$^c$-structure on $Y$ which comes from a spin structure. Then $\delta^E_\infty( -Y , \mathfrak{s} , \sigma ) = \delta( -Y , \mathfrak{s}) - V_0(K)$. Moreover, if $\nu^+(K) > 0$, then $j^E(-Y , \mathfrak{s} , \sigma) > 0$.
\end{theorem}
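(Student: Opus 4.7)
The plan is to verify the identity $\delta^E_\infty(-Y,\mathfrak{s},\sigma)=\delta(-Y,\mathfrak{s})-V_0(K)$ in two independent pieces: first compute $\delta^E_\infty(-Y,\mathfrak{s},\sigma)$ in terms of the signature $\sigma(A)$ of the linking matrix from Theorem \ref{thm:slamd}, and separately identify $\sigma(A)/8$ with $\delta(-Y,\mathfrak{s})-V_0(K)$ via the Heegaard Floer rational surgery formula. The second assertion of the theorem is then an easy consequence.

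For the first step, I would apply the type-$E$ equivariant Fr{\o}yshov inequality (Theorem \ref{thm:froy}) in both directions to the slam-dunk trace $X$ from the proof of Theorem \ref{thm:slamd}. Since $K$ is strongly invertible, $\sigma$ extends smoothly over $X$ and acts as $-1$ on $H^2(X;\mathbb{Z})$, so the $\sigma$-invariant subspace of $H^2(X;\mathbb{R})$ is trivial and in particular vacuously negative definite. Viewing $-X$ as an equivariant cobordism from $\emptyset$ to $-Y$ yields the lower bound $\delta^E_j(-Y,\mathfrak{s},\sigma)\ge\sigma(A)/8$ for all $j\ge 0$, while viewing $X$ as an equivariant cobordism from $-Y$ to $\emptyset$ yields the upper bound $\delta^E_{j+b_+(A)}(-Y,\mathfrak{s},\sigma)\le\sigma(A)/8$ for all $j\ge 0$. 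Combining, $\delta^E_\infty(-Y,\mathfrak{s},\sigma)=\sigma(A)/8$.

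For the second step, the unknot case of Theorem \ref{thm:slamd}, together with the fact that $-L(p,q)$ is an $L$-space, gives $\delta(L(p,q),\mathfrak{s})=\sigma(A)/8$. The Ni--Wu mapping cone formula for the $d$-invariant of rational surgery, applied to $-Y=-S_{p/q}(K)$ in the spin-type spin$^c$-structure and using $q>p>0$ to cut the formula down to a single contribution, gives $\delta(-Y,\mathfrak{s})=\delta(L(p,q),\mathfrak{s})+V_0(K)$. Rearranging produces $\delta(-Y,\mathfrak{s})-V_0(K)=\sigma(A)/8=\delta^E_\infty(-Y,\mathfrak{s},\sigma)$, as required.

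The second assertion now follows easily: $\nu^+(K)>0$ is equivalent to $V_0(K)>0$, so the identity just proved gives $\delta^E_\infty(-Y,\mathfrak{s},\sigma)<\delta(-Y,\mathfrak{s})\le\delta^E_0(-Y,\mathfrak{s},\sigma)$ by property (3) of Proposition \ref{prop:deltaprop}, whence $j^E(-Y,\mathfrak{s},\sigma)>0$. The main obstacle is the sign-and-orientation-bookkeeping in the second step: the range $q>p>0$ lies outside the ``large surgery'' regime where the simplest Ozsv\'ath--Szab\'o surgery formula applies directly, and the convention $S_{p/q}(\mathrm{unknot})=-L(p,q)$ used throughout this paper must be reconciled carefully with the standard Heegaard Floer conventions so that only the $V_0(K)$ contribution (and not an $H_0$ or mirror contribution) appears on the right-hand side.
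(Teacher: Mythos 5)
Your argument is correct and follows essentially the same route as the paper: the paper likewise combines Theorem \ref{thm:slamd} (whose proof is exactly your two-sided Fr{\o}yshov application to the slam-dunk trace, with the unknot case identifying the constant as $-\delta(S_{p/q}(U),\mathfrak{s})=\delta(L(p,q),\mathfrak{s})$) with \cite[Proposition 1.6]{niwu}, where $q>p>0$ reduces the correction term to $V_0(K)$, and then deduces $j^E(-Y,\mathfrak{s},\sigma)>0$ from $\delta^E_0(-Y,\mathfrak{s},\sigma)\ge\delta(-Y,\mathfrak{s})$. The orientation bookkeeping you flag is handled exactly as you propose, via $S_{p/q}(U)=-L(p,q)$.
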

\begin{proof}
By \cite[Proposition 1.6]{niwu} we have 
\[
\delta( S_{p/q}(K) , \mathfrak{s}) = \delta( S_{p/q}(U) , \mathfrak{s}) - \max \{ V_{\lfloor \frac{i}{q} \rfloor}(K) , V_{\lceil \frac{p-i}{q} \rceil}(K) \}
\]
for some $i$, $0 \le i \le p-1$, where we use $\mathfrak{s}$ to denote the unique spin$^c$-structure coming from a spin structure for both $Y$ and $S_{p/q}(U)$. Since we have assumed $q > p$, this simplifies to $\delta( S_{p/q}(K) , \mathfrak{s}) = \delta( S_{p/q}(U) , \mathfrak{s}) - V_0(K)$. Then from Theorem \ref{thm:slamd}, we have 
\[
\delta^E_\infty(-S_{p/q}(K) , \mathfrak{s} , \sigma) = \delta^E_\infty( S_{-p/q}(-K) , \mathfrak{s} , \sigma ) = \delta( S_{-p/q}(U) , \mathfrak{s}) = -\delta( S_{p/q}(U) , \mathfrak{s}).
\]
Hence $\delta^E_\infty( -S_{p/q}(K) , \mathfrak{s} , \sigma ) = \delta( -S_{p/q}(K) , \mathfrak{s}) - V_0(K)$. Now if $\nu^+(K) > 0$, then $V_0(K) > 0$ and hence 
\[
\delta^E_0(-S_{p/q}(K) , \mathfrak{s} , \sigma) \ge \delta( -S_{p/q}(K) , \mathfrak{s}) > \delta^E_\infty( -S_{p/q}(K) , \mathfrak{s} , \sigma ),
\]
which means that $j^E( -S_{p/q}(K) , \mathfrak{s} , \sigma) > 0$.
\end{proof}

\section{Applications}\label{sec:app}

\subsection{Obstructions to extending involutions}\label{sec:obex}

Suppose we are given a rational homology $3$-sphere $Y$ with orientation preserving involution $\sigma$ and a spin$^c$-structure $\mathfrak{s}$ of type $E,R$ or $S$. Suppose that $X$ is a compact, oriented smooth $4$-manifold which bounds $Y$ and that $\mathfrak{s}$ extends to a spin$^c$-structure on $X$. We can use Theorem \ref{thm:froy} to obstruct the existence of an extension of $\sigma$ to an involution on $X$, under some assumptions on how $\sigma$ acts on $H^2(X ; \mathbb{Z})$. In a similar manner, given two triples $(Y_0 , \mathfrak{s}_0 , \sigma_0), (Y_1 , \mathfrak{s}_1 , \sigma_1)$ of rational homology spheres with involutions and spin$^c$ structures of type $T \in \{E,R,S\}$ we can obstruct the existence of certain equivariant cobordisms from $Y_0$ to $Y_1$.

To keep things relatively simple, we will mainly focus on the case that each component of the boundary of $X$ is an integral homology sphere. Consider first the case that $X$ is negative definite.

\begin{proposition}\label{prop:def}
Let $X$ be a compact, oriented, smooth $4$-manifold with boundary $Y$ empty or a union of integral homology spheres. Assume $H_1(X ; \mathbb{Z}_2) = 0$ and that $X$ is negative definite. Let $\sigma$ be an orientation preserving involution on $X$ that sends each component of $Y$ to itself. Then
\begin{itemize}
\item[(1)]{If $c \in H^2(X ; \mathbb{Z})$ is a characteristic element and $\sigma(c) = c$, then 
\[
\frac{ c^2 + b_2(X) }{8} \le \min\{ \delta_\infty^E(Y , \sigma) , -\delta^E_{0}(-Y,\sigma)\}.
\]
}
\item[(2)]{Assume that the fixed point set of $\sigma$ contains non-isolated points. If $c \in H^2(X ; \mathbb{Z})$ is a characteristic element and $\sigma(c) = -c$, then
\[
\frac{c^2 + b_2(X) }{8} \le \min\{ \delta^R_\infty(Y , \sigma) , -\delta^R_{0}(-Y,\sigma) \}.
\]
}
\item[(3)]{If $X$ is spin and $\sigma$ is odd, then
\[
\frac{b_2(X)}{8} \le \min\{ \delta^S_{0,\infty}(Y , \sigma) , \delta^S_{\infty,1}(Y,\sigma) , -\delta^S_{0,0}(-Y,\sigma) \}.
\]
}
\end{itemize}

\end{proposition}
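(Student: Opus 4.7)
The plan is to apply the equivariant Fr{\o}yshov inequality of Theorem \ref{thm:froy} to $X$ viewed as an equivariant cobordism in two ways: first with incoming boundary $\emptyset$ and outgoing boundary $Y$, and second with incoming boundary $-Y$ and outgoing boundary $\emptyset$. Since $X$ is negative definite, $b_+(X)=0$ and the hypotheses on the $\sigma$-invariant or anti-invariant subspace of $H^2(X;\mathbb{R})$ hold automatically in all three types. Moreover $\delta(X,\mathfrak{s}) = (c(\mathfrak{s})^2 - \sigma(X))/8 = (c^2+b_2(X))/8$, which in case (3) reduces to $b_2(X)/8$ since $X$ is spin.

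Before invoking the inequality I must check that the chosen spin$^c$-structure on $X$ has the correct type. In case (1), $\sigma(c)=c$ together with $H_1(X;\mathbb{Z}_2)=0$ (which determines $\mathfrak{s}$ up to isomorphism from $c$) forces $\sigma^*(\mathfrak{s})\cong\mathfrak{s}$, so $\mathfrak{s}$ is of type $E$. In case (2), $\sigma(c)=-c$ gives $\sigma^*(\mathfrak{s})\cong -\mathfrak{s}$, and Proposition \ref{prop:RQ} produces either a Real or a Quaternionic lift; the hypothesis that the fixed point set of $\sigma$ contains non-isolated points guarantees a codimension-$2$ fixed component in $X$, which by the standard codimension calculation for spin involutions rules out the Quaternionic (even) case, leaving type $R$. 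In case (3) the spin$^c$-structure is given as type $S$.

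Given the type verification, the remainder is direct. With $Y_0=\emptyset$ and $Y_1=Y$, together with $\delta^T_*(\emptyset)=0$ from Remark \ref{rem:disjoint}, Theorem \ref{thm:froy} yields $(c^2+b_2(X))/8 \le \delta^T_*(Y,\sigma)$ for every admissible index, and minimising over those indices produces the first terms of each $\min$. In case (3) the three admissible regimes $(i,j) = (0,\ast)$, $(\ast,0)$, $(\ast,1)$ give the three candidates $\delta^S_{0,\infty}$, $\delta^S_{\infty,0}$, $\delta^S_{\infty,1}$; since $\delta^S_{\infty,1}\le\delta^S_{\infty,0}$ by the monotonicity in Proposition \ref{prop:deltaprop2}, the bound from $\delta^S_{\infty,0}$ is absorbed. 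Reversing roles so that $Y_0=-Y$, $Y_1=\emptyset$ gives $\delta^T_*(-Y,\sigma) \le -(c^2+b_2(X))/8$; the strongest such bound arises at the smallest admissible index, by monotonicity, producing the $\delta^E_0$, $\delta^R_0$, $\delta^S_{0,0}$ terms. Combining the two viewpoints delivers each stated $\min$.

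The argument is essentially bookkeeping once Theorem \ref{thm:froy} is in hand; the only genuinely delicate step is the type verification in case (2), where the non-isolated fixed-point hypothesis is essential to pick out the Real (rather than Quaternionic) branch of Proposition \ref{prop:RQ}, so that $\mathfrak{s}$ legitimately qualifies as a type $R$ spin$^c$-structure to which the $R$-version of the Fr{\o}yshov inequality applies.
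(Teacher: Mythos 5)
Your proposal is correct and follows essentially the same route as the paper: verify the type of the spin$^c$-structure determined by $c$ (using $H_1(X;\mathbb{Z}_2)=0$, and the non-isolated fixed-point hypothesis to exclude the Quaternionic case in (2)), then apply Theorem \ref{thm:froy} twice, once with $Y$ as outgoing boundary and once with $-Y$ as ingoing boundary. The extra bookkeeping you supply (empty-boundary convention, monotonicity to select the optimal indices, absorbing $\delta^S_{\infty,0}$ into $\delta^S_{\infty,1}$) is exactly what the paper leaves implicit.
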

\begin{proof}
Note that since $H_1(X ; \mathbb{Z}_2) = 0$, spin$^c$-structures on $X$ are in bijection with characteristic elements. A characteristic $c$ corresponds to a spin$^c$-structure of type $E$ if $\sigma(c) = c$ and to a spin$^c$-structure of type $R$ if $\sigma(c)=-c$. Now the result follows from applying Theorem \ref{thm:froy} twice, where we consider $X$ as having outgoing boundary $Y$ or ingoing boundary $-Y$. Note that in case (2) we assume $\sigma$ has non-isolated fixed points to ensure that the spin$^c$-structure corresponding to $c$ has type $R$.
\end{proof}

\begin{remark}
If $Y = \partial X$ has multiple components, then we can get an extension of Proposition \ref{prop:def} as follows. Partition the components of $Y$ into two so that $Y = Y_0 \cup Y_1$. Treat $Y_1$ as an outgoing boundary and $-Y_0$ as an ingoing boundary. Then in Proposition \ref{prop:def} (1), we get a bound $(c^2+b_2(X))/8 \le \delta^E_j(Y_1,\sigma)-\delta^E_j(-Y_0,\sigma)$ for any $j\ge 0$. Similar bounds apply in cases (2) and (3).
\end{remark}

\begin{example}
Consider $Y = \Sigma(2,5,11) \# -\! \! 2\Sigma(2,3,13)$ with involution $\sigma$ the connected sum of the $m$ involution on each summand. Since $\Sigma(2,5,11) = S_{-1}(T_{2,5})$, we have that $\Sigma(2,3,13) = \partial X_0$, where $X_0$ is the trace of the $-1$-surgery on $T_{2,5}$. We also have that $\Sigma(2,3,13)$ bounds a contractible smooth $4$-manifold $W$. Let $X$ be the boundary sum of $X_0$ and two copies of $-W$. Then $X$ is a negative definite smooth $4$-manifold bounding $Y$ and $b_2(X) = 1$. We claim that $\sigma$ does not extend to a smooth involution on $X$. Suppose on the contrary that $\sigma$ extends to $X$. Let $\mathfrak{s}$ be the unique spin$^c$-structure on $X$ with $c(\mathfrak{s})^2 = -1$. Since $H^2(X ; \mathbb{Z}) \cong \mathbb{Z}$, we have that either $\sigma(\mathfrak{s}) = \mathfrak{s}$ or $\sigma(\mathfrak{s}) = -\mathfrak{s}$. In the first case Proposition \ref{prop:def} (1) gives $\delta^E_\infty(Y,\sigma) \ge 0$. But 
\begin{align*}
\delta^E_\infty(Y,\sigma) &\le \delta^E_\infty( \Sigma(2,5,11) , m ) + 2\delta^E_\infty(-\Sigma(2,3,13),m) \\
&= -\lambda(\Sigma(2,5,11)) + 2\lambda(\Sigma(2,3,13)) = 3-4 = -1,
\end{align*}
a contradiction. In the second case Proposition \ref{prop:def} (2) gives $\delta^R_\infty(Y,\sigma) \ge 0$. But
\begin{align*}
\delta^R_\infty(Y,\sigma) &\le \delta^R_\infty(\Sigma(2,5,11) , m) + 2\delta^R_\infty(-\Sigma(2,3,13),m)\\
& = -\overline{\mu}(\Sigma(2,5,11)) + 2\overline{\mu}(2,3,13) = -1,
\end{align*}
a contradiction.
\end{example}

\begin{proposition}\label{prop:b+1}
Let $X$ be a compact, oriented, smooth spin $4$-manifold with boundary $Y$ empty or a union of integral homology spheres. Assume $H_1(X ; \mathbb{Z}_2) = 0$ and that $b_+(X)=1$. Let $\sigma$ be an odd involution on $X$ that sends each component of $Y$ to itself. Let $H^+(X)$ denote a $\sigma$-invariant maximal positive definite subspace of $H^2(X ; \mathbb{R})$. Then:
\begin{itemize}
\item[(1)]{If $\sigma$ acts trivially on $H^+(X)$, then
\[
-\frac{\sigma(X)}{8} \le \min\{ \delta^S_{\infty,0}(Y,\sigma) , -\delta^S_{1,0}(-Y,\sigma) \}.
\]
}
\item[(2)]{If $\sigma$ acts non-trivially on $H^+(X)$, then
\[
-\frac{\sigma(X)}{8} \le \min\{ \delta^S_{0,\infty}(Y,\sigma) , \delta^S_{\infty,1}(Y,\sigma) , -\delta^S_{0,1}(-Y,\sigma) \}.
\]
}
\end{itemize}
\end{proposition}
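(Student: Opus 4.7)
The plan is to apply Theorem \ref{thm:froy}(3) to $X$ twice: once viewing $X$ as an outgoing cobordism from $\emptyset$ to $Y$ (to bound $\delta^S_{*,*}(Y,\sigma)$ from below by $-\sigma(X)/8$), and once viewing it as an ingoing cobordism from $-Y$ to $\emptyset$ (to bound $\delta^S_{*,*}(-Y,\sigma)$ from above by $\sigma(X)/8$). Since $H_1(X;\mathbb{Z}_2)=0$ and $Y$ is an integral homology sphere, the unique spin structure on $Y$ extends uniquely to a spin structure $\mathfrak{s}$ on $X$ with $c(\mathfrak{s})=0$, and since $\sigma$ is odd this is of type $S$. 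Consequently $\delta(X,\mathfrak{s}) = -\sigma(X)/8$, and $\delta^S_{*,*}(\emptyset)=0$ at all valid indices (following Remark \ref{rem:disjoint}), so Theorem \ref{thm:froy}(3) reduces precisely to inequalities of the desired form.

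Because $b_+(X) = 1$, the subspace $H^+(X)$ is one-dimensional and $\sigma$ acts on it by $\pm 1$; this gives $(b_+^\sigma,b_+^{-\sigma})=(1,0)$ in case (1) and $(0,1)$ in case (2). The admissibility condition on the shifted left-hand-side index $(i+b_+^\sigma,j+b_+^{-\sigma})$ in Theorem \ref{thm:froy}(3) then becomes $j\le 1$ (with $i$ arbitrary) in case (1), and $i=0$ or $j=0$ in case (2).

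In case (1), taking $(i,j)=(i,0)$ with $i\to\infty$ yields $-\sigma(X)/8 \le \delta^S_{\infty,0}(Y,\sigma)$, and applying the theorem to the reversed cobordism with $(i,j)=(0,0)$ yields $\delta^S_{1,0}(-Y,\sigma) \le \sigma(X)/8$, equivalently $-\sigma(X)/8 \le -\delta^S_{1,0}(-Y,\sigma)$. In case (2), taking $(i,j)=(0,j)$ with $j\to\infty$ gives $-\sigma(X)/8 \le \delta^S_{0,\infty}(Y,\sigma)$, and the reversed cobordism with $(i,j)=(0,0)$ gives $\delta^S_{0,1}(-Y,\sigma) \le \sigma(X)/8$. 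This establishes every claimed inequality except the bound on $\delta^S_{\infty,1}(Y,\sigma)$ in case (2).

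The main obstacle is precisely this remaining bound $-\sigma(X)/8 \le \delta^S_{\infty,1}(Y,\sigma)$ in case (2): the admissible index set there is the L-shape $\{i=0\}\cup\{j=0\}$, and $(\infty,1)$ lies outside it, so the bound does not fall out of a single direct application of Theorem \ref{thm:froy}(3) to $X$ itself. To handle this step I would analyse the equivariant Seiberg--Witten cobordism map more carefully with respect to the $H^*_S$-module structure on $HSW^*_S(Y,\mathfrak{s},\sigma)$ (using that the relevant class is $v^{\infty}(v+s)$ rather than $v^{\infty}$, and noting that $(v+s)$ corresponds to the character on $G^S_\mathfrak{s}$ non-trivial on both $j$ and $\widetilde{\sigma}$), or alternatively perform an auxiliary equivariant stabilisation of $X$ by a copy of $(S^2\times S^2,\tau)$ chosen to shift $(b_+^\sigma,b_+^{-\sigma})$ so that the index $(\infty,1)$ enters the admissible range. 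This is the main technical point of the proof.
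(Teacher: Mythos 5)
Your reduction to Theorem \ref{thm:froy} is exactly the paper's argument (the paper's own proof is a one-line appeal to that theorem), and your bookkeeping of the index shifts is correct: with $(b_+^\sigma,b_+^{-\sigma})=(1,0)$ in case (1) and $(0,1)$ in case (2), every inequality in the statement follows from the two applications of Theorem \ref{thm:froy}(3) that you describe, \emph{except} the bound $-\sigma(X)/8\le\delta^S_{\infty,1}(Y,\sigma)$ in case (2). You are right to single this one out. In case (2) the cobordism map multiplies by the Euler class $(v+s)$ of the anti-invariant part of $H^+(X)$, and since $v^i(v+s)\cdot(v+s)=v^i(v+s)^2=0$ in $H^*_S$ for $i\ge 1$, the classes $v^i(v+s)$ with $i\ge1$ are precisely the ones the theorem cannot reach. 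What one does get from $c=v^i$ is $-\sigma(X)/8\le\delta^S_{\infty,0}(Y,\sigma)$, which by the monotonicity in Proposition \ref{prop:deltaprop2} is weaker than the printed bound on $\delta^S_{\infty,1}$. (Contrast case (1), where the Euler class is $v$ and $v^{i+1}(v+s)\ne0$, so there both $\delta^S_{\infty,0}$ and $\delta^S_{\infty,1}$ are accessible.)

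Neither of your two proposed repairs closes this gap as written. Equivariant stabilisation by a copy of $(S^2\times S^2,\tau)$ can only increase $b_+^{\sigma}$ or $b_+^{-\sigma}$; since the obstruction is exactly that $b_+^{-\sigma}\ge1$ forces the shifted second index $j+b_+^{-\sigma}$ to exceed $1$ when $j=1$, no stabilisation can bring the index $(\infty,1)$ into the admissible range. The ``more careful analysis of the cobordism map'' is not carried out, and it is unclear what it could yield, since the relevant class $v^i(v+s)$, $i\ge1$, is annihilated by the Euler class one must multiply by. So your argument establishes the proposition with $\delta^S_{\infty,1}(Y,\sigma)$ replaced by $\delta^S_{\infty,0}(Y,\sigma)$ in case (2); the stronger statement as printed does not follow from Theorem \ref{thm:froy}, and the paper's proof supplies nothing beyond that theorem, so this appears to be a defect (or a typo for $\delta^S_{\infty,0}$) in the proposition itself rather than a missing idea on your part. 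You should either prove the weakened statement or flag the discrepancy explicitly.
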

\begin{proof}
The condition $H_1(X ; \mathbb{Z}_2) = 0$ ensures that $X$ has a unique spin structure, which is then necessarily preserved by $S$. Now the result follows from Theorem \ref{thm:froy}.
\end{proof}

\begin{example}
Consider $Y = -\Sigma(2,3,12n-5)$, $n \ge 1$, with involution $\sigma = m$. Then $Y = \Sigma_2( -T_{3,12n-5})$ and $m$ coincides with the covering involution. Hence $\delta^S_{0,\infty}(Y,m) = \sigma(T_{3,12n-5})/8 = -(2n-1)$. Now suppose that $X$ is a compact, oriented, smooth spin $4$-manifold bounding $Y$ and that $H_1(X ; \mathbb{Z}_2)=0$ and $b_+(X) = 1$. The Fr{\o}yshov inequality for spin cobordisms implies that $\sigma(X) = -8$. Such $4$-manifolds exist, for instance we could take $X$ to be the plumbing on the graph in Figure \ref{fig:2312n-5}.

\begin{figure}
\begin{center}
\begin{tikzpicture}
\draw[thick] (-2,0) -- (6,0) ;
\draw[thick] (0,0) -- (0,-1);
\draw (-2,0)[fill] circle(0.1);
\draw (-1,0)[fill] circle(0.1);
\draw (0,0)[fill] circle(0.1);
\draw (1,0)[fill] circle(0.1);
\draw (2,0)[fill] circle(0.1);
\draw (0,-1)[fill] circle(0.1);
\draw (3,0)[fill] circle(0.1);
\draw (4,0)[fill] circle(0.1);
\draw (5,0)[fill] circle(0.1);
\draw (6,0)[fill] circle(0.1);

\node at (-2,0.5) {$-2$};
\node at (-1,0.5) {$-2$};
\node at (0,0.5) {$-2$};
\node at (1,0.5) {$-2$};
\node at (2,0.5) {$-2$};
\node at (0,-1.5) {$-2$};
\node at (3,0.5) {$-2$};
\node at (4,0.5) {$-2$};
\node at (5,0.5) {$-2$};
\node at (6,0.5) {$-2n$};

\end{tikzpicture}
\caption{Plumbing graph for $-\Sigma(2,3,12n-5)$}\label{fig:2312n-5}
\end{center}
\end{figure}
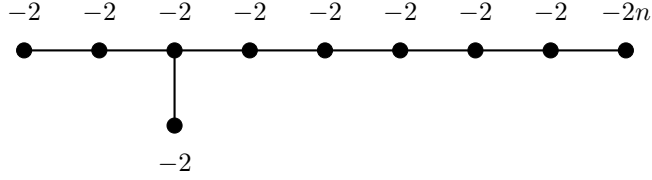

If $m$ extends to an involution $\sigma$ on $X$ and $\sigma|_{H^+(X)} = -1$, then Proposition \ref{prop:b+1} (2) gives $-\sigma(X)/8 \le \delta^S_{0,\infty}(Y,m)$. But $\sigma(X) = -8$ and $\delta^S_{0,\infty}(Y,m) = 1-2n$, which is impossible since $n \ge 1$. We conclude that any extension of $m$ to an involution on $X$ satisfies $\sigma|_{H^-(X)} = 1$. Moreover an example where $m$ extends to an involution is given by the plumbing on the graph in Figure \ref{fig:2312n-5}, where the extension is given by $m_\Gamma$.
\end{example}

Next, we consider the extension problem in the case that the involution acts homologically trivially.

\begin{proposition}\label{prop:ht}
Let $X$ be a compact, oriented, smooth spin $4$-manifold with boundary $Y$ empty or a union of integral homology spheres. Assume $H_1(X ; \mathbb{Z}_2) = 0$. Let $\sigma$ be a smooth odd involution on $X$ which acts homologically trivially on $X$ and sends each component of $Y$ to itself. Then:
\[
\sigma(X) = -8 \delta^R_\infty(Y,\sigma).
\]
Furthermore, we have
\[
b_-(X) \ge j^R(Y,\sigma), \quad b_+(X) \ge j^R(-Y,\sigma).
\]
\end{proposition}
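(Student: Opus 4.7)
The plan is to apply the type $R$ Fr{\o}yshov inequality (Theorem \ref{thm:froy} (2)) to $X$ and to $-X$. First I would check that the spin structure on $X$ can be treated as a type $R$ spin$^c$-structure: since $X$ is spin and $\sigma$ is odd, the underlying spin$^c$-structure is automatically Real (as noted in Section \ref{sec:inv}, the underlying spin$^c$-structure is Real/Quaternionic iff $\sigma$ is odd/even). Moreover, since $H_1(X;\mathbb{Z}_2)=0$, the spin structure is unique so it is preserved by $\sigma$. The hypothesis that $\sigma$ acts trivially on $H^2(X;\mathbb{R})$ means the $\sigma$-anti-invariant subspace of $H^2(X;\mathbb{R})$ is zero, which is vacuously negative definite, so the type $R$ Fr{\o}yshov inequality applies to both $X$ (with $c(\mathfrak{s})=0$) and to $-X$.

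Viewing $X$ as a cobordism from $\emptyset$ to $Y$, Theorem \ref{thm:froy} (2) with $b_+(X)$ replaced appropriately gives
\[
-\sigma(X)/8 \le \delta^R_j(Y,\sigma) \quad \text{for all } j \ge 0,
\]
using $\delta^R_*(\emptyset)=0$. Viewing $-X$ as a cobordism from $Y$ to $\emptyset$, the same theorem applied to $-X$ (where $b_+(-X)=b_-(X)$, $\sigma(-X)=-\sigma(X)$) yields
\[
\delta^R_{j+b_-(X)}(Y,\sigma) \le -\sigma(X)/8 \quad \text{for all } j\ge 0.
\]
Combining these two inequalities forces $\delta^R_j(Y,\sigma) = -\sigma(X)/8$ for all $j \ge b_-(X)$. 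Hence $\delta^R_\infty(Y,\sigma) = -\sigma(X)/8$, which is the first claim, and moreover the stabilisation index satisfies $j^R(Y,\sigma) \le b_-(X)$.

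For the bound $b_+(X) \ge j^R(-Y,\sigma)$, I would simply repeat the argument with $-X$ in place of $X$: $-X$ is a smooth spin $4$-manifold bounding $-Y$, still carrying the homologically trivial odd involution $\sigma$, so the previous argument applied to $-X$ gives $j^R(-Y,\sigma) \le b_-(-X) = b_+(X)$. The only subtle point is verifying the type $R$ compatibility at the start and carefully bookkeeping signs when passing from $X$ to $-X$; the rest is a direct unpacking of Theorem \ref{thm:froy}.
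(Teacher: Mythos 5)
Your proposal is correct and follows exactly the paper's argument: the paper's proof is precisely "the unique spin structure is of type $R$; apply Theorem \ref{thm:froy} to $X$ and $-X$," and your write-up just unpacks the bookkeeping (vacuous negative-definiteness of the zero anti-invariant subspace, $\delta^R_*(\emptyset)=0$, $b_+(-X)=b_-(X)$, $\sigma(-X)=-\sigma(X)$) correctly.
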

\begin{proof}
Since $H_1(X ; \mathbb{Z}_2) = 0$, $X$ has a unique spin structure, which is necessarily of type $R$. The result follows by applying Theorem \ref{thm:froy} to $X$ and $-X$.
\end{proof}

\begin{example}
Let $Y$ be an integral homology sphere and $\sigma$ a smooth orientation preserving involution on $Y$. It is very easy to construct examples of spin $4$-manifolds $X$ bounding $Y$ and such that $\sigma$ does not extend to a homologically trivial involution on $X$. Suppose $X$ is a spin manifold bounding $Y$ and $H_1(X ; \mathbb{Z}_2) = 0$. If $\sigma(X) \neq -8 \delta^R_\infty(Y,\sigma)$, then $\sigma$ does not extend homologically trivially to $X$ by Proposition \ref{prop:ht}. If $\sigma(X) = -8\delta^R_\infty(Y,\sigma)$, then we can just replace $X$ by $X \# X'$, where $X'$ is any  closed spin $4$-manifold with $\sigma(X') \neq 0$ and $H_1(X' ; \mathbb{Z}_2) = 0$.

Suppose for example that $Y = \Sigma(a_1 , \dots , a_n)$ is a Brieskorn sphere where $a_i$ is even for some $i$ and that $\sigma = m$. Then $\delta^R_\infty(Y , m) = -\overline{\mu}(Y)$. Let $X$ be a spin manifold bounding $Y$ with $H_1(X ; \mathbb{Z}_2)$ and with $\sigma(X) \neq 8\overline{\mu}(Y)$. Then $m$ does not extend to a homologically trivial smooth involution on $X$. On the other hand, $m$ does extend to a smooth homologically trivial diffeomorphism on $X$. This is because the involution $m$ belongs to a circle action on $Y$, hence is smoothly isotopic to the identity. We can choose an extension of $m$ to $X$ which consists of an isotopy from $m$ to the identity in some collar neighbourhood of $Y$ and is the identity outside of this neighbourhood.

This non-extension result contrasts with the fact that $m$ does extend to a homologically trivial involution on the plumbing $X_{\Gamma}$ on a star-shaped graph $\Gamma$ whose boundary is $Y$.
\end{example}

Lastly, we consider the extension problem under the assumption that the involution acts as $-1$ on $H^2(X;\mathbb{Z})$. The proof is essentially the same as for Proposition \ref{prop:ht}.

\begin{proposition}\label{prop:hm}
Let $X$ be a compact, oriented, smooth spin $4$-manifold with boundary $Y$ empty or a union of integral homology spheres. Assume $H_1(X ; \mathbb{Z}_2) = 0$. Let $\sigma$ be a smooth odd involution on $X$ which acts as $-1$ on $H^2(X ; \mathbb{Z})$ and sends each component of $Y$ to itself. Then:
\[
\sigma(X) = -8 \delta^E_\infty(Y,\sigma).
\]
Furthermore, we have
\[
b_-(X) \ge j^E(Y,\sigma), \quad b_+(X) \ge j^E(-Y,\sigma).
\]
\end{proposition}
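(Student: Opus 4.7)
The plan is to adapt the proof of Proposition~\ref{prop:ht} by replacing type $R$ with type $E$: apply Theorem~\ref{thm:froy}~(1) to $X$ and to $-X$ in order to pinch $\delta^E_j(Y,\sigma)$ at $-\sigma(X)/8$ for $j \ge b_-(X)$.

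First I would identify the type. Since $H_1(X;\mathbb{Z}_2)=0$, the group $H^2(X;\mathbb{Z})$ is $2$-torsion free, $X$ has a unique spin structure, and its associated spin$^c$-structure $\mathfrak{s}$ satisfies $c_1(\mathfrak{s})=0$. Because $\sigma$ acts as $-1$ on $H^2(X;\mathbb{Z})$, we have $\sigma^*(c_1(\mathfrak{s})) = c_1(\mathfrak{s})$, so $\sigma^*(\mathfrak{s}) \cong \mathfrak{s}$ and $\mathfrak{s}$ is of type $E$. The $\sigma$-invariant subspace of $H^2(X;\mathbb{R})$ (the $+1$-eigenspace) is zero and hence trivially negative definite; the same holds after reversing orientation, so Theorem~\ref{thm:froy}~(1) applies both to $X$ and to $-X$. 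Viewing $X$ as a cobordism $\emptyset \to Y$, and using that $\delta(X,\mathfrak{s}) = -\sigma(X)/8$ and $\delta^E_k(\emptyset)=0$, Theorem~\ref{thm:froy}~(1) gives
\[
-\sigma(X)/8 \le \delta^E_j(Y,\sigma) \qquad (j \ge 0).
\]
Viewing $-X$ as a cobordism $Y \to \emptyset$, with $b_+(-X) = b_-(X)$ and $\delta(-X,\mathfrak{s}) = \sigma(X)/8$, the same theorem yields
\[
\delta^E_{j+b_-(X)}(Y,\sigma) \le -\sigma(X)/8 \qquad (j \ge 0).
\]

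Combining the two inequalities, $\delta^E_j(Y,\sigma) = -\sigma(X)/8$ for every $j \ge b_-(X)$, hence $\sigma(X) = -8\delta^E_\infty(Y,\sigma)$ and $j^E(Y,\sigma) \le b_-(X)$. The symmetric bound $b_+(X) \ge j^E(-Y,\sigma)$ then follows by rerunning the same argument with $Y$ and $-Y$ interchanged (equivalently, swapping the roles of $X$ and $-X$ in the two Fr{\o}yshov applications above). I do not anticipate any real obstacle here: the argument is a routine translation of the proof of Proposition~\ref{prop:ht}. The only point requiring any real care is the initial type identification---the hypothesis $\sigma=-1$ on $H^2(X;\mathbb{Z})$ together with $c_1(\mathfrak{s})=0$ forces $\sigma^*(\mathfrak{s}) \cong \mathfrak{s}$, which puts us in type $E$ rather than type $R$, and dictates that it is the $\sigma$-invariant (rather than anti-invariant) subspace which must be negative definite to invoke Theorem~\ref{thm:froy}~(1).
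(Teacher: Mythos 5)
Your proof is correct and is essentially the paper's argument: the paper proves this by noting the unique spin structure is equivariant and applying Theorem \ref{thm:froy} to $X$ and $-X$, exactly as you do, with the key observation that $\sigma=-1$ on $H^2(X;\mathbb{Z})$ makes the $\sigma$-invariant subspace zero (hence vacuously negative definite) so that the type $E$ Fr{\o}yshov inequality pinches $\delta^E_j(Y,\sigma)$ at $-\sigma(X)/8$ for $j\ge b_-(X)$, and symmetrically for $-Y$.
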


\subsection{Non-smoothable actions}\label{sec:nsa}

In this section we will use the obstruction results of Section \ref{sec:obex} to give examples of orientation preserving, locally linear involutions which are non-smoothable in the sense that they are not smooth with respect to any differentiable structure on the manifold.

Suppose that $X,X'$ are two topological $4$-manifolds (possibly with boundary) with orientation preserving locally linear involutions $\sigma, \sigma'$. Assume that $\sigma, \sigma'$ do not act freely and that every component of the fixed point set has codimension $2$. By an equivariant connected sum $(X, \sigma) \# (X', \sigma')$, we mean the following. Choose fixed points $x \in X$, $x' \in X'$ of the involutions $\sigma, \sigma'$, where $x,x'$ lie in the interiors of $X,X'$. Since every component of the fixed point sets have the same codimension, the involutions $\sigma, \sigma'$ have the same local form around any fixed points. Therefore we can perform the connected sum $X \# X'$ in such a way that the involutions extend to an involution $\sigma \# \sigma'$ on the connected sum. Note that the isomorphism class of the resulting involution $\sigma \# \sigma'$ will depend on the choice of fixed points $x,x'$. Note that since the fixed point sets of each summand are infinite, it is possible to construct an equivariant sum with any number of summands.

A particular case of this construction that we will be interested in is when $X' = S^2 \times S^2$ and $\sigma'$ is either $\sigma_+ = \phi \times \phi$, where $\phi\colon S^2 \to S^2$ is rotation by $\pi$ about an axis or $\sigma_- = r \times r$, where $r\colon S^2 \to S^2$ is the reflection about an equator. We have that $\sigma_+$ acts on $H^2( S^2 \times S^2 ; \mathbb{Z})$ as the identity and $\sigma_-$ acts on $H^2(S^2 \times S^2 ; \mathbb{Z})$ as minus the identity. 

Let $X_0$ be a compact, oriented, spin, smooth $4$-manifold with boundary $Y$, an integral homology sphere. Assume also that $H_1(X ; \mathbb{Z}_2) = 0$. Suppose $\sigma_0$ is a smooth, orientation preserving odd involution on $X_0$ and that $\sigma_0$ acts either as $+1$ or $-1$ on $H^2(X_0 ; \mathbb{Z})$. Then according to Proposition \ref{prop:ht}, we have $\sigma(X_0) = -8 \delta^T_\infty(Y,\sigma_0)$, where $T = R$ if $\sigma_0$ acts trivially on $H^2(X_0 ; \mathbb{Z})$ and $T = E$ if $\sigma_0$ acts as $-1$ on $H^2(X_0 ; \mathbb{Z})$. Assume that $\sigma_0$ does not act freely. Let $X_{\pm}(m)$ be the equivariant connected sum of $(X_0,\sigma_0)$ with $m$ copies of $(S^2 \times S^2 , \sigma_{\pm})$.

Now let $W$ be a closed, simply-connected, topological $4$-manifold whose intersection form is even, negative definite and has non-zero rank (for example, the negative definite $E_8$ lattice). By Freedman \cite{fre}, there exists a unique such $W$ for every even negative definite unimodular lattice. Let $x \in X_{\pm}(m)$ be such that $\sigma'(x) \neq x$, where $\sigma'$ denotes the involution on $X_{\pm}(m)$. Let $X(m) = X_{\pm}(m) \# 2W$ where the sign is $+$ if $\sigma_0$ acts trivially on $H^2(X_0 ; \mathbb{Z})$ and is $-$ if $\sigma_0$ acts as $-1$ on $H^2(X_0 ; \mathbb{Z})$. We perform the connected sum by attaching copies of $W$ at $x$ and $\sigma'(x)$. Then it is clear that $\sigma'$ can be extended to a locally linear, orientation preserving involution $\sigma$ on $X(m)$ which swaps the two copies of $W$.

\begin{proposition}\label{prop:nsi}
Suppose that $m > 3 b_2(W)/8$. Then $X(m)$ admits a smooth structure. However the locally linear involution $\sigma$ is not smooth with respect to any smooth structure on $X(m)$.
\end{proposition}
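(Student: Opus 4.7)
The plan is to establish the smoothability of $X(m)$ and the non-smoothability of $\sigma$ separately.

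For the smoothability of $X(m)$: $X_\pm(m)$ is smooth by construction (an equivariant connected sum of smooth pieces), and the closed topological 4-manifold $2W$ has vanishing Kirby--Siebenmann invariant since KS is additive under connected sum and $\mathrm{KS}(W)+\mathrm{KS}(W)=0\pmod 2$. Hence $\mathrm{KS}(X(m))=0$. The intersection form of $X(m)$ is $Q_{X_0}\oplus mH\oplus 2Q_W$, which is even with signature $\sigma(X_0)-2b_2(W)$ and rank $b_2(X_0)+2m+2b_2(W)$; the hypothesis $m>3b_2(W)/8$ supplies enough slack in Furuta's $10/8$ inequality that this form is smoothly realizable by an explicit construction (using $K3$-type building blocks when $W$ has rank $8$, and more generally standard smooth models for even indefinite forms with enough hyperbolic summands). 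Together with vanishing KS this yields a smooth structure on $X(m)$ extending the given smooth structure on $Y$.

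For the non-smoothability of $\sigma$, suppose for contradiction that $\sigma$ is smooth with respect to some smooth structure on $X(m)$. By Mayer--Vietoris $H_1(X(m);\mathbb{Z}_2)=0$, so $X(m)$ carries a unique spin structure $\mathfrak{s}$, which $\sigma$ preserves. I treat Case 1 (where $\sigma_0$ acts as $+1$ on $H^2(X_0;\mathbb{Z})$) in detail; Case 2 is parallel. Directly from the construction, $\sigma$ acts trivially on $H^2(X_0\# m(S^2\times S^2);\mathbb{Z})$ and by swap on $H^2(W_1)\oplus H^2(W_2)$. Since $W$ is negative definite, the antidiagonal copy of $H^2(W)$ lies entirely in $H^-(X(m))$, so the $\sigma$-anti-invariant subspace of $H^2(X(m);\mathbb{R})$ is negative definite of dimension $b_2(W)$; hence $\mathfrak{s}$ has type $R$ with respect to $\sigma$, and Theorem \ref{thm:froy} of type $R$ applies. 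A direct computation gives $b_+(X(m))=b_+(X_0)+m$ and $\sigma(X(m))=\sigma(X_0)-2b_2(W)=-8\delta^R_\infty(Y,\sigma_0)-2b_2(W)$, using Proposition \ref{prop:ht} applied to the smooth manifold $X_0$.

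Applying Theorem \ref{thm:froy} of type $R$ to $X(m)$ viewed as a cobordism from $-Y$ to $\emptyset$ yields, for every $j\ge 0$,
\[
\delta^R_{j+b_+(X_0)+m}(-Y,\sigma_0)+\delta(X(m))\le\delta^R_j(\emptyset)=0,
\]
giving $\delta^R_{j+b_+(X_0)+m}(-Y,\sigma_0)\le -\delta^R_\infty(Y,\sigma_0)-b_2(W)/4$. Taking $j$ large enough that $j+b_+(X_0)+m\ge j^R(-Y,\sigma_0)$, the sequence $\delta^R_k(-Y,\sigma_0)$ has stabilized by Proposition \ref{prop:deltaprop2}(1), so $\delta^R_\infty(-Y,\sigma_0)\le -\delta^R_\infty(Y,\sigma_0)-b_2(W)/4$. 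Combined with the duality bound $\delta^R_\infty(Y,\sigma_0)+\delta^R_\infty(-Y,\sigma_0)\ge 0$ of Proposition \ref{prop:deltaprop2}(3), this collapses to $0\le -b_2(W)/4$, contradicting $b_2(W)>0$. Case 2 is identical: there $\sigma$ acts as $-1$ on $H^2(X_0\# m(S^2\times S^2);\mathbb{Z})$, so the $\sigma$-invariant subspace becomes $b_2(W)$-dimensional and negative definite, making $\mathfrak{s}$ of type $E$, and one applies Theorem \ref{thm:froy} of type $E$ together with Proposition \ref{prop:hm} in the identical way.

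The main obstacle I expect is the smoothability step: ensuring that the topological manifold $X(m)$ is homeomorphic to a smooth one requires a careful appeal to Furuta's $10/8$ inequality together with explicit smooth models realizing the needed intersection form, and the threshold $m>3b_2(W)/8$ is exactly where this becomes possible. The Fr{\o}yshov step, once the applicability of type $R$ (respectively type $E$) is in place, proceeds cleanly from the stabilization of $\delta^R_j$ and the duality inequality, without requiring any deeper comparison between the different equivariant Floer theories.
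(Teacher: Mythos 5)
Your non-smoothability argument is essentially the paper's: apply the equivariant Fr{\o}yshov inequality (Theorem \ref{thm:froy}) of type $R$ (resp.\ $E$) to $X(m)$ and play it off against $\sigma(X_0)=-8\delta^R_\infty(Y,\sigma_0)$ from Proposition \ref{prop:ht} (resp.\ Proposition \ref{prop:hm}). The only difference is that you run the inequality with $-Y$ as incoming boundary and close the loop with the duality $\delta^R_\infty(Y,\sigma_0)+\delta^R_\infty(-Y,\sigma_0)\ge 0$, whereas the paper treats $Y$ as outgoing boundary and reads off $\delta^R_\infty(Y,\sigma_0)\ge-\sigma(X(m))/8$ directly; both are valid and the index bookkeeping in your version checks out. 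One slip: ``the anti-invariant subspace is negative definite, hence $\mathfrak{s}$ has type $R$'' is a non-sequitur. The type of $\mathfrak{s}$ concerns lifting $\sigma$ to the spinor bundle; the correct justification is that $\sigma$ is an odd involution (it restricts to the odd $\sigma_0$ on the $X_0$ summand, whose fixed set has $2$-dimensional components), so the underlying spin$^c$-structure is Real. The definiteness of the anti-invariant subspace is the separate hypothesis needed to invoke Theorem \ref{thm:froy}(2).

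The smoothability half is where your write-up needs repair. Vanishing of the Kirby--Siebenmann invariant does not by itself yield a smooth structure on a closed simply-connected $4$-manifold --- $2W$ itself has $KS=0$ but is even, definite and of nonzero rank, hence non-smoothable by Donaldson --- and Furuta's $10/8$ inequality is an obstruction, not a construction; the threshold $m>3b_2(W)/8$ is not ``where $10/8$ becomes satisfiable'' but exactly what is needed so that $mH\oplus 2L\cong m'H\oplus k(3H\oplus 2E_8)$ with $k=b_2(W)/8$ and $m'=m-3k>0$, realizing the form by the smooth manifold $m'(S^2\times S^2)\#kK3$. Moreover, $X(m)$ has boundary, so Freedman's classification cannot be applied to it directly. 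The paper's route is to write $X(m)=X_0\#Z$ with $Z=m(S^2\times S^2)\#2W$ closed and simply-connected, identify $Z$ up to homeomorphism with $m'(S^2\times S^2)\#kK3$ by Freedman (same even indefinite form, same $KS$), and note that $X_0$ is already smooth. You have the right ingredients (the $K3$ building blocks), but the logical scaffolding around them --- split off the closed summand first, then use Freedman plus the explicit smooth model, with $KS$ and $10/8$ playing no constructive role --- is what actually makes the step work.
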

\begin{proof}
Observe that $X(m) = X_0 \# Z$ where $Z = m(S^2 \times S^2) \# 2W$. Then $Z$ is closed, simply-connected and has intersection form $mH \oplus 2L$, where $L$ is the intersection form of $W$ and $H$ is the intersection form of $S^2 \times S^2$. Since $m > 0$, we have that $m H \oplus 2L \cong m' H \oplus k(3H \oplus 2E_8)$, where $k = b_2(W)/8$ and $m' = m - 3k > 0$ by the assumption that $m > 3b_2(X)/8$. Thus $Z$ is homeomorphic to $m' (S^2 \times S^2) \# k K3$ and hence is smoothable. On the other hand, since $\sigma_0$ acts trivially on $H^2(X_0 ; \mathbb{Z})$ and since $\sigma_+$ acts trivially on $H^2(S^2 \times S^2 ; \mathbb{Z})$, it follows that $\sigma$ acts trivially on $H^+(X(m))$. Suppose that $X(m)$ admits a smooth structure in which $\sigma$ is smooth. Applying Theorem \ref{thm:froy} to the unique spin structure on $X(m)$ (which is necessarily of type $R$), we get $\delta^T_\infty(Y , \sigma_0) \ge -\sigma(X(m))/8 = -\sigma(X_0)/8 + b_2(W)/4$. But $\delta^T_\infty(Y , \sigma_0) = -\sigma(X_0)/8$, giving $b_2(W) \le 0$, which is a contradiction. Hence $\sigma$ is not smoothable.
\end{proof}

\begin{remark}
The non-smoothable involutions constructed in Proposition \ref{prop:nsi} have the following stability property. They remain non-smoothable upon equivariant connected sum with copies of $(S^2 \times S^2 , \sigma_{\pm})$, where the sign is $+$ if $\sigma_0$ acts trivially on $H^2(X_0 ; \mathbb{Z})$ and is $-$ if $\sigma_0$ acts as $-1$ on $H^2(X_0 ; \mathbb{Z})$. On the other hand, if we connect sum with $S^2 \times S^2$ equipped with a different involution, then it might be possible that the involution will become smoothable.
\end{remark}

\subsection{Equivariant embeddings}\label{sec:emb}

Let $Y$ be a rational homology $3$-sphere and $\sigma$ an orientation preserving smooth involution on $Y$. Consider the problem of embedding $Y$ into a closed, oriented, smooth $4$-manifold $X$ in such a way that $\sigma$ extends to an orientation preserving involution on $X$. Suppose we are given such an embedding $Y \to X$. Then we get a decomposition $X = X_- \cup_Y X_+$ where $X_-,X_+$ are compact smooth $4$-manifolds $\partial X_- = Y$, $\partial X_+ = -Y$. If $\sigma$ extends to $X$, then by restriction $\sigma$ acts on $X_+,X_-$ and we can apply Theorem \ref{thm:froy} to obtain constraints on the existence of such embeddings. We will focus on the case of embeddings into $S^4$ or connected sums of $S^2 \times S^2$.

The following is a straighforward extension of \cite[Proposition 7.15]{bh}:
\begin{proposition}
Let $Y$ be an integral homology $3$-sphere and $\sigma$ an orientation preserving smooth involution on $Y$. If $Y$ can be equivariantly embedded in $S^4$, then all delta-invariants of $Y$ vanish.
\end{proposition}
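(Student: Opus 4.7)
The plan is to use the standard decomposition of $S^4$ induced by the embedding and apply the equivariant Fr{\o}yshov inequality (Theorem \ref{thm:froy}) to each half. Assume $\iota\colon Y \hookrightarrow S^4$ is an embedding equivariant with respect to some orientation preserving involution $\widetilde{\sigma}$ on $S^4$ extending $\sigma$. Write $S^4 = X_- \cup_Y X_+$, where $\partial X_- = Y$ and $\partial X_+ = -Y$, and note that each $X_\pm$ is $\widetilde{\sigma}$-invariant. A Mayer--Vietoris argument (using that $Y$ is an integral homology sphere and $H_*(S^4)$ vanishes in degrees $1,2,3$) shows that $X_\pm$ are integral homology balls; in particular $b_1(X_\pm) = b_+(X_\pm) = b_-(X_\pm) = 0$.

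Next I would check that for each type $T \in \{E,R,S\}$, the unique spin$^c$-structure $\mathfrak{s}$ on $Y$ extends to a type $T$ spin$^c$-structure on each $X_\pm$. Since $X_\pm$ is a homology ball it is spin, with a unique spin$^c$-structure $\mathfrak{s}_\pm$, which is automatically preserved by $\widetilde{\sigma}|_{X_\pm}$ (so $\mathfrak{s}_\pm$ has type $E$) and admits a unique antilinear lift of $\widetilde{\sigma}|_{X_\pm}$ squaring to $\pm 1$ by Proposition \ref{prop:RQ} (so it also has type $R$). For type $S$ one uses that the parity of $\widetilde{\sigma}|_{X_\pm}^{2}$ on the spin bundle restricts to the parity of $\sigma^{2}$ on the spin bundle of $Y$, hence if $\sigma$ is odd on $Y$ then $\widetilde{\sigma}|_{X_\pm}$ is odd on $X_\pm$, so $\mathfrak{s}_\pm$ is of type $S$ as well. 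In each case $c(\mathfrak{s}_\pm)^2 = 0$ and $\sigma(X_\pm) = 0$, so $\delta(X_\pm,\mathfrak{s}_\pm) = 0$. Moreover the $\widetilde{\sigma}$-invariant and anti-invariant parts of $H^2(X_\pm;\mathbb{R})$ are both trivial (and hence trivially negative definite), and $b_+(X_\pm)^{\widetilde{\sigma}} = b_+(X_\pm)^{-\widetilde{\sigma}} = 0$.

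Now apply Theorem \ref{thm:froy} to each half. Viewing $X_-$ as a cobordism from $\emptyset$ to $Y$ gives, for $T = E,R$,
\[
0 = \delta^T_j(\emptyset) + \delta(X_-,\mathfrak{s}_-) \le \delta^T_j(Y,\sigma),
\]
and viewing $X_+$ as a cobordism from $Y$ to $\emptyset$ gives
\[
\delta^T_j(Y,\sigma) + \delta(X_+,\mathfrak{s}_+) \le \delta^T_j(\emptyset) = 0.
\]
Combining these two inequalities yields $\delta^T_j(Y,\sigma) = 0$ for all $j \ge 0$. The type $S$ case is identical, using instead inequality (3) of Theorem \ref{thm:froy} and the vanishing of $b_+(X_\pm)^{\pm\widetilde{\sigma}}$ to conclude $\delta^S_{i,j}(Y,\sigma) = 0$ for all admissible pairs $(i,j)$.

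The only subtlety is the type-matching step above: one has to verify that the $Y$-level type-$T$ structure is genuinely the restriction of a type-$T$ structure on each $X_\pm$. Once that is in hand the Fr{\o}yshov inequalities squeeze all delta-invariants to zero.
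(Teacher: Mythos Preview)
Your argument is correct and is precisely the ``straightforward extension of \cite[Proposition 7.15]{bh}'' that the paper invokes without further detail: split $S^4$ along $Y$ into two equivariant homology balls and apply Theorem~\ref{thm:froy} to each half to squeeze the invariants to zero. The type-matching step you flag is handled exactly as you describe, using that the parity of the spin lift on $X_\pm$ restricts to the parity on $Y$, so no additional input is needed.
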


Every orientable $3$-manifold embeds in $\#n(S^2 \times S^2)$ for some sufficiently large $n$ \cite[Theorem 2.1]{agl}. Aceto--Golla--Larson define the embedding number $\varepsilon(Y)$ to be the least such $n$ such that $Y$ embeds in $\#n(S^2 \times S^2)$. We consider three equivariant versions of $\varepsilon$.

\begin{definition}
Let $Y$ be an orientable $3$-manifold and $\sigma$ an orientation preserving smooth involution on $Y$. Define the following invariants of $(Y,\sigma)$:
\begin{itemize}
\item[(1)]{$\varepsilon(Y,\sigma)$ is the least $n$ such that $Y$ embeds in $X = \#n(S^2 \times S^2)$ and $\sigma$ extends to an orientation preserving smooth involution on $X$. If no such $n$ exists, then we set $\varepsilon(Y, \sigma) = \infty$.}
\item[(2)]{$\varepsilon_+(Y,\sigma)$ is the least $n$ such that $Y$ embeds in $X = \#n(S^2 \times S^2)$ and $\sigma$ extends to a homologically trivial, orientation preserving smooth involution on $X$. If no such $n$ exists, then we set $\varepsilon_+(Y, \sigma) = \infty$.}
\item[(1)]{$\varepsilon_-(Y,\sigma)$ is the least $n$ such that $Y$ embeds in $X = \#n(S^2 \times S^2)$ and $\sigma$ extends to an orientation preserving smooth involution on $X$ which acts as $-1$ on $H^2(X ; \mathbb{Z})$. If no such $n$ exists, then we set $\varepsilon_-(Y, \sigma) = \infty$.}
\end{itemize}
\end{definition}

Clearly $\varepsilon_{\pm}(Y,\sigma) \ge \varepsilon(Y,\sigma) \ge \varepsilon(Y)$. Note that equivariant embedding number defined in \cite[\textsection 7.4]{bh} corresponds (in the case of involutions) to $\varepsilon_-$ in this paper.

The following results give some upper bounds on $\varepsilon,\varepsilon_{\pm}$:

\begin{proposition}\label{prop:eemb}
Suppose $(Y,\sigma)$ is given by equivariant Dehn surgery on a framed link $\mathcal{L}$ whose framing coefficients are all even integers. Then $\varepsilon(Y,\sigma) \le k$, where $k$ is the number of components of $k$. If $\mathcal{L}$ is $2$-periodic, then $\varepsilon_+(Y,\sigma) \le k$ and if $\mathcal{L}$ is strongly invertible, then $\varepsilon_+(Y,\sigma) \le k$.
\end{proposition}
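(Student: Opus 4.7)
The plan is to embed $Y$ equivariantly in a closed $4$-manifold $X\cong \#k(S^2\times S^2)$ by performing equivariant surgery along $\mathcal{L}$ inside $S^4$. Regard $\mathcal{L}\subset S^3\subset S^4$ with $S^3$ the equator of the standard $S^4$, and extend $\sigma$ to an orientation-preserving involution $\tilde\sigma$ on $S^4$ by unreduced suspension (fixing the two suspension points, restricting to $\sigma$ on the equator). Since $\mathcal{F}$ is $\sigma$-invariant, the framed link $(\mathcal{L},\mathcal{F})$ is $\tilde\sigma$-invariant, and one may carry out equivariant $4$-dimensional surgery on each component: remove a $\tilde\sigma$-invariant tubular neighbourhood $\nu(K_i)\cong S^1\times D^3$ and glue in $D^2\times S^2$ via the framing. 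Denote the resulting closed oriented $4$-manifold by $X$ with induced involution $\tilde\sigma_X$.

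To identify the non-equivariant diffeomorphism type, note that any link in $S^4$ is ambient isotopic to an unlink of standard unknots (codimension $3$), and framings of an unknot in $S^4$ are classified by parity via $\pi_1(SO(3))\cong\mathbb{Z}/2$. Since all coefficients of $\mathcal{F}$ are even, the surgery is equivalent to surgery on the standard unlink with trivial framings, yielding $X\cong\#k(S^2\times S^2)$. Moreover the equator $S^3$ is modified by the surgery exactly as Dehn surgery on $(\mathcal{L},\mathcal{F})$ modifies $S^3$: each slice $\nu_{S^3}(K_i)=S^1\times D^2$ is replaced by $D^2\times S^1$ via the given framing. Hence $Y\subset X$ with $\tilde\sigma_X|_Y=\sigma$, giving $\varepsilon(Y,\sigma)\le k$.

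For the refined bounds, analyse the action of $\tilde\sigma_X$ on $H^2(X;\mathbb{Z})\cong\mathbb{Z}^{2k}$. A natural basis consists of the surgery spheres $f_i:=\{0\}\times S^2\subset D^2\times S^2$ together with the linking spheres $e_i$ of the $K_i$ in $S^4\setminus\mathcal{L}$. When $K_i$ is $2$-periodic, one may choose coordinates in which $\tilde\sigma$ acts on $\nu(K_i)\cong S^1\times D^3$ by $(\zeta,v)\mapsto(-\zeta,v)$ (using the $\sigma$-invariant framing and the fact that $\tilde\sigma$ preserves the orientation of $S^4$); this extends across the surgery as $(w,p)\mapsto(-w,p)$ on $D^2\times S^2$, fixing $f_i$ pointwise and fixing the homology class of $e_i$ (a linking sphere at $\zeta$ maps to one at $-\zeta$, representing the same class). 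Hence $\tilde\sigma_X$ acts trivially on $H^2(X;\mathbb{Z})$ and $\varepsilon_+(Y,\sigma)\le k$. When $K_i$ is strongly invertible, an orientation-compatible $\sigma$-invariant framing yields $\tilde\sigma\colon(\zeta,(x,y,z))\mapsto(\bar\zeta,(x,y,-z))$ on $\nu(K_i)$, extending to $(w,(x,y,z))\mapsto(\bar w,(x,y,-z))$ on $D^2\times S^2$; this acts as an orientation-reversing reflection on the $S^2$-factor, giving $\tilde\sigma_X(f_i)=-f_i$ and, by the same local analysis, $\tilde\sigma_X(e_i)=-e_i$. Thus $\tilde\sigma_X$ acts as $-1$ on $H^2(X;\mathbb{Z})$, which gives $\varepsilon_-(Y,\sigma)\le k$ (correcting the evident typo in the second bound of the proposition).

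The main technical obstacle is the non-equivariant identification $X\cong\#k(S^2\times S^2)$, which reduces to codimension-$3$ unknottedness plus the parity classification of framings in $S^4$. A secondary point is the production of $\tilde\sigma$-invariant tubular neighbourhoods and normal forms in $S^4$ extending the given $\sigma$-invariant data in $S^3$; this is routine using a $\sigma$-invariant metric and the fact that the framing coefficients determine $\tilde\sigma$-equivariant trivialisations of $\nu(K_i)$ up to the choice of sign on the normal $D^3$-factor.
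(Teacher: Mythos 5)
Your construction is the paper's proof in different clothing: ambient surgery on $\mathcal{L}\subset S^3\subset S^4$ produces exactly the double $D(X)=X\cup_Y(-X)$ of the trace $X$ of the surgery, which is the closed manifold the paper uses, and the identification with $\#k(S^2\times S^2)$ (which you reprove via unknotting in codimension $3$ and $\pi_1(SO(3))\cong\mathbb{Z}/2$) is quoted there from Gompf--Stipsicz, Corollary 5.1.6. Your explicit local models for the action on $H^2$ agree with the paper's assertion that $\sigma$ acts as $+1$ in the $2$-periodic case and as $-1$ in the strongly invertible case, and you are right that the final $\varepsilon_+$ in the statement is a typo for $\varepsilon_-$.
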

\begin{proof}
Let $X$ denote the trace of the surgery on $\mathcal{L}$ and let $D(X) = X \cup_Y (-X)$ be the double of $X$. Then $Y$ embeds equivariantly in $X$. But $X$ is diffeomorphic to $\#k(S^2 \times S^2)$ \cite[Corollary 5.1.6]{gs}. Thus $\varepsilon(Y,\sigma) \le k$. Furthermore, if $\mathcal{L}$ is $2$-periodic then $\sigma$ acts trivially on $H^2( D(X) ; \mathbb{Z})$, hence $\varepsilon_+(Y,\sigma) \le k$. Similarly, if $\mathcal{L}$ is strongly invertible then $\sigma$ acts as $-1$ on $H^2(D(X) ; \mathbb{Z})$, hence $\varepsilon_-(Y , \sigma) \le k$.
\end{proof}

\begin{proposition}\label{prop:epplumb}
Let $\Gamma$ be a connected plumbing graph with all vertices having even degrees. Then $\varepsilon_-(Y_\Gamma , c_\Gamma) \le |\Gamma|$, where $|\Gamma|$ denotes the number of vertices in $\Gamma$. If additionally $\Gamma$ is a $\mathbb{Z}_2$-equivariant plumbing graph, then $\varepsilon_+(Y_\Gamma , m_\Gamma) \le |\Gamma|$.
\end{proposition}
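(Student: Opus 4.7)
The plan is to mirror the proof of Proposition \ref{prop:eemb}, using the fact that $X_\Gamma$ itself is a $2$-handlebody and therefore its double is a connected sum of $S^2$-bundles over $S^2$. Specifically, I would form the double
\[
D(X_\Gamma) = X_\Gamma \cup_{Y_\Gamma} (-X_\Gamma),
\]
in which $Y_\Gamma$ sits as a separating hypersurface, and extend $c_\Gamma$ (resp.\ $m_\Gamma$) to an involution on $D(X_\Gamma)$ by applying the same involution to both copies of $X_\Gamma$. This is well-defined because both involutions restrict to the same involution on $Y_\Gamma$.

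The key geometric input is that $X_\Gamma$ has a handle decomposition with a single $0$-handle and $|\Gamma|$ $2$-handles, attached to $D^4$ along the framed link obtained by replacing each vertex by an unknot, with framings equal to the vertex degrees and linking numbers given by the adjacency matrix of $\Gamma$ (see \textsection \ref{sec:pg} and \cite[\textsection 1.1.9]{sav}). Since every vertex of $\Gamma$ has even degree, every framing in this link is even, so \cite[Corollary 5.1.6]{gs} applies and identifies
\[
D(X_\Gamma) \;\cong\; \#|\Gamma|\,(S^2 \times S^2).
\]
This provides the required smooth embedding $Y_\Gamma \hookrightarrow \#|\Gamma|(S^2 \times S^2)$, together with an orientation-preserving smooth extension of the involution.

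It remains to check the homological type of the extended involution. Since $Y_\Gamma$ is a rational homology $3$-sphere, Mayer--Vietoris yields $H^2(D(X_\Gamma);\mathbb{Q}) \cong H^2(X_\Gamma;\mathbb{Q}) \oplus H^2(X_\Gamma;\mathbb{Q})$, and the doubled involution acts diagonally as the given involution on each summand. By \textsection \ref{sec:cc}, $c_\Gamma$ acts as $-1$ on $H^2(X_\Gamma;\mathbb{Z})$, and by \textsection \ref{sec:z2p}, $m_\Gamma$ acts as $+1$. Because $H^2(D(X_\Gamma);\mathbb{Z})$ is torsion-free (the double being diffeomorphic to $\#|\Gamma|(S^2 \times S^2)$), the rational computation determines the integral action, so the extension of $c_\Gamma$ acts as $-1$ on $H^2(D(X_\Gamma);\mathbb{Z})$ and the extension of $m_\Gamma$ acts trivially. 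This gives $\varepsilon_-(Y_\Gamma,c_\Gamma)\le |\Gamma|$ and $\varepsilon_+(Y_\Gamma,m_\Gamma)\le |\Gamma|$, completing the proof.

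There is no real obstacle here: once the doubling construction is invoked, the diffeomorphism type of the double is supplied by a standard result in Kirby calculus, and the cohomological statements are formal consequences of Mayer--Vietoris together with the already-recorded facts about the actions of $c_\Gamma$ and $m_\Gamma$ on $H^2(X_\Gamma)$.
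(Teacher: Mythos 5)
Your proof is correct and follows essentially the same route as the paper: double the plumbing, identify $D(X_\Gamma)$ with $\#|\Gamma|(S^2\times S^2)$ via \cite[Corollary 5.1.6]{gs} (using that all degrees are even so $X_\Gamma$ is spin), and read off the action on $H^2$ from the known actions of $c_\Gamma$ and $m_\Gamma$ on $H^2(X_\Gamma;\mathbb{Z})$. The extra detail you supply (the handle decomposition and the Mayer--Vietoris step pinning down the integral action) is a correct elaboration of what the paper leaves implicit.
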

\begin{proof}
$Y_\Gamma$ equivariantly embeds in the double $D(X_\Gamma)$ of the plumbing $X_\Gamma$. By \cite[Corollary 5.1.6]{gs}, we have that $D(X_\Gamma)$ is diffeomorphic to $\#|\Gamma|(S^2 \times S^2)$. Then the result follows by noting that $c_\Gamma$ acts as $-1$ on $H^2( D(X_\Gamma) ; \mathbb{Z})$ and $m_\Gamma$ acts as $+1$.
\end{proof}

We also have \cite[Proposition 7.18]{bh}:

\begin{proposition}\label{prop:ds}
Let $K$ be a knot in $S^3$ and $Y = \Sigma_2(K)$ the branched double cover with covering involution $\sigma$. Then $\varepsilon_-(Y , \sigma) \le g_{ds}(K)$, where $g_{ds}(K)$ is the double slice genus of $K$ \cite[\textsection 5]{lime}.
\end{proposition}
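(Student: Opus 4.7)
The strategy is to build the required embedding directly from a minimal genus surface realizing the double slice genus. Let $g = g_{ds}(K)$, so by definition there exists a closed, unknotted, orientable surface $S \subset S^4$ of genus $g$ whose intersection with the equatorial $S^3 \subset S^4$ is transverse and equal to $K$. I would consider $X = \Sigma_2(S^4,S)$, the double cover of $S^4$ branched over $S$. Because $S$ is unknotted (it bounds a handlebody $H_g$ on at least one side, in fact on both), a standard handle-decomposition argument shows that $X$ is diffeomorphic to $\#g(S^2 \times S^2)$: each $1$-handle of $H_g$ lifts to produce an $S^2 \times S^2$ summand, while $0$- and $3$-handles contribute a cancelling pair.

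With $X$ in hand, the equivariant embedding is essentially free. Let $\pi \colon X \to S^4$ denote the branched covering map. Its restriction over the equatorial $S^3$ is the double cover of $S^3$ branched along $S \cap S^3 = K$, which is $\Sigma_2(K) = Y$. Hence $\pi^{-1}(S^3) \subset X$ is a copy of $Y$, and the deck involution $\tau \colon X \to X$ restricts to the branched-covering involution $\sigma$ on $Y$. This provides the desired equivariant embedding $(Y,\sigma) \hookrightarrow (X, \tau)$ with $X \cong \#g(S^2 \times S^2)$.

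The final step is to check that $\tau$ acts as $-1$ on $H^2(X;\mathbb{Z})$, which is precisely what is needed in the definition of $\varepsilon_-$. Since $\tau^2 = 1$, its eigenvalues on $H^2(X;\mathbb{Q})$ are $\pm 1$. The transfer for the branched double cover gives an isomorphism $H^2(X;\mathbb{Q})^\tau \cong H^2(S^4;\mathbb{Q}) = 0$, so $\tau^* = -1$ on $H^2(X;\mathbb{Q})$. Because $H^2(X;\mathbb{Z})$ is torsion-free, this upgrades to $\tau^* = -1$ on $H^2(X;\mathbb{Z})$. Combining these three ingredients gives $\varepsilon_-(Y,\sigma) \le g = g_{ds}(K)$.

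The only step I anticipate needing care is the identification $\Sigma_2(S^4,S) \cong \#g(S^2 \times S^2)$. This relies essentially on $S$ being \emph{unknotted} in $S^4$ rather than merely bounding a genus-$g$ surface with boundary $K$ in each hemisphere; without this, the total space could be any simply-connected $4$-manifold with even intersection form and the argument would collapse. The cohomological computation of $\tau^*$ and the construction of the embedding itself are both direct consequences of the branched cover structure.
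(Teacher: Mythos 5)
Your proposal is correct and follows essentially the same route as the source of this result: the paper gives no independent proof here but defers to \cite[Proposition 7.18]{bh}, which likewise takes $X$ to be the double cover of $S^4$ branched over an unknotted genus-$g_{ds}(K)$ surface meeting $S^3$ in $K$, identifies $X$ with $\# g_{ds}(K)(S^2 \times S^2)$, and uses the transfer isomorphism $H^2(X;\mathbb{Q})^{\tau} \cong H^2(S^4;\mathbb{Q}) = 0$ together with torsion-freeness to conclude that the deck involution acts as $-1$ on $H^2(X;\mathbb{Z})$. Your flagged point of care (that the identification of the branched cover with $\# g(S^2\times S^2)$ genuinely uses unknottedness) is exactly the right one, and the fact itself is standard.
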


Now we use Theorem \ref{thm:froy} to obtain lower bounds on $\varepsilon, \varepsilon_{\pm}$:
\begin{proposition}\label{prop:embbound}
Let $Y$ be an integral homology $3$-sphere and $\sigma$ an orientation preserving smooth involution on $Y$. We have:
\begin{itemize}
\item[(1)]{If the delta-invariants $\delta^T_*$ of $(Y,\sigma)$ do not all vanish then $\varepsilon(Y,\sigma) \ge 2$.}
\item[(2)]{$\varepsilon_+(Y,\sigma) \ge \max\{ j^R(Y , \sigma) , 2j^R(Y,\sigma) - 8 \delta^R_\infty(Y,\sigma) \}$.}
\item[(3)]{$\varepsilon_-(Y,\sigma) \ge \max\{ j^E(Y , \sigma) , 2j^E(Y,\sigma) - 8 \delta^E_\infty(Y,\sigma) \}$.}
\item[(4)]{If $\delta^S_{0,1}(Y,\sigma), \delta^S_{1,0}(Y,\sigma)$ are both non-zero, then $\varepsilon(Y,\sigma) \ge 4$.}
\item[(5)]{If the Rokhlin invariant of $Y$ is non-zero and if $\delta^S_{0,\infty}(Y,\sigma), \delta^S_{\infty,0}(Y,\sigma), \delta^S_{\infty,1}(Y,\sigma)$ do not all have the same sign, then $\varepsilon(Y,\sigma) \ge 10$.}
\end{itemize}
\end{proposition}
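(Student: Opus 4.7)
My plan is to handle all five parts through a common setup: given an equivariant embedding $Y\hookrightarrow X=\#n(S^2\times S^2)$, cut $X$ along $Y$ to obtain $X=X_-\cup_Y X_+$ with $\partial X_-=Y$ and $\partial X_+=-Y$. Each $X_\pm$ is spin, inherits a smooth involution from $\sigma$, and by Mayer--Vietoris (using that $Y$ is a $\mathbb{Q}$-homology sphere) there is a $\sigma$-equivariant orthogonal decomposition $H^2(X;\mathbb{Q})\cong H^2(X_-;\mathbb{Q})\oplus H^2(X_+;\mathbb{Q})$. In particular $\sigma(X_-)+\sigma(X_+)=0$ and $b_\pm(X_-)+b_\pm(X_+)=n$. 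I then apply Theorem \ref{thm:froy} to the four spin cobordisms $X_-$, $-X_+$ (viewed as outgoing cobordisms $\emptyset\to Y$) and $X_+$, $-X_-$ (viewed as incoming cobordisms $Y\to\emptyset$), selecting the type $T\in\{E,R,S\}$ appropriate to each statement.

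For (2), the hypothesis $\sigma|_{H^2(X;\mathbb{Z})}=\mathrm{id}$ together with the orthogonal decomposition forces $\sigma$ to act trivially on each $H^2(X_\pm;\mathbb{R})$, so the $\sigma$-anti-invariant subspace is zero and type $R$ Fr{\o}yshov applies to all four cobordisms. The outgoing inequalities on $X_-$ and $-X_+$ give $-\sigma(X_-)/8\le\delta^R_j(Y,\sigma)$ for every $j\ge 0$, while the incoming inequalities on $X_+$ and $-X_-$ give $\delta^R_{j+b_+(X_+)}(Y,\sigma)\le -\sigma(X_-)/8$ and $\delta^R_{j+b_-(X_-)}(Y,\sigma)\le -\sigma(X_-)/8$ for every $j\ge 0$. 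These combine to force $\delta^R_\infty(Y,\sigma)=-\sigma(X_-)/8$ together with the sharpness bounds $b_+(X_+)\ge j^R(Y,\sigma)$ and $b_-(X_-)\ge j^R(Y,\sigma)$. The identities $n=b_+(X_+)+b_+(X_-)=b_+(X_+)+b_-(X_-)+\sigma(X_-)$ combined with $\sigma(X_-)=-8\delta^R_\infty(Y,\sigma)$ then immediately yield $n\ge j^R(Y,\sigma)$ and $n\ge 2j^R(Y,\sigma)-8\delta^R_\infty(Y,\sigma)$. Item (3) is the same argument with type $R$ replaced by type $E$, since $\sigma|_{H^2(X;\mathbb{Z})}=-\mathrm{id}$ now makes the $\sigma$-invariant subspaces of $H^2(X_\pm;\mathbb{R})$ trivial.

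For (1), when $n\le 1$ the intersection lattice $nH$ is either trivial or the indecomposable hyperbolic form, so the orthogonal splitting forces one of $Q_{X_-},Q_{X_+}$ to be zero; up to relabelling, $X_+$ is a rational homology ball, whence $\sigma(X_+)=0$ and $b_+(X_+)=0$. Applying Theorem \ref{thm:froy} to $X_+$ and $-X_+$ in every type $T\in\{E,R,S\}$ admitted by $(Y,\sigma)$ yields $0\le\delta^T_*(Y,\sigma)\le 0$ at every admissible index.

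Parts (4) and (5) follow the same template in type $S$, exploiting that the four decorations $b_+(X_-)^\sigma,b_+(X_-)^{-\sigma},b_+(X_+)^\sigma,b_+(X_+)^{-\sigma}$ sum to $n$ (and similarly for $b_-$). For (4), with $n\le 3$ a pigeonhole argument on these four non-negative integers makes one of the Fr{\o}yshov type $S$ inequalities at index $(0,1)$ or $(1,0)$ applicable with $\delta(X_\pm)=0$ on the right, forcing $\delta^S_{0,1}(Y,\sigma)=0$ or $\delta^S_{1,0}(Y,\sigma)=0$ and contradicting the hypothesis. For (5), the Rokhlin congruence $\delta^S_*\equiv\mu(Y)\pmod{2\mathbb{Z}}$ of Proposition \ref{prop:deltaprop}(5) combined with the hypothesis $\mu(Y)\neq 0$ forces $|\delta^S_{0,\infty}|,|\delta^S_{\infty,0}|,|\delta^S_{\infty,1}|\ge 1$, and the full set of type $S$ Fr{\o}yshov inequalities (applied to all four cobordisms and all three stable indices) bounds each of these from above and below by signed combinations of $\sigma(X_\pm)/8$; one then checks that the non-constant sign hypothesis on the three stable invariants is incompatible with any distribution of $b_+(X_\pm)^{\pm\sigma}$ having total $n\le 9$. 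I expect the main obstacle to be precisely this case analysis in (5): keeping track of which stable $\delta^S$-invariant is controlled by which signed combination of $\sigma(X_\pm)$ and which of the four $b_+^{\pm\sigma}$ decorations, and then enumerating the sign patterns of the three invariants to exhibit a contradictory configuration whenever $n\le 9$; the remaining parts are essentially formal consequences of Theorem \ref{thm:froy} and the Mayer--Vietoris splitting.
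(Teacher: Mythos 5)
Your common setup and your treatment of parts (1)--(3) coincide with the paper's proof: split $X=X_-\cup_Y X_+$ into even unimodular summands $L^\pm$ with $L^+\oplus L^-\cong nH$, and apply Theorem \ref{thm:froy} to $\pm X_\pm$ in the relevant type. Your bookkeeping for (2)--(3), namely $b_+(X_+)\ge j^R$, $b_-(X_-)\ge j^R$ and $n=b_+(X_+)+b_-(X_-)+\sigma(X_-)$ with $\sigma(X_-)=-8\delta^R_\infty$, is correct (and in fact slightly cleaner than the paper's write-up), and your argument for (1) via indecomposability of $H$ among unimodular lattices is the paper's.

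The gap is in (4) and (5): a pigeonhole argument on the decorations $b_\pm(X_\pm)^{\pm\sigma}$ summing to $n$ is not sufficient, because there are numerical distributions with small total that defeat it and are excluded only by lattice theory, not by counting. For example, with $n=3$ take $b_+(X_-)^{\sigma}=1$, $b_+(X_-)^{-\sigma}=2$, $b_+(X_+)=0$ and $b_-(X_+)^{\sigma}=1$, $b_-(X_+)^{-\sigma}=2$, $b_-(X_-)=0$: then no type-$S$ Fr{\o}yshov inequality coming from $X_-$ or $-X_+$ produces a lower bound on any $\delta^S_{i,j}(Y)$ (every applicability condition $i+b_+^{\sigma}=0$ or $j+b_+^{-\sigma}\le 1$ fails at $(1,0)$ and $(0,1)$), so neither $\delta^S_{1,0}$ nor $\delta^S_{0,1}$ can be pinned to zero; this distribution is ruled out only because it would force $L^\mp$ to be definite even unimodular of rank $3$. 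The missing ingredient is precisely the classification of even unimodular lattices of small rank, which the paper uses at this point: for (4), since definite even unimodular lattices have rank divisible by $8$ and $\operatorname{rk}L^++\operatorname{rk}L^-=2n\le 6$, both $L^\pm$ have signature zero and one of them is $0$ or $H$, after which one checks the four involutions on $H$ case by case; for (5), $\mu(Y)\ne 0$ forces $\sigma(L^\pm)=\mp 8$, and with $n\le 9$ this forces one of $L^\pm$ to be a definite $E_8$, so that $b_+$ (or $b_-$) of that piece vanishes, \emph{all} type-$S$ inequalities on that side apply simultaneously with the common constant $\pm 1$, and the three stable invariants are forced to share a sign. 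The same signature fact (not counting) is also what justifies your unproved assertion in (4) that $\delta(X_\pm)=0$. Your plan for (4)--(5) should therefore be reorganised around this classification step rather than around an enumeration of decorations.
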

\begin{proof}
Suppose that $Y$ embeds equivariantly in $X = \#n(S^2 \times S^2)$. Then $Y = X_- \cup_Y X_+$. Let $L^+,L^-$ denote the intersection forms on $X_+,X_-$. Since $Y$ is an integral homology sphere, $L^+,L^-$ are even unimodular lattices and $L^+ \oplus L^- \cong H^2(X ; \mathbb{Z}) \cong nH$ where $H$ is the hyperbolic lattice.

If $n < 2$, then $L^+$ or $L^-$ is zero. Applying Theorem \ref{thm:froy} to $X_+$ if $L^+$ is zero or $X_-$ if $L^-$ is zero, we see that all the delta-invariants of $(Y,\sigma)$ must vanish. This proves (1).

Suppose now that $\sigma$ acts trivially on $H^2(X ; \mathbb{Z})$, hence also on $H^2(X_{\pm} ; \mathbb{Z})$. Let $L^-$ have signature $(a,b)$. Then $L^+$ has signature $(n-a,n-b)$. Applying Theorem \ref{thm:froy} to $X_-$ gives $\delta^R_j(Y,\sigma) = (b-a)/8$ for $j \ge b$. Hence $\delta^R_\infty(Y,\sigma) = (b-a)/8$ and $b \ge j^R(Y,\sigma)$. Applying Theorem \ref{thm:froy} to $X_+$ gives $\delta^R_j(Y,\sigma) = (b-a)/8$ for $j \ge n-a$, hence $n-a \ge j^R(Y,\sigma)$. Hence $n = a+b \ge b \ge j^R(Y,\sigma)$ and
\[
n = 2b + (a-b) \ge 2j^R(Y,\sigma)-8\delta^R_j(Y,\sigma).
\]
Hence $n \ge \max\{ j^R(Y,\sigma) , 2j^R(Y,\sigma) - 8\delta^R_j(Y,\sigma) \}$ proving (2). In the case that $\sigma$ acts as $-1$ on $H^2(X ; \mathbb{Z})$ an identical argument proves (3).

Suppose $\varepsilon(Y,\sigma) < 4$. Then one of $L^+$ or $L^-$ is either zero or $H$. Reversing orientation on $X$ if necessary, we can assume $L^+$ is either zero or $H$. If $L^+$ is zero then $\delta^S_{0,1}(Y,\sigma) = \delta^S_{1,0}(Y,\sigma) = 0$ by (1). Now suppose $L^+ = H$. Consider the action of $\sigma$ on $H$. There are four possible involutions on $H$. In each of the four cases one finds (by applying Theorem \ref{thm:froy} to $X_+$) that either $\delta^S_{0,1}(Y,\sigma)=0$ or $\delta^S_{1,0}(Y,\sigma) = 0$. This proves (4).

Lastly, suppose the Rokhlin invariant of $Y$ is non-zero and $n \le 9$. Then either $L^+$ or $L^-$ must be $\pm E_8$, where $E_8$ denotes the negative definite $E_8$ lattice. Reversing orientation on $X$ if necessary, we can assume $L^+$ or $L^-$ is $E_8$. If $L^+ = E_8$, applying Theorem \ref{thm:froy} to $X_+$ gives $\delta^S_{i,j}(Y,\sigma) \ge 1$ for all $i,j$. Similarly if $L^- = E_8$, applying Theorem \ref{thm:froy} to $X_-$ gives $\delta^S_{i,j}(Y,\sigma) \le -1$ for all $i,j$. In either case we see that $\delta^S_{0,\infty}(Y,\sigma), \delta^S_{\infty,0}(Y,\sigma), \delta^S_{\infty,1}(Y,\sigma)$ all have the same sign. This proves (5).
\end{proof}

In \cite{bar}, we introduced a concordance invariant $\theta^{(2)}(K)$ of knots which is defined by 
\[
\theta^{(2)}(K) = \max\{ 0 , j^E( \Sigma_2(K) , \sigma) - \sigma(K)/2 \},
\]
where $\sigma$ is the covering involution on $\Sigma_2(K)$. But note that $\sigma(K) = 8\delta^E_\infty( -\Sigma_2(K) , \sigma)$, so it follows from Proposition \ref{prop:embbound} (3) and Proposition \ref{prop:ds} that
\begin{equation}\label{equ:varbound}
2\theta^{(2)}(K) \le \varepsilon_-( \Sigma_2(K) , \sigma) \le g_{ds}(K).
\end{equation}

\begin{proposition}\label{prop:tpq}
Let $p,q$ be odd, positive, coprime integers. Then $\varepsilon_-( \Sigma_2( T_{p,q}) , \sigma ) = (p-1)(q-1)$.
\end{proposition}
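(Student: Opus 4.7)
The plan is to show $\varepsilon_-(\Sigma_2(T_{p,q}),\sigma) = (p-1)(q-1)$ by establishing both inequalities, identifying the equivariant embedding number with the doubly slice genus of $T_{p,q}$.

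For the upper bound, I would invoke Proposition~\ref{prop:ds}, which gives $\varepsilon_-(\Sigma_2(K),\sigma) \le g_{ds}(K)$ for every knot $K$. The bound $g_{ds}(T_{p,q}) \le (p-1)(q-1)$ is realized by the standard doubling construction: take the genus $(p-1)(q-1)/2$ Seifert surface for $T_{p,q}$, push a copy into each hemisphere $B^4_\pm$ of $S^4 = B^4_+ \cup_{S^3} B^4_-$, and glue along $T_{p,q} \subset S^3$ to produce a smoothly unknotted closed surface of genus $(p-1)(q-1)$ in $S^4$ containing $T_{p,q}$ on its equator.

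For the lower bound, suppose $\Sigma_2(T_{p,q})$ embeds equivariantly into $X = \#n(S^2 \times S^2)$ with $\sigma$ acting as $-1$ on $H^2(X;\mathbb{Z})$. Let $F\subset X$ be the closed surface fixed by $\sigma$, meeting the embedded $\Sigma_2(T_{p,q})$ along the fixed circle of $m$ (i.e.\ the Seifert fiber of multiplicity $2$ in $\Sigma(2,p,q)$). Set $\bar X = X/\sigma$ and $\bar F = F/\sigma$. Because $\sigma$ acts as $-1$ on the middle cohomology of the simply connected manifold $X$ with $\sigma(X)=0$, a transfer and Van Kampen computation shows $\bar X$ is a simply connected integer homology $4$-sphere, and hence is homeomorphic to $S^4$ by Freedman's theorem. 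An Euler characteristic computation $\chi(X) = 2\chi(\bar X) - \chi(\bar F)$ gives $g(\bar F) = n$, and since $X = \#n(S^2\times S^2)$ is the branched double cover of $\bar X \cong S^4$ along $\bar F$, the surface $\bar F$ is topologically unknotted. As $\bar F$ meets the equatorial $S^3 = \Sigma_2(T_{p,q})/\sigma$ in $T_{p,q}$, this realizes $T_{p,q}$ as the equatorial cross-section of an unknotted genus $n$ surface in $S^4$, whence $n \ge g_{ds}(T_{p,q}) = (p-1)(q-1)$.

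The main obstacle is the lower bound, which splits into two technical points: first, confirming that the quotient $\bar X$ is homeomorphic to $S^4$ with $\bar F$ topologically unknotted (which follows from Freedman combined with the characterization of unknotted surfaces in $S^4$ by their branched double covers); and second, the sharp lower bound $g_{ds}(T_{p,q}) \ge (p-1)(q-1)$ for odd coprime $p,q$, which requires input from classical concordance invariants such as the Tristram--Levine signature function or Casson--Gordon invariants, since a direct application of Proposition~\ref{prop:embbound} yields only $\varepsilon_- \ge -\sigma(T_{p,q})$, which is strictly smaller than $(p-1)(q-1)$ in general.
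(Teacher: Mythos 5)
Your upper bound is exactly the paper's: Proposition \ref{prop:ds} plus $g_{ds}(T_{p,q}) = 2g_3(T_{p,q}) = 2g_4(T_{p,q}) = (p-1)(q-1)$ (the middle equality being the Milnor conjecture). The problem is the lower bound, where you both discard the tool that actually works and substitute an argument with a fatal gap. You claim Proposition \ref{prop:embbound} "yields only $\varepsilon_- \ge -\sigma(T_{p,q})$", but part (3) of that proposition gives $\varepsilon_-(Y,\sigma) \ge 2j^E(Y,\sigma) - 8\delta^E_\infty(Y,\sigma)$; the stabilisation index $j^E$ is precisely the extra term that makes the bound sharp. Combined with Proposition \ref{prop:ds} this is Equation \eqref{equ:varbound}, $2\theta^{(2)}(K) \le \varepsilon_-(\Sigma_2(K),\sigma) \le g_{ds}(K)$, and the paper's proof then quotes the computation $2\theta^{(2)}(T_{p,q}) = (p-1)(q-1)$ from \cite[Theorem 1.1]{bh2}. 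That computation is the real content of the lower bound and cannot be bypassed by classical invariants.

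Your replacement argument fails at two points. First, there is no "characterization of unknotted surfaces in $S^4$ by their branched double covers": a closed surface in $S^4$ whose double branched cover is $\#n(S^2\times S^2)$ need not be unknotted, and detecting exactly this failure is what the delta-invariants (and the related work \cite{kmt1,kmt2}) are designed for, so assuming it is circular. Second, even granting everything else, Freedman only gives you $\bar X$ \emph{homeomorphic} to $S^4$, so at best you conclude that $T_{p,q}$ sits on a topologically unknotted surface in a topological $S^4$; this bounds a topological doubly slice genus, and in the topological category the inequality $g_{ds}^{top}(T_{p,q}) \ge (p-1)(q-1)$ is not available — Tristram--Levine and Casson--Gordon invariants do not recover the Milnor conjecture, and the topological $4$-genus of torus knots is in general strictly smaller than the Seifert genus. (There are also smaller unaddressed issues: you assume the fixed-point set of the extended involution is a single closed orientable surface, and that $\bar X$ is an integral, not just rational, homology sphere.) The fix is simply to use Proposition \ref{prop:embbound}(3) in full together with \cite[Theorem 1.1]{bh2}, as the paper does.
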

\begin{proof}
For any knot $K$, we have $2g_4(K) \le g_{ds}(K) \le 2g_3(K)$, where $g_3$ denotes the $3$-genus. For a torus knot $T_{p,q}$, the $3$-genus and $4$-genus agree and hence $g_{ds}(T_{p,q}) = 2g_4(T_{p,q}) = (p-1)(q-1)$. On the other hand, \cite[Theorem 1.1]{bh2} gives $2\theta^{(2)}(T_{p,q}) = (p-1)(q-1)$. So Equation \ref{equ:varbound} gives $\varepsilon_-( \Sigma_2(T_{p,q}) , \sigma) = (p-1)(q-1)$.
\end{proof}

\begin{proposition}
Let $\Gamma$ be a negative definite plumbing graph with all vertices having even degree and $\det(A(\Gamma)) = 1$. Then $\varepsilon_-(Y_\Gamma , c_\Gamma) = |\Gamma|$, where $|\Gamma|$ is the number of vertices of $\Gamma$. If in addition $\Gamma$ is a $\mathbb{Z}_2$-equivariant plumbing graph, then $\varepsilon_+(Y_\Gamma , m_\Gamma) = |\Gamma|$.
\end{proposition}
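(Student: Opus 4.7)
The upper bounds $\varepsilon_-(Y_\Gamma, c_\Gamma) \le |\Gamma|$ and (when $\Gamma$ is also $\mathbb{Z}_2$-equivariant) $\varepsilon_+(Y_\Gamma, m_\Gamma) \le |\Gamma|$ are immediate from Proposition \ref{prop:epplumb}, so the task is to establish the matching lower bounds. I will argue the $\varepsilon_-$ case in detail; the $\varepsilon_+$ case is structurally identical with type $E$ replaced by type $R$.

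First I record the input $\delta^E_\infty(Y_\Gamma, c_\Gamma) \ge |\Gamma|/8$. Since every degree of $\Gamma$ is even, $X_\Gamma$ is spin and carries the type $E$ spin$^c$-structure $\mathfrak{s}$ with $c(\mathfrak{s}) = 0$; because $c_\Gamma$ acts by $-1$ on $H^2(X_\Gamma;\mathbb{R})$ its invariant subspace is zero (trivially negative definite), and since $\Gamma$ is negative definite $\sigma(X_\Gamma) = -|\Gamma|$. Applying Theorem \ref{thm:froy}(1) with outgoing boundary $Y_\Gamma$ and empty ingoing boundary then gives $\delta^E_j(Y_\Gamma, c_\Gamma) \ge |\Gamma|/8$ for every $j \ge 0$.

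Now suppose $Y_\Gamma$ embeds equivariantly in $X = \#n(S^2\times S^2)$ with $\sigma$ extending $c_\Gamma|_{Y_\Gamma}$ and acting as $-1$ on $H^2(X;\mathbb{Z})$, and write $X = X_-\cup_{Y_\Gamma} X_+$. Since $\det A(\Gamma) = 1$ makes $Y_\Gamma$ an integral homology sphere, Mayer--Vietoris yields $H_1(X_\pm;\mathbb{Z}) = 0$ together with an orthogonal decomposition $H^2(X;\mathbb{Z}) = H^2(X_-;\mathbb{Z}) \oplus H^2(X_+;\mathbb{Z})$ into unimodular lattices; since the form on $X$ is even, each $L^\pm$ is in fact even unimodular, so $X_\pm$ is spin with unique spin structure extending $\mathfrak{s}_0$ on $Y_\Gamma$. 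Because $\sigma$ acts as $-1$ on $H^2(X_+;\mathbb{Z})$ and fixes the characteristic $c = 0$, this spin structure is of type $E$ with respect to $\sigma|_{X_+}$, and the $\sigma$-invariant subspace of $H^2(X_+;\mathbb{R})$ vanishes. Theorem \ref{thm:froy}(1) applied to $X_+$ with $Y_\Gamma$ as ingoing boundary and outgoing boundary empty then gives $\delta^E_j(Y_\Gamma, c_\Gamma) \le \sigma(X_+)/8$ for all $j \ge b_+(X_+)$. Letting $j\to\infty$ and combining with the previous paragraph produces $\sigma(X_+) \ge |\Gamma|$, so $b_+(X_+) \ge \sigma(X_+) \ge |\Gamma|$; since $b_+(X_+) \le b_+(X) = n$, this forces $n \ge |\Gamma|$, as required.

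For $\varepsilon_+(Y_\Gamma, m_\Gamma)$ the argument is parallel, using Theorem \ref{thm:froy}(2): here $m_\Gamma$ (and therefore any extension $\sigma$) acts trivially on $H^2$, so the $\sigma$-anti-invariant subspaces appearing in that inequality all vanish, and the spin structure on $X_\pm$ is of type $R$. The main technical hurdle is verifying this last point: one must check that the antilinear lift of $\sigma$ to the spinor bundle over $X_\pm$ squares to $+1$ (Real) rather than $-1$ (Quaternionic). By Proposition \ref{prop:RQ} the lift exists and is unique up to isomorphism, and since the fixed set of $m_\Gamma|_{Y_\Gamma}$ is non-empty, Proposition 2.4 forces the restriction to $Y_\Gamma$ to be Real; connectedness of $X_\pm$ then propagates Realness to the interior. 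Once this is in place, the remainder of the argument is a routine repetition of the $c_\Gamma$ case.
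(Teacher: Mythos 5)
Your proof is correct and follows essentially the same route as the paper: the upper bound is Proposition \ref{prop:epplumb}, and the lower bound comes from applying the equivariant Fr{\o}yshov inequality (Theorem \ref{thm:froy}) to the pieces $X_{\pm}$ of an equivariant embedding into $\#n(S^2\times S^2)$ together with the computation $\delta^T_\infty(Y_\Gamma)=|\Gamma|/8$ from $X_\Gamma$ itself --- you are simply re-deriving the relevant instances of Proposition \ref{prop:embbound} (2), (3) rather than citing them. One cosmetic remark: your justification that the spin structure is of type $R$ with respect to $m_\Gamma$ via a non-empty fixed-point set on $Y_\Gamma$ is both unnecessary and not always available (the fixed set of $m_\Gamma|_{Y_\Gamma}$ can a priori be empty when every white vertex has two edges); the paper instead gets type $R$ from the oddness of $m_\Gamma$, after which your propagation argument to $X_\pm$ goes through unchanged.
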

\begin{proof}
Proposition \ref{prop:epplumb} gives the bound $\varepsilon_-(Y_\Gamma , c_\Gamma) \le |\Gamma|$ and Proposition \ref{prop:embbound} gives the bound $\varepsilon_-(Y_\Gamma , c_\Gamma) \ge 2j^E(-Y,\sigma) - 8\delta^E_\infty(-Y,\sigma) \ge -8\delta^E_\infty(-Y,\sigma) = -8\overline{\mu}(Y_\Gamma)$. But $\Gamma$ is negative definite with all degrees even, so $\overline{\mu}(Y_\Gamma) = -|\Gamma|/8$, hence $\varepsilon_-(Y_\Gamma , c_\Gamma) = |\Gamma|$. The case of $\varepsilon_+(Y_\Gamma , m_\Gamma)$ is similar.
\end{proof}

\begin{example}\label{ex:epsbri}
We consider the embedding numbers of some Brieskorn spheres equipped either of the involutions $\sigma$ or $c$.

$Y = \Sigma(2,3,5)$. Since $\mu(Y)=1$, all embedding numbers are at least $8$. But $Y$ is the boundary of the $E_8$-plumbing over which both $\sigma$ and $c$ extend. So we get $\varepsilon(Y) = \varepsilon(Y,c) = \varepsilon(Y,m) = \varepsilon_+(Y,m) = \varepsilon_-(Y,c) = 8$. Since $Y = \Sigma_2(T_{3,5})$ with $m$ being the covering involution, Proposition \ref{prop:tpq} also gives $\varepsilon_-(Y,m) = 8$. We do not know the value of $\varepsilon_-(Y,c)$.

$Y = \Sigma(2,3,7)$. Then $\varepsilon(Y) = 10$ \cite[Proposition 3.5]{agl}. On the other hand, $Y$ is the boundary of a plumbing graph shown in Figure \ref{fig:237}. Hence $\varepsilon_+(Y,m) \le 10$ and $\varepsilon_-(Y,c) \le 10$. So this gives $\varepsilon(Y,m) = \varepsilon(Y,c) = \varepsilon_+(Y,m) = \varepsilon_-(Y,c) = 10$. Proposition \ref{prop:tpq} gives $\varepsilon_-(Y,m) = 12$. We do not know the value of $\varepsilon_+(Y,c)$.

\begin{figure}[h]
\begin{center}
\begin{tikzpicture}
\draw[thick] (-2,0) -- (6,0) ;
\draw[thick] (0,0) -- (0,-1);
\draw (-2,0)[fill] circle(0.1);
\draw (-1,0)[fill] circle(0.1);
\draw (0,0)[fill] circle(0.1);
\draw (1,0)[fill] circle(0.1);
\draw (2,0)[fill] circle(0.1);
\draw (0,-1)[fill] circle(0.1);
\draw (3,0)[fill] circle(0.1);
\draw (4,0)[fill] circle(0.1);
\draw (5,0)[fill] circle(0.1);
\draw (6,0)[fill] circle(0.1);

\node at (-2,0.5) {$2$};
\node at (-1,0.5) {$2$};
\node at (0,0.5) {$2$};
\node at (1,0.5) {$2$};
\node at (2,0.5) {$2$};
\node at (0,-1.5) {$2$};
\node at (3,0.5) {$2$};
\node at (4,0.5) {$2$};
\node at (5,0.5) {$2$};
\node at (6,0.5) {$2$};
\end{tikzpicture}
\caption{Plumbing graph for $\Sigma(2,3,7)$}\label{fig:237}
\end{center}
\end{figure}

$Y = \Sigma(2,3,13)$. It is known that $Y$ embeds in $S^4$ \cite[Theorem 2.13]{bb}, so $\varepsilon(Y) = 0$. On the other hand, $\delta^E_\infty(Y,m) = -\sigma(T_{3,13})/8 = 2$, so Proposition \ref{prop:embbound} (1) gives $\varepsilon(Y,m) \ge 2$. Furthermore, Proposition \ref{prop:tpq} gives $\varepsilon_-(Y , m) = 24$. Next, since $\Sigma(2,3,13) = Y(0 ; (2,-1) , (3,2) , (13,-2))$ we can write $Y = Y_\Gamma$ where $\Gamma$ is given by Figure \ref{fig:2313}. This graph has $6$ vertices, so $\varepsilon_+(Y,m) \le 6$, $\varepsilon_-(Y,c) \le 6$. Furthermore, we observe that $Y = S_{-1/2}(T_{2,3})$. The strong inversion on $T_{2,3}$ corresponds to the involution $c$ on $Y$ (it can not correspond to $m$ since $\delta^E_\infty(Y , m) \neq 0$, which would contradict Proposition \ref{prop:12p}). Using the slam dunk move we can write $Y$ as surgery on a two component link with even surgery coefficients. Therefore $\varepsilon_-(Y , c) \le 2$. Notice that $\varepsilon(Y) = 0$, $\varepsilon(Y,m) \in [2,6]$, $\varepsilon_-(Y,m) = 24$, so $\varepsilon(Y), \varepsilon(Y,m), \varepsilon_-(Y,m)$ take distinct values.

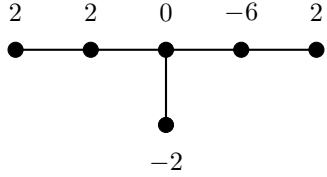
\begin{figure}[h]
\begin{center}
\begin{tikzpicture}
\draw[thick] (-2,0) -- (2,0) ;
\draw[thick] (0,0) -- (0,-1);
\draw (-2,0)[fill] circle(0.1);
\draw (-1,0)[fill] circle(0.1);
\draw (0,0)[fill] circle(0.1);
\draw (1,0)[fill] circle(0.1);
\draw (2,0)[fill] circle(0.1);
\draw (0,-1)[fill] circle(0.1);

\node at (-2,0.5) {$2$};
\node at (-1,0.5) {$2$};
\node at (0,0.5) {$0$};
\node at (1,0.5) {$-6$};
\node at (2,0.5) {$2$};
\node at (0,-1.5) {$-2$};

\end{tikzpicture}
\caption{Plumbing graph for $\Sigma(2,3,13)$}\label{fig:2313}
\end{center}
\end{figure}

\end{example}

\begin{example}
Let $Y$ be the equivariant connected sum $\Sigma(2,3,5) \# - \! \!\Sigma(2,3,13)$ with involution $\sigma$ the connected sum of the $m$ involutions on the two summands. Then since $\varepsilon(\Sigma(2,3,5)) = 8$, $\varepsilon(\Sigma(2,3,13))=0$, if follows that $\varepsilon(Y) \le 8$. On the other hand $\varepsilon(Y) \ge 8$ since $\mu(Y) = 1$. So $\varepsilon(Y) = 8$. In Example \ref{ex:epsbri} we saw that $\varepsilon(\Sigma(2,3,5) , m) = 8$ and $\varepsilon(\Sigma(2,3,13) , m) \le 6$, so $\varepsilon(Y,\sigma) \le 14$. We claim that $\varepsilon(Y,\sigma) \ge 10$. Suppose this were not the case. Then as in the proof of Proposition \ref{prop:embbound} (5), we get $X = X_- \cup_Y X_+$ where $L^+$ or $L^-$ is $E_8$. If $L^+ = E_8$, then Theorem \ref{thm:froy} gives $\delta^E_0(Y,\sigma) \le -1$. But $\delta^E_0(Y,\sigma) \ge \delta(Y) = \delta(\Sigma(2,3,5)) - \delta(\Sigma(2,3,13)) = 1$, a contradiction. If $L^- = E_8$, then Theorem \ref{thm:froy} gives $\delta^E_\infty(Y,\sigma) \ge 1$. But $\delta^E_\infty(Y,\sigma) \le \delta^E_\infty(-\Sigma(2,3,13),m) + \delta^E_\infty(\Sigma(2,3,5),m) = -2+1 = -1$, a contradiction. So we have shown that $\varepsilon(Y) = 8$ and $10 \le \varepsilon(Y,\sigma) \le 14$.
\end{example}

\begin{example}
We will show that the differences $\varepsilon_+(Y,\sigma) - \varepsilon(Y)$, $\varepsilon_-(Y,\sigma) - \varepsilon(Y)$ can be arbitrarily large. We do not know whether this is also true for $\varepsilon(Y,\sigma) - \varepsilon(Y)$.

Take $Y = \Sigma(2,3,12n-1)$ with involution $\sigma = m$. Then $Y = -S_{1/2n}(T_{2,3})$ and as in Example \ref{ex:epsbri}, this gives $\epsilon(Y) \le 2$. On the other hand, Proposition \ref{prop:tpq} gives $\varepsilon_-(Y , \sigma) = 24n-4$. 

Finding examples where $\varepsilon_+(Y,\sigma) - \varepsilon(Y)$ is large requires a little more effort. Let $n_1, \dots , n_k$ be positive integers and take $Y = \#_{i=1}^{k} \Sigma(2,3,12n_i-1)$ to be an equivariant connected sum of $\Sigma(2,3,12n_1-1), \dots , \Sigma(2,3,12n_k-1)$, where we take the $m$ involution on each summand. We have $l(\Sigma(2,3,12n_i-1)) = 1$ and hence $l(Y) \ge k$. Also $\delta^R_\infty(Y,\sigma) \le \sum_{i=1}^l \delta^R_\infty( \Sigma(2,3,12n_i-1),m) = -\sum_{i=1}^k \overline{\mu}(\Sigma(2,3,12n_i-1)) = 0$. Hence Proposition \ref{prop:jbound}, $j^R(Y,\sigma) \ge k$. On the other hand, $j^R(Y,\sigma) \le \sum_{i=1}^{k} j^R(\Sigma(2,3,12n_i-1),m) = k$, so $j^R(Y,\sigma) = k$. Hence $\varepsilon_+(Y,\sigma) \ge 2k$ by Proposition \ref{prop:embbound} (2).
\end{example}

\subsection{Non-orientable surfaces bounding knots}\label{sec:nos}

Let $K$ be a knot in $S^3$ and let $S$ be a connected, properly embedded, non-orientable surface in $D^4$ which bounds $K$. In this section we consider a constraint on the topology of $S$ obtained from the type $R$ delta-invariants of $K$. A similar application was considered in \cite{kmt2}. The main difference is that our invariant can be calculated for a much larger class of knots, in particular for all quasi-alternating knots.

Let $e(S)$ denote the relative Euler class of $S$ with respect to the zero framing on $K$. Since $e(S)$ is valued in the orientation local system, will identify $e(S)$ with an integer. Note that $e(S)$ is always even becauses its value mod $2$ is the mod $2$ self-intersection number of $S$, which is zero since $H^2(D^4 , S^3 ; \mathbb{Z}_2) = 0$. A natural question to ask is for a given $K$, what possible values of $( e(S) , b_1(S) )$ can be attained? This problem is studied for torus knots in \cite{all}. Here $b_1(S)$ is the first Betti number of $S$. The minimum possible value of $b_1(S)$ that such an $S$ can attain is known as {\em the non-orientable $4$-genus of $S$} \cite{gl} and denoted by $\gamma_4(K)$.

\begin{proposition}\label{prop:xy}
Let $x = \sigma(K) - e(S)/2$ and $y = b_1(S)$. Then $x,y \in \mathbb{Z}$, $y \ge 0$, $|x| \le y$ and $x = y \; ({\rm mod} \; 2)$.
\end{proposition}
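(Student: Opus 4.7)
The plan is to form the double branched cover $W = \Sigma_2(D^4, S)$, a compact smooth $4$-manifold with $\partial W = \Sigma_2(K)$, and then read off $|x| \le y$ and $x \equiv y \pmod{2}$ from the signature and second Betti number of $W$.

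First I would verify $b_2(W) = b_1(S)$. Since $\partial S = K$ is a single circle, $\chi(S) = 1 - b_1(S)$, and the branched-cover Euler characteristic formula gives
\[
\chi(W) = 2\chi(D^4) - \chi(S) = 1 + b_1(S).
\]
A van Kampen argument (the complement $D^4 \setminus \nu(S)$ deformation retracts onto a $2$-complex whose $\pi_1$ is normally generated by a meridian of $S$, and this meridian is killed in the double cover) shows $\pi_1(W)=1$, so $b_1(W)=0$; $b_3(W) = b_4(W) = 0$ is immediate from $\partial W \neq \emptyset$. Hence $b_2(W) = \chi(W) - 1 = b_1(S) = y$.

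Second, I would invoke the Gordon--Litherland signature formula \cite{gl}, which identifies $\sigma(K)$ with $\sigma(G_S) - e(S)/2$ for the Goeritz form $G_S$ of $S$. Since the intersection form on $H_2(W;\mathbb{Z})$ is isometric to $G_S$ (up to a global sign dictated by the orientation convention on $W$), this lets me identify $\sigma(W)$ with $\pm(\sigma(K) - e(S)/2) = \pm x$, and after the sign of $e(S)$ is fixed consistently with the paper's local-system convention we may arrange $\sigma(W) = x$.

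From these two facts the conclusion is immediate: the classical inequality $|\sigma(W)| \le b_2(W)$ becomes $|x| \le y$, and $b_2(W) - \sigma(W) = 2b_-(W) \in 2\mathbb{Z}$ gives $y - x \equiv 0 \pmod 2$. Integrality of $x,y$ follows from $e(S) \in 2\mathbb{Z}$ (already observed in the paper, using $H^2(D^4,S^3;\mathbb{Z}_2)=0$) together with the integrality of $\sigma(K)$ and $b_1(S)$. The main obstacle is purely bookkeeping: since $S$ is non-orientable, $e(S)$ is a priori only defined up to sign, and the orientation of $W$ must be chosen compatibly so that the Gordon--Litherland identification produces $\sigma(W) = +x$ rather than $-x$ (or $\sigma(K) + e(S)/2$). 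Everything else is a direct application of standard facts about double branched covers along properly embedded surfaces.
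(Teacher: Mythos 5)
Your proof is correct and follows essentially the same route as the paper: both pass to the double branched cover $W$ of $D^4$ along $S$ and read off $|x|\le y$ and $x\equiv y \pmod 2$ from the non-negativity and integrality of $b_{\pm}(W)=(y\pm x)/2$, the paper simply citing \cite[Lemma 4.5]{kmt2} for the identities $b_2(W)=b_1(S)$ and $\sigma(W)=\sigma(K)-e(S)/2$ that you re-derive. The one caveat is that your van Kampen claim $\pi_1(W)=1$ is stronger than needed and not immediate (knowing that a meridian normally generates $\pi_1(D^4\setminus\nu(S))$ does not by itself trivialise the relevant index-two subgroup after killing $\mu^2$); all you need is $b_1(W;\mathbb{Q})=0$, which follows from the standard transfer argument for double branched covers.
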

\begin{proof}
Since $e(S)$ is even, $x$ is an integer. We have assumed that $S$ is non-orientable, so $y = b_1(S) > 1$ (any non-orientable surface with one boundary component has positive first Betti number).

Let $X$ denote the double cover of $D^4$ branched over $S$. Then $X$ is a smooth, compact, oriented $4$-manifold with boundary $Y = \partial(X) = \Sigma_2(K)$. By a straightforward computation (eg, \cite[Lemma 4.5]{kmt2}) we have:
\begin{equation}\label{equ:bpm}
b_+(X) = \frac{x+y}{2}, \quad b_-(X) = \frac{y-x}{2}.
\end{equation}
The result follows, since $b_{\pm}(X)$ are non-negative integers.
\end{proof}

Suppose the surface $S$ has $x = \sigma(K) - e(S)/2$, $y = b_1(S)$. By connect summing $S$ with an embedded copy of $\mathbb{RP}^2$ in $S^4$, we can obtain a new surface $S'$ with values $(x \pm 1, y+1)$. This operation can not increase the value of $y-|x|$. Of particular interest is the boundary case where $|x| = y$. In this case double cover $X$ of $D^4$ branched over $S$ is positive (if $x=y$) or negative definite (if $x=-y$).

\begin{proposition}\label{prop:noxy}
Suppose that $x=-y$, or equivalently $\sigma(K) - e(S)/2 = -b_1(S)$. Then there exists a spin$^c$-structure $\mathfrak{s}$ on $\Sigma_2(K)$ for which $\delta^R_\infty( \Sigma_2(K) , \mathfrak{s} , \sigma) \ge 0$ and $\delta^R_0(-\Sigma_2(K) , \mathfrak{s} , \sigma) \le 0$, where $\sigma$ is the covering involution on $\Sigma_2(K)$.  Furthermore, if $\delta^R_\infty( \Sigma_2(K) , \mathfrak{s} , \sigma) = 0$ or $\delta^R_0(-\Sigma_2(K) , \mathfrak{s} , \sigma) = 0$, then $\mathfrak{s}$ is the unique spin structure on $\Sigma_2(K)$.

If $K$ is quasi-alternating (or more generally, if $\Sigma_2(K)$ is an $L$-space), then there exists a spin$^c$-structure $\mathfrak{s}$ on $\Sigma_2(K)$ for which $\delta( \Sigma_2(K) , \mathfrak{s} ) \ge 0$, with equality only if $\mathfrak{s}$ is the spin structure.
\end{proposition}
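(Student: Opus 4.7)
The plan is to apply Theorem~\ref{thm:froy}(2) to the double cover $X \to D^4$ branched along $S$, equipped with its deck transformation. This deck transformation $\sigma_X$ on $X$ extends the covering involution on $\partial X = \Sigma_2(K)$ and has fixed point set $S$, a codimension-$2$ submanifold. From Proposition~\ref{prop:xy} and the hypothesis $x = -y$ one reads off $b_+(X) = 0$, $b_-(X) = y$, so $X$ is negative definite with $\sigma(X) = -y$. A transfer argument for the double cover $X \to D^4$ shows that the $\sigma_X$-invariant part of $H^2(X;\mathbb{Q})$ injects into $H^2(D^4;\mathbb{Q}) = 0$, so the $\sigma_X$-anti-invariant subspace of $H^2(X;\mathbb{R})$ is all of $H^2(X;\mathbb{R})$ and is negative definite. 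This verifies the hypothesis of Theorem~\ref{thm:froy}(2).

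Next I would invoke the classical fact (a form of Elkies' theorem) that any negative definite unimodular lattice of rank $y$ carries a characteristic vector $c$ with $-c^2 \le y$; pick such a $c \in H^2(X;\mathbb{Z})$ and let $\mathfrak{s}$ be the corresponding spin$^c$-structure on $X$. Since $\sigma_X^\ast c = -c$ modulo torsion and the fixed set of $\sigma_X$ contains the codimension-$2$ component $S$, Proposition~\ref{prop:RQ} (together with the codimension-$2$ parity discussion in Section~\ref{sec:inv}) identifies $\mathfrak{s}$ as being of type $R$, and $\delta(X, \mathfrak{s}) = (c^2 - \sigma(X))/8 = (c^2 + y)/8 \ge 0$. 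Applying Theorem~\ref{thm:froy}(2) to $X$ (once as a cobordism from $\emptyset$ to $\Sigma_2(K)$ and once as a cobordism from $-\Sigma_2(K)$ to $\emptyset$), using $b_+(X) = 0$ and the vanishing of delta invariants on the empty boundary (Remark~\ref{rem:disjoint}), produces
\[
\delta^R_j(\Sigma_2(K), \mathfrak{s}|_Y, \sigma) \ge \delta(X,\mathfrak{s}) \ge 0, \qquad \delta^R_0(-\Sigma_2(K), \mathfrak{s}|_Y, \sigma) \le -\delta(X,\mathfrak{s}) \le 0,
\]
valid for all $j \ge 0$; letting $j \to \infty$ in the first gives the claimed bounds.

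For the equality clause, if either of the two inequalities is an equality then $\delta(X,\mathfrak{s}) = 0$, hence $c^2 = -y$. The equality case of Elkies' theorem then forces the intersection form of $X$ to be $-\mathbb{Z}^y$; this form is unimodular with trivial cokernel, so $H_1(\Sigma_2(K);\mathbb{Z})$ vanishes and $\Sigma_2(K)$ is in fact an integer homology sphere. Its unique spin$^c$-structure is its spin structure, so $\mathfrak{s}|_{\Sigma_2(K)}$ is the spin structure as claimed. The final assertion for quasi-alternating $K$ (or more generally when $\Sigma_2(K)$ is an $L$-space) is then immediate from Proposition~\ref{prop:deltaprop}(4), which identifies $\delta^R_j$ with the ordinary $\delta$-invariant on $L$-spaces, combined with the existence and equality statements already established. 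The main technical subtlety is the equality case, where one must combine the sharp form of Elkies' theorem with the unimodular cokernel computation to recognise that extremality of $c$ collapses $\Sigma_2(K)$ into an integer homology sphere.
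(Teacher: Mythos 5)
Your overall architecture matches the paper's: pass to the double cover $X$ of $D^4$ branched over $S$, note that $x=-y$ makes $X$ negative definite, show the deck transformation acts as $-1$ on $H^2(X)$ so every spin$^c$-structure has type $R$, and apply Theorem~\ref{thm:froy}(2) in both directions. The $L$-space clause via Proposition~\ref{prop:deltaprop}(4) is also as in the paper.

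However, there is a genuine gap at the key lattice-theoretic step. You invoke Elkies' theorem for \emph{unimodular} negative definite lattices, but the intersection lattice $L$ of the branched double cover $X$ is not unimodular in general: its discriminant group $\bar{L}=L^*/L$ is a subquotient of $H^2(\Sigma_2(K);\mathbb{Z})$, whose order is $\det(K)$, typically greater than $1$. So the existence of a characteristic vector $c$ with $c^2 \ge -y$ does not follow from Elkies. The paper instead uses the Owens--Strle bound for definite lattices with (odd) discriminant group of order $\delta$ \cite[Theorem 1]{owst2}: a maximizing characteristic satisfies $c^2 \ge 1-n-1/\delta$, whence $(c^2-\sigma(X))/8 \ge (1-1/\delta)/8 \ge 0$. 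This is the ingredient your argument is missing, and without it the inequality $\delta(X,\mathfrak{s})\ge 0$ is unproved.

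The equality analysis inherits the same problem and also overshoots. The correct dichotomy is: the inequality $(c^2+n)/8 \ge (1-1/\delta)/8$ is strict unless $\delta = 1$, and $\delta=1$ means $L$ is unimodular, from which one deduces only that $\mathfrak{s}|_{\Sigma_2(K)}$ is the spin structure --- not that $\Sigma_2(K)$ is an integral homology sphere. Unimodularity of $L$ does not force $H_1(\Sigma_2(K);\mathbb{Z})=0$ (the order of $H_1$ is $|\bar{L}|$ times a square, so it need only be a perfect square), and the appeal to the rigidity half of Elkies' theorem to identify $L$ with $-\mathbb{Z}^y$ is likewise unavailable since $L$ need not be unimodular to begin with. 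Replacing Elkies by the Owens--Strle theorem and running the $\delta=1$ versus $\delta>1$ dichotomy repairs both the inequality and the equality clause.
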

\begin{proof}
Let $X$ be the double cover of $D^4$ branched over $S$. Since $x=-y$, $X$ is negative definite. The covering involution $\sigma$ on $Y = \Sigma_2(K)$ extends over $X$ as the covering involution of the double cover $\pi\colon X \to D^4$. We have that $1 + \sigma^* = \pi^* \pi_* = 0$, since $H^2(D^4 ; \mathbb{Z}) = 0$. Hence $\sigma$ acts as $-1$ on $H^2(X ; \mathbb{Z})$. Furthermore, $H^2(X ; \mathbb{Z})$ has no $2$-torsion \cite[Lemma 4.5]{kmt2}, so we can identify spin$^c$-structures on $X$ with their characteristics. We also have $b_1(X) = 0$. It follows that every spin$^c$-structure on $X$ has type $R$. From the long exact sequence in cohomology of the pair $(X,Y)$, we have
\[
0 \to H^2(X , Y ; \mathbb{Z}) \to H^2(X ; \mathbb{Z}) \to H^2(Y ; \mathbb{Z}).
\]
Let $L = H^2(X , Y ; \mathbb{Z})/{tors}$ denote the intersection lattice of $X$. Then it follows that the discriminant group $\bar{L} = L^*/L$ is isomorphic to a subquotient of $H^2(Y ; \mathbb{Z})$. Moreover, $| H^2(Y ; \mathbb{Z}) | = \det(K)$, hence $| \bar{L} |$ divides $\det(K)$. In particular, $|\bar{L}|$ is odd.

Choose a spin$^c$-structure $\mathfrak{s}$ on $X$ whose characteristic $c \in H^2(X ; \mathbb{Z})$ attains the maximum possible value of $c^2$ (such a spin$^c$-structure exists, since $X$ is negative definite). By \cite[Theorem 1]{owst2}, we have $c^2 \ge 1-n-1/\delta$, where $n$ is the rank of $L$ and $\delta = | \bar{L} |$. Hence $(c^2 - \sigma(X))/8 \ge (1/8)(1 - 1/\delta) \ge 0$. Now the result follows by applying Theorem \ref{thm:froy} to $(X , \mathfrak{s} , \sigma)$. Furthermore the inequalities $\delta^R_\infty( \Sigma_2(K) , \mathfrak{s} , \sigma) \ge 0$ and $\delta^R_0(-\Sigma_2(K) , \mathfrak{s} , \sigma) \le 0$ are strict unless $\delta = 1$. But in this case $L$ is unimodular and it follows that $\mathfrak{s}|_Y$ is the unique spin structure on $Y$.

The last statement also follows, for if $Y$ is an $L$-space, then $\delta^R_\infty( Y , \mathfrak{s} , \sigma) = \delta(Y , \mathfrak{s})$ by Proposition \ref{prop:deltaprop} (4).
\end{proof}

\begin{example}
Let $K = T_{p,q}$ be a torus knot, where $p$ and $q$ are odd. Then $\det(K) = 1$, so $Y = \Sigma_2(K) = \Sigma(2,p,q)$ has a unique spin$^c$-structure. Suppose that a non-orientable surface $S$ bounding $K$ has $x = -y$, where $x = \sigma(K) - e(S)/2$, $y = b_1(S)$. Then by Proposition \ref{prop:noxy}, $\delta^R_\infty(Y , \sigma) \ge 0$. In this case $\delta^R_\infty(Y , \sigma) = -\overline{\mu}( \Sigma(2,p,q) )$. So we get $\overline{\mu}(\Sigma(2,p,q)) \le 0$. Similarly, suppose there is a surface $S$ with $x = y$. Then applying Proposition \ref{prop:noxy} to $-K$ we get $\overline{\mu}(\Sigma(2,p,q)) \ge 0$. So if $\overline{\mu}(\Sigma(2,p,q)) > 0$, then $x > -y$ and if $\overline{\mu}(\Sigma(2,p,q)) < 0$, then $x < y$.
\end{example}

\begin{example}
Let $K = M( e ; (a_1,b_1) , \dots , (a_n , b_n))$ be a Montesinos knot where $a_1, \dots , a_n$ are coprime, $a_i$ is even for some $i$ and $e - \sum_{i=1}^{n} b_i/a_i = 1/(a_1\cdots a_n)$. Then $\Sigma_2(K) = -Y$, where $Y$ is the Brieskorn sphere $\Sigma(a_1, \dots , a_n)$. Then by Theorem \ref{thm:briedelt}, we have $\delta^R_\infty( \Sigma_2(K) , \sigma ) \le -\sigma(K)/8 = \overline{\mu}(Y)$. So if $\overline{\mu}(Y) < 0$, then $\delta^R_\infty( \Sigma_2(K) , \sigma) < 0$. Applying Proposition \ref{prop:noxy} we see that if $\overline{\mu}(Y) < 0$, then there does not exist a non-orientable surface $S$ bounding $K$ with $\sigma(K) - e(S)/2 = -b_1(S)$.
\end{example}


\bibliographystyle{amsplain}

\end{document}